\theoremstyle{plain}
\newtheorem{thm}{Theorem}[section]
\newtheorem{prop}[thm]{Proposition}
\newtheorem{lem}[thm]{Lemma}
\newtheorem{cor}[thm]{Corollary}
\theoremstyle{definition}
\newtheorem{defn}[thm]{Definition}
\theoremstyle{remark}
\newtheorem{example}{Example}
\newtheorem{problem}{Problem}
\newtheorem{question}{Question}
  \def\C{{\mathbb{C}}}  \def\E{{\mathbb{E}}}         \def\N{{\mathbb{N}}}  \def\P{{\mathbb{P}}} \def\Q{{\mathbb{Q}}} \def\R{{\mathbb{R}}}        
    \def\cE{{\mathcal{E}}} \def\cF{{\mathcal{F}}} \def\cG{{\mathcal{G}}} \def\cH{{\mathcal{H}}}    \def\cL{{\mathcal{L}}} \def\cM{{\mathcal{M}}} \def\cN{{\mathcal{N}}}    \def\cR{{\mathcal{R}}} \def\cS{{\mathcal{S}}} \def\cT{{\mathcal{T}}} \def\cU{{\mathcal{U}}}     
\newcommand\Aut{\operatorname{Aut}}
\newcommand\dom{\operatorname{dom}}
\newcommand\esssup{\operatorname{esssup}}
\newcommand\Fix{{\operatorname{Fix}}}
\newcommand\id{\operatorname{id}}
\renewcommand\Im{\operatorname{Im}}
\newcommand\Meas{{\operatorname{Meas}}}
\newcommand\Prob{\operatorname{Prob}}
\newcommand\ran{\operatorname{ran}}
\newcommand\supp{\operatorname{supp}}
\newcommand\Span{\operatorname{span}}
\newcommand{\actson}{\curvearrowright}
\newcommand{\actons}{\curvearrowright}
\newcommand{\acston}{\actson}
\newcommand{\ip}[1]{\langle #1 \rangle}
\begin{document}
\title{Coamenability and cospectral radius for orbit equivalence relations}

\author{Ben Hayes}
\address{Department of Mathematics, University of Virginia\\
141 Cabell Drive, Kerchof Hall
P.O. Box 400137,
Charlottesville, VA 22904}
\email{brh5c@virginia.edu}

\begin{abstract}
We consider inclusions $\cS\leq \cR$ of discrete, probability measure-preserving orbit equivalence relations. In previous work with Ab\'{e}rt-Fra\c{c}zyk, we established the pointwise almost sure existence of the cospectral radius of a random walk on the $\cR$-classes. In this paper, we investigate the connections of this cospectral radius to the coamenability of the inclusion $\cS\leq \cR$. We also undertake a systematic study of coamenability for inclusions of relations, establishing several equivalent formulations of this notion.    
\end{abstract}

\maketitle

\section{Introduction}
This paper is concerned with developing a satisfactory understanding of what it means for an inclusion of orbit equivalence relations to be coamenable.
Recall that if $(X,\mu)$ is a standard probability space a \emph{discrete, probability measure-preserving orbit equivalence relation on $(X,\mu)$} (we will frequently drop ``orbit") is a Borel subset $\cR\subseteq X\times X$ so that 
\begin{itemize}
    \item the relation $x\thicksim y$ if and only if $(x,y)\in \cR$ is an equivalence relation,
    \item for every $x\in X$, we have $[x]_{\cR}:=\{y\in [x]_{\cR}:(x,y)\in \cR\}$ is countable,
    \item for every Borel $f\colon \cR\to [0,+\infty]$ the \emph{mass-transport principle} holds:
    \[\int_{X}\sum_{y\in [x]_{\cR}}f(x,y)\,d\mu(x)=\int_{X}\sum_{y\in [x]_{\cR}}f(y,x)\,d\mu(x).\]
\end{itemize}
Note that if $\Gamma\actons (X,\mu)$ is a probability-measure preserving action with $\Gamma$ countable, then $\cR_{\Gamma,X}:=\{(x,gx):g\in\Gamma,x\in X\}$ is a discrete, probability measure-preserving equivalence relation. In fact, all such relations arise this way \cite[Theorem 1]{FelMooreI}. Additionally, there is a natural notion of isomorphism for equivalence relations, and two groups are orbit equivalent if and only if they have essentially free  probability measure-preserving actions whose corresponding orbit equivalent relations are isomorphic. On the other hand, much of the theory of orbit equivalence relations can be developed by analogy with, and often in parallel to, the study of analytic properties of discrete groups. For example,
many salient notions for groups such as amenability, Property T, the Haagerup property etc. can be defined for equivalence relations in such a way that essentially free actions will have/fail to have the relevant property if and only if the group has/fails to have this property \cite{BOExactness, BOMd, FurmanMeasRigid, HAJoli, JoliWA, IshanGroupApprox, OzawaExact, PopaL2Betti}. A key observation for this is that for $\cR=\cR_{\Gamma,X}$ we have a natural map from $\Gamma$ into the full group, denoted $[\cR]$, of $\cR$ which is all $\gamma\in \Aut(X,\mu)$ so that $\gamma(x)\in [x]_{\cR}$ for almost every $x\in X$.
Thus equivalence relations provide a rich and useful abstraction of groups that remain close enough to groups to be a tractable area of study.

In this framework, amenability has a particular nice formulation: a discrete, probability measure-preserving equivalence relation is amenable if and only if it is \emph{hyperfinite}, namely it can be written as the increasing union (modulo null sets) of equivalence relations which have almost every equivalence class finite \cite{CFW, Dye, OrnWeiss}. This result is directly inspired by Connes' fundamental work showing amenability and hyperfiniteness are equivalent for von Neumann algebras \cite{Connes} (indeed a discrete, probability measure-preserving equivalence relation  is amenable if and only if the corresponding von Neumann algebra is). Hyperfinite relations can typically be studied by reducing to the case of a relation with finite equivalence classes, which is usually direct to understand.  
As in the case of groups, there are many equivalent formulations of amenability of probability measure-preserving relations and the seminal works \cite{CFW, OrnWeiss, ZimmerAMen1, ZimmerAmen2} lay out some of the most important formulations. See also \cite{KechrisAmenVersusHyper, DKStrucutreHyper} for important works investigating the equivalence of amenability and hyperfiniteness in the absence of a quasi-invariant measure, as well as \cite{JKL, KM} for excellent references on hyperfiniteness in general including comparison between \emph{Borel hyperfiniteness and measure hyperfiniteness} (throughout the paper we will use hyperfiniteness to mean measure hyperfiniteness).  

Amenability is ubiquitous in the study of analysis on discrete groups, particularly ergodic theory and functional analysis. Amenability also arises in the theory of random walks on groups. Let $\Gamma$ be a group and $\nu\in \Prob(\Gamma)$ be \emph{symmetric} (i.e. $\nu(\{g\})=\nu(\{g^{-1}\})$ for all $g\in \Gamma$).  We then have a natural random walk on $\Gamma$ which a sequence of $\Gamma$-valued random variables $(X_{n})_{n=0}^{\infty}$ where $X_{0}=e$ and with transition probabilities
\[\P(X_{n}=a|X_{n-1}=b)=\nu(b^{-1}a).\]
A well-studied quantity associated to such a random walk is the \emph{spectral radius}, defined as $r(\nu)=\lim_{n\to\infty}\P(X_{2n}=e)^{1/2n}$.  This quantity can also be shown to be equal to the operator norm of the \emph{Markov operator} associated to $\nu$, which is given by convolving with $\nu$ on $\ell^{2}(G)$ (see 
 \cite[Lemma 10.1]{Woess}). For $\nu$ as above we have that $r(\nu)=1$ if and only if $\ip{\supp(\nu)}$ is amenable (see \cite[Corollary 12.5]{Woess}). This provides a tight connection between operator theory, probability theory, and amenability.

Being such a successful notion, it is natural to consider when it can be relativized. Given an inclusion $\Lambda\leq \Gamma$ of countable groups, a notion of $\Lambda$ being coamenable in $\Gamma$ was developed in \cite{EymardCA,  MonodPopa, PopaCorr}. Coamenability of groups is closely related to amenability of actions on discrete sets: an action $\Gamma\actson X$ is amenable if and only if all stabilizers are coamenable subgroups. 
Coamenability of groups and amenability of actions has occurred in various contexts, see e.g. \cite[Corollary 4.2]{ElekSzaboDeterminant}, \cite{GMAmen, GNARelAmen, JMSimple, PAmen, TuckerMeans}.
Additionally, coamenability of $\Lambda\leq \Gamma$ can be formulated in terms of the cospectral radius $\lim_{n\to\infty}\P(X_{2n}\in \Lambda)^{1/2n}$ of the random walks in the above paragraph (we remark that the Cohen-Grigorchuk cogrowth formula \cite{CohenCG, GrigoCG} gives a different connection between random walks and coamenability).

It is thus natural to try and develop an equivalence relation analogue, and explore its connections to random walks on equivalence relations. Fortunately, a route to do this already exists. Namely, in \cite{PopaCorr} a notion of coamenability for von Neumann algebras was developed (see \cite{MonodPopa, 
OzPopaCartan,  PopaAI, PopaL2Betti, PopaVaesFree} for applications of this notion). Given an inclusion $\cS\leq \cR$ of discrete, probability measure-preserving equivalence relations, it is thus implicit from \cite{PopaCorr} to define $\cS$ to be coamenable in $\cR$ if the von Neumann algebra of $\cS$ is coamenable in the von Neumann algebra of $\cR$. What is not immediate from such a definition however, is how to reformulate this in terms of other well-known formulations of amenability such as almost-invariant vectors on $L^{p}$, spectral radius, F\o lner sets, cospectral radius etc.

Indeed, one difficulty is that the notion of cospectral radius is nontrivial to even define for relations. It is direct to establish the existence of $\lim_{n\to\infty}\P(X_{2n}\in \Lambda)^{1/2n}$ for the types of random walks on $\Gamma\geq \Lambda$ we discussed before. However, existence of the analogous limit is very nontrivial to establish in the relation case. If $\cR$ is a discrete, probability measure-preserving equivalence relation on $(X,\mu)$, we use the notation $\cS\leq \cR$ to mean that $\cS$ is a Borel subset of $\cR$ and that the relation $x\thicksim y$ if and only if $(x,y)\in \cS$ is an equivalence relation. We will often just call $\cS$ a \emph{subrelation of $\cR$.}
Given an inclusion $\cS\leq \cR$ of discrete, probability measure-preserving equivalence relations, and $\nu\in \Prob([\cR])$ we have a random walk $(X_{n,x})_{n=0}^{\infty}$ on $[x]_{\cR}$ with $X_{0,x}=x$ and with transition probabilities
\[\P(X_{n,x}=y|X_{n-1,x}=z)=\nu(\{\gamma\in [\cR]:\gamma(y)=z\}).\]
In \cite[Theorem 1.2]{AFH} (see also \cite{AFHGrowth}) it is shown that the pointwise cospectral radius
\[\rho^{\cS}_{\nu}(x)=\lim_{n\to\infty}\P(X_{2n,x}\in [x]_{\cS})^{1/2n}\]
exists almost everywhere. 
In contrast to the group situation, the proof of this is nontrivial and there is no obvious submultiplicativity to exploit (see \cite[discussion following Theorem 1.2]{AFH} for a more detailed discussion).

If $\nu$ is symmetric and \emph{generates $\cR$} (meaning that $\cR=\cR_{\ip{\supp(\nu)},X}$ modulo null sets), then the essential supremum of $\rho^{\cS}_{\nu}$ is the norm of a natural Markov operator. Since $\cS$ is a subrelation of $\cR$, every $\cR$-class is a disjoint union of $\cS$-equivalence classes. For $x\in X$, we use $[x]_{\cR}/\cS$ for the set of $\cS$-equivalence classes which are contained in $[x]_{\cR}$.    Consider
\[\cR/\cS:=\{(x,c):x\in X,c\in [x]_{\cR}/\cS\}.\]
It turns out there is a natural standard Borel structure on $\cR/\cS$ (see Section \ref{sec:basic construction equiv reln} for the details). 
We can equip $\cR/\cS$ with the Borel measure
\[\mu_{\cR/\cS}(E)=\int_{X}|\{c\in [x]_{\cR}/\cS:(x,c)\in E\}|\,d\mu(x).\]
We may think of $\cR/\cS$ as ``fibered over $X$" and for $\xi\in L^{p}(\cR/\cS)$ and $x\in X$, we may consider $\xi_{x}\in \ell^{p}([x]_{\cR}/\cS)$ given by $\xi_{x}(c)=\xi(x,c)$. 
As in \cite{AFH}, we have an induced unitary representation
\[\lambda_{\cR/\cS}\colon [\cR]\to \mathcal{U}(L^{2}(\cR/\cS))\]
via $(\lambda_{\cR/\cS}(\gamma)\xi)(x,c)=\xi(\gamma^{-1}(x),c).$ 
For a countably supported probability measure $\nu$ on $[\cR]$ we set
\[\lambda_{\cR/\cS}(\nu)=\sum_{\gamma\in [\cR]}\nu(\gamma)\lambda_{\cR/\cS}(\gamma).\]
We can think of $\lambda_{\cR/\cS}(\nu)$ as the appropriate Markov operator associated to the above random walk on $[x]_{\cR}$. Indeed,
when $\nu$ is generating and symmetric, we have that $\|\rho^{\cS}_{\nu}\|_{\infty}=\|\lambda_{\cR/\cS}(\nu)\|$.

Having developed a theory of cospectral radius for relations, it is natural to ask if coamenability can be formulated in terms of cospectral radius just as in the case of groups. Our main result says this is true, and also establishes several other equivalent formulations of coamenability. For simplicity, we state the results in the introduction when $\cR$ is ergodic, though we handle the general case in this paper.



\begin{thm} \label{thm: main theorem intro}
Let $\cS\leq \cR$ be discrete, probability measure-preserving equivalence relations on $(X,\mu)$ with $\cR$ ergodic. Then the following are equivalent:
\begin{enumerate}[(i)]
    \item $L(\cS)$ is coamenable in $L(\cR)$,
    \item $\lambda_{\cR/\cS}$ has almost invariant vectors, \label{item: almost invariant vectors intro}
    \item for every countably supported $\nu\in \Prob([\cR])$ which generates $\cR$, we have $\|\rho^{\cS}_{\nu}\|_{\infty}=1$, \label{item: co-spectral radius condition intro}
    \item  there is a sequence $\xi^{(n)}\in L^{2}(\cR/\cS)$ so that  for almost every $x\in X$ 
   we have $\|\xi^{(n)}_{x}\|_{2}=1$ and $\lim_{n\to\infty}\|\xi^{(n)}_{y}-\xi^{(n)}_{x}\|_{2}=0$ for all $y\in [x]_{\cR}$,
    \label{item:almost invariant fiberwise norm 1 intro}
    \item there is a countable $\Gamma\leq [\cR]$ such that $\Gamma x=[x]_{\cR}$ for almost every $x\in X$, and a sequence $\xi^{(n)}\in L^{2}(\cR/\cS)$ so that $\|\xi^{(n)}_{x}\|_{2}=1$ for almost every $x\in X$, and $\xi^{(n)}$ is $\Gamma$-almost invariant. \label{item:almost invariant fiberwise norm 1 intro 2}
\end{enumerate}
\end{thm}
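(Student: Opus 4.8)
The plan is to prove the equivalences as a web anchored at condition (ii), treating (i)$\Leftrightarrow$(ii) by the operator-algebraic theory of coamenable inclusions, the cluster (ii), (iv), (v) by a direct cycle, and (iii) through the cospectral-radius/operator-norm dictionary. Throughout I would work with the \emph{diagonal vector} $\Omega\in L^2(\cR/\cS)$ given by $\Omega(x,c)=1$ if $c=[x]_\cS$ and $0$ otherwise; it is a fiberwise unit vector, and an induction on the Markov property gives, for every countably supported $\nu\in\Prob([\cR])$, the identity $\langle\lambda_{\cR/\cS}(\nu)^{2n}\Omega,\Omega\rangle=\int_X\P(X_{2n,x}\in[x]_\cS)\,d\mu(x)$. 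Together with the fact that the $L^\infty$-norm of a family of return probabilities equals the exponential rate of their integrals, this rewrites $\|\rho^\cS_\nu\|_\infty=\lim_n\langle\lambda_{\cR/\cS}(\nu)^{2n}\Omega,\Omega\rangle^{1/2n}$.

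For (i)$\Leftrightarrow$(ii) I would invoke the Monod--Popa formulation of coamenability of $L(\cS)\leq L(\cR)$: coamenability is the weak containment of the trivial $L(\cR)$-bimodule in the relative coarse bimodule, which in the relation picture is realized on $L^2(\cR/\cS)$ with left action $\lambda_{\cR/\cS}$, and weak containment of the trivial representation of $[\cR]$ is exactly the existence of almost invariant vectors. The role of the Cartan subalgebra $L^\infty(X)$ is to force the approximating vectors to be compatible with $\mu$, which is the bridge to the fiberwise conditions (iv) and (v). For (iii) I would use, for \emph{symmetric} generating $\nu$, the given identity $\|\rho^\cS_\nu\|_\infty=\|\lambda_{\cR/\cS}(\nu)\|$: then (iii)$\Rightarrow$(ii) is a Kesten-type argument, namely that a self-adjoint contraction which is an average of unitaries over a generating set and has norm one has $1$ in its spectrum, producing approximate eigenvectors $\xi_n$ for which $1-\langle\lambda_{\cR/\cS}(\nu)\xi_n,\xi_n\rangle=\sum_\gamma\nu(\gamma)\tfrac12\|\lambda_{\cR/\cS}(\gamma)\xi_n-\xi_n\|^2\to0$ forces almost invariance along $\supp(\nu)$; conversely (ii) gives $\|\lambda_{\cR/\cS}(\nu)\|=1$ for every $\nu$, and the general case of (iii) I would reduce to symmetric generating measures by comparison with the symmetrization $\tfrac12(\nu+\check\nu)$.

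The cheap steps of the cycle are (iv)$\Rightarrow$(ii), (iv)$\Rightarrow$(v) and (v)$\Rightarrow$(iv). For (iv)$\Rightarrow$(ii), note that for $\gamma\in[\cR]$ one has $\|\lambda_{\cR/\cS}(\gamma)\xi^{(n)}-\xi^{(n)}\|_2^2=\int_X\|\xi^{(n)}_{\gamma^{-1}x}-\xi^{(n)}_x\|_2^2\,d\mu(x)$; since $\gamma^{-1}x\in[x]_\cR$ the integrand tends to $0$ pointwise and is bounded by $4$, so dominated convergence yields almost invariance for \emph{every} element of $[\cR]$ (this is exactly why the all-$y$ formulation (iv) is needed). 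The implication (iv)$\Rightarrow$(v) is immediate upon fixing any countable $\Gamma\leq[\cR]$ with $\Gamma x=[x]_\cR$ a.e. For (v)$\Rightarrow$(iv), I would pass to a subsequence along which $\sum_n\|\lambda_{\cR/\cS}(\gamma)\xi^{(n)}-\xi^{(n)}\|_2^2<\infty$ simultaneously for all generators $\gamma\in\Gamma$; Borel--Cantelli then gives $\|\xi^{(n)}_{\gamma^{-1}x}-\xi^{(n)}_x\|_2\to0$ for a.e.\ $x$ and every generator, and since each generator is non-singular the exceptional sets remain null along every word in $\Gamma$, so the triangle inequality propagates the convergence to all $y\in\Gamma x=[x]_\cR$.

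The main obstacle is the remaining implication (ii)$\Rightarrow$(v), i.e.\ upgrading norm-wise almost invariant vectors to ones of constant fiber norm. Reducing to $\xi^{(n)}\geq0$ (which preserves both norms and almost invariance, as $\lambda_{\cR/\cS}$ acts by transporting base points), the obstruction is concentrated in the fiber-mass function $f_n(x)=\|\xi^{(n)}_x\|_2$: the naive normalization $\xi^{(n)}_x/f_n(x)$ is controlled wherever $f_n$ is bounded below, but can blow up on $\{f_n\approx0\}$, and the crude split of the integral fails precisely because the mass of $f_n$ may concentrate. The key is therefore to flatten $f_n$, and here I would exploit that $\cR$ is ergodic: its full group is rich enough to realize arbitrary measure-preserving rearrangements of $X$, so almost invariance tested against such rearrangements forces $f_n\to1$ in measure via the mass-transport principle. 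Making this argument run from the sequential almost invariance supplied by (ii)---rather than from a uniform version---is the delicate point, and I expect it to be the crux of the proof; once $f_n\to1$ is secured, fiberwise normalization preserves $\Gamma$-almost invariance and delivers (v).
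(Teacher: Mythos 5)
Your web of cheap implications --- the cycle (iv)$\Rightarrow$(ii) via dominated convergence with the bound $4$, (iv)$\Rightarrow$(v), and (v)$\Rightarrow$(iv) by Borel--Cantelli along a subsequence --- matches the paper (Theorem \ref{thm: DCT argument}), as does the dictionary $\|\rho^{\cS}_{\nu}\|_{\infty}=\|\lambda_{\cR/\cS}(\nu)\|$ and the symmetrization reduction in (iii). But the crux, (ii)$\Rightarrow$(v), has a genuine gap, and the mechanism you propose is false as stated. You want ergodicity of $\cR$ to force the fiber-mass functions $f_{n}(x)=\|\xi^{(n)}_{x}\|_{2}$ to converge to $1$ in measure, by testing almost invariance against rearrangements in $[\cR]$. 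The rearrangement you would need, however, depends on $n$ (it must swap the sets where $f_{n}$ is large and small), while (ii) only supplies almost invariance for each \emph{fixed} $\gamma\in[\cR]$; this is precisely the strong-ergodicity issue, and mere ergodicity does not close it. Concretely: take $\cR$ ergodic hyperfinite, so that there is a nontrivial asymptotically invariant sequence $A_{n}$ with $\mu(A_{n})=1/2$ and $\mu(\gamma A_{n}\Delta A_{n})\to 0$ for \emph{every} $\gamma\in[\cR]$. If $\xi^{(n)}$ witnesses (v) (even $\xi^{(n)}\equiv 1$ when $\cS=\cR$), then $\zeta^{(n)}=\sqrt{2}\,1_{A_{n}}\xi^{(n)}$ is a unit vector, almost invariant under every element of $[\cR]$, yet its fiber mass $\sqrt{2}\,1_{A_{n}}$ does not converge to $1$ in measure. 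So no argument can prove your flattening claim for the given sequence; the sequence must be \emph{replaced}, not normalized. This is exactly what the paper does: ergodicity enters only at the level of means --- a weak$^{*}$ cluster point $m$ of $|\xi^{(n)}|^{2}$ is a $[\cR]$-invariant mean, and Theorem \ref{ETS agrees with measure on center} pins down $m|_{L^{\infty}(X)}=\int\cdot\,d\mu$; then a Hahn--Banach separation argument in $L^{1}(X)\oplus L^{1}(\cR/\cS)^{\oplus F}$ (the Claim inside the proof of Theorem \ref{thm: big TFAE thm}) converts the mean back into \emph{new} nonnegative $L^{1}$-functions that are simultaneously almost invariant and have $\|f_{x}\|_{1}$ close to $1$ in $L^{1}(X)$; a perturbation makes the fiber norms exactly $1$, and square roots give (v).

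The same missing bridge quietly undermines your (iii)$\Rightarrow$(ii): the Kesten-type spectral argument from $\|\lambda_{\cR/\cS}(\nu)\|=1$ only produces unit vectors almost invariant under the countable group $\ip{\supp(\nu)}$, and without fiberwise norm control one cannot upgrade to almost invariance under all of $[\cR]$ --- decomposing $\gamma\in[\cR]$ over pieces where it agrees with elements of $\ip{\supp(\nu)}$ fails on the tail if the fiber mass concentrates (the dominated convergence step in Theorem \ref{thm: DCT argument} genuinely uses $\|\xi_{x}\|_{2}=1$ a.e.). So (iii) feeds into the same mean/Hahn--Banach machinery in the paper (via Proposition \ref{prop: restricted co-spectral radius to invariant means} and weak$^{*}$ compactness), rather than directly into (ii). Finally, your (i)$\Leftrightarrow$(ii) sketch via weak containment of bimodules is plausible in outline but thin: the paper instead identifies $\ip{L(\cR),e_{L(\cS)}}$ with $L(\widehat{\cS})$ (the Feldman--Sutherland--Zimmer result, Proposition \ref{prop: thanks S 2}) and matches hypertraces with $[\cR]$-invariant means restricting to $\int\cdot\,d\mu$ (Theorem \ref{thm: two different version of rel amen are the same}); either way, passing from the state/mean back to vectors again runs through the convexity argument you have not supplied.
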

There is an important subtlety here, that occurs both in the case of relations and in the case of groups. Namely, while coamenability of groups $\Lambda\leq \Gamma$ is equivalent to cospectral radius being $1$ for a measure on $\Prob(\Gamma)$ whose support generates $\Gamma$, it is not equivalent to cospectral radius $1$ for \emph{all} probability measures on $\Gamma$. This is related to examples of Monod-Popa \cite[Theorem 1]{MonodPopa} of groups $\Delta\leq\Lambda\leq \Gamma$ with $\Delta$ being coamenable in $\Gamma$, but not in $\Lambda$ (see also \cite[Section 4]{monodcomments} for many examples of this phenomenon). For the same reason, we cannot drop the assumption that $\nu$ generates $\cR$ in (\ref{item: co-spectral radius condition intro}).
 Secondly, it would be tempting (as in the group situation) to replace the assumption in (\ref{item: co-spectral radius condition intro})  of cospectral radius one for \emph{every} probability measure on $[\cR]$ which generates $\cR$ with the assumption that there is \emph{some} probability measure on $[\cR]$ which generates $\cR$ which has cospectral radius one. However, this is already false in the nonrelative setting (i.e. with $\cS$ being the trivial subrelation). Indeed, Kaimanovich in \cite{VKLeafcounterexample} showed that there is a relation which has a graphing with almost every fiberwise graph being amenable but still the relation itself is not amenable. For these reasons, we have to consider  arbitrary measures which generate the relation (however, see Section \ref{sec:cospectral radius} for cases where this issue can be removed).

We will say that $\cS$ is \emph{coamenable in $\cR$} if any of the above equivalent items hold for the inclusion $\cS\leq \cR$. We refer the reader to Section \ref{sec: relative amenability relns} for other equivalent conditions, as well as how to properly formulate these equivalences when $\cR$ is not assumed ergodic. Note that, in particular, item (\ref{item: co-spectral radius condition intro}) shows that cospectral radius is tightly related to coamenability of  $\cS\leq \cR$, just as in the group setting. This provides further justification for our notion of cospectral radius being the correct one for relations, and for being a natural object of study from the point of view of measured group theory. 

The most difficult equivalence in Theorem \ref{thm: main theorem intro} is to establish that  (\ref{item: almost invariant vectors intro}) and (\ref{item:almost invariant fiberwise norm 1 intro 2}) are equivalent. The direction (\ref{item:almost invariant fiberwise norm 1 intro 2}) implies  (\ref{item: almost invariant vectors intro}) is a fairly direct dominated convergence theorem argument (see Theorem \ref{thm: DCT argument}).
For the reverse direction, general considerations (see Theorem \ref{thm: folklore 1}) say that (\ref{item: almost invariant vectors intro}) implies the existence of an $[\cR]$-invariant mean on $L^{\infty}(\cR/\cS)$. When $\cR$ is ergodic, such an invariant mean must agree with integration against $\mu$ when restricted to the copy of $L^{\infty}(X,\mu)$ given by $(f\mapsto ((x,c)\mapsto f(x)))$. General Hahn-Banach arguments imply that such a mean can be approximated in the weak$^{*}$ topology by nonnegative elements in $L^{1}(\cR/\cS)$ of norm $1$. Since the mean agrees with integration on $L^{\infty}(X)$ we know that $m$ can be weak$^{*}$ approximated by functions so that $x\mapsto \|f_{i,x}\|_{\ell^{1}([x]_{\cR})}$ is weakly close to $1$ as an element of $L^{1}(X)$.
From here, another application of Hahn-Banach then tells us that we can replace $f_{i}$ with another sequence so that $\|\|f_{i,x}\|_{\ell^{1}([x]_{\cR})}-1\|_{L^{1}(X)}$ tends to $0$. We can then find vectors satisfying (\ref{item:almost invariant fiberwise norm 1 intro 2}) by a simple perturbation. In the case that $\cR$ is not ergodic, we use a slightly more general result (established, e.g. in \cite[Lemma 4.2]{HVTypeIIICartan}) showing that a mean on $L^{\infty}(\cR/\cS)$ which agrees with integration against $\mu$ on all $[\cR]$-invariant elements of $L^{\infty}(X,\mu)$ must in fact agree with integration against $\mu$ on all of $L^{\infty}(X,\mu)$. We remark that in the situation of plain amenability of $\cR$, many previous arguments establishing that the analogues of (\ref{item: almost invariant vectors intro})  and (\ref{item:almost invariant fiberwise norm 1 intro 2}) are each equivalent to amenability of $\cR$ went through first establishing that (\ref{item: almost invariant vectors intro}) implies that $\cR$ is hyperfinite (see e.g. \cite{KaimanovichAmenability, CFW}). As we will remark later, we do not know (and indeed it is likely false) whether coamenability implies some version of cohyperfiniteness and so we do not have access to this argument.

For inclusions $\Lambda\leq \Gamma$ of countable groups, we have that $\Lambda$ is coamenable in $\Gamma$ if and only if $\rho^{\Lambda}_{\nu}=1$ for every symmetric $\nu\in \Prob(\Gamma)$ whose support generates $\Gamma$. Thus, it is tempting to guess that $\cS$ being coamenable in $\cR$ is equivalent to $\rho^{\cS}_{\nu}=1$ almost everywhere for every probability measure on $[\cR]$ whose support generates $\cR$. However, this is not true. A counterexample can be given as follows. Recall that if $\cR$ is a discrete, probability measure-preserving equivalence relation on $(X,\mu)$ and $E\subseteq X$ is Borel and $\mu(E)>0$, then we have a restricted relation $\cR|_{E}:=\cR\cap (E\times E)$ on $E$, which preserves the measure $\frac{\mu|_{E}}{\mu(E)}$. Take a nonamenable group $\Gamma$, and consider a free, ergodic probability measure-preserving action $\Gamma\actson (X,\mu)$, and fix a measurable $E\subseteq X$ with $0<\mu(E)<1$. Let $\cR$ be the orbit equivalence relation of $\Gamma\actson (X,\mu)$, and let $\cS$ be the subrelation so that $\cS|_{E}=\cR|_{E}$ and $\cS|_{E^{c}}$ is the trivial relation. Then one can show  that $\cS$ is coamenable  in $\cR$, but for any $\nu\in [\cR]$ which is supported on $\Gamma$ and such that $\ip{\supp(\nu)}=\Gamma$ we have that $\rho^{\cS}_{\nu}|_{E^{c}}<1$ almost surely. See Example \ref{example: rel amen comes from small pieces} later in the paper for the details. 

Essentially what goes wrong in this example is the following. If we take any sequence $\xi^{(n)}\in L^{2}(\cR/\cS)$ of almost $\Gamma$-invariant vectors with $\|\xi^{(n)}_{x}\|_{2}=1$ almost everywhere, then we must have that $|\xi^{(n)}(x,c)|$ is ``relatively evenly distributed" as a function of $x$, by almost invariance. However, we can still have that $ |\xi^{(n)}(x,c)|$ is not very evenly distributed as a function of $c$ (making this precise is somewhat difficult as one cannot measurably define something as a function of $c$, and we do not pursue it here, but the heuristic is nevertheless helpful). Specifically in the example in the above paragraph, the vectors $\xi^{(n)}$ witnessing coamenability can be taken to be entirely supported on those $(x,c)$ where $c\subseteq E$.

A fix for this, which rules out the example above, is to use the notion of \emph{everywhere coamenability}.  
We say that $\cS$ is \textbf{everywhere coamenable in $\cR$} if for every measurable, positive measure $E\subseteq X$ we have that $\cS|_{E}$ is coamenable in $\cR|_{E}$. With this stronger version of coamenability, we can assert that the cospectral radius is pointwise almost everywhere equal to $1$. 

\begin{thm}\label{thm: special case cs equal intro}
Let $\cS\leq \cR$ be discrete, probability measure-preserving equivalence relations on a standard probability space $(X,\mu)$. Then $\cS$ is everywhere coamenable in $\cR$ if and only if for every $\nu\in \Prob([\cR])$ which is countably supported and generates $\cR$ we have $\rho^{\cS}_{\nu}=1$ almost everywhere.
\end{thm}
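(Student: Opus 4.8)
The plan is to reduce everywhere coamenability to a family of statements about cospectral radii via Theorem~\ref{thm: main theorem intro} applied to every restriction $\cR|_E$, and then to transfer between the cospectral radius of $\cS\leq\cR$ and that of the restricted inclusion $\cS|_E\leq\cR|_E$ by means of the induced (first-return) random walk on $E$. It suffices to treat symmetric, countably supported $\nu$, for which $\rho^{\cS}_\nu(x)=\|\lambda_{\cR/\cS}(\nu)_x\|$ is the norm of the fiberwise Markov operator on $\ell^2([x]_{\cR}/\cS)$; the general case reduces to this since whether $\rho^{\cS}_\nu(x)=1$ only records fiberwise amenability of the quotient walk. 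By the non-ergodic form of Theorem~\ref{thm: main theorem intro}, $\cS|_E$ is coamenable in $\cR|_E$ exactly when $\|\rho^{\cS|_E}_{\eta}\|_\infty=1$ for every countably supported $\eta\in\Prob([\cR|_E])$ generating $\cR|_E$, so everywhere coamenability becomes the assertion that $\esssup_E \rho^{\cS|_E}_{\eta}=1$ for all such $E$ and $\eta$.

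The central device is the following comparison. Given generating $\nu\in\Prob([\cR])$ and positive-measure $E$, observe the walk only at the times it lies in $E$; since $\nu$ generates $\cR$ and $\mu(E)>0$, the walk is recurrent to $E$ on almost every class, so this first-return process is a genuine random walk on $[x]_{\cR}\cap E=[x]_{\cR|_E}$. I would show its one-step kernel is reversible and supported on $\cR|_E$, and realize it (by a measurable decomposition of the kernel into partial isomorphisms, completed to full-group elements) as $\lambda_{\cR|_E/\cS|_E}(\nu_E)$ for a countably supported $\nu_E\in\Prob([\cR|_E])$ generating $\cR|_E$. The key inequality is the pointwise monotonicity $\rho^{\cS|_E}_{\nu_E}(x)\leq \rho^{\cS}_\nu(x)$ for a.e.\ $x\in E$, which I would prove at the level of return probabilities: a return of the induced walk to $[x]_{\cS}\cap E$ at its $2n$-th visit to $E$ is in particular a visit of the ambient walk to $[x]_{\cS}$ at some time $t\geq 2n$, so
\[
\P\big(\text{induced walk}\in[x]_{\cS}\cap E\text{ at step }2n\big)\leq \sum_{t\geq 2n}\P\big(X_{t,x}\in[x]_{\cS}\big)\leq \frac{\rho^{\cS}_\nu(x)^{2n}}{1-\rho^{\cS}_\nu(x)},
\]
using $\P(X_{t,x}\in[x]_{\cS})\leq\|\lambda_{\cR/\cS}(\nu)_x\|^{t}=\rho^{\cS}_\nu(x)^{t}$; taking $2n$-th roots gives the claim.

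With this in hand the forward implication is clean. If $\rho^{\cS}_\nu\leq 1-\varepsilon$ on a positive-measure $E$ for some generating $\nu$, then monotonicity gives $\rho^{\cS|_E}_{\nu_E}\leq 1-\varepsilon$ on $E$, so $\|\rho^{\cS|_E}_{\nu_E}\|_\infty<1$ and $\cS|_E$ is not coamenable in $\cR|_E$; contrapositively, everywhere coamenability forces $\rho^{\cS}_\nu=1$ almost everywhere. For the reverse implication I would argue by contraposition: if $\cS|_E$ fails to be coamenable in $\cR|_E$, there is a generating $\eta$ on $[\cR|_E]$ with $\|\rho^{\cS|_E}_\eta\|_\infty<1$, and I would realize $\eta$ as the induced walk of a generating $\nu$ on $[\cR]$ (inflating $\eta$ by extending its support by the identity off $E$ and adjoining a small-weight generating completion, arranged so the first-return kernel on $E$ is exactly $\lambda(\eta)$). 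I then need the comparison in the opposite direction, namely that \emph{amenability is preserved under inducing}: $\rho^{\cS}_\nu(x)=1\Rightarrow\rho^{\cS|_E}_{\nu_E}(x)=1$ for a.e.\ $x\in E$, equivalently that $\rho^{\cS|_E}_\eta(x)<1$ on $E$ forces $\rho^{\cS}_\nu(x)<1$ on $E$, yielding $\rho^{\cS}_\nu<1$ on a positive-measure set and violating the hypothesis.

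I expect the main obstacle to be exactly this reverse comparison together with the realization step. The difficulty is that the \emph{quotient} walk on $[x]_\cR/\cS$ is not literally the trace of a quotient walk on the classes meeting $E$, so the classical fact that first-return (Schur-complement) chains preserve the spectral radius does not apply verbatim; I would instead transfer Følner/almost-invariant vectors from the ambient quotient walk to the induced quotient walk, using that $E$ has positive density in almost every class (by the ergodic theorem for the recurrent return-time cocycle) to control the induced boundary. Carrying out this transfer measurably in $x$, realizing the first-return kernel as a genuinely generating $\nu_E$ (respectively realizing an arbitrary $\eta$ as an induced walk), and checking that the reduction to symmetric $\nu$ is lossless, are the places where the argument must be made careful.
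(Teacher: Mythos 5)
Your argument rests on two comparisons between the ambient quotient walk and the first-return walk on $E$, and both hinge on the identification $\rho^{\cS}_{\nu}(x)=\|\lambda_{\cR/\cS}(\nu)_{x}\|$ of the pointwise cospectral radius with a fiberwise operator norm. That identification is false. The operator $\lambda_{\cR/\cS}(\nu)$ does not decompose over points of $X$ (it permutes the fibers $\ell^{2}([x]_{\cR}/\cS)$ within an $\cR$-class), so any fiberwise norm it defines is an $\cR$-invariant function of $x$, hence a.e.\ constant when $\cR$ is ergodic; but Example \ref{example: rel amen comes from small pieces} gives an ergodic $\cR$ with $\rho^{\cS}_{\nu}=1$ on $E$ and $\rho^{\cS}_{\nu}<1$ on $E^{c}$. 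Consequently your key bound $\P(X_{t,x}\in[x]_{\cS})\leq\rho^{\cS}_{\nu}(x)^{t}$ has no proof: $p_{2n,x,\cS}$ is an \emph{off-diagonal} matrix coefficient $\ip{M_{x}^{2n}\delta_{x},1_{[x]_{\cS}}}$, not $\ip{T^{2n}\delta,\delta}$ for a fixed vector, so there is no log-convexity or submultiplicativity in $n$ --- this absence is precisely why the a.e.\ existence of $\rho^{\cS}_{\nu}$ is the hard theorem of \cite{AFH}. At best one gets $p_{2n,x,\cS}\leq(\rho^{\cS}_{\nu}(x)+\varepsilon)^{2n}$ for $n$ large (and odd times need a separate argument, e.g.\ the $\gamma^{*}(\cS)$ conjugation trick), which might salvage your forward comparison with effort. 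The fatal gap is the reverse comparison, which you yourself flag: you need $\rho^{\cS}_{\nu}(x)=1$ to force $\rho^{\cS|_{E}}_{\nu_{E}}(x)=1$, and you offer only the hope of transferring almost-invariant vectors using positive density of $E$ in classes. Positive density cannot suffice: the vectors witnessing coamenability can concentrate on cosets lying inside a small piece of the space (again Example \ref{example: rel amen comes from small pieces}), which is exactly the phenomenon that everywhere coamenability is designed to police. The realization step --- producing, for an \emph{arbitrary} generating $\eta\in\Prob([\cR|_{E}])$, a generating $\nu\in\Prob([\cR])$ whose first-return kernel on $E$ is exactly $\eta$ --- is also asserted without proof and is doubtful, since induced kernels carry constrained return-time structure; and your one-line reduction to symmetric $\nu$ ("only records fiberwise amenability") invokes the same false pointwise picture.

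For contrast, the paper avoids induced walks entirely. For $\cS$-invariant $E$ it proves the operator identity $\|\rho^{\cS}_{\nu}1_{E}\|_{\infty}=\rho_{E}(\cR/\cS,\nu)=\|R(1_{E})\lambda_{\cR/\cS}(\nu)\|$ (Lemma \ref{lem: restricted co-spec radi is just restricted op}), where $R(1_{E})$ is the commuting multiplication by the $\cS$-invariant function $1_{E}$; this converts the pointwise statement into a norm inequality on the invariant subspace $R(1_{E})L^{2}(\cR/\cS)$. The direction (everywhere coamenable $\Rightarrow\rho^{\cS}_{\nu}=1$ a.e.) is then Proposition \ref{prop: co spectral radius descent}, proved by lifting test vectors from $L^{2}(\cR/\cS_{2})$ to $L^{2}(\cR/\cS_{1})$ via fiberwise multiplication against the almost-invariant fields witnessing coamenability --- no return-time analysis. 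The converse uses Proposition \ref{prop: almost invariant vectors from restricted co-spectral raidus} (a lazy-walk/spectral-measure argument extracting almost invariant vectors under $R(1_{E})$ from $\rho_{E}=1$) together with the mean and Hahn--Banach machinery of Theorems \ref{thm: big TFAE thm} and \ref{thm: everywhere rel man TFAE thm}, with Proposition \ref{prop: passing rel amen between subsets} reducing arbitrary $E$ to $\cS$-invariant $E$; in particular no realization of $\eta$ as an induced kernel is ever needed. If you wish to pursue your route, the missing ingredient is a Schur-complement-type spectral comparison valid for \emph{quotient} (coset) walks, and establishing it appears to be of comparable difficulty to Lemma \ref{lem: restricted co-spec radi is just restricted op} and Proposition \ref{prop: co spectral radius descent} themselves.
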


In fact, we can prove a slightly more general result which we highlight here.

\begin{thm}\label{thm: cs equality intro}
 Let $\cS_{1}\leq \cS_{2}\leq \cR$ be discrete, probability measure-preserving equivalence relations on a standard probability space $(X,\mu)$. Let $\nu\in \Prob([\cR])$ be countable supported and generate $\cR$. If $\cS_{1}$ is everywhere coamenable in $\cS_{2}$, then $\rho^{\cS_{1}}_{\nu}=\rho^{\cS_{2}}_{\nu}$ almost everywhere.   
\end{thm}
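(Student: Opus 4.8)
The plan is to prove the two inequalities $\rho^{\cS_1}_{\nu}\leq\rho^{\cS_2}_{\nu}$ and $\rho^{\cS_2}_{\nu}\leq\rho^{\cS_1}_{\nu}$ separately. The first is immediate and uses no hypothesis: since $\cS_1\leq\cS_2$ we have $[x]_{\cS_1}\subseteq[x]_{\cS_2}$, so $\P(X_{2n,x}\in[x]_{\cS_1})\leq\P(X_{2n,x}\in[x]_{\cS_2})$ for every $n$, and taking $(\cdot)^{1/2n}$ and letting $n\to\infty$ gives $\rho^{\cS_1}_{\nu}(x)\leq\rho^{\cS_2}_{\nu}(x)$ almost everywhere. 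All the content lies in the reverse inequality, where everywhere coamenability must enter. I would first fix $\nu$ and reduce to an operator-norm statement: by the $\cR$-invariance of the cospectral radius together with the identity $\esssup_{x}\rho^{\cS}_{\nu}=\|\lambda_{\cR/\cS}(\nu)\|$ for symmetric generating $\nu$, it suffices to produce, from coamenability of $\cS_1\leq\cS_2$, the inequality $\|\lambda_{\cR/\cS_2}(\nu)\|\leq\|\lambda_{\cR/\cS_1}(\nu)\|$. (A non-symmetric $\nu$ is handled by the same scheme after passing to the self-adjoint operator governing even-step returns; this is routine bookkeeping.) Coamenability of $\cS_1\leq\cS_2$ supplies, via the relative form of item (\ref{item:almost invariant fiberwise norm 1 intro}) of Theorem \ref{thm: main theorem intro} applied to this inclusion, a sequence $\eta^{(k)}\in L^{2}(\cS_2/\cS_1)$ with $\|\eta^{(k)}_{y}\|_2=1$ for almost every $y$ and $\|\eta^{(k)}_{y'}-\eta^{(k)}_{y}\|_2\to 0$ whenever $y'\in[y]_{\cS_2}$.

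The heart of the argument is to turn these vectors into an asymptotically intertwining isometry $L_k\colon L^{2}(\cR/\cS_2)\to L^{2}(\cR/\cS_1)$. Using a Borel uniformization to select, measurably on $\cR/\cS_2$, a representative $y(x,c_2)\in c_2$ of each $\cS_2$-class, I would set
\[(L_k\xi)(x,c_1)=\xi(x,c_2)\,\eta^{(k)}(y(x,c_2),c_1),\]
where $c_2\supseteq c_1$ is the $\cS_2$-class containing the $\cS_1$-class $c_1$. Since $\sum_{c_1\subseteq c_2}|\eta^{(k)}(y,c_1)|^2=\|\eta^{(k)}_{y}\|_2^2=1$, grouping by $c_2$ shows $\|L_k\xi\|_2=\|\xi\|_2$, so $L_k$ is an isometry. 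A direct computation shows that $(L_k\lambda_{\cR/\cS_2}(\gamma)\xi-\lambda_{\cR/\cS_1}(\gamma)L_k\xi)(x,c_1)$ equals $\xi(\gamma^{-1}x,c_2)$ times $\eta^{(k)}(y(x,c_2),c_1)-\eta^{(k)}(y(\gamma^{-1}x,c_2),c_1)$, where crucially $y(x,c_2)$ and $y(\gamma^{-1}x,c_2)$ are two representatives of the \emph{same} $\cS_2$-class $c_2$, hence $\cS_2$-equivalent.

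Thus the squared norm of the error integrates to a weighted average of the quantities $\|\eta^{(k)}_{y(\gamma^{-1}x,c_2)}-\eta^{(k)}_{y(x,c_2)}\|_2^{2}$, the weights forming a probability measure after normalizing by $\|\xi\|_2^2$ (change of variables $x\mapsto\gamma x$). These quantities tend to $0$ pointwise in $(x,c_2)$ by the $\cS_2$-almost invariance of $\eta^{(k)}$ and are bounded by $4$, so dominated convergence gives $\|L_k\lambda_{\cR/\cS_2}(\gamma)-\lambda_{\cR/\cS_1}(\gamma)L_k\|\to 0$ for each $\gamma$, and a further dominated-convergence argument over $\supp(\nu)$ yields $\|L_k\lambda_{\cR/\cS_2}(\nu)-\lambda_{\cR/\cS_1}(\nu)L_k\|\to 0$. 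Since an isometry that asymptotically intertwines two operators forces the norm of the source to be at most that of the target, I conclude $\|\lambda_{\cR/\cS_2}(\nu)\|\leq\|\lambda_{\cR/\cS_1}(\nu)\|$, and with the easy direction this is an equality. The measurability of the representative selection and the precise holonomy estimate controlling the error are the technically delicate points, and the place where the coamenability hypothesis is genuinely consumed.

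The final step is to pass from this global norm equality to the pointwise conclusion, and this is where \emph{everywhere} coamenability (rather than mere coamenability) is indispensable. A norm equality only controls essential suprema, so on its own it cannot preclude $\rho^{\cS_1}_{\nu}$ and $\rho^{\cS_2}_{\nu}$ differing on a positive measure set — precisely the behavior of Example \ref{example: rel amen comes from small pieces}. To rule this out, suppose the two functions differed on a positive measure set; then there are rationals $s<t$ with $A:=\{\rho^{\cS_1}_{\nu}<s\}\cap\{\rho^{\cS_2}_{\nu}>t\}$ of positive measure, and $A$ is $\cR$-invariant because the cospectral radius is $\cR$-invariant. Everywhere coamenability guarantees that $\cS_1|_{A}\leq\cS_2|_{A}$ is coamenable, so the isometry argument applied to $\cR|_{A}$ gives $\|\lambda_{\cR|_A/\cS_1|_A}(\nu)\|=\|\lambda_{\cR|_A/\cS_2|_A}(\nu)\|$, hence $\esssup_{A}\rho^{\cS_1}_{\nu}=\esssup_{A}\rho^{\cS_2}_{\nu}$; but by construction $\esssup_{A}\rho^{\cS_1}_{\nu}\leq s<t\leq\esssup_{A}\rho^{\cS_2}_{\nu}$, a contradiction. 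Therefore $\rho^{\cS_1}_{\nu}=\rho^{\cS_2}_{\nu}$ almost everywhere. I expect the main obstacle to be the second paragraph's construction — manufacturing the isometry $L_k$ and quantifying its failure to intertwine purely in terms of the $\cS_2$-almost invariance of $\eta^{(k)}$ — since the localization to invariant level sets in this last step, while essential for the pointwise statement, is otherwise formal once the norm comparison is in hand.
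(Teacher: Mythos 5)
Your lifting construction in the second paragraph is, in substance, the paper's own argument (Proposition \ref{prop: co spectral radius descent}): the paper builds exactly such a lift $\zeta\mapsto\widetilde{\zeta}^{(n)}$, with the representative $y(x,c_{2})$ realized through choice functions $\sigma_{j}$ and a transversal $\{\gamma_{i}\}\subseteq[[\cR]]$, and controls the failure of intertwining by $\|\xi^{(n)}_{y}-\xi^{(n)}_{y'}\|_{2}$ for $\cS_{2}$-equivalent representatives, via dominated convergence. One overstatement there: dominated convergence gives $\|L_{k}\lambda_{\cR/\cS_{2}}(\gamma)\xi-\lambda_{\cR/\cS_{1}}(\gamma)L_{k}\xi\|_{2}\to 0$ for each \emph{fixed} $\xi$ (strong asymptotic intertwining), not convergence in operator norm — the error is $\int g_{k}\,d(\textnormal{weight}_{\xi})$ with $g_{k}\to 0$ only pointwise a.e. and bounded by $4$, so the supremum over unit $\xi$ need not vanish. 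This is harmless, since $\|T\xi\|=\|L_{k}T\xi\|\leq\|SL_{k}\xi\|+o(1)\leq\|S\|\|\xi\|+o(1)$ already yields the norm inequality, but you should state the strong version.

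The genuine gap is in your final localization. You assert that $A=\{\rho^{\cS_{1}}_{\nu}<s\}\cap\{\rho^{\cS_{2}}_{\nu}>t\}$ is $\cR$-invariant ``because the cospectral radius is $\cR$-invariant.'' It is not: $\rho^{\cS_{i}}_{\nu}$ is almost surely invariant only under $\cS_{i}$ (and the one-sided partial normalizer), and Example \ref{example: rel amen comes from small pieces} gives an \emph{ergodic} $\cR$ with $\rho^{\cS}_{\nu}$ non-constant, so $\cR$-invariance fails badly in general. This breaks the step ``apply the isometry argument to $\cR|_{A}$'': since $A$ is only $\cS_{1}$-invariant, $\nu\in\Prob([\cR])$ induces no walk on the $\cR|_{A}$-classes, and $\esssup_{A}\rho^{\cS_{i}}_{\nu}$ refers to the \emph{ambient} walk on $[x]_{\cR}$, not to any cospectral radius of $\cR|_{A}$; so even a norm equality for the restricted relation would not control the quantities you need. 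The correct repair — and what the paper does — is to keep $\cR$ and $\nu$ fixed and localize on the target side: for $\cS_{1}$-invariant $E$ one has $\|\rho^{\cS_{i}}_{\nu}1_{E}\|_{\infty}=\rho_{E}(\cR/\cS_{i},\nu)=\|R(1_{E})\lambda_{\cR/\cS_{i}}(\nu)\|$ by (\ref{eqn: co spectral radius is norm restricted to subset background}) and Lemma \ref{lem: restricted co-spec radi is just restricted op}, where $R(1_{E})$ multiplies by the indicator of $E$ evaluated along the coset; one then arranges $R(1_{E})\xi^{(n)}=\xi^{(n)}$ for the almost invariant vectors (Proposition \ref{prop: passing rel amen between subsets}), so the lift preserves the range of $R(1_{E})$, and runs the intertwining argument on that invariant subspace. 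Since both $\rho^{\cS_{i}}_{\nu}$ are a.e.\ $\cS_{1}$-invariant functions, equality of $\|\rho^{\cS_{1}}_{\nu}1_{F}\|_{\infty}$ and $\|\rho^{\cS_{2}}_{\nu}1_{F}\|_{\infty}$ over all $\cS_{1}$-invariant $F$ (which everywhere coamenability supplies via Corollary \ref{cor: equality of co-spectral radii}) then gives the pointwise conclusion, by essentially your level-set contradiction applied to $\cS_{1}$-invariant level sets.
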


 The proof of Theorem \ref{thm: cs equality intro}  proceeds via equating the essential supremum of $\rho^{\cS_{i}}_{\nu}$ over a measurable subset $E\subseteq X$ with the norm of the Markov operator $\lambda_{\cR/\cS}(\nu)$ restricted to a certain invariant subspace of $L^{2}(\cR/\cS)$. This reduces  Theorem \ref{thm: cs equality intro} to an inequality of operator norms. From here we can follow the proof in the group setting by lifting almost invariant vectors in one coset space to another. We refer the reader to Proposition \ref{prop: co spectral radius descent} for precise details. Thus our methods show the utility of connecting cospectral radius to operator norms and applying Hilbert space methods. Given the connections between orbit equivalence relations, percolation theory, and unimodular random graphs (see, e.g. \cite{BHAExtension, SolidErg, Gaboriau-Lyons, LyonsSchramm}) particularly in relations to cospectral radius (see \cite[Proposition 2.1 and Section 5]{AFH} and also \cite{AFHGrowth}), we expect these Hilbert space methods to have interesting applications to the edge/vertex percolations on graphs (particularly Cayley graphs or unimodular random graphs).

The case where $\cS_{1}$ is the trivial subrelation is shown in \cite[Proposition 4.1]{AFH}.
In that case,  $\cS_{2}$ is amenable. The proof given in \cite{AFH} uses that amenable relations are hyperfinite. We cannot assert that $\cS_{1}$ being coamenable in $\cS_{2}$ implies that $\cS_{2}$ is ``hyperfinite" over $\cS_{1}$. 
As we alluded to before, we do not know if coamenability of $\cR/\cS$ implies some version of ``cohyperfiniteness" of $\cS$ inside of $\cR$. Indeed, the fact that coamenability does not coincide with everywhere coamenability indicates that such an implication would not hold. We leave it as an open problem to see if this implication holds with coamenability replaced with everywhere coamenability. 

\begin{problem}
Suppose that $\cS\leq \cR$ are discrete, probability measure-preserving equivalence relations over a standard probability space $(X,\mu)$. If $\cS$ is everywhere coamenable in $\cR$, does it follow that $\cR$ is ``hyperfinite over $\cS$" in some suitable sense?   
\end{problem}

We suspect that the answer is ``no". Indeed, as previously mentioned relative amenability has been introduced and extensively studied in the von Neumann algebras situation in \cite{PopaCorr}. No equivalence in the realm of von Neumann algebras has been established between relative amenability and some kind of ``relative hyperfiniteness". Given the tight connections between results and methods in the von Neumann algebras and measured group theory contexts (see e.g. the similarities between \cite{Connes} and \cite{CFW}), we expect similar difficulties to arise for equivalence relations. 


In the group case, it is direct to establish that if $\Lambda\leq \Gamma$ are countable groups, and $\Gamma\actson (X,\mu)$ is an essentially free, probability measure-preserving action, then $\cR_{\Lambda,X}$ is coamenable in $\cR_{\Gamma,X}$ if and only if $\Lambda$ is coamenable in $\Gamma$. We in fact establish a stronger connection to \emph{everywhere coamenability.}

\begin{thm}\label{thm: into rel amen of groups gives rel amen of relsn}
Let $(X,\mu)$ be a standard probability space, $\Lambda\leq \Gamma$ countable groups, and $\Gamma\actson (X,\mu)$ a probability measure-preserving action. Then: 
\begin{enumerate}[(i)]
    \item if $\Lambda$ is coamenable in $\Gamma$, then $\cR_{\Lambda,X}$ is everywhere coamenable in $\cR_{\Gamma,X}$,
    \item If $\Gamma\actson (X,\mu)$ is essentially free and $\Lambda$ is not coamenable in $\Gamma$, then for every measurable $E\subseteq X$ with $\mu(E)>0$ we have that $\cR_{\Lambda,X}|_{E}$ is not coamenable in $\cR_{\Gamma,X}|_{E}$.
\end{enumerate}

\end{thm}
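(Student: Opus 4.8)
The engine I would build first is an explicit description of $L^2(\cR_{\Gamma,X}/\cR_{\Lambda,X})$ and of $\lambda_{\cR/\cS}$ in group-theoretic terms (here $\Lambda\le\Gamma$, so that $\cR_{\Lambda,X}\le\cR_{\Gamma,X}$). Fix a $\Gamma$-invariant Borel $Y\subseteq X$ with $\mu(Y)>0$ and write $\mu_Y=\mu|_Y/\mu(Y)$. Since the action is essentially free, for a.e.\ $x\in Y$ the orbit map $\gamma\mapsto\gamma x$ is a bijection $\Gamma\to[x]_{\cR_{\Gamma,Y}}$ taking the partition into $\cR_{\Lambda,Y}$-classes to the partition of $\Gamma$ into left cosets of $\Lambda$, yielding a bijection $\phi_x\colon[x]_{\cR_{\Gamma,Y}}/\cR_{\Lambda,Y}\to\Lambda\backslash\Gamma$ with the cocycle relation $\phi_{g^{-1}x}(c)=\phi_x(c)g$. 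These assemble into a unitary
\[\Psi\colon L^2(Y,\mu_Y)\otimes\ell^2(\Lambda\backslash\Gamma)\to L^2(\cR_{\Gamma,Y}/\cR_{\Lambda,Y}),\qquad \Psi(f\otimes\delta_{\Lambda\gamma})(x,c)=f(x)\,\mathbf 1[\phi_x(c)=\Lambda\gamma],\]
which a direct computation shows intertwines $\kappa\otimes\rho$ with $\lambda_{\cR_{\Gamma,Y}/\cR_{\Lambda,Y}}$ on $\Gamma$, where $\kappa$ is the Koopman representation on $L^2(Y,\mu_Y)$ and $\rho$ the quasi-regular representation on $\ell^2(\Lambda\backslash\Gamma)$. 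Because the constants span a $\Gamma$-invariant line, $\rho$ is a subrepresentation of $\kappa\otimes\rho$; hence if $\Lambda$ is coamenable in $\Gamma$ then $\lambda_{\cR_{\Gamma,Y}/\cR_{\Lambda,Y}}$ has almost invariant vectors, which via Theorem~\ref{thm: main theorem intro} (in its general, not necessarily ergodic, formulation) already gives the global case $Y=X$ of (i).

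For (ii) I would argue the contrapositive: if $\cR_{\Lambda,X}|_E$ is coamenable in $\cR_{\Gamma,X}|_E$ for some $\mu(E)>0$, then $\Lambda$ is coamenable in $\Gamma$. First I pass to the $\cR_{\Gamma,X}$-saturation $Y$ of $E$; transporting almost invariant vectors along countably many elements of $[\cR_{\Gamma,X}]$ carrying pieces of $Y$ into $E$ shows $\cR_{\Lambda,Y}$ is coamenable in $\cR_{\Gamma,Y}$ (enlarging to the central support is the harmless direction). Since $Y$ is $\Gamma$-invariant I apply the identification of the previous paragraph. Coamenability supplies, via Theorem~\ref{thm: main theorem intro}(\ref{item:almost invariant fiberwise norm 1 intro 2}), vectors $v_n\in L^2(Y,\mu_Y)\otimes\ell^2(\Lambda\backslash\Gamma)$ with fiberwise norm $1$ and $\|(\kappa(g)\otimes\rho(g))v_n-v_n\|\to0$ for all $g\in\Gamma$. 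The fiberwise-norm map $V(w)_{\Lambda\gamma}=\|w_{\Lambda\gamma}\|_{L^2(Y)}$ is $1$-Lipschitz and, since $\kappa(g)$ acts fiberwise isometrically while $\rho(g)$ only permutes fibers, intertwines $\kappa\otimes\rho$ with $\rho$. Thus $\eta_n:=V(v_n)$ are unit vectors with $\|\rho(g)\eta_n-\eta_n\|\le\|(\kappa(g)\otimes\rho(g))v_n-v_n\|\to0$, so $\rho$ has almost invariant vectors and $\Lambda$ is coamenable in $\Gamma$.

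The remaining, harder, content is the everywhere assertion in (i). Fixing arbitrary $E$ with $\mu(E)>0$, I would verify Theorem~\ref{thm: main theorem intro}(\ref{item:almost invariant fiberwise norm 1 intro}) for $\cR_{\Lambda,X}|_E\le\cR_{\Gamma,X}|_E$. Since $\cR_{\Lambda,X}$ is an equivalence relation, each coset $c$ meets $E$ in a single $\cR_{\Lambda,X}|_E$-class, so the fibers of the restricted quotient over $x\in E$ are the \emph{surviving cosets} $C_x=\{c:c\cap E\neq\emptyset\}$, identified via $\phi_x$ with a subset of $\Lambda\backslash\Gamma$, and $C_x=C_y$ whenever $x,y$ lie in a common orbit. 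Starting from almost invariant unit vectors $\eta_n\in\ell^2(\Lambda\backslash\Gamma)$ for $\rho$, I set $\xi^{(n)}_x=\|\eta_n\mathbf 1_{C_x}\|_2^{-1}\,(\eta_n\mathbf 1_{C_x})\circ\phi_x$, which has fiberwise norm $1$. The cocycle relation $\phi_{hx}(c)=\phi_x(c)h^{-1}$ identifies $\xi^{(n)}_y$ (for $y=hx\in E$) with the renormalized restriction of $\rho(h)\eta_n$ to $C_x$; since restriction to $C_x$ is norm decreasing, $\|\xi^{(n)}_x-\xi^{(n)}_y\|_2$ is then controlled by $\|\rho(h)\eta_n-\eta_n\|\to0$, provided the normalizations stay bounded below.

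The main obstacle is exactly this last proviso: one must show $\liminf_n\|\eta_n\mathbf 1_{C_x}\|_2>0$ for a.e.\ $x\in E$. Now $\Lambda\gamma\in C_x$ iff $\gamma x$ lies in the $\Lambda$-saturation $[E]_\Lambda$, so with $\eta_n$ chosen to be normalized indicators of a (tempered) Følner sequence for the amenable quotient $\Lambda\backslash\Gamma$, the quantity $\|\eta_n\mathbf 1_{C_x}\|_2^2$ is a Følner average of $\Lambda\gamma\mapsto\mathbf 1_{[E]_\Lambda}(\gamma x)$; a pointwise ergodic theorem along this sequence should identify its limit with a conditional measure of $[E]_\Lambda$, which is positive because $E\subseteq[E]_\Lambda$ has positive measure. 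Establishing this density bound is the technical heart of (i). I would stress that the soft alternative—compressing an $L(\cR)$-central hypertrace by $\mathbf 1_E$—does \emph{not} succeed, since $\mathbf 1_E\big(L^2L(\cR)\otimes_{L(\cS)}L^2L(\cR)\big)\mathbf 1_E$ is strictly larger than the basic-construction bimodule of the corner, so weak containment of the trivial bimodule need not descend. This is precisely the representation-theoretic reflection of the counterexample preceding the theorem showing coamenability does not restrict in general; the point of (i) is that the homogeneity of the group-induced construction makes every coset visible with positive density, which is what rescues the restriction.
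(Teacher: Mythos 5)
Your treatment of part (ii) is correct and takes a genuinely different route from the paper. You pass to the $\cR_{\Gamma,X}$-saturation of $E$ (this ascent is exactly Proposition \ref{prop: passing rel amen between subsets}(\ref{item: rel amen ascent via saturation})), identify $L^{2}(\cR/\cS)$ over the invariant set with $L^{2}(Y,\mu_{Y})\otimes \ell^{2}(\Lambda\backslash\Gamma)$ carrying $\kappa\otimes\rho$, and then push almost invariant vectors forward through the fiberwise-norm map $V(w)_{\Lambda\gamma}=\|w_{\Lambda\gamma}\|_{L^{2}(Y)}$, which is norm-preserving, $1$-Lipschitz, and intertwines $\kappa\otimes\rho$ with $\rho$. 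This directly produces almost invariant vectors for the quasi-regular representation and is arguably cleaner than the paper's argument, which instead computes the pointwise cospectral radius: non-coamenability gives a finite symmetric $F$ with $\|\,|F|^{-1}\sum_{g\in F}\lambda_{\Gamma/\Lambda}(g)\|<1$, freeness gives $\rho^{\cR_{\Lambda,X}}_{u_{F}}=\rho<1$ a.e., and Proposition \ref{prop: co spectral radius descent} plus Proposition \ref{prop: passing rel amen between subsets} rule out coamenability on every positive-measure piece. Your route avoids the cospectral-radius machinery entirely; the paper's buys, in exchange, the quantitative statement that the obstruction is visible in a single Markov operator norm. Your global case of (i) is essentially the paper's construction in disguise: the paper's vectors $\xi^{(n)}(x,[gx]_{\cS})=|F_{n}|^{-1/2}1_{F_{n}}(g^{-1}\Lambda)$ are precisely your $\Psi(1\otimes |F_{n}|^{-1/2}1_{F_{n}})$.

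However, for the everywhere assertion in (i) --- which is the main content of the theorem --- your proposal has a genuine gap, and you have in fact flagged it yourself: the pointwise lower bound $\liminf_{n}\|\eta_{n}1_{C_{x}}\|_{2}>0$ for a.e.\ $x\in E$. Your suggested remedy, ``a pointwise ergodic theorem along this sequence,'' is not available: pointwise ergodic theorems along F\o lner sequences require temperedness even for amenable \emph{group} actions (Lindenstrauss), and here the averages run over F\o lner sets in the coset space $\Lambda\backslash\Gamma$, which carries no group structure and for which no general pointwise theorem is known; moreover an arbitrary F\o lner sequence witnessing coamenability can be entirely untempered. The weak mean ergodic statement that \emph{is} provable (Proposition \ref{prop: weak mean ergodic theorem}) gives only weak convergence of $|F_{n}|^{-1}\sum_{g\Lambda\in F_{n}}\pi(g)\xi$ to $P_{\Fix_{\Gamma}}\xi$, and weak convergence does not yield a.e.\ convergence even along subsequences, so your density bound cannot be extracted from it.

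The missing idea is that the pointwise bound is never needed, because of two reductions you skipped. First, by Theorem \ref{thm: everywhere rel man TFAE thm} (resting on Proposition \ref{prop: passing rel amen between subsets}(\ref{item: rel amen descent via saturation})), everywhere coamenability only has to be checked on $\cS$-invariant, i.e.\ $\Lambda$-invariant, sets $E$; for such $E$ one has $C_{x}$ determined by $\{\Lambda\gamma:\gamma x\in E\}$ with $1_{E}\in\Fix_{\Lambda}(L^{2})$, exactly the hypothesis of Proposition \ref{prop: weak mean ergodic theorem}. Second, for fixed $E$ one does not need fiberwise-norm-one vectors: by Theorem \ref{thm: big TFAE thm} it suffices that $\lambda_{\cR|_{E}/\cS|_{E}}$ has plain almost invariant vectors. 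So one takes the \emph{unrenormalized} restrictions $\widetilde{\zeta}^{(n)}(x,c)=\xi^{(n)}(x,\iota_{x}(c))$ of the global F\o lner vectors, which satisfy $\|\lambda_{\cR|_{E}/\cS|_{E}}(\gamma)\widetilde{\zeta}^{(n)}-\widetilde{\zeta}^{(n)}\|_{2}\to 0$ automatically, and only needs the \emph{integrated} bound $\limsup_{n}\|\widetilde{\zeta}^{(n)}\|_{2}>0$. That is a single weak pairing,
\[\|\widetilde{\zeta}^{(n)}\|_{2}^{2}=\Bigl\langle \frac{1}{|F_{n}|}\sum_{g\Lambda\in F_{n}}1_{gE},\,1_{E}\Bigr\rangle \to \|\E_{\Fix_{\Gamma}(L^{\infty}(X))}(1_{E})\|_{2}^{2}\geq \mu(E)^{2}>0,\]
which the weak mean ergodic theorem delivers. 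Replacing your pointwise density claim by this invariant-set reduction plus the weak pairing closes the gap; as written, your argument for the everywhere statement is incomplete.
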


Note that Theorem \ref{thm: into rel amen of groups gives rel amen of relsn} says that coamenability for inclusion of groups is invariant under \emph{measure equivalence of inclusions}. Here given inclusions $\Lambda_{i}\leq \Gamma_{i}$, $i=1,2$ of countable groups we say that $\Lambda_{1}\leq \Gamma_{1}$ is measure equivalent to $\Lambda_{2}\leq \Gamma_{2}$ if there are free, probability measure-preserving actions $\Gamma_{i}\actson (X_{i},\mu_{i})$, $i=1,2$ on standard probability spaces, and measurable sets $E_{i}\subseteq X_{i}$ with $\mu_{i}(E_{i})>0$ and a measure-preserving isomorphism $\Theta\colon (E_{1},\frac{\mu_{1}|_{E_{1}}}{\mu_{1}(E_{1})})\to (E_{2},\frac{\mu_{1}|_{E_{2}}}{\mu_{2}(E_{2})})$ so that $\Theta(\Gamma_{1}x\cap E_{1})=\Gamma_{2}\Theta(x)\cap E_{2}$ and $\Theta(\Lambda_{1}x \cap E_{1})=\Lambda_{2}x \cap E_{2}$ for almost every $x\in E_{1}$. A primary focus of measured group theory is the understanding of measure equivalence of groups (namely the case $\Lambda_{i}=\{e\}$ in the above). However, measure equivalence of inclusions implicitly appears in proofs of various results in the study of measure equivalence such as orbit equivalence being preserved by certain amalgamated free products \cite{GaboriauMEFree} and also by graph products \cite[Proposition 4.2]{HHGraphOE},\cite{demir2024measurableimbeddingsfreeproducts}.

\subsection{Remark}
What we call ``coamenability of von Neumann algebras" is sometimes called ``relative amenability" in the literature. On the other hand, there is an existing notion of ``relative amenability" for inclusions of groups \cite{RelAmenGroups}, which is different from coamenability of groups as used in this article. We will stick to ``coamenability" throughout, but the reader should double check conventions when consulting the literature on this topic.  Additionally, some references use ``countable equivalence relation" for what we will call a ``discrete equivalence relation".

\subsection{Organization of the paper} In Section \ref{sec:preliminaries}, we give the main preliminaries of the paper including the core background definitions. We also give the construction of the Borel structure and appropriate measure on $\cR/\cS$ here,  and state background on cospectral radius from \cite{AFH}. In this section, we also state some results in the literature connecting means, almost invariant vectors, and F\o lner sets for measure-preserving actions on $\sigma$-finite spaces. Stating these results in the appropriate generality will streamline much of the rest of the paper. In Section \ref{sec: relative amenability relns}, we prove the equivalence 
 of Theorem \ref{thm: main theorem intro} (\ref{item: almost invariant vectors intro})-(\ref{item:almost invariant fiberwise norm 1 intro 2}), and several other equivalence notions of coamenability.  In Section \ref{sec: pointwise cospectral radius}, we investigate everywhere coamenability and prove Theorems \ref{thm: special case cs equal intro} and \ref{thm: cs equality intro}. As mentioned above, in general in order to prove an inclusion of relations is coamenable, it is necessary to prove that the cospectral radius has essential supremum $1$ for every measure which generates the bigger relation. In Section \ref{sec:cospectral radius}, we show that this can be reduced to checking that the cospectral radius has essential supremum $1$ for some measure which generates the bigger relation, under an assumption of spectral gap of the action on the base space.  We use this to give an approach to understanding cospectral radius of i.i.d edge percolation when there are infinitely many infinite clusters.

In Section \ref{sec: vNa stuff}, we establish the equivalence between coamenability of von Neumann algebras, and the other items in Theorem \ref{thm: main theorem intro}.
The notion of coamenability for von Neumann algebras takes as input the \emph{Jones basic construction} (introduced in \cite{Jones83}) associated to an inclusion of von Neumann algebras . For the case of von Neumann algebras coming from relations, we can show that the basic construction is the same as the von Neumann algebra of the measure-preserving relation $\widehat{\cS}$ on the $\sigma$-finite space $(\cR/\cS,\mu_{\cR/\cS})$ given by $(x,c)\thicksim_{\widehat{\cS}}(y,c')$ if and only if $(x,y)\in \cR$ and $c=c'$. This is a result of Feldman-Sutherland-Zimmer (see \cite[Section 1]{FSZ}). Since we are unable to find an explicit proof in the literature, we give a proof in Section \ref{sec: basic construction}, which also contains a detailed discussion on what the von Neumann algebra of an equivalence relation is. 

In Section \ref{sec; general prop}, we investigate general properties of coamenability, including a proof of Theorem \ref{thm: into rel amen of groups gives rel amen of relsn}. Finally, in Section \ref{sec: ergodic decomp} we investigate how coamenability/everywhere coamenability behaves under the ergodic decomposition, showing that coamenability/everywhere coamenability are equivalent to almost every ergodic fiber being coamenable/everywhere coamenable.

\subsection*{Acknowledgements}
I thank Miklos Ab\'{e}rt, Miko\l aj Fra\c{c}zyk, Srivatsav Kunnawalkam Elayavalli, Felipe Flores, James Harbour, Samuel Mellick, and Robin Tucker-Drob for insightful conversations related to this topic. I thank Anush Tserunyan for numerous comments on an earlier draft of this paper. I thank the anonymous referee for their numerous comments, which greatly improved the readability of the paper. Part of this work was influenced by discussions at the ``Probability, Dynamics and the Geometry of Groups" conference at the University of M\"{u}nster in September 2024. I thank the organizers of this conference for hosting this program. 
The author acknowledges support from the NSF CAREER award DMS-21447.

\tableofcontents

\section{Preliminaries}\label{sec:preliminaries}

\subsection{Background definitions and notational conventions}
A \emph{standard measure space} is a  pair $(X,\mu)$ where $X$ is a standard Borel space, and $\mu$ is a $\sigma$-finite Borel measure on $X$. It is called a \emph{standard probability space} if $\mu$ is a probability measure. 
We say that $E\subseteq X$ is \emph{$\mu$-measurable} if it is in the domain of the completion of $\mu$. If $\mu$ is clear from the context, we will often simply say ``measurable".
An \emph{equivalence relation over $(X,\mu)$} is a Borel subset $\cR\subseteq X\times X$ so that the relation $\thicksim$ on $X$ given by $x\thicksim y$ if $(x,y)\in \cR$ is an equivalence relation. For $x\in X$, we let $[x]_{\cR}=\{y\in X:(x,y)\in \cR\}$. We say that $\cR$ is \emph{discrete} if for almost every $x\in X$ we have that $[x]_{\cR}$ is countable. If $\cR$ is discrete, we may turn $\cR$ into a $\sigma$-finite measure space by endowing $\cR$ with the Borel measure
\[\mu_{\cR}(E)=\int_{X}|\{y:(x,y)\in E\}|\,d\mu(x)\mbox{ for all Borel $E\subseteq \cR$.}\]
We will continue to use $\mu_{\cR}$ for the completion of $\mu_{\cR}$. If $\cR$ is discrete, we say that it is \emph{measure-preserving} if the map $\cR\to \cR$ given by $(x,y)\mapsto (y,x)$ preserves $\mu_{\cR}$. We say that $\cR$ is \emph{probability measure-preserving} if $\cR$ is  measure-preserving and $\mu$ is a probability measure.  If $(X,\mu)$ is a probability space, and $E\subseteq X$ is Borel with $\mu(E)>0$ we let
\[\cR|_{E}=\cR\cap (E\times E).\]
If $E$ has positive measure, then $\cR|_{E}$ is a measure-preserving relation over the probability space $(E,\frac{\mu(E\cap \cdot)}{\mu(E)})$.  If $E\subseteq X$, we use $\cR E$ for its \emph{saturation}, given by \[\cR E=\{y\in X:\textnormal{ there exists } x\in E \textnormal{ with } (x,y)\in \cR\}.\]
Using \cite[Theorem 1]{FelMooreI}, we see that if $E$ is Borel (resp. measurable) then $\cR E$ is Borel (resp. measurable).
We use $\cS\leq \cR$ to mean that $\cS$ is a Borel subset of $\cR$ which is also a Borel equivalence relation over $(X,\mu)$. For a group $\Gamma$, and $S\subseteq \Gamma$, we use $\ip{S}$ for the subgroup of $\Gamma$ generated by  $S$.
If $\Gamma$ is a countable group, and $\Gamma\actson (X,\mu)$ is a measure-preserving action, with $\Gamma\actson X$ Borel, then $\cR_{\Gamma,X}=\{(x,gx):g\in \Gamma\}$ is a discrete, measure-preserving equivalence relation. We let $[\cR]$ be the group of all bimeasurable bijections $\phi\colon X\to X$ so that $\phi(x)\in [x]_{\cR}$ for almost every $x\in X$. We identify two elements of $[\cR]$ if they agree almost everywhere. We have a natural metric $d$ on $[\cR]$ given by
\[d(\phi,\psi)=\mu(\{x\in X:\phi(x)\ne \psi(x)\}).\]
This is a complete, separable, translation-invariant metric on $[\cR]$ and this turns $[\cR]$ into a \emph{Polish group}. We let $[[\cR]]$ be all bimeasurable bijections $\gamma\colon A\to B$ such that:
\begin{itemize}
\item $A,B\subseteq X$ are measurable,
\item $\gamma(x)\in [x]_{\cR}$ for almost every $x\in A$.
\end{itemize}
We usually use $\dom(\gamma),\ran(\gamma)$ for $A,B$. We typically identify two elements of $[[\cR]]$ if they agree up to sets of measure zero. However just as with the theory of $L^{p}$-spaces we will often blur the lines between a bimeasurable bijection as above and its equivalence class in $[[\cR]]$. Thus, for example, we will say that $\gamma\in [[\cR]]$ is \emph{Borel} to mean a bimeasurable bijection as above where $A,B$ are Borel subsets of $X$ and $\gamma$ is a Borel map. 
For $\gamma\in [[\cR]]$, we let $\gamma^{-1}$ be the compositional inverse of $\gamma$. 
If $\gamma_{1},\gamma_{2}\in [[\cR]]$ we define $\gamma_{1}\circ \gamma_{2}$ by saying that $\dom(\gamma_{1}\circ \gamma_{2})=\dom(\gamma_{2})\cap \gamma_{2}^{-1}(\dom(\gamma_{1}))$, and $(\gamma_{1}\circ \gamma_{2})(x)=\gamma_{1}(\gamma_{2}(x))$. 
 For $\gamma\in [[\cR]]$,  and $f\colon \cR\to \C$ measurable, we let $\lambda(\gamma)(f)$ be the measurable function defined by
\[(\lambda(\gamma)(f))(x)=1_{\ran(\gamma)}(x)f(\gamma^{-1}(x)).\]

We use $\Prob([\cR])$ for the Borel probability measures on $[\cR]$. Since $[\cR]$ is a Polish group, the space $\Prob([\cR])$ can be made into a semigroup under convolution: so if $\nu_{1},\nu_{2}\in \Prob([\cR])$, then $\nu_{1}*\nu_{2}\in \Prob([\cR])$ is defined by
\[(\nu_{1}*\nu_{2})(\Omega)=\nu_{1}\otimes \nu_{2}(\{(\phi,\psi):\phi\psi\in \Omega\}).\]
Given a countable $\Gamma\leq [\cR]$, we say that $\Gamma$ \emph{generates} $\cR$ if $\Gamma x=[x]_{\cR}$ for almost every $x\in X$. Given a countably supported $\nu\in \Prob([\cR])$, we say that $\nu$ \emph{generates $\cR$} if $\ip{\supp(\nu)}$ generates $\cR$, where $\supp(\nu)=\{\phi \in [\cR]:\nu(\{\phi\})\ne 0\}$. We say that $\nu\in \Prob([\cR])$ is \emph{symmetric} if the map $\phi\mapsto \phi^{-1}$ preserves $\nu$.

For a Banach space $V$, we use $V^{*}$ for the \emph{continuous dual of $V$}, namely all continuous linear functionals $\phi\colon V\to \C$. For $V,W$ Banach spaces, we let $B(V,W)$ be all continuous linear operators $V\to W$ and use $B(V)$ if $V=W$. We endow $B(V,W)$ with the operator norm (note this includes the case of $V^{*}=B(V,\C)$).
For a $\sigma$-finite measure space $(Y,\nu)$, and $p\in [1,+\infty]$ we let $L^{p}(Y,\nu)_{+}$ be the functions $f\in L^{p}(Y,\nu)$ so that $f$ is almost everywhere nonnegative valued and use $f\geq 0$ as shorthand for saying that $f$ is almost everywhere nonnegative valued.
We say that $m\in L^{\infty}(Y,\nu)^{*}$ is a \emph{mean} on $X$ if $f\geq 0$ implies that $m(f)\geq 0$ and if $m(1)=1$. If $\Gamma$ is a group of measure-class preserving transformations of $(Y,\nu)$, we say that $m$ is \emph{$\Gamma$-invariant} if $m(f\circ \gamma)=m(f)$ for all $f\in L^{\infty}(Y,\nu)$, $\gamma\in \Gamma$. 

\subsection{Basic construction for equivalence relations} \label{sec:basic construction equiv reln}

Let $\cS\leq \cR$ be discrete, probability measure-preserving equivalence relations on a standard probability space $(X,\mu)$. Since $\cS\leq \cR$, for $x\in X$ we have that $\cS$ restricts to an equivalence relation on $[x]_{\cR}$. We use $[x]_{\cR}/\cS$ for the space of $\cS$-equivalence classes in $[x]_{\cR}$. We let $\cR/\cS=\{(x,c):x\in X,c\in [x]_{\cR}/\cS\}$. We  declare that $E\subseteq \cR/\cS$ is Borel if and only if for every Borel $\gamma\in [\cR]$, we have that $E_{\gamma}:=\{x\in X:(x,[\gamma(x)]_{\cS})\in E\}$ is Borel. Note that the Borel subsets of $\cR/\cS$ form a $\sigma$-algebra. If $Y$ is standard Borel we say that $f\colon \cR/\cS\to Y$ is \emph{Borel} if the inverse image of a Borel subset of $Y$ is Borel. The following proposition justifies this terminology by showing that $\cR/\cS$ equipped with this $\sigma$-algebra is a standard Borel space.
\begin{prop}\label{prop: standard}
Let $(X,\mu)$ be a standard probability space, and let $\cS\leq \cR$ be discrete, probability measure-preserving equivalence relations on $(X,\mu)$. 
\begin{enumerate}[(i)]
    \item If $\Gamma\leq [\cR]$  is countable and $\cR=\cR_{\Gamma,X}$, then $E\subseteq \cR/\cS$ is Borel if and only if for all $\gamma\in \Gamma$, we have that $\{x\in X:(x,[\gamma (x)]_{\cS})\in E\}$ is Borel. More generally, if $Y$ is standard Borel and $f\colon \cR/\cS\to Y$, then $f$ is Borel if and only if for every $\gamma\in \Gamma$ we have that $f_{\gamma}(x)=f(x,[\gamma x]_{\cS})$ is Borel.
    \label{item: Borel sections}
    \item $\cR/\cS$ with the above collection of Borel sets is a standard Borel space.
    \label{item: standard Borel space}
    \item If $f\colon \cR/\cS\to [0,+\infty]$ is Borel, then $\widetilde{f}\colon X\to [0,+\infty]$ given by $\widetilde{f}(x)=\sum_{c\in [x]_{\cR}/\cS}f(x,c)$ is Borel. \label{item: Borel integrand}
\end{enumerate}

\end{prop}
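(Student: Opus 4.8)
The plan is to fix, once and for all, a countable group $\Gamma\leq[\cR]$ acting by honest Borel automorphisms with $[x]_{\cR}=\Gamma x$ for \emph{every} $x\in X$ (possible by the Feldman--Moore theorem \cite{FelMooreI}), and to represent all elements of $[\cR]$ that appear by honest Borel bijections, adjusted on a null set so that $\phi(x)\in[x]_{\cR}$ everywhere. Part (\ref{item: Borel sections}) is the workhorse. The ``only if'' direction is immediate since $\Gamma\subseteq[\cR]$. For ``if'', given an arbitrary $\phi\in[\cR]$ I would decompose $X$ along $\Gamma$: the sets $A_{\gamma}=\{x\in X:\phi(x)=\gamma x\}$ are Borel and, because $\phi(x)\in[x]_{\cR}=\Gamma x$ for every $x$, they cover $X$; disjointifying them in a fixed enumeration of $\Gamma$ yields a Borel partition $X=\bigsqcup_{\gamma}A_{\gamma}$. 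On $A_{\gamma}$ one has $[\phi(x)]_{\cS}=[\gamma x]_{\cS}$, so $E_{\phi}=\bigsqcup_{\gamma}(A_{\gamma}\cap E_{\gamma})$, a countable union of Borel sets. The function version is identical: $f_{\phi}=\sum_{\gamma}1_{A_{\gamma}}f_{\gamma}$, which is Borel whenever every $f_{\gamma}$ is.

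For part (\ref{item: standard Borel space}) I would produce an explicit Borel isomorphism onto a Borel subset of a standard Borel space. Define
\[
\Phi\colon \cR/\cS\to X\times 2^{\Gamma},\qquad \Phi(x,c)=\bigl(x,\{\gamma\in\Gamma:\gamma x\in c\}\bigr).
\]
Since $c\subseteq[x]_{\cR}=\Gamma x$, the class $c$ is recovered from the set $\{\gamma:\gamma x\in c\}$, so $\Phi$ is injective. Its image is
\[
W=\{(x,A): A \text{ is a single class of the relation } \gamma\approx_{x}\gamma'\Longleftrightarrow(\gamma x,\gamma' x)\in\cS\},
\]
which is cut out inside $X\times 2^{\Gamma}$ by $A\neq\emptyset$ together with the (countably many, over $\gamma,\gamma'\in\Gamma$) conditions $\gamma,\gamma'\in A\Rightarrow(\gamma x,\gamma' x)\in\cS$ and $\gamma\in A,\gamma'\notin A\Rightarrow(\gamma x,\gamma' x)\notin\cS$; each is Borel because $x\mapsto(\gamma x,\gamma' x)$ is Borel and $\cS$ is Borel, so $W$ is Borel. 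I would then verify that $\Phi$ is a Borel isomorphism onto $W$: that $\Phi$ is Borel follows from part (\ref{item: Borel sections}), since the $X$-coordinate is obviously Borel and the set $\{(x,c):\gamma_{0}x\in c\}$ has $\gamma$-section $\{x:(\gamma_{0}x,\gamma x)\in\cS\}$; conversely, for any full-group-Borel $E$ one computes
\[
\Phi(E)=W\cap\bigcup_{\gamma\in\Gamma}\bigl(E_{\gamma}\times\{A:\gamma\in A\}\bigr),
\]
which is Borel because each $E_{\gamma}$ is Borel. Hence the defining $\sigma$-algebra of $\cR/\cS$ is \emph{exactly} the pullback of $\mathcal{B}(W)$, and $\cR/\cS$ is standard Borel.

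Finally, part (\ref{item: Borel integrand}) follows from part (\ref{item: Borel sections}) by a Borel choice of one representative per $\cS$-class. Keeping the enumeration of $\Gamma$, set $D_{\gamma}=\{x:(\gamma' x,\gamma x)\notin\cS \text{ for all } \gamma'\prec\gamma\}$, a Borel set, so that for each $x$ the indices $\gamma$ with $x\in D_{\gamma}$ pick out exactly one $\gamma$ in each $\cS$-class of $[x]_{\cR}$. Then
\[
\widetilde{f}(x)=\sum_{\gamma\in\Gamma}1_{D_{\gamma}}(x)\,f(x,[\gamma x]_{\cS})=\sum_{\gamma\in\Gamma}1_{D_{\gamma}}(x)\,f_{\gamma}(x),
\]
a countable sum of nonnegative Borel functions, hence Borel.

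The main obstacle I anticipate is part (\ref{item: standard Borel space}): identifying the image $W$ and verifying that $\Phi$ transports the full-group $\sigma$-algebra exactly onto $\mathcal{B}(W)$ in \emph{both} directions, rather than merely being a Borel bijection --- one cannot invoke Lusin--Souslin to supply the inverse for free, since the domain is not yet known to be standard. A secondary technical point, running through all three parts, is the a.e.\ versus everywhere distinction for elements of $[\cR]$: the clean ``honest Borel, for every $x$'' statements require replacing the given $\Gamma$ by a Feldman--Moore generating action and adjusting representatives on null sets, and then checking that the $\sigma$-algebra this produces on $\cR/\cS$ coincides with the one defined via the full group.
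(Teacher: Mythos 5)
Your proposal is correct, and its parts (\ref{item: Borel sections}) and (\ref{item: Borel integrand}) are essentially the paper's own proof: your disjointified sets $A_{\gamma}$ are exactly the paper's $(A_{g})_{g\in\Gamma}$, and your ``first element of its $\cS$-class'' sets $D_{\gamma}$ coincide with the paper's $E_{n}=\{x:(x,[g_{n}x]_{\cS})\in Y_{n}\}=\bigcap_{j<n}\{x:(g_{n}x,g_{j}x)\notin\cS\}$, so (\ref{item: Borel integrand}) is verbatim the same computation. (One notational nitpick: for a general standard Borel target $Y$ the expression $f_{\phi}=\sum_{\gamma}1_{A_{\gamma}}f_{\gamma}$ should be read as a piecewise definition, i.e.\ $f_{\phi}^{-1}(B)=\bigsqcup_{\gamma}\left(A_{\gamma}\cap f_{\gamma}^{-1}(B)\right)$, which reduces the function version of (\ref{item: Borel sections}) to the set version.) Where you genuinely diverge is (\ref{item: standard Borel space}). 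The paper stays internal to $\cR/\cS$: it partitions $\cR/\cS=\bigsqcup_{n}Y_{n}$, where $Y_{n}$ is the graph of the $n$-th ``first-visit'' section $x\mapsto(x,[g_{n}x]_{\cS})$, checks that each $Y_{n}$ with its subspace $\sigma$-algebra is isomorphic to a Borel subset of $X$, and uses that a countable disjoint union of standard Borel spaces is standard; you instead code a class $c$ by its visitor set $\{\gamma\in\Gamma:\gamma x\in c\}$, embed $\cR/\cS$ globally into $X\times 2^{\Gamma}$ via $\Phi$, cut out the image $W$ by countably many Borel conditions, and verify by hand that $\Phi$ transports the section-defined $\sigma$-algebra exactly onto $\mathcal{B}(W)$. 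Both are sound, and your worry about Lusin--Souslin is well placed in either approach: the paper's terse claim that $Y_{n}$ ``as a measurable space'' is isomorphic to a Borel subset of $X$ also requires the two-sided check (the $g_{m}$-section of a subset of $Y_{n}$ is $\{x:(g_{m}x,g_{n}x)\in\cS\}$ intersected with the pullback, so Borelness of the pullback gives Borelness of all sections), which is exactly the kind of verification you carry out explicitly for $W$. What each route buys: your coordinatization is self-contained and makes the equality of $\sigma$-algebras fully explicit in one stroke, while the paper's decomposition is shorter and its pieces are immediately recycled --- the $E_{n}$ drive part (\ref{item: Borel integrand}) and yield the choice functions of Corollary \ref{cor:choice functions}. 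Finally, your normalization via Feldman--Moore and adjustment of representatives on null sets (e.g.\ restricting to a conull $\phi$-invariant Borel set on which $\phi$ preserves classes, extending by the identity) is the same implicit convention the paper uses, so flagging it is appropriate rather than a defect.
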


\begin{proof}

(\ref{item: Borel sections}): Given $\gamma\in [\cR]$, by countability of $\Gamma$, we may find Borel sets $(A_{g})_{g\in\Gamma}$ with $A_{g}\subseteq \{x:\gamma(x)=gx\}$ and $X=\bigsqcup_{g\in \Gamma}A_{g}$. Then:
\[\{x\in X:(x,[\gamma(x)]_{\cS})\in E\}=\bigsqcup_{g\in \Gamma}\{x\in A_{g}:(x,[gx])\in E\}=\bigsqcup_{g\in \Gamma}\left(\{x\in X:(x,[gx])\in E\}\cap A_{g}\right),\]
is Borel. 

For the ``more generally" claim, we simply note that if $B\subseteq Y$ is Borel, then 
\[f^{-1}(B)=\bigcup_{\gamma\in \Gamma}\{(x,[\gamma(x)]_{\cS}):x\in f_{\gamma}^{-1}(B)\},\]
\[f_{\gamma}^{-1}(B)=\{x\in X:(x,[\gamma(x)]_{\cS})\in f^{-1}(B)\}.\]
This reduces the intended result about functions to the case of sets.

(\ref{item: standard Borel space}): By \cite[Theorem 1]{FelMooreI}, we may fix  a countable $\Gamma\leq [\cR]$ whose elements are Borel and so that $\cR=\cR_{\Gamma,X}$. Fix an enumeration $(g_{n})_{n=1}^{\infty}$ of $\Gamma$ with $g_{1}=e$. Inductively define sets $(Y_{n})_{n=1}^{\infty}$ by $Y_{1}=\{(x,[x]_{\cS}):x\in X\}$, and 
\[Y_{n}=\{(x,[g_{n}x]_{\cS}):x\in X\}\setminus \bigsqcup_{j=1}^{n-1}\{(x,[g_{j}x]_{\cS}):x\in X\}=\bigcap_{j=1}^{n-1}\{(x,[g_{n}x]_{\cS}):x\in X, (g_{n}x,g_{j}x)\notin \cS\}.\]
Since $\cS$ is Borel, it follows that $Y_{n}$ is Borel. Moreover, $Y_{n}$ as a measurable space is isomorphic with
\[\bigcap_{j=1}^{n}\{x\in X:(g_{n}x,g_{j}x)\notin \cS\},\]
which is a  Borel subset of $X$. Thus $Y_{n}$ is a standard Borel space \cite[Corollary 13.4]{KechrisClassic}, and since 
\[\cR/\cS=\bigsqcup_{n}Y_{n},\]
we see that $\cR/\cS$ is standard Borel.

(\ref{item: Borel integrand}):
Let $\Gamma,(g_{n})_{n},(Y_{n})_{n}$ be as in (\ref{item: standard Borel space}).
Let $E_{n}=\{x\in X:(x,[g_{n}x]_{\cS})\in Y_{n}\}$, so that $E_{n}$ is Borel.
Then
\[\widetilde{f}(x)=\sum_{n=1}^{\infty}1_{E_{n}}(x)f(x,[g_{n}x]_{\cS}),\]
is a sum of Borel functions, hence Borel.
\end{proof}

We define a $\sigma$-finite Borel measure $\mu_{\cR/\cS}$ on $\cR/\cS$ by
\[\mu_{\cR/\cS}(E)=\int_{X}|\{c\in [x]_{\cR}/\cS:(x,c)\in E\}|\,d\mu(x).\]
Note that the above integrand is Borel, by Proposition \ref{prop: standard}. Approximating Borel functions by simple Borel functions, we see that 
\[\int f\,d\mu_{\cR/\cS}=\int \sum_{c\in [x]_{\cR/\cS}}f(x,c)\,d\mu(x)\]
for all Borel $f\colon \cR/\cS\to [0,+\infty]$.
It will be helpful to use the following fact, whose proof is left as an exercise to the reader.

\begin{prop}\label{prop:measurability exercise}
  Let $\cS\leq \cR$ be discrete, probability measure-preserving equivalence relations on a standard probability space $(X,\mu)$, and let $f\colon \cR/\cS\to \C$. Fix a countable $\Gamma\leq [\cR]$ which generates $\cR$. Then the following are equivalent:
  \begin{enumerate}[(i)]
   \item $f$ is $\mu_{\cR/\cS}$-measurable,
   \item for every $\gamma\in [\cR]$, the function $f_{\gamma}\colon X\to \C$ given by $f_{\gamma}(x)=f(x,[\gamma(x)]_{\cS})$ is $\mu$-measurable,
   \item for every $\gamma\in\Gamma$, the function $f_{\gamma}\colon X\to \C$ given by $f_{\gamma}(x)=f(x,[\gamma(x)]_{\cS})$ is $\mu$-measurable.
  \end{enumerate}
\end{prop}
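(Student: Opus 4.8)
The plan is to prove the two substantive implications (i)$\Rightarrow$(ii) and (iii)$\Rightarrow$(i); the implication (ii)$\Rightarrow$(iii) is immediate because $\Gamma\leq [\cR]$. Everything rests on one elementary observation about how $\mu_{\cR/\cS}$-null sets project down to $X$: if $N\subseteq \cR/\cS$ is $\mu_{\cR/\cS}$-null and $\gamma\in [\cR]$, then $N_{\gamma}:=\{x:(x,[\gamma(x)]_{\cS})\in N\}$ is $\mu$-null. Indeed, after enlarging $N$ to a Borel null set (so that $N_{\gamma}$ is genuinely Borel), membership $x\in N_{\gamma}$ forces the fiber $\{c\in [x]_{\cR}/\cS:(x,c)\in N\}$ to be nonempty, so $1_{N_{\gamma}}(x)\leq |\{c:(x,c)\in N\}|$ pointwise, and integrating the right-hand side over $X$ gives exactly $\mu_{\cR/\cS}(N)=0$.

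For (i)$\Rightarrow$(ii) I would argue as follows. Since $\cR/\cS$ is a standard Borel space by Proposition \ref{prop: standard}(\ref{item: standard Borel space}) and $f$ is $\mu_{\cR/\cS}$-measurable, there is a Borel $g\colon\cR/\cS\to\C$ agreeing with $f$ off some $\mu_{\cR/\cS}$-null set $N$. By the very definition of the Borel structure on $\cR/\cS$, the section $g_{\gamma}(x)=g(x,[\gamma(x)]_{\cS})$ is Borel for \emph{every} $\gamma\in [\cR]$, while $f_{\gamma}$ and $g_{\gamma}$ agree off the $\mu$-null set $N_{\gamma}$ by the observation above. Hence $f_{\gamma}$ is $\mu$-measurable for all $\gamma\in [\cR]$, which is (ii).

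The substance is (iii)$\Rightarrow$(i), and here I would recycle the explicit Borel partition built in the proof of Proposition \ref{prop: standard}(\ref{item: standard Borel space}). Fixing the enumeration $(g_{n})$ of $\Gamma$, we get Borel sets $Y_{n}$ with $\cR/\cS=\bigsqcup_{n}Y_{n}$, and for each $n$ the map $x\mapsto(x,[g_{n}x]_{\cS})$ is a Borel isomorphism from the Borel set $E_{n}=\{x:(x,[g_{n}x]_{\cS})\in Y_{n}\}$ onto $Y_{n}$. Using (iii), each $f_{g_{n}}$ is $\mu$-measurable, so I can pick Borel $h_{n}\colon X\to\C$ and Borel $\mu$-null sets $\widetilde N_{n}\supseteq\{f_{g_{n}}\neq h_{n}\}$. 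Defining $g$ on $Y_{n}$ by $g(x,[g_{n}x]_{\cS})=h_{n}(x)$ and transporting $h_{n}$ through the isomorphism makes $g$ Borel, since the $Y_{n}$ form a countable Borel partition. To finish, set $M_{n}=\{(x,[g_{n}x]_{\cS}):x\in E_{n}\cap\widetilde N_{n}\}$ and $M=\bigsqcup_{n}M_{n}$; each $M_{n}$ is the isomorphic image of a Borel null set and has all fibers of size at most one, so $\mu_{\cR/\cS}(M_{n})=\mu(E_{n}\cap\widetilde N_{n})=0$ and thus $\mu_{\cR/\cS}(M)=0$. For $(x,c)\notin M$ lying in $Y_{n}$ one has $x\in E_{n}\setminus\widetilde N_{n}$, whence $f(x,c)=f_{g_{n}}(x)=h_{n}(x)=g(x,c)$. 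So $f$ agrees with the Borel function $g$ off the $\mu_{\cR/\cS}$-null set $M$, giving (i).

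I expect the only real difficulty to be bookkeeping in the last implication: ensuring the fiberwise Borel representatives $h_{n}$ genuinely glue to a Borel function $g$, and ensuring the accumulated exceptional sets stay null. Both are handled by the partition $\cR/\cS=\bigsqcup_{n}Y_{n}$ together with the singleton-fiber property of each $M_{n}$, which makes $\mu_{\cR/\cS}(M_{n})$ equal to the honest $\mu$-measure $\mu(E_{n}\cap\widetilde N_{n})$. Once the Borel isomorphisms $Y_{n}\cong E_{n}$ from Proposition \ref{prop: standard} are in hand, no genuinely hard analysis remains.
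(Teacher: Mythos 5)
The paper gives no proof of this proposition (it is explicitly left as an exercise), and your argument is precisely the intended one — the fiber-counting observation that a $\mu_{\cR/\cS}$-null set has $\mu$-null sections $N_{\gamma}$, combined with the Borel partition $\cR/\cS=\bigsqcup_{n}Y_{n}$ recycled from the proof of Proposition \ref{prop: standard} — and it is correct. One small point to tidy in (iii)$\Rightarrow$(i): the $\Gamma$ of the statement generates $\cR$ only modulo null sets (unlike the exact Feldman--Moore group used in Proposition \ref{prop: standard}), so $\bigsqcup_{n}Y_{n}$ is merely conull in $\cR/\cS$; the leftover set $\{(x,c):\Gamma x\neq[x]_{\cR}\}$ lies over a $\mu$-null set of base points and hence is $\mu_{\cR/\cS}$-null (the fibers are countable), so it should simply be absorbed into your exceptional set $M$, after which the proof goes through verbatim.
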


Note that the proof of Proposition \ref{prop: standard} gives an alternate proof of \cite[Lemma 1.1]{FSZ}.

\begin{cor}\label{cor:choice functions}
 Let $\cS\leq \cR$ be discrete, probability measure preserving equivalence relations on a standard probability space $(X,\mu)$. Then there is a countable family $(\sigma_{j})_{j\in J}$ of Borel functions in $[[\cR]]$ with 
 \[[x]_{\cR}=\bigsqcup_{j\in J:x\in \dom(\sigma_{j})}[\sigma_{j}(x)]_{\cS}\]
 for every $x\in X$. 
\end{cor}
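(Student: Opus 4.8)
The plan is to extract the family $(\sigma_j)_j$ directly from the proof of Proposition \ref{prop: standard}. By \cite[Theorem 1]{FelMooreI} fix a countable $\Gamma\leq [\cR]$ with $\cR=\cR_{\Gamma,X}$ and an enumeration $(g_n)_{n=1}^{\infty}$ of $\Gamma$ with $g_1=e$, exactly as in the proof of Proposition \ref{prop: standard} (\ref{item: standard Borel space}). Reusing the Borel sets
\[E_n=\bigcap_{j=1}^{n-1}\{x\in X:(g_nx,g_jx)\notin\cS\}\]
appearing there (the points where $g_nx$ represents an $\cS$-class not already hit by $g_1x,\dots,g_{n-1}x$), I would take $J=\N$ and let $\sigma_n$ be the restriction $g_n\resto E_n$, so that $\dom(\sigma_n)=E_n$ and $\sigma_n(x)=g_nx$ for $x\in E_n$.

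First I would record that each $\sigma_n$ lies in $[[\cR]]$ and is Borel: it is the restriction of the Borel element $g_n\in[\cR]$ to the Borel set $E_n$, hence a Borel bimeasurable bijection of $E_n$ onto the Borel set $g_n(E_n)$ with $\sigma_n(x)\in[x]_{\cR}$, which is exactly the definition of a Borel element of $[[\cR]]$.

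The substance of the proof is the pointwise identity $[x]_{\cR}=\bigsqcup_{n:x\in E_n}[\sigma_n(x)]_{\cS}$. For disjointness, if $n<n'$ and $x\in E_n\cap E_{n'}$, then the defining condition of $E_{n'}$ at the index $j=n$ gives $(g_{n'}x,g_nx)\notin\cS$, so $[\sigma_{n'}(x)]_{\cS}\neq[\sigma_n(x)]_{\cS}$. For covering, given $y\in[x]_{\cR}$ write $y=g_mx$ and let $n\leq m$ be least with $(g_nx,g_mx)\in\cS$; minimality together with transitivity and symmetry of $\cS$ forces $x\in E_n$, and then $y\in[g_mx]_{\cS}=[g_nx]_{\cS}=[\sigma_n(x)]_{\cS}$. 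Thus as $n$ ranges over $\{n:x\in E_n\}$ the classes $[\sigma_n(x)]_{\cS}$ enumerate the distinct $\cS$-classes of $[x]_{\cR}$ without repetition, giving the claimed disjoint union.

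I do not expect a genuine obstacle: the corollary is a repackaging of the partition $\cR/\cS=\bigsqcup_n Y_n$ produced in Proposition \ref{prop: standard}, with $\sigma_n$ playing the role of the Borel section onto $Y_n$. The only point deserving care is that the conclusion is asserted for \emph{every} $x\in X$ rather than for almost every $x$; but this is already built into Proposition \ref{prop: standard}, where the sets $E_n$ and the identity $[x]_{\cR}=\Gamma x$ hold pointwise, so no further measure-theoretic adjustment is needed.
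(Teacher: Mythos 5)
Your proposal is correct and is essentially the paper's intended argument: the paper states the corollary as an immediate byproduct of the proof of Proposition \ref{prop: standard}, with the choice functions $\sigma_{n}=g_{n}\resto E_{n}$ extracted from exactly the sets $E_{n}$ (equivalently the partition $\cR/\cS=\bigsqcup_{n}Y_{n}$) constructed there. Your disjointness and covering verifications, and your observation that the exact (pointwise, Borel) form of the Feldman--Moore theorem is what makes the conclusion hold for every $x\in X$, fill in the details the paper leaves implicit.
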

We call the $\sigma_{j}$ \emph{choice functions for $\cS\leq \cR$.}

We define a new equivalence relation $\widehat{\cS}$ on $\cR/\cS$ by saying that $((x,c),(x',c'))\in \widehat{\cS}$ if $(x,x')\in \cR$ and $c=c'$. We often abuse notation and regard $\widehat{\cS}=\{(x,x',c):(x,x')\in \cR, c\in [x]_{\cR}/\cS\}$.
As observed by Feldman-Sutherland-Zimmer \cite[Introduction]{FSZ},  the relation $\widehat{\cS}$ is tightly related to the \emph{Jones basic construction} associated to an inclusion of von Neumann algebras (we will give a precise definition of this construction in Section \ref{sec: basic construction}): namely, the basic construction of the von Neumann algebra of $\cR$ over the von Neumann algebra of $\cS$ can be naturally identified with the von Neumann algebra of $\widehat{\cS}$. For completeness, we give an explicit proof of this result of Feldman-Sutherland-Zimmer in Section \ref{sec: vNa stuff}. This result can actually be used to motivate a substantial portion of the paper.
Coamenability for von Neumann algebras is defined using Jones basic construction, and so it is natural to use the space $\cR/\cS$ to define coamenability of the inclusion $\cS\leq \cR$.

Note that $[\cR]$ naturally acts on $(\cR/\cS,\mu_{\cR/\cS})$ by measuring-preserving transformations by declaring that $\gamma\cdot (x,c)=(\gamma(x),c)$ for $\gamma\in [\cR]$. We will use $\lambda_{\cR/\cS}(\gamma)$ for the induced action on functions given by $(\lambda_{\cR/\cS}(\gamma)f)(x,c)=f(\gamma^{-1}(x),c)$. This agrees (modulo unitary conjugation) with the representation which was denoted $\lambda_{\cS}$ in \cite{AFH}. As explained in \cite[Section 3]{AFH}, we may even regard the action on $L^{2}(\cR/\cS)$ as a representation of $\cR$ (which is stronger than just being a strongly continuous representation of $[\cR]$), but we will not need the theory of representations of $\cR$ here. 

\subsection{Background on cospectral radius}
Let $\cS\leq\cR$ be discrete, probability measure-preserving equivalence relations on a standard probability space $(X,\mu)$. For $\nu\in \Prob([\cR])$ which is symmetric, and $x\in X$ we consider the random walk on $[x]_{\cR}$ which starts at $x$ and with transition probabilities $p_{y,z}=\nu(\{\gamma:\gamma(y)=z\})$. For $k\in \N$, we let $p_{k,x,\cS}$ be the probability that this random walk is in $[x]_{\cS}$ after $k$ steps. By \cite[Theorem 3.2]{AFH}, we have that 
\[\rho^{\cS}_{\nu}(x)=\lim_{k\to\infty}p_{2k,x,\cS}^{1/2k}\]
exists for almost every $x\in X$. Moreover, by \cite[Theorem 3.2]{AFH}
\begin{equation}\label{eqn: co spec radi is norm background 1}
\|\rho^{\cS}_{\nu}\|_{\infty}=\|\lambda_{\cR/\cS}(\nu)\|_{B(L^{2}(\cR/\cS))}.
\end{equation}
We will also use $\rho(\cR/\cS,\nu)$ for either side of this equation. It follows from \cite[Section 3.1]{AFH} that 
\begin{equation}\label{eqn: co spec radi is norm background 2}\rho(\cR/\cS,\nu)=\lim_{k\to\infty}\ip{\lambda_{\cR/\cS}(\nu)^{2k}\xi,\xi}^{1/2k}=\lim_{k\to\infty}\left(\int p_{2k,x,\cS}\,d\mu(x)\right)^{1/2k}
\end{equation}
where $\xi\in L^{2}(\cR/\cS)$ is given by $\xi(x,c)=1_{x\in c}$.

We need a slightly more general version restricted to measurable subsets of $X$. For a measurable $E\subseteq X$ with positive measure and a $k\in\N$, we let $p_{k,x,\cS,E}$ be the probability that the random walk is in $[x]_{\cS}\cap E$ after $k$ steps. It follows from \cite[Example 3]{AFH} that 
\[\rho^{\cS}_{\nu,E}(x)=\lim_{k\to\infty}p_{2k,x,\cS,E}^{1/2k}\]
exists almost surely.  Moreover, \cite[Theorem 3.14]{AFH} implies that $\rho^{\cS}_{E,\nu}=\rho^{\cS}_{\nu}1_{\cS E}$ almost everywhere.
In \cite[Corollary 3.17 and Theorem 3.20]{AFH} it is proved that 
\begin{equation}\label{eqn: co spectral radius is norm restricted to subset background}
\|\rho^{\cS}1_{\cS E}\|_{\infty}=\lim_{k\to\infty}\ip{\lambda_{\cR/\cS}(\nu)^{2k}\zeta_{E},\zeta_{E}}^{1/2k}  
\end{equation}
where $\zeta_{E}(x,c)=\frac{1}{\sqrt{\mu(E)}}1_{x\in c\cap E}.$ 
We use $\rho_{E}(\cR/\cS,\nu)$ for either side of this equation. It is worth noting that, by $\cS$-invariance of $\rho^{\cS}_{\nu}$ and the above, we have
\[\|\rho^{\cS}1_{ E}\|_{\infty}=\lim_{k\to\infty}\ip{\lambda_{\cR/\cS}(\nu)^{2k}\zeta_{E},\zeta_{E}}^{1/2k}  \]
\subsection{Generalities on invariant means, F\o lner sets, and almost invariant vectors}

We collect here some results (mostly folklore) establishing precisely the connection between invariant means, almost invariant vectors, and F\o lner sets for measure-preserving actions on $\sigma$-finite spaces. We do this because we will often need not just the equivalence of these results, but the mechanisms precisely underlying their equivalence. 
We stress that for the results in this section, all groups will be assumed to be \emph{discrete}, but will \emph{not} always be assumed to be countable. This is because we will often want to apply our results with the group in question being the full group of a probability measure-preserving relation. A measure-preserving action of a group $\Gamma$ on a sigma-finite space $(Y,\nu)$ should thus properly be interpreted then as a homomorphism $\Gamma\to \Aut(Y,\nu)$ (where elements of $\Aut(Y,\nu)$ are identified if they agree modulo null sets). We will still slightly abuse notation and not explicitly reference the homomorphism, which means that we have for all $g,h\in \Gamma$ that $g(hy)=(gh)y$ for almost every $y\in Y$, not for \emph{every $y\in Y$}. Of course when $\Gamma$ is countable, one can circumvent this difficulty but as mentioned we have  good reasons to consider the case that $\Gamma$ is not countable.

\begin{defn}
Let $\Gamma$ be a group, and $(Y,\nu)$ a standard $\sigma$-finite space with $\Gamma\actson (Y,\nu)$ by measure-preserving transformations.   For $p\in [1,+\infty]$, and $\gamma\in \Gamma$, we define $\alpha(\gamma)\in B(L^{p}(Y,\nu))$ by $(\alpha(\gamma)f)(x)=f(\gamma^{-1}x)$. We say that:
\begin{itemize}
    \item $m\in L^{\infty}(Y,\nu)^{*}$ is a \emph{$\Gamma$-invariant mean} if $m(\alpha(\gamma)f)=m(f)$ for all $f\in L^{\infty}(Y,\nu),\gamma\in \Gamma$,
    \item  If $\Lambda\leq \Gamma$, we say that a sequence $f_{n}\in L^{p}(Y,\nu)$ are \emph{almost $\Lambda$-invariant vectors} if $\|\alpha(\gamma)f_{n}-f_{n}\|_{p}\to_{n\to\infty}0$ for all $\gamma\in \Lambda$. If $\Lambda=\Gamma$, then we usually drop ``$\Gamma$-" and say that $(f_{n})_{n}$ are almost invariant vectors.
    \item For $\Phi\subseteq \Gamma$ finite and $\varepsilon>0$, we say that a measurable $E\subseteq Y$ is \emph{$(\Phi,\varepsilon)$-F\o lner} if $0<\nu(E)<+\infty$  and $\nu(\gamma E\Delta E)<\varepsilon \nu(E)$ for all $\gamma\in \Phi$. We say that $\Gamma\actson (Y,\nu)$ \emph{admits F\o lner sets}, if for every finite $\Phi\subseteq \Gamma$ and every $\varepsilon>0$ there is a $(\Phi,\varepsilon)$-F\o lner set. If there is a sequence $(E_{n})_{n}$ so that for $\Phi\subseteq \Gamma$ and every $\varepsilon>0$ we have that $\{n: (E_{n})_{n} \textnormal{ is not $(\Phi,\varepsilon)$ F\o lner}\}$ is finite, then we call $(E_{n})_{n}$ a \emph{F\o lner sequence.}
\end{itemize}
\end{defn}

Some of our equivalent conditions defining relative amenability will deal with vectors in $L^{p}(\cR/\cS)$ which are fiberwise unit vectors, and require that such vectors are almost invariant under the relation. On the other hand, in order to apply diagonal arguments and core measure theory results it will be convenient to reduce checking this condition to a countable subgroup. Since we will have to do this several times, it will be convenient to isolate a result which handles this in all cases in question. 

\begin{thm}\label{thm: DCT argument}
Let $(X,\mu)$ be a standard probability space and $\cS\leq \cR$ be discrete, probability measure preserving equivalence relations on $(X,\mu)$. Let $\Gamma\leq [\cR]$ be countable with $\cR=\cR_{\Gamma,X}$ modulo null sets. Fix $p\in [1,\infty)$.
\begin{enumerate}[(i)]
\item Suppose that $\xi^{(n)}\in L^{p}(\cR/\cS)$ is a sequence with $\|\xi^{(n)}_{x}\|_{p}=1$ for almost every $x\in X$ and all $n\in \N$. If $\xi^{(n)}$ are almost  $\Gamma$-invariant vectors in $L^{p}(\cR/\cS)$, then there is a subsequence $\xi^{(n_{k})}\in L^{p}(\cR/\cS)$ so that $\|\xi^{(n_{k})}_{x}-\xi^{(n_{k})}_{y}\|_{p}\to_{k\to\infty}0$ for almost every $(x,y)\in \cR$. \label{item: easy AE convergence}
    \item Suppose $\xi^{(n)}\in L^{p}(\cR/\cS)$ is a sequence with $\|\xi^{(n)}_{x}\|_{p}=1$ for almost every $x\in X$ and all $n\in \N$, and with $\|\xi^{(n)}_{x}-\xi^{(n)}_{y}\|_{p}\to_{n\to\infty}0$ for almost every $(x,y)\in \cR$. Then $\|\lambda_{\cR/\cS}(\gamma)\xi^{(n)}-\xi^{(n)}\|_{p}\to_{n\to\infty}0$ for every $\gamma\in [\cR]$.  \label{item: easy DCT argument}
\end{enumerate}
\end{thm}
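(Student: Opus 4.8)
The plan is to route both parts through one elementary identity connecting the norm on $L^{p}(\cR/\cS)$ with the fiberwise norms. Since $(\lambda_{\cR/\cS}(\gamma)\xi)_{x}=\xi_{\gamma^{-1}(x)}$ as vectors in $\ell^{p}([x]_{\cR}/\cS)$, for every $\gamma\in[\cR]$ and $\xi\in L^{p}(\cR/\cS)$ one has
\[
\|\lambda_{\cR/\cS}(\gamma)\xi-\xi\|_{p}^{p}=\int_{X}\bigl\|\xi_{\gamma^{-1}(x)}-\xi_{x}\bigr\|_{p}^{p}\,d\mu(x),
\]
where the integrand $x\mapsto\|\xi_{\gamma^{-1}(x)}-\xi_{x}\|_{p}^{p}$ is measurable and integrable, being the fiberwise $p$-th power of $\lambda_{\cR/\cS}(\gamma)\xi-\xi\in L^{p}(\cR/\cS)$ (Proposition~\ref{prop: standard}(iii) applied to $|\lambda_{\cR/\cS}(\gamma)\xi-\xi|^{p}$). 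With this identity in hand, both statements reduce to translating between $\mu$-almost-everywhere statements indexed by elements of $\Gamma$ and $\mu_{\cR}$-almost-everywhere statements on $\cR$.

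For part (\ref{item: easy DCT argument}) I would argue directly. Let $B\subseteq\cR$ be the set of pairs where $\|\xi^{(n)}_{x}-\xi^{(n)}_{y}\|_{p}\not\to0$; by hypothesis $\mu_{\cR}(B)=0$, i.e. $\int_{X}|\{y:(x,y)\in B\}|\,d\mu(x)=0$, so for $\mu$-almost every $x$ the slice $\{y:(x,y)\in B\}$ is empty. Fixing $\gamma\in[\cR]$ and using that $\gamma^{-1}(x)\in[x]_{\cR}$ for a.e. $x$, it follows that $\|\xi^{(n)}_{\gamma^{-1}(x)}-\xi^{(n)}_{x}\|_{p}\to0$ for $\mu$-a.e. $x$. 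The integrand is bounded by $(\|\xi^{(n)}_{\gamma^{-1}(x)}\|_{p}+\|\xi^{(n)}_{x}\|_{p})^{p}=2^{p}$, which is integrable since $\mu$ is a probability measure, so dominated convergence applied to the displayed identity gives $\|\lambda_{\cR/\cS}(\gamma)\xi^{(n)}-\xi^{(n)}\|_{p}\to0$.

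For part (\ref{item: easy AE convergence}) I would first observe that the hypothesis and the identity give, for each $\gamma\in\Gamma$, that $u^{(n)}_{\gamma}(x):=\|\xi^{(n)}_{x}-\xi^{(n)}_{\gamma(x)}\|_{p}^{p}\to0$ in $L^{1}(X,\mu)$ (using almost invariance under $\gamma^{-1}\in\Gamma$). Since $\Gamma$ is countable, a standard diagonal extraction produces a single subsequence $(n_{k})$ and, for each $\gamma\in\Gamma$, a $\mu$-null set $N_{\gamma}$ such that $u^{(n_{k})}_{\gamma}(x)\to0$ for all $x\notin N_{\gamma}$. Letting $X_{0}$ be the conull set on which $\Gamma x=[x]_{\cR}$, I would set
\[
B=\{(x,y)\in\cR:x\notin X_{0}\}\cup\bigsqcup_{\gamma\in\Gamma}\{(x,\gamma(x)):x\in N_{\gamma}\}.
\]
The first piece is $\mu_{\cR}$-null because it lives over the $\mu$-null set $X\setminus X_{0}$, and each graph piece has $\mu_{\cR}$-measure $\mu(N_{\gamma})=0$ (its slice over $x$ has cardinality $1_{N_{\gamma}}(x)$); hence $\mu_{\cR}(B)=0$. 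Finally, for $(x,y)\in\cR\setminus B$ we have $x\in X_{0}$, so $y=\gamma(x)$ for some $\gamma\in\Gamma$, and $(x,\gamma(x))\notin B$ forces $x\notin N_{\gamma}$, whence $\|\xi^{(n_{k})}_{x}-\xi^{(n_{k})}_{y}\|_{p}=u^{(n_{k})}_{\gamma}(x)^{1/p}\to0$, which is the desired $\mu_{\cR}$-a.e. convergence.

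The main obstacle is the bookkeeping in part (\ref{item: easy AE convergence}): almost invariance only provides, for each individual $\gamma$, convergence on a conull subset of the base $X$, whereas the conclusion demands convergence on a conull subset of $\cR$ for the measure $\mu_{\cR}$. Bridging these is exactly where one must (a) use countability of $\Gamma$ to make the diagonal extraction uniform over all generators at once, (b) use that $\Gamma$ generates $\cR$ so that a.e. pair is realized as $(x,\gamma(x))$, and (c) verify that the union of the graphs of the generators over their respective base-null sets is genuinely $\mu_{\cR}$-null, which rests on the slice computation $\mu_{\cR}(\{(x,\gamma(x)):x\in N\})=\mu(N)$. Part (\ref{item: easy DCT argument}) is the reverse translation and is comparatively routine, being just the observation that a $\mu_{\cR}$-null set has a.e. empty slices together with the uniform bound $2^{p}$ needed for dominated convergence.
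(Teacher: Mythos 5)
Your proposal is correct and follows essentially the same route as the paper: the same identity $\|\lambda_{\cR/\cS}(\gamma)\xi-\xi\|_{p}^{p}=\int_{X}\|\xi_{\gamma^{-1}(x)}-\xi_{x}\|_{p}^{p}\,d\mu(x)$, a diagonal extraction over the countable group $\Gamma$ (via $L^{1}$ convergence giving a.e.\ convergence along a subsequence) for part (i), and dominated convergence with the bound $2^{p}$ for part (ii). Your explicit construction of the exceptional set $B\subseteq\cR$ and the slice computations merely spell out what the paper compresses into ``the result now follows from $\cR=\cR_{\Gamma,X}$ modulo null sets.''
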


\begin{proof}

(\ref{item: easy AE convergence}):
Note that, for $\gamma\in \Gamma$,
\[\|\lambda_{\cR/\cS}(\gamma)\xi^{(n)}-\xi^{(n)}\|_{p}^{p}=\int \|\xi^{(n)}_{\gamma^{-1}(x)}-\xi^{(n)}_{x}\|_{\ell^{p}([x]_{\cR}/\cS)}^{p}\,d\mu(x).\]
Hence by countability of $\Gamma$, and a diagonal argument, we may find a subsequence (see e.g. \cite[Theorem 2.30]{Folland}) $(\xi^{(n_{k})})_{k}$ so that for almost every $x\in X$ we have $\|\xi^{(n_{k})}_{\gamma^{-1}(x)}-\xi^{(n_{k})}_{x}\|_{p}\to_{k\to\infty}0$ for every $\gamma\in \Gamma$. The result now follows from the fact that $\cR=\cR_{\Gamma,X}$ modulo null sets. 

(\ref{item: easy DCT argument}): As in (\ref{item: easy AE convergence}), for every $\gamma\in [\cR]$:
\[\|\lambda_{\cR/\cS}(\gamma)\xi^{(n)}-\xi^{(n)}\|_{p}^{p}=\int_{X}\|\xi^{(n)}_{\gamma^{-1}(x)}-\xi^{(n)}_{x}\|_{p}^{p}\,d\mu(x).\]
Since the integrand above is dominated by $2^{p}$, the proof is completed by applying the dominated convergence theorem. 
\end{proof}

Invariant means will play an important role in our proof of the main equivalence definitions of coamenability, particularly as useful limits of almost invariant vectors. The existence of such means essentially always uses some compactness argument. Recall that if $V$ is a Banach space, then $V^{*}$ has the \emph{weak$^{*}$ topology}. This topology can be described by saying that if $\phi\in V^{*}$ then the sets
\[\mathcal{O}_{F,\varepsilon}(\phi)=\bigcap_{v\in F}\{\psi\in V^{*}:|(\phi-\psi)(v)|<\varepsilon\},\]
ranging over finite $F\subseteq V$ and $\varepsilon>0$ form a neighborhood basis for $\phi$ in the weak$^{*}$-topology. By the Banach-Alaoglu theorem, we have that $\{\phi\in V^{*}:\|\phi\|\leq 1\}$ is weak$^{*}$ compact. For a $\sigma$-finite measure space $(Y,\nu)$, we let $\cM(Y,\nu)$ be the space of means on $(Y,\nu)$. Observe that $\cM(Y,\nu)$ is a weak$^{*}$-closed subset of $\{\phi\in L^{\infty}(Y,\nu)^{*}:\|\phi\|\leq 1\}$, thus $\cM(Y,\nu)$ is weak$^{*}$-compact.

The following is proved exactly as in the proof of the standard equivalent definitions of an amenable group (see \cite[Theorems G.3.1 and G.3.2]{KazhdanTDef}).

\begin{thm}\label{thm: folklore 1}
Let $(Y,\nu)$ be a $\sigma$-finite measure space, $\Gamma$  a group and $\Gamma\actson (Y,\nu)$ a probability measure-preserving action. 
\begin{enumerate}[(i)]
    \item Suppose $\xi^{(n)}\in L^{2}(Y,\nu)$ has $\|\xi^{(n)}\|_{2}=1$ and $\xi^{(n)}$ are a sequence of almost invariant unit vectors for the induced action on $L^{2}(Y,\nu)$. Set $f_{n}=|\xi^{(n)}|^{2}$. Then $\|\alpha(\gamma)f_{n}-f_{n}\|_{1}\to_{n\to\infty}0$ for all $\gamma\in \Gamma$.
    \item If $f_{n}\in L^{1}(Y,\nu)_{+}$  and $\|f_{n}\|_{1}=1$, and $\|\alpha(\gamma)f_{n}-f_{n}\|_{1}\to_{n\to\infty}0$ for all $\gamma\in\Gamma$, then $\xi^{(n)}=f_{n}^{1/2}$ are almost invariant unit vectors for the induced action on $L^{2}(Y,\nu)$.
    \item Suppose that $f_{n}\in L^{1}(Y,\nu)$ is a sequence with $f_{n}\geq 0$, and $\|f_{n}\|_{1}=1$, and $\|\alpha(\gamma)f_{n}-f_{n}\|_{1}\to_{n\to\infty}0$ for all $\gamma\in\Gamma$. Let $m_{n}\in L^{\infty}(Y,\nu)^{*}$ be given by $m_{n}(g)=\int f_{n}g\,d\nu$. If $m$ is any weak$^{*}$ cluster point of $m_{n}$, then $m$ is a $\Gamma$-invariant mean. 
\end{enumerate}
\end{thm}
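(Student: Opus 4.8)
The plan is to treat the three items in order, each reducing to an elementary pointwise inequality followed by a routine limiting argument; there is no single hard step, but the only point requiring genuine care is the handling of nets in the (possibly non-metrizable) weak$^*$ topology in item (iii).

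For (i), I would first note that since $\alpha(\gamma)$ acts by precomposition with $\gamma^{-1}$ it commutes with the operation $h\mapsto |h|^2$, so that $\alpha(\gamma)f_n=|\alpha(\gamma)\xi^{(n)}|^2$. The key is then the pointwise factorization $|a|^2-|b|^2=(|a|-|b|)(|a|+|b|)$, which after integrating and applying Cauchy--Schwarz yields $\||a|^2-|b|^2\|_1\le \||a|-|b|\|_2\,\||a|+|b|\|_2$. Using the pointwise reverse triangle inequality $\big||a|-|b|\big|\le |a-b|$ together with the ordinary triangle inequality in $L^2$, this gives
\[\|\alpha(\gamma)f_n-f_n\|_1\le \|\alpha(\gamma)\xi^{(n)}-\xi^{(n)}\|_2\big(\|\alpha(\gamma)\xi^{(n)}\|_2+\|\xi^{(n)}\|_2\big).\]
Since $\alpha(\gamma)$ is measure-preserving the right-hand factor equals $2$, and almost invariance of the $\xi^{(n)}$ forces the left factor to $0$.

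For (ii), set $\xi^{(n)}=f_n^{1/2}$; then $\|\xi^{(n)}\|_2^2=\int f_n\,d\nu=1$, so these are unit vectors, and again $\alpha(\gamma)\xi^{(n)}=(\alpha(\gamma)f_n)^{1/2}$. I would invoke the elementary inequality $(\sqrt a-\sqrt b)^2\le |a-b|$ for $a,b\ge 0$, which follows from $|a-b|=|\sqrt a-\sqrt b|(\sqrt a+\sqrt b)\ge (\sqrt a-\sqrt b)^2$. Applying it pointwise and integrating gives
\[\|\alpha(\gamma)\xi^{(n)}-\xi^{(n)}\|_2^2\le \int |\alpha(\gamma)f_n-f_n|\,d\nu=\|\alpha(\gamma)f_n-f_n\|_1\to 0,\]
which is precisely almost invariance in $L^2$.

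For (iii), each $m_n$ is a mean ($m_n(1)=\|f_n\|_1=1$ and $m_n(g)\ge0$ when $g\ge0$), so any weak$^*$ cluster point $m$ lies in the weak$^*$-closed set $\cM(Y,\nu)$ and is automatically a mean. For invariance I would compute, via the measure-preserving change of variables $x\mapsto \gamma x$, that $m_n(\alpha(\gamma)g)=\int (\alpha(\gamma^{-1})f_n)\,g\,d\nu$, whence
\[|m_n(\alpha(\gamma)g)-m_n(g)|\le \|\alpha(\gamma^{-1})f_n-f_n\|_1\,\|g\|_\infty\to 0\]
for every fixed $\gamma\in\Gamma$ and $g\in L^\infty(Y,\nu)$ (using almost invariance applied to $\gamma^{-1}$). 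Here lies the main, albeit mild, obstacle: $\Gamma$ and $Y$ need not be countable or separable, so the weak$^*$ topology need not be metrizable and $m$ is only a cluster point along some subnet $(m_{n_i})$. I would therefore pass to this subnet, use $m_{n_i}(\alpha(\gamma)g)\to m(\alpha(\gamma)g)$ and $m_{n_i}(g)\to m(g)$, and combine with the displayed estimate (whose vanishing along the full sequence certainly passes to any subnet) to conclude $m(\alpha(\gamma)g)=m(g)$, i.e. $m$ is $\Gamma$-invariant.
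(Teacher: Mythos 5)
Your proof is correct and is precisely the standard argument the paper invokes by citation (Theorems G.3.1--G.3.2 of the Property (T) reference): the factorization $|a|^{2}-|b|^{2}=(|a|-|b|)(|a|+|b|)$ with Cauchy--Schwarz for (i), the pointwise inequality $(\sqrt{a}-\sqrt{b})^{2}\leq |a-b|$ for (ii), and weak$^{*}$ compactness of the means with a subnet argument for (iii). Your explicit care with subnets in the non-metrizable weak$^{*}$ topology is a correct reading of ``cluster point'' and matches the intended proof.
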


At various points in time (in particular for investigating cospectral radius), it will also be helpful to rephrase existence of almost invariant vectors in terms of norms of natural bounded operators on Hilbert spaces. This once again follows exactly as in the proof of the standard equivalent definitions of amenability (see e.g. \cite[Theorem G.3.2, Proposition G.4.2, and Theorem G.5.1]{KazhdanTDef}). 

\begin{thm}\label{thm: folklore 2}
 Let $(Y,\nu)$ be a $\sigma$-finite measure space, and let $\Gamma\acston (Y,\nu)$ be a measure-preserving action of a countable group $\Gamma$. Suppose that $S\subseteq \Gamma$ generates $\Gamma$. Then the following are equivalent. 
 \begin{enumerate}[(i)]
     \item The induced action of $\Gamma$ on $L^{2}(Y,\nu)$ has almost invariant vectors,
     \item there is a symmetric $\eta\in \Prob(\Gamma)$ with $\ip{\supp(\eta)}=\Gamma$, and $\|\alpha(\eta)\|_{B(L^{2}(Y,\nu))}=1$.
     \item for every $\eta\in \Prob(\Gamma)$ with $\ip{\supp(\eta)}=\Gamma$ we have $\|\alpha(\eta)\|_{B(L^{2}(Y,\nu))}=1$,
     \item the action $\Gamma\acston (Y,\nu)$ admits a F\o lner sequence,
     \item there is a sequence $F_{n}\subseteq Y$ of measurable sets with $0<\nu(F_{n})<+\infty$ and 
     \[\frac{\nu(\gamma F_{n}\Delta F_{n})}{\nu(F_{n})}\to_{n\to\infty}0,\]
     for all $\gamma\in S$. 
 \end{enumerate}
\end{thm}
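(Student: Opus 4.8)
The plan is to follow the classical chain of equivalences for amenable groups verbatim, replacing the (quasi-)regular representation by the Koopman representation $\alpha$ of $\Gamma$ on $L^{p}(Y,\nu)$, and replacing the constant function (which need not lie in $L^{2}$ when $\nu$ is infinite) by normalized indicators of finite-measure sets. Concretely I would close the two cycles $(\mathrm{i})\Rightarrow(\mathrm{iii})\Rightarrow(\mathrm{ii})\Rightarrow(\mathrm{i})$ and $(\mathrm{i})\Rightarrow(\mathrm{iv})\Rightarrow(\mathrm{v})\Rightarrow(\mathrm{i})$. The implication $(\mathrm{iv})\Rightarrow(\mathrm{v})$ is immediate since $S\subseteq\Gamma$, and $(\mathrm{iii})\Rightarrow(\mathrm{ii})$ requires only that a countable group carries a symmetric probability measure with generating support, which one builds by hand from an enumeration of $S\cup S^{-1}$. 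For $(\mathrm{i})\Rightarrow(\mathrm{iii})$ I would take almost invariant unit vectors $\xi_{n}$ and, for any generating $\nu\in\Prob(\Gamma)$, expand $\langle\alpha(\nu)\xi_{n},\xi_{n}\rangle=\sum_{g}\nu(g)\langle\alpha(g)\xi_{n},\xi_{n}\rangle$; each summand tends to $\nu(g)$ and is dominated by $\nu(g)\in\ell^{1}(\Gamma)$, so dominated convergence gives $\langle\alpha(\nu)\xi_{n},\xi_{n}\rangle\to1$, while $\|\alpha(\nu)\|\le1$ always holds, forcing $\|\alpha(\nu)\|=1$.

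The key analytic input, and the step I expect to be the main obstacle, is $(\mathrm{ii})\Rightarrow(\mathrm{i})$: passing from $\|\alpha(\nu)\|=1$ for a single symmetric generating $\nu$ back to almost invariant vectors. Here $\alpha(\nu)$ is a self-adjoint contraction, so $\|\alpha(\nu)\|=1$ only guarantees that $+1$ or $-1$ lies in its spectrum, and approximate eigenvectors for $-1$ carry no invariance information. The resolution is that $\alpha(\nu)$ is also positivity-preserving because $\nu\ge0$: for any $\xi$ one has $|\langle\alpha(\nu)\xi,\xi\rangle|\le\langle\alpha(\nu)|\xi|,|\xi|\rangle$, whence $\|\alpha(\nu)\|=\sup_{\|\xi\|=1}|\langle\alpha(\nu)\xi,\xi\rangle|\le\max\sigma(\alpha(\nu))$ and therefore $1\in\sigma(\alpha(\nu))$. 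Choosing unit vectors $\xi_{n}$ with $\langle\alpha(\nu)\xi_{n},\xi_{n}\rangle\to1$ and using the identity $\langle\alpha(\nu)\xi_{n},\xi_{n}\rangle=1-\tfrac12\sum_{g}\nu(g)\|\alpha(g)\xi_{n}-\xi_{n}\|_{2}^{2}$ (valid since $\nu$ is a symmetric probability measure and each $\alpha(g)$ is unitary), I conclude $\|\alpha(g)\xi_{n}-\xi_{n}\|_{2}\to0$ for every $g\in\supp(\nu)$; the elementary estimates $\|\alpha(gh)\xi-\xi\|\le\|\alpha(g)\xi-\xi\|+\|\alpha(h)\xi-\xi\|$ and $\|\alpha(g^{-1})\xi-\xi\|=\|\alpha(g)\xi-\xi\|$ then propagate almost invariance from the generating set $\supp(\nu)$ to all of $\Gamma$.

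It remains to handle the F\o lner cycle. For $(\mathrm{i})\Rightarrow(\mathrm{iv})$ I would first convert $L^{2}$ almost invariant unit vectors into $L^{1}$ data via Theorem \ref{thm: folklore 1}(i): setting $f_{n}=|\xi_{n}|^{2}\in L^{1}(Y,\nu)_{+}$ gives $\|f_{n}\|_{1}=1$ and $\|\alpha(\gamma)f_{n}-f_{n}\|_{1}\to0$ for all $\gamma$. Then I apply the layer-cake (Namioka) argument: since $\alpha(\gamma)$ is a composition operator it commutes with taking super-level sets, so writing $F_{n}^{t}=\{f_{n}>t\}$ one has $\|\alpha(\gamma)f_{n}-f_{n}\|_{1}=\int_{0}^{\infty}\nu(\gamma F_{n}^{t}\,\Delta\,F_{n}^{t})\,dt$ and $\|f_{n}\|_{1}=\int_{0}^{\infty}\nu(F_{n}^{t})\,dt$; an averaging/pigeonhole argument over the parameter $t$ then produces, for each finite $\Phi\subseteq\Gamma$ and $\varepsilon>0$, a finite-measure $(\Phi,\varepsilon)$-F\o lner set, and a diagonalization over an exhaustion $\Phi_{1}\subseteq\Phi_{2}\subseteq\cdots$ with $\bigcup_{k}\Phi_{k}=\Gamma$ yields a F\o lner sequence. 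Finally $(\mathrm{v})\Rightarrow(\mathrm{i})$ runs in reverse: the normalized indicators $\xi_{n}=\nu(F_{n})^{-1/2}1_{F_{n}}$ satisfy $\|\alpha(\gamma)\xi_{n}-\xi_{n}\|_{2}^{2}=\nu(\gamma F_{n}\Delta F_{n})/\nu(F_{n})\to0$ for $\gamma\in S$, and the same product/inverse estimates as above upgrade almost invariance along $S$ to almost invariance along $\langle S\rangle=\Gamma$. I expect no difficulty in the F\o lner cycle beyond bookkeeping; the genuine subtlety is isolated in the positivity argument guaranteeing $1$ (rather than $-1$) in the spectrum in $(\mathrm{ii})\Rightarrow(\mathrm{i})$.
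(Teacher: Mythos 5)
Your proposal is correct and coincides with the paper's own route: the paper proves this theorem simply by invoking the standard equivalences for amenability (\cite[Theorem G.3.2, Proposition G.4.2, and Theorem G.5.1]{KazhdanTDef}), and the arguments you reconstruct---the dominated-convergence computation for (i)$\Rightarrow$(iii), Kesten's positivity argument showing $\|\alpha(\nu)\|\leq \max\sigma(\alpha(\nu))$ so that $1$ (not merely $-1$) lies in the spectrum for (ii)$\Rightarrow$(i) together with the identity $2-2\ip{\alpha(\nu)\xi,\xi}=\sum_{g}\nu(g)\|\alpha(g)\xi-\xi\|_{2}^{2}$, and the Namioka layer-cake/pigeonhole trick for (i)$\Rightarrow$(iv)---are exactly the ones those references carry out, correctly transplanted to the Koopman representation on a $\sigma$-finite space with normalized indicators replacing the constant vector. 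The one step where care was genuinely needed, ruling out $-1\in\sigma(\alpha(\nu))$ as the sole witness of $\|\alpha(\nu)\|=1$, you handle correctly via the positivity-preserving estimate $|\ip{\alpha(\nu)\xi,\xi}|\leq\ip{\alpha(\nu)|\xi|,|\xi|}$.
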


For technical reasons, for our definitions of coamenability we will need to check that our invariant means on $L^{\infty}(\cR/\cS)$ in question agree with the integral on $L^{\infty}(X,\mu)$ (recall that we may view $L^{\infty}(X,\mu)$ inside $L^{\infty}(\cR/\cS)$ via $f\mapsto ((x,c)\mapsto f(x))$). The following result greatly reduces our work, e.g. in the case that $\cR$ is ergodic it says that this condition follows immediately from being a $[\cR]$-invariant mean. 

\begin{thm}\label{ETS agrees with measure on center}
Let $(X,\mu)$ be a probability space and $\Gamma\acston (X,\mu)$ a probability measure-preserving action of a group $\Gamma$. If $m\in L^{\infty}(X,\mu)^{*}$ is a $\Gamma$-invariant mean so that $m(1_{E})=\mu(E)$ for all $\Gamma$-invariant measurable sets $E$, then $m(f)=\int f\,d\mu$ for all $f\in L^{\infty}(X,\mu)$. 
\end{thm}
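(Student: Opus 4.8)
The plan is to reduce the statement to the identity $m(f)=m(E[f\mid\cI])$, where $\cI$ denotes the $\sigma$-algebra of $\Gamma$-invariant measurable sets and $E[\,\cdot\mid\cI]\colon L^\infty(X,\mu)\to L^\infty(X,\cI,\mu)$ is the conditional expectation. Indeed, approximating $\Gamma$-invariant $L^\infty$ functions in norm by invariant simple functions, the hypothesis $m(1_E)=\mu(E)$ upgrades to $m(h)=\int h\,d\mu$ for every $\Gamma$-invariant $h\in L^\infty(X,\mu)$; since $E[f\mid\cI]$ is invariant and $\int E[f\mid\cI]\,d\mu=\int f\,d\mu$, the displayed identity gives the theorem. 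The tempting way to prove it is the mean ergodic theorem: $E[f\mid\cI]$ lies in the $L^2$-closed convex hull of the translates $\{\alpha(\gamma)f:\gamma\in\Gamma\}$, on which $m$ is constantly equal to $m(f)$ by invariance. The difficulty, and the main obstacle of the whole proof, is that $m$ need not be normal, so one cannot pass $m$ through an $L^2$ (or weak$^*$) limit; this is exactly why an $L^2$ averaging argument is insufficient and why some source of \emph{norm} convergence is needed.

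To supply norm convergence I would pass to the tracial von Neumann algebra $M=L(\cR)$, where $\cR=\cR_{\Gamma,X}$, with its canonical trace $\tau$ (which restricts to $\int\,\cdot\,d\mu$ on $A:=L^\infty(X,\mu)$) and the canonical trace-preserving conditional expectation $E_A\colon M\to A$. Consider the state $\Omega:=m\circ E_A$ on $M$. The key point is that $\Omega$ is a \emph{trace}: on the $\sigma$-weakly dense $*$-subalgebra spanned by the $fu_\gamma$ a direct computation shows $\Omega(xy)=\Omega(yx)$, the only inputs being invariance of $m$ under the normalizing unitaries and commutativity of $A$; norm continuity of $\Omega$ then yields the trace identity on all of $M$. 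Here it is cleanest to take $\Gamma=[\cR]$, so that $m$ is $[\cR]$-invariant, which is the case used in the applications; I would record separately that $\Gamma$-invariance for a generating $\Gamma$ is what is actually being consumed.

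With $\Omega$ a trace, Dixmier's averaging theorem finishes the argument and is insensitive to (non)amenability of $\Gamma$, since it averages over the full unitary group $\cU(M)$ rather than over $\Gamma$. For $f\in A$ the center-valued trace $T(f)$ lies in the \emph{norm}-closed convex hull of the unitary orbit $\{ufu^*:u\in\cU(M)\}$; as $\Omega$ is a trace it is constant on this convex hull, so by norm continuity $\Omega(f)=\Omega(T(f))$. Because $A$ is a Cartan (hence maximal abelian) subalgebra of $L(\cR)$ by Feldman--Moore, we have $Z(M)\subseteq A$, and $T(f)$ is a $\Gamma$-invariant element of $L^\infty(X,\mu)$, i.e.\ $T(f)\in L^\infty(X,\cI,\mu)$. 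Hence $\Omega(T(f))=m(T(f))=\int T(f)\,d\mu=\tau(T(f))=\tau(f)=\int f\,d\mu$, while $\Omega(f)=m(E_A(f))=m(f)$; comparing gives $m(f)=\int f\,d\mu$, as desired.

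The single real obstacle is thus the non-normality of $m$, which I would circumvent precisely by replacing the $L^2$ mean ergodic theorem with Dixmier averaging inside $L(\cR)$; the two routine-but-necessary checks along the way are that $\Omega=m\circ E_A$ is genuinely a trace (where invariance of $m$ is used) and the identification $T(f)=E[f\mid\cI]$ for $f\in A$ (immediate from $T$ being trace-preserving together with $Z(M)\subseteq A$). I would also note that the slightly more general statement, phrased purely for invariant means, is \cite[Lemma 4.2]{HVTypeIIICartan}, which may be cited in place of reproving it.
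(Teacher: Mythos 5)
Your reduction of the theorem to $m(f)=m(E[f\mid\cI])$, the identification $T|_{L^{\infty}(X)}=E[\,\cdot\mid\cI]$, and the diagnosis that non-normality of $m$ is the real obstacle are all correct. But your argument breaks at precisely the step you pass over quickly: ``norm continuity of $\Omega$ then yields the trace identity on all of $M$.'' Norm continuity extends $\Omega(xy)=\Omega(yx)$ only to the \emph{norm} closure of $\Span\{fu_{\gamma}\}$, which is a C$^{*}$-crossed-product--type subalgebra, not to the von Neumann algebra $M=L(\cR)$; a non-normal state that is tracial on a $\sigma$-weakly dense $*$-subalgebra need not be tracial on $M$. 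This is not a repairable technicality, at least for a countable generating $\Gamma$: if $\Omega=m\circ E_{A}$ were a trace on $M$, then whenever $L(\cR)$ is a $\mathrm{II}_{1}$ factor, Dixmier averaging (which, by the same norm-continuity argument, forces \emph{every} tracial state on a $\mathrm{II}_{1}$ factor to equal $\tau$) would give $m=\int\cdot\,d\mu$ for every $\Gamma$-invariant mean. That contradicts Schmidt's theorem (ETDS, 1981) that an ergodic pmp action which is not strongly ergodic --- e.g.\ an irrational rotation $\Z\actson S^{1}$, for which $L(\cR)$ is the hyperfinite $\mathrm{II}_{1}$ factor --- admits invariant means different from integration; and in the ergodic case the hypothesis $m(1_{E})=\mu(E)$ for invariant $E$ is vacuous. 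The same example refutes your side remark that ``$\Gamma$-invariance for a generating $\Gamma$ is what is actually being consumed'': invariance under the full group $[\cR]$ is essential. Indeed, finite additivity upgrades $\Gamma$-invariance of $m$ only to invariance under piecewise-$\Gamma$ maps with \emph{finitely} many pieces, while elements of $[\cR]$ require countably many pieces, and it is exactly the non-normality of $m$ that blocks the passage from finitely to countably many pieces. (Accordingly, the theorem must be read, as the paper's conventions and applications indicate, with $\Gamma$ allowed to be uncountable and in practice equal to a full group.)

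Even in the honest case $\Gamma=[\cR]$, your concluding step is unavailable as stated: conjugation-invariance of $\Omega$ can be verified only against unitaries of the form $vu_{\Phi}$ with $v\in\cU(L^{\infty}(X))$ and $\Phi\in[\cR]$, not against all of $\cU(M)$, so classical Dixmier averaging over $\cU(M)$ cannot be quoted. What your scheme then needs is a relative Dixmier-type statement: that $E[f\mid\cI]$ lies in the norm-closed convex hull of $\{f\circ\Phi^{-1}:\Phi\in[\cR]\}$. This is true, but proving it --- via cut-and-paste, i.e.\ the fact that measurable sets with the same conditional measure over $\cI$ are $[\cR]$-equidecomposable, together with an explicit cyclic-partition averaging --- is precisely the content of \cite[Lemma 4.2]{HVTypeIIICartan}, whose argument is the paper's entire proof: the paper argues as in that lemma that $m$ agrees with $\mu$ on every indicator function and concludes by linearity and density. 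So the von Neumann algebra packaging either fails (global traciality and Dixmier over $\cU(M)$) or silently re-assumes the combinatorial lemma that constitutes the actual proof; your closing suggestion to cite \cite[Lemma 4.2]{HVTypeIIICartan} outright is, in effect, what the paper does.
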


\begin{proof}
Arguing exactly as in
\cite[Lemma 4.2]{HVTypeIIICartan}, we see that $m$ agrees with $\mu$ on the indicator function of  any measurable subset of $X$. Since $L^{\infty}(X)$ is densely spanned by indicator functions of measurable sets, and both $m$,$\int\cdot \,d\mu$ are continuous on $L^{\infty}(X)$, we see that $m=\int \cdot\,d\mu$.     
\end{proof}

\section{coamenability for relations}\label{sec: relative amenability relns}

Recall that if $\cS\leq \cR$ are discrete, probability measure-preserving equivalence relations on a standard probability space $(X,\mu)$, then for $\gamma\in [[\cR]]$ and $f\colon \cR/\cS\to \C$ measurable, we define $\lambda_{\cR/\cS}(\gamma)f$ by $(\lambda_{\cR/\cS}(\gamma)f)(x,c)=1_{\ran(\gamma)}(x)f(\gamma^{-1}(x),c)$. 
We also identify $L^{\infty}(X)$ as a subset of $L^{\infty}(\cR/\cS)$ by identifying $f$ with the function $((x,c)\mapsto f(x))$. 

\begin{defn}
Let $\cR$ be an discrete, probability measure-preserving equivalence relation on a standard probability space $(X,\mu)$, and $\cS\leq \cR$. A \emph{global invariant mean} is a positive operator $\Phi\colon L^{\infty}(\cR/\cS)\to L^{\infty}(X)$ such that $\Phi(f)=f$ for all $f\in L^{\infty}(X)$, and so that $\Phi(\lambda_{\cR/\cS}(\gamma)(f))=\lambda_{\cR}(\gamma)(\Phi(f))$ for all $f\in L^{\infty}(\cR/\cS)$ and $\gamma\in [\cR]$.
\end{defn}

Sometimes authors include the assumption that $\Phi(\lambda_{\cR/\cS}(\gamma)(f))=\lambda_{\cR}(\gamma)(\Phi(f))$ for all $f\in L^{\infty}(\cR/\cS)$ and $\gamma\in [[\cR]]$, but as we will show this is automatic. 

\begin{prop}\label{prop: sometimes people refuse to learn basic cp map theory}
Let $\Phi\colon L^{\infty}(\cR/\cS)\to L^{\infty}(X)$ be a global invariant mean.
Then for any $\gamma\in [[\cR]],$ and any $f\in L^{\infty}(\cR/\cS)$, we have that $\Phi(\lambda_{\cR/\cS}(\gamma)(f))=\lambda_{\cR}(\gamma)(\Phi(f))$.
\end{prop}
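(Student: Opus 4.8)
The plan is to deduce the general equivariance from the two defining properties of a global invariant mean — positivity together with $\Phi|_{L^{\infty}(X)}=\mathrm{id}$ (which in particular makes $\Phi$ unital), and equivariance under the \emph{genuine} full group $[\cR]$ — using the module structure furnished by basic completely positive map theory. Since $L^{\infty}(\cR/\cS)$ and $L^{\infty}(X)$ are both commutative, the positive unital map $\Phi$ is automatically completely positive, and because it restricts to the identity on $L^{\infty}(X)$, every element of $L^{\infty}(X)$ lies in its multiplicative domain. By Choi's multiplicative domain theorem, $\Phi$ is then an $L^{\infty}(X)$-bimodule map; in particular $\Phi(a\xi)=a\,\Phi(\xi)$ for all $a\in L^{\infty}(X)$ and $\xi\in L^{\infty}(\cR/\cS)$. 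This is the one structural input, and it is what makes the whole argument run. I would in fact establish the identity for every $f\in L^{\infty}(\cR/\cS)$, which contains the stated case.

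Next I would reduce a partial element to honest full-group elements. Fix a countable $\Gamma\leq[\cR]$ with $\cR=\cR_{\Gamma,X}$. For $\gamma\in[[\cR]]$, partition $\dom(\gamma)=\bigsqcup_{n}A_{n}$ into Borel pieces with $\gamma|_{A_{n}}=g_{n}|_{A_{n}}$ for suitable $g_{n}\in\Gamma$ (exactly as in the proof of Proposition \ref{prop: standard}), and put $B_{n}=\gamma(A_{n})=g_{n}(A_{n})$, a Borel partition of $\ran(\gamma)$. Writing $\gamma_{N}=\gamma|_{A_{1}\sqcup\cdots\sqcup A_{N}}$, a direct computation gives the finite sum $\lambda_{\cR/\cS}(\gamma_{N})(f)=\sum_{n=1}^{N}1_{B_{n}}\,\lambda_{\cR/\cS}(g_{n})(f)$. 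Applying $\Phi$ and using linearity, the module property $\Phi(1_{B_{n}}\cdot)=1_{B_{n}}\Phi(\cdot)$, and the defining equivariance $\Phi(\lambda_{\cR/\cS}(g_{n})(f))=\lambda_{\cR}(g_{n})(\Phi(f))$ (legitimate since each $g_{n}\in[\cR]$), I would obtain $\Phi(\lambda_{\cR/\cS}(\gamma_{N})(f))=\sum_{n=1}^{N}1_{B_{n}}\lambda_{\cR}(g_{n})(\Phi f)=\lambda_{\cR}(\gamma_{N})(\Phi f)$, i.e. the desired identity for each finite truncation $\gamma_{N}$.

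The main obstacle is passing to the limit $N\to\infty$: a global invariant mean need not be normal, and $\lambda_{\cR/\cS}(\gamma_{N})(f)\to\lambda_{\cR/\cS}(\gamma)(f)$ only in measure, not in $L^{\infty}$-norm, so $\Phi$ cannot simply be applied termwise. I would circumvent this with a positivity/support estimate rather than any continuity of $\Phi$. Set $C_{N}=\ran(\gamma)\setminus(B_{1}\sqcup\cdots\sqcup B_{N})$, so $\mu(C_{N})\to 0$. The error $e_{N}=\lambda_{\cR/\cS}(\gamma)(f)-\lambda_{\cR/\cS}(\gamma_{N})(f)$ satisfies $|e_{N}|\leq\|f\|_{\infty}\,1_{C_{N}}$ pointwise on $\cR/\cS$, whence positivity of $\Phi$ gives $|\Phi(e_{N})|\leq\|f\|_{\infty}\,\Phi(1_{C_{N}})=\|f\|_{\infty}\,1_{C_{N}}$; likewise $\lambda_{\cR}(\gamma)(\Phi f)-\lambda_{\cR}(\gamma_{N})(\Phi f)$ is supported on $C_{N}$. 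Consequently the fixed quantity $\Phi(\lambda_{\cR/\cS}(\gamma)(f))-\lambda_{\cR}(\gamma)(\Phi f)$ is supported on $C_{N}$ for every $N$; letting $N\to\infty$ and using $\mu(C_{N})\to 0$ forces it to vanish, which is exactly the claim. The only points needing care are the measurable decomposition of $\gamma$ and the inequality $|\Phi(g)|\leq\Phi(|g|)$ applied to the (possibly complex-valued) $e_{N}$, both of which are routine.
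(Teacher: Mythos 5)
Your proof is correct, and its first half coincides with the paper's own: both arguments extract the $L^{\infty}(X)$-bimodule property $\Phi(k\xi)=k\,\Phi(\xi)$ from positivity together with $\Phi|_{L^{\infty}(X)}=\id$ via multiplicative-domain theory (the paper cites Stinespring and Paulsen's Theorem 3.18, which is precisely the Choi multiplicative-domain statement you invoke; note also that the paper's hypothesis ``$f\in L^{\infty}(X)$'' in the statement is evidently a typo for $f\in L^{\infty}(\cR/\cS)$, and like the paper's proof you correctly treat the general case). Where you genuinely diverge is in how a partial isomorphism is reduced to full-group elements. The paper writes $\gamma=\id_{E}\circ\sigma$ with $\sigma\in[\cR]$ and $E=\ran(\gamma)$ --- that is, it invokes the standard but not entirely trivial fact that every element of $[[\cR]]$ extends to an element of the full group (which rests on an equidecomposability argument for the complements of $\dom(\gamma)$ and $\ran(\gamma)$, using that $\gamma$ preserves the measure of $\cR$-invariant sets) --- after which the identity $\Phi(\lambda_{\cR/\cS}(\gamma)f)=\Phi(1_{E}\lambda_{\cR/\cS}(\sigma)f)=1_{E}\lambda_{\cR}(\sigma)(\Phi f)=\lambda_{\cR}(\gamma)(\Phi f)$ is a one-liner. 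You instead patch $\gamma$ together from a countable generating group via Feldman--Moore, prove the identity for the finite truncations $\gamma_{N}$, and pass to the limit through the support estimate $|\Phi(e_{N})|\leq\|f\|_{\infty}\,\Phi(1_{C_{N}})=\|f\|_{\infty}\,1_{C_{N}}$; your emphasis that $\Phi$ need not be normal, so the limit must be routed through positivity and the identity $\Phi(1_{C_{N}})=1_{C_{N}}$ rather than any continuity of $\Phi$, is exactly the right point of care (and if one worries about the modulus inequality for complex-valued $e_{N}$, the Kadison--Schwarz inequality $|\Phi(g)|^{2}\leq\Phi(|g|^{2})$ for the unital completely positive map $\Phi$ settles it). In short: your route trades the paper's unproved extension lemma for a self-contained truncation argument, making it slightly longer but more elementary in its prerequisites, while the paper's version is shorter at the cost of that implicit appeal.
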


\begin{proof}
Since $\Phi$ is positive and $\Phi|_{L^{\infty}(X)}$ is the identity, it follows by \cite[Theorem 4]{StinespringCP} and \cite[Theorem 3.18]{PaulsenCB}
  that $\Phi(kf)=k\Phi(f)$ for all $f\in L^{\infty}(\cR/\cS)$, and any $k\in L^{\infty}(X)$. We may write $\gamma=\id_{E}\sigma$ where $\sigma\in [\cR]$ and $E=\ran(\gamma)$. By the above:
\[\Phi(\lambda_{\cR/\cS}(\gamma)(f))=\Phi(1_{E}\lambda_{\cR/\cS}(\sigma)(f))=1_{E}\lambda_{\cR}(\sigma)(\Phi(f))=\lambda_{\cR}(\gamma)(\Phi(f)).\]
\end{proof}






One of our conditions for coamenability will be the existence of a $[\cR]$-invariant mean which agrees with $\int \cdot\,d\mu$ on $L^{\infty}(X)$. To relate this to cospectral radius, it will be helpful to isolate the following result. 

\begin{prop}\label{prop: restricted co-spectral radius to invariant means}
Suppose that $\rho_{E}(\cR/\cS,\nu)=1$ for all positive measure $\cR$-invariant sets $E$, and all countably supported, symmetric $\nu\in \Prob([\cR])$, which generate $\cR$. Then, given $\cR$-invariant positive measure sets $E_{1},\cdots,E_{k}$, and a countable $\Gamma\leq [\cR]$, there is a $[\cR]$-invariant mean $m\in L^{\infty}(\cR/\cS)^{*}$ with $m(1_{E_{i}})=\mu(E_{i})$, $i=1,\cdots,k$. 
\end{prop}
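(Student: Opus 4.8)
The plan is to manufacture $m$ as a weak$^{*}$ cluster point, taken over the directed set of pairs $(\Phi,\varepsilon)$ with $\Phi\subseteq[\cR]$ \emph{finite} and $\varepsilon>0$, of functionals $m_{\Phi,\varepsilon}(f)=\int_{X}f\,g_{\Phi,\varepsilon}\,d\mu$ arising from explicit densities $g_{\Phi,\varepsilon}\in L^{1}(X,\mu)_{+}$ with $\|g_{\Phi,\varepsilon}\|_{1}=1$. Indexing by \emph{all} finite subsets of $[\cR]$, rather than by a fixed countable subgroup, is what will purchase genuine $[\cR]$-invariance in the limit, and it is exactly here that the full strength of the hypothesis---that $\rho_{E}(\cR/\cS,\nu)=1$ for \emph{every} generating symmetric $\nu$---is used.

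First I would reduce to invariant ``atoms.'' Let $A_{1},\dots,A_{N}$ be the atoms of the finite Boolean algebra generated by $E_{1},\dots,E_{k}$; each $A_{j}$ is $\cR$-invariant, $\sum_{j}\mu(A_{j})=1$, and each $E_{i}$ is a disjoint union of certain $A_{j}$. For an $\cR$-invariant set $A$, the subspace $H_{A}\subseteq L^{2}(\cR/\cS)$ of functions supported on $\{(x,c):x\in A\}$ is $\lambda_{\cR/\cS}([\cR])$-invariant and contains $\zeta_{A}$ together with its entire cyclic subspace. Now fix $(\Phi,\varepsilon)$. I would choose a countable $\Gamma_{\Phi}\leq[\cR]$ with $\Phi\cup\Gamma\subseteq\Gamma_{\Phi}$ generating $\cR$, and a symmetric, generating, lazy $\nu\in\Prob([\cR])$ supported on $\Gamma_{\Phi}$ (so $\lambda_{\cR/\cS}(\nu)\geq 0$). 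For each atom $A_{j}$ the hypothesis gives $\rho_{A_{j}}(\cR/\cS,\nu)=1$, which by \eqref{eqn: co spectral radius is norm restricted to subset background} says $\lim_{k}\ip{\lambda_{\cR/\cS}(\nu)^{2k}\zeta_{A_{j}},\zeta_{A_{j}}}^{1/2k}=1$. As $\lambda_{\cR/\cS}(\nu)$ is a positive contraction, the spectral measure of $\zeta_{A_{j}}$ charges every neighborhood of $1$, so a spectral-projection vector $\eta_{j}\in H_{A_{j}}$ of norm $1$ may be chosen with $\|\lambda_{\cR/\cS}(\nu)\eta_{j}-\eta_{j}\|_{2}$ arbitrarily small; the convexity estimate $\mathrm{Re}\ip{\lambda_{\cR/\cS}(\gamma)\eta_{j},\eta_{j}}\le 1$ then propagates almost-invariance from $\nu$ to each generator, and by finitely many triangle inequalities to every $\gamma\in\Phi$, yielding $\|\lambda_{\cR/\cS}(\gamma)\eta_{j}-\eta_{j}\|_{2}<\varepsilon$.

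Next I would glue the pieces. Set $\xi_{\Phi,\varepsilon}=\sum_{j}\sqrt{\mu(A_{j})}\,\eta_{j}$, an orthogonal sum with $\|\xi_{\Phi,\varepsilon}\|_{2}=1$ that is $(\Phi,\varepsilon)$-almost invariant, and let $g_{\Phi,\varepsilon}(x)=\|(\xi_{\Phi,\varepsilon})_{x}\|_{2}^{2}$. Since the $\eta_{j}$ have disjoint invariant supports, $g_{\Phi,\varepsilon}$ equals $\mu(A_{j})\|(\eta_{j})_{x}\|_{2}^{2}$ on $A_{j}$, so $\int_{E_{i}}g_{\Phi,\varepsilon}\,d\mu=\sum_{A_{j}\subseteq E_{i}}\mu(A_{j})=\mu(E_{i})$ \emph{exactly}, for every $(\Phi,\varepsilon)$. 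Moreover the fiberwise-norm-squared estimate $\|\alpha(\gamma)g_{\Phi,\varepsilon}-g_{\Phi,\varepsilon}\|_{1}\le 2\|\lambda_{\cR/\cS}(\gamma)\xi_{\Phi,\varepsilon}-\xi_{\Phi,\varepsilon}\|_{2}$ (the mechanism underlying Theorem \ref{thm: folklore 1}(i), pushed forward from $\cR/\cS$ to $X$) gives $\|\alpha(\gamma)g_{\Phi,\varepsilon}-g_{\Phi,\varepsilon}\|_{1}<2\varepsilon$ for $\gamma\in\Phi$. Passing to a weak$^{*}$ cluster point $m$ of the net $(m_{\Phi,\varepsilon})$ in the weak$^{*}$-compact set $\cM(X,\mu)$: for each fixed $\gamma\in[\cR]$ the net eventually has $\gamma\in\Phi$ with $\varepsilon\to 0$, so $m(\alpha(\gamma)f)=m(f)$ for all $f$, giving $[\cR]$-invariance, while $m(1_{E_{i}})=\mu(E_{i})$ passes to the limit from the exact equalities.

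The main obstacle is securing invariance under the \emph{full} (uncountable) group $[\cR]$ rather than merely under a countable generating subgroup: a subgroup-invariant mean need not be $[\cR]$-invariant, and the densities $g_{\Phi,\varepsilon}$ need not be uniformly integrable, so no naive ``upgrade'' is available at the level of a single sequence. The device that circumvents this is precisely the indexing by finite subsets $\Phi\subseteq[\cR]$---any finite $\Phi$ can be absorbed into a countable generating $\Gamma_{\Phi}$, and the hypothesis, holding for a generating measure supported there, lets us demand almost-invariance against all of $\Phi$ simultaneously. The remaining points---the standard Borel bookkeeping for $H_{A}$, and the existence of a symmetric generating lazy measure with the prescribed support---are routine and I would only verify them briefly.
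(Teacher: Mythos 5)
Your construction of the almost invariant vectors is sound, but your final object lives on the wrong space, and this is the genuine gap. You push $\xi_{\Phi,\varepsilon}\in L^{2}(\cR/\cS)$ down to the density $g_{\Phi,\varepsilon}(x)=\|(\xi_{\Phi,\varepsilon})_{x}\|_{2}^{2}$ on $X$ and take a weak$^{*}$ cluster point in $\cM(X,\mu)$, so your $m$ is a mean on $L^{\infty}(X,\mu)$. But every element of $[\cR]$ preserves $\mu$, so $\int\cdot\,d\mu$ is \emph{already} an $[\cR]$-invariant mean on $L^{\infty}(X,\mu)$ with $m(1_{E_{i}})=\mu(E_{i})$: the statement you proved is trivially true and says nothing about the inclusion $\cS\leq\cR$. (The statement as printed is misstated: as the paper's own proof, and its use in establishing that item (\ref{item: co-spectral radius condition nonergodic part}) implies item (\ref{item:inv mean restricting well}) of Theorem \ref{thm: big TFAE thm}, make clear, what is needed is a $\Gamma$-invariant mean in $L^{\infty}(\cR/\cS)^{*}$, where $L^{\infty}(X)$ sits inside $L^{\infty}(\cR/\cS)$ via $f\mapsto((x,c)\mapsto f(x))$; all the content is in the fiber variable $c$, which your collapse $g_{\Phi,\varepsilon}(x)=\|(\xi_{\Phi,\varepsilon})_{x}\|_{2}^{2}$ destroys.) The repair is one line: keep the density $|\xi_{\Phi,\varepsilon}|^{2}\in L^{1}(\cR/\cS)_{+}$ and cluster among means on $L^{\infty}(\cR/\cS)$ as in Theorem \ref{thm: folklore 1}; your exact computation $\int 1_{E_{i}}|\xi_{\Phi,\varepsilon}|^{2}\,d\mu_{\cR/\cS}=\sum_{A_{j}\subseteq E_{i}}\mu(A_{j})=\mu(E_{i})$ and your estimate $\|\lambda_{\cR/\cS}(\gamma)|\xi_{\Phi,\varepsilon}|^{2}-|\xi_{\Phi,\varepsilon}|^{2}\|_{1}\leq 2\|\lambda_{\cR/\cS}(\gamma)\xi_{\Phi,\varepsilon}-\xi_{\Phi,\varepsilon}\|_{2}$ go through verbatim on $\cR/\cS$.

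With that repair your route is correct and diverges from the paper's in an instructive way. The atom decomposition is the same. Where the paper applies Theorems \ref{thm: folklore 1} and \ref{thm: folklore 2} to the restricted relations $\cS|_{A_{j}}\leq\cR|_{A_{j}}$, using $\rho_{A_{j}}(\cR/\cS,\nu)=1$ together with \eqref{eqn: co spectral radius is norm restricted to subset background} and \eqref{eqn: co spec radi is norm background 1} to produce a $\Gamma$-invariant mean $m_{j}$ on each $L^{\infty}(\cR|_{A_{j}}/\cS|_{A_{j}})$ and then glues at the level of means via $m=\sum_{j}\mu(A_{j})\,m_{j}\circ\pi_{j}$, you instead extract almost invariant unit vectors $\eta_{j}\in H_{A_{j}}$ directly by spectral calculus for the lazy Markov operator (the laziness making $\lambda_{\cR/\cS}(\nu)$ positive, so that cospectral radius $1$ forces the spectral measure of $\zeta_{A_{j}}$ to charge every neighborhood of $1$) and glue at the level of vectors; the convexity propagation from $\|\lambda_{\cR/\cS}(\nu)\eta_{j}-\eta_{j}\|_{2}$ small to each $\gamma\in\supp(\nu)$ is the correct mechanism. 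One further difference: your net over \emph{finite} $\Phi\subseteq[\cR]$, each absorbed into a countable generating $\Gamma_{\Phi}$, yields a fully $[\cR]$-invariant mean in a single step, which is formally stronger than the per-countable-$\Gamma$ conclusion the paper extracts here; the paper deliberately defers that upgrade to the weak$^{*}$ compactness and finite-intersection argument inside the proof of Theorem \ref{thm: big TFAE thm}. Both are valid; the paper's version keeps this proposition lighter, while your net front-loads the compactness.
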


\begin{proof}
Fix a countably supported, symmetric $\nu\in \Prob([\cR])$ which generates $\cR$ and has $\ip{\supp(\nu)}\supseteq \Gamma$.
Let $\cF$ be the sub-$\sigma$-algebra generated by $\{E_{i}\}_{i=1}^{k}$. Since $\{E_{i}\}_{i=1}^{k}$ is finite, we may find a partition $(A_{j})_{j=1}^{\ell}$ of $X$ so that 
\[\cF=\left\{\bigcup_{j\in D}A_{j}:D\subseteq [\ell]\right\}.\]
By (\ref{eqn: co spectral radius is norm restricted to subset background}),
\[\rho_{A_{j}}(\cR/\cS,\nu)=1=\|\rho^{\cS}_{\nu}1_{A_{j}}\|_{\infty},\]
for all $j=1,\cdots,\ell$. Since $A_{j}$ is $\cR$-invariant, we may view $\cR|_{A_{j}}/\cS|_{A_{j}}$ as an $[\cR]$-invariant subset of $\cR/\cS$, and $\|\rho^{\cS}_{\nu}1_{A_{j}}\|_{\infty}$ may be viewed as the cospectral radius of $\cS|_{A_{j}}\leq \cR|_{A_{j}}$ with respect to $\nu$. Since $\nu$ is generating, equation (\ref{eqn: co spec radi is norm background 1}) implies that
\[\|\rho^{\cS}_{\nu}1_{A_{j}}\|_{\infty}=\|\lambda_{\cR|_{A_{j}}/\cS|_{A_{j}}}(\nu)\|_{B(L^{2}(\cR|_{A_{j}}/\cS|_{A_{j}}))}.\]
Thus Theorems \ref{thm: folklore 1} and \ref{thm: folklore 2} imply that for every $j=1,\cdots,\ell$ there is a $\Gamma$-invariant mean $m_{j}\in L^{\infty}(\cR|_{A_{j}}/\cS|_{A_{j}})^{*}$. For $j=1,\cdots,\ell$, define $\pi_{j}\colon \cR/\cS\to \cR|_{A_{j}}/\cS|_{A_{j}}$ by $\pi_{j}(f)=f|_{\cR|_{A_{j}}/\cS|_{A_{j}}}.$ Define $m\in L^{\infty}(\cR/\cS)^{*}$ by
\[m=\sum_{j=1}^{\ell}\mu(A_{j})m_{j}\circ \pi_{j}.\]
Then the $\cR$-invariance of $A_{j}$ implies that $m$ is $\Gamma$-invariant and that $m(1_{A_{j}})=\mu(A_{j})$ for $j=1,\cdots,\ell$. Our choice of $(A_{j})_{j}$ then implies that $m(1_{E})=\mu(E)$ for every $E\in \cF$, in particular for every $E_{i}$, $i=1,\cdots,k$.

\end{proof}

As in the proof of existence of invariant means and Reiter's condition for invariant means, we will rely on the Hahn-Banach theorem and a convexity argument. For a Banach space $V$, we define the \emph{weak topology} on $V$ by saying that for $v\in V$ the sets
\[\mathcal{O}_{F,\varepsilon}=\bigcap_{\phi\in F}\{w\in V:|\phi(v-w)|<\varepsilon\}\]
ranging over finite $F\subseteq V^{*}$ and $\varepsilon>0$, form a neighborhood basis at $v$.
Note that if $p\in [1,+\infty]$ and if $V_{1},\cdots,V_{n}$ are Banach spaces and we equip $\bigoplus_{j=1}^{n}V_{j}$ with the norm $\|v\|=\left(\sum_{j=1}^{n}\|v_{j}\|_{p}\right)^{1/p}$, then the weak topology on $\bigoplus_{j=1}^{n}V_{j}$ is the product of the weak topology on each $V_{j}$. 
One of the main utilities of the weak topology is that, being weaker than the norm topology, it is easier to approximate vectors weakly than in norm. On the other hand, under convexity assumptions one can deduce the \emph{existence} of norm approximates from weak approximates. Namely, if $V$ is a Banach space and $C\subseteq V$ convex then a typical application of separating Hahn-Banach (see \cite[Theorem IV.3.13]{Conway} for a precise statement) is that $\overline{C}^{\|\cdot\|}=\overline{C}^{weak}$,
 see \cite[Theorem V.1.4]{Conway}.
Another application of separating Hahn-Banach is the following.
\begin{prop}[Proposition 0.1 of \cite{Paterson}] \label{prop: folklore means}
 Let $(Y,\nu)$ be a standard $\sigma$-finite space. Then the space of means in $L^{\infty}(Y,\nu)^{*}$ is the weak$^{*}$-closure of $\{f\in L^{1}(Y,\nu)_{+}:\|f\|_{1}=1\}$.  
\end{prop}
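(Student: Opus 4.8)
The plan is to prove the two inclusions separately, with the substance lying entirely in showing that every mean lies in the weak$^{*}$-closure. Write $P=\{f\in L^{1}(Y,\nu)_{+}:\|f\|_{1}=1\}$ and identify each $f\in P$ with the functional $g\mapsto \int fg\,d\nu$ in $L^{\infty}(Y,\nu)^{*}$. First I would dispense with the easy containment: if $f\in P$ then $g\geq 0$ forces $\int fg\,d\nu\geq 0$ and $\int f\cdot 1\,d\nu=1$, so each such functional is a mean; thus $P\subseteq \cM(Y,\nu)$. Since $P$ is convex (a convex combination of nonnegative norm-one $L^{1}$ functions is again nonnegative of norm one) and $\cM(Y,\nu)$ is both convex and weak$^{*}$-closed (as recorded earlier in this section), we immediately get $\overline{P}^{\,w^{*}}\subseteq \cM(Y,\nu)$.

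For the reverse inclusion I would argue by contradiction using the separating form of Hahn--Banach. Suppose some mean $m\in \cM(Y,\nu)$ does not belong to $\overline{P}^{\,w^{*}}$. The set $\overline{P}^{\,w^{*}}$ is weak$^{*}$-closed and convex, and the dual of $L^{\infty}(Y,\nu)^{*}$ equipped with the weak$^{*}$-topology is canonically $L^{\infty}(Y,\nu)$, so by separating Hahn--Banach (\cite[Theorem IV.3.13]{Conway}), after reducing to real scalars, there exist a real $g\in L^{\infty}(Y,\nu)$ and $\alpha\in\R$ with
\[\int fg\,d\nu\leq \alpha<m(g)\qquad\text{for all }f\in P.\]
The decisive computation is then $\sup_{f\in P}\int fg\,d\nu=\esssup g$: the inequality $\int fg\,d\nu\leq \esssup g$ is immediate from $\|f\|_{1}=1$ and $f\geq 0$, while for the reverse one fixes $\varepsilon>0$, notes that $A=\{g>\esssup g-\varepsilon\}$ has positive measure, uses $\sigma$-finiteness to extract $B\subseteq A$ with $0<\nu(B)<\infty$, and tests against $f=\nu(B)^{-1}1_{B}\in P$ to get $\int fg\,d\nu\geq \esssup g-\varepsilon$. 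Hence $\esssup g\leq \alpha$.

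To finish, I would use positivity of $m$ directly: since $\esssup g\cdot 1-g\geq 0$ almost everywhere and $m$ is a mean with $m(1)=1$, we have $m(\esssup g\cdot 1-g)\geq 0$, i.e. $m(g)\leq \esssup g\leq \alpha$, contradicting $\alpha<m(g)$. This forces $\cM(Y,\nu)\subseteq \overline{P}^{\,w^{*}}$ and completes the proof. The main point requiring care is the supremum computation $\sup_{f\in P}\int fg\,d\nu=\esssup g$, which is exactly where $\sigma$-finiteness is needed (to produce a normalizable indicator concentrated where $g$ is near its essential supremum); the other mild subtlety is the passage to real scalars so that the separating functional and the essential-supremum estimate interact correctly.
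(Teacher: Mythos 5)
Your proof is correct, and it is essentially the standard argument: the paper does not prove this proposition but cites Paterson's Proposition 0.1, whose proof is the same weak$^{*}$ Hahn--Banach separation together with the observation that $\sup_{f}\int fg\,d\nu=\esssup g$ and positivity of the mean yield a contradiction. Your handling of the two genuine technical points (reduction to real scalars, and using $\sigma$-finiteness to produce a normalizable indicator inside $\{g>\esssup g-\varepsilon\}$) is exactly right.
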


With this in hand, we can close the loop and prove the equivalence of all the results stated above.

\begin{thm}\label{thm: big TFAE thm}
Let $\cR$ be a discrete, probability measure-preserving equivalence on a standard probability space $(X,\mu)$, and let $\cS\leq \cR$. The following are equivalent.
\begin{enumerate}[(i)]
    \item there is a global invariant mean $\Phi\colon L^{\infty}(\cR/\cS)\to L^{\infty}(X,\mu),$ \label{item:global invariant mean}
    \item there is a $[\cR]$-invariant mean $m$ on $L^{\infty}(\cR/\cS)$ such that $m|_{L^{\infty}(X)}=\int \cdot d\mu$,
    \label{item:inv mean restricting well}
    \item  there is a sequence $\xi^{(n)}\in L^{2}(\cR/\cS)$ with $\|\xi^{(n)}_{x}\|_{2}=1$ for almost every $x\in X$, and $\|\xi^{(n)}_{x}-\xi^{(n)}_{y}\|_{2}\to_{n\to\infty}0$ for almost every $(x,y)\in \cR$, \label{item:almost invariant fiberwise norm 1 nonergodic part}
    \item there is a sequence $\mu^{(n)}\in L^{1}(\cR/\cS)$ so that $\mu^{(n)}_{x}\in \Prob([x]_{\cR}/\cS)$ for almost every $x\in X$ and $\|\mu^{(n)}_{x}-\mu^{(n)}_{y}\|_{1}\to_{n\to\infty}0$ for almost every $(x,y)\in \cR$, \label{item: Rieter sequence fiberwise}
    \item there is a countable $\Gamma\leq [\cR]$ such $\Gamma x=[x]_{\cR}$ for almost every $x\in X$, and a sequence $\mu^{(n)}\in L^{1}(\cR/\cS)$ with $\mu^{(n)}_{x}\in \Prob([x]_{\cR}/\cS)$ for almost every $x\in X$ and so that $\mu^{(n)}$ is $\Gamma$-almost invariant, \label{item: Rieter sequence on subgroup}
    \item there is a countable $\Gamma\leq [\cR]$ such that $\Gamma x=[x]_{\cR}$ for almost every $x\in X$, and a sequence $\xi^{(n)}\in L^{2}(\cR/\cS)$ so that $\|\xi^{(n)}_{x}\|_{2}=1$ for almost every $x\in X$, and $\xi^{(n)}$ is $\Gamma$-almost invariant, \label{item:almost invariant fiberwise nonergodic intro part 2}  
     \item $\lambda_{\cR|_{E}/\cS|_{E}}$ has almost invariant vectors for all $\cR$-invariant measurable $E\subseteq X$ of positive measure, \label{item: almost invariant vectors nonergodic part inv sets}
     \item for every $\cR$-invariant measurable set $E\subseteq X$ with positive measure, the action $[\cR]\actons (\cR|_{E}/\cS|_{E},\eta_{\cR|_{E}/\cS_{E}})$ admits F\o lner sets, where $\eta=\frac{\mu|_{E}}{\mu(E)}$, \label{item: Folner sets in invariant sets}
     \item for every countably supported, symmetric $\nu\in \Prob([\cR])$ we have $\|\lambda_{\cR|_{E}/\cS|_{E}}(\nu)\|=1$ for every $\cR$-invariant set $E$ of positive measure, \label{item: rel amen in terms of Markov operator}
    \item for every countably supported, symmetric $\nu\in \Prob([\cR])$ which generates $\cR$, we have $\rho_{E}(\cR/\cS,\nu)=1$ for every $\cR$-invariant set $E$ of positive measure. 
    \label{item: co-spectral radius condition nonergodic part}
\end{enumerate}

\end{thm}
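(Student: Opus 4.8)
The plan is to split the ten conditions into the \emph{global} family (i)--(vi) and the \emph{localized} family (vii)--(x), prove each family internally equivalent, and then bridge the two. First I would dispatch the internal equivalences of (iii)--(vi). The passages (iii)$\Leftrightarrow$(iv) and (v)$\Leftrightarrow$(vi) are the fiberwise square-root trick: for fiberwise unit vectors and probability densities one has $\||\xi_x|^2-|\eta_x|^2\|_1\le 2\|\xi_x-\eta_x\|_2$ and $\|f_x^{1/2}-g_x^{1/2}\|_2^2\le\|f_x-g_x\|_1$, so setting $\mu^{(n)}=|\xi^{(n)}|^2$ or $\xi^{(n)}=(\mu^{(n)})^{1/2}$ transfers almost-invariance between $L^2$ and $L^1$ verbatim, exactly as in Theorem~\ref{thm: folklore 1}. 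The passage between $\Gamma$-almost invariance (in (v),(vi)) and convergence for a.e. $(x,y)\in\cR$ (in (iii),(iv)) is precisely Theorem~\ref{thm: DCT argument}: part~(i) extracts from $\Gamma$-almost invariant fiberwise unit vectors a subsequence converging a.e.\ on $\cR$, and part~(ii) gives the converse. Thus (iii)--(vi) are mutually equivalent.

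Next I would handle (i)$\Leftrightarrow$(ii) and close the global loop. For (i)$\Rightarrow$(ii) take $m=\int_X\Phi(\cdot)\,d\mu$; equivariance of $\Phi$ and measure-preservation give $\cR$-invariance, while $\Phi|_{L^\infty(X)}=\id$ gives $m|_{L^\infty(X)}=\int\cdot\,d\mu$. For (ii)$\Rightarrow$(i) the functional $k\mapsto m(kf)$ satisfies $|m(kf)|\le\|f\|_\infty\|k\|_1$ by positivity and $m|_{L^\infty(X)}=\int\cdot\,d\mu$, hence represents an element $\Phi(f)\in L^\infty(X)$, with positivity, unitality and equivariance routine (cf.\ Proposition~\ref{prop: sometimes people refuse to learn basic cp map theory}). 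To re-enter the cluster I would prove (vi)$\Rightarrow$(ii): with $\xi^{(n)}$ fiberwise unit and $\Gamma$-almost invariant, $f_n=|\xi^{(n)}|^2$ are fiberwise probability densities, $\Gamma$-almost $L^1$-invariant by Theorem~\ref{thm: folklore 1}(i); any weak$^*$ cluster point $m$ of $g\mapsto\int f_n g\,d\mu_{\cR/\cS}$ is a $\Gamma$-invariant mean by Theorem~\ref{thm: folklore 1}(iii), and $\sum_c f_n(x,c)=1$ forces $m|_{L^\infty(X)}=\int\cdot\,d\mu$. Here I would insert the small but useful lemma that a $\Gamma$-invariant mean agreeing with $\mu$ on $L^\infty(X)$ is automatically $\cR$-invariant: writing $\gamma\in[\cR]$ as a countable patching $\gamma=\sum_n 1_{B_n}g_n$ with $g_n\in\Gamma$, the identity $1_{B_n}\lambda(g_n)f=\lambda(g_n)(1_{A_n}f)$ together with the fact that $m$ kills tails (since it agrees with $\mu$ on $L^\infty(X)$) gives $m(\lambda(\gamma)f)=m(f)$. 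This yields (ii) and closes (i)--(vi).

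The hard part will be (ii)$\Rightarrow$(v), the crux of the theorem. Starting from an $\cR$-invariant mean $m$ with $m|_{L^\infty(X)}=\int\cdot\,d\mu$, Proposition~\ref{prop: folklore means} writes $m$ as a weak$^*$ limit of $f_i\in L^1(\cR/\cS)_+$ with $\|f_i\|_1=1$; invariance gives $\lambda(\gamma)f_i-f_i\to 0$ weakly in $L^1$ for $\gamma$ in a fixed countable generating $\Gamma$, and agreement with $\mu$ on $L^\infty(X)$ gives $x\mapsto\sum_c f_i(x,c)\to 1$ weakly in $L^1(X)$. The key move is a second Hahn--Banach step: for each finite $F\subseteq\Gamma$ the convex set $\{(\sum_c f(\cdot,c)-1,\,(\lambda(\gamma)f-f)_{\gamma\in F}):f\ge 0,\ \|f\|_1=1\}$ in $L^1(X)\oplus\bigoplus_{\gamma\in F}L^1(\cR/\cS)$ contains $0$ in its weak closure, hence, convexity making weak and norm closures coincide, in its norm closure; this produces $f\ge 0$ with $\|\sum_c f(\cdot,c)-1\|_1<\varepsilon$ and $\|\lambda(\gamma)f-f\|_1<\varepsilon$ on $F$. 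A fiberwise renormalization $g=f/\sum_c f(\cdot,c)$, with a fixed point mass on the $\varepsilon$-small set where the denominator vanishes, turns $f$ into an exact fiberwise probability density with $\|g-f\|_1=O(\varepsilon)$, whence $\|\lambda(\gamma)g-g\|_1=O(\varepsilon)$ on $F$. Letting $F\nearrow\Gamma$ and $\varepsilon\to 0$ yields the sequence of (v). I expect the delicate points to be the weak-to-norm passage in the product space and controlling the renormalization across $\{\sum_c f(\cdot,c)=0\}$.

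Finally I would treat (vii)--(x) and bridge. For fixed $\cR$-invariant $E$, Theorem~\ref{thm: folklore 2} applied to the $\sigma$-finite space $\cR|_E/\cS|_E$ reduces (vii), the Følner condition (viii), and the generating-$\nu$ operator-norm condition to the single statement that $\lambda_{\cR|_E/\cS|_E}$ admits almost invariant vectors; combining with $\rho_E(\cR/\cS,\nu)=\|\lambda_{\cR|_E/\cS|_E}(\nu)\|$ from \eqref{eqn: co spec radi is norm background 1} and \eqref{eqn: co spectral radius is norm restricted to subset background} gives (x), and the estimate $\langle\lambda(\nu)\eta^{(n)},\eta^{(n)}\rangle\to 1$ upgrades almost invariance to $\|\lambda(\nu)\|=1$ for \emph{every} symmetric $\nu$, giving (ix). Hence (vii)--(x) agree $E$-by-$E$. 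The global conditions imply them by restriction: restricting the fiberwise almost invariant vectors of (iii) to an invariant $E$ gives such vectors on $\cR|_E/\cS|_E$, so $\lambda_{\cR|_E/\cS|_E}$ has almost invariant vectors. For the return (x)$\Rightarrow$(ii) I would invoke Proposition~\ref{prop: restricted co-spectral radius to invariant means}, which produces for each finite family of invariant sets a $\Gamma$-invariant mean on $L^\infty(\cR/\cS)$ agreeing with $\mu$ on that family; a weak$^*$ cluster point over the net of all finite families is a $\Gamma$-invariant mean agreeing with $\mu$ on every $\cR$-invariant set, Theorem~\ref{ETS agrees with measure on center} promotes it to $m|_{L^\infty(X)}=\int\cdot\,d\mu$, and the patching lemma from the second paragraph makes it $\cR$-invariant, i.e.\ (ii). This closes all the implications.
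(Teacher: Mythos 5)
Your proposal is correct, and its skeleton coincides with the paper's: the same folklore reductions (Theorems \ref{thm: folklore 1}, \ref{thm: folklore 2}, \ref{thm: DCT argument}) handle (iii)$\Leftrightarrow$(iv), (v)$\Leftrightarrow$(vi) and the cluster (vii)--(ix); the same duality argument gives (i)$\Leftrightarrow$(ii); the same double Hahn--Banach step with fiberwise renormalization is the crux (ii)$\Rightarrow$(v); restriction to an invariant set gives (iii)$\Rightarrow$(vii); the norm identity $\rho_{E}(\cR/\cS,\nu)=\|\lambda_{\cR|_{E}/\cS|_{E}}(\nu)\|$ gives (ix)$\Rightarrow$(x); and Proposition \ref{prop: restricted co-spectral radius to invariant means} together with Theorem \ref{ETS agrees with measure on center} closes (x)$\Rightarrow$(ii). (Your reading of (viii) as a F\o lner condition on the space $\cR|_{E}/\cS|_{E}$, rather than on $E$ itself as literally printed, is the intended one.) The one genuine deviation is how full $[\cR]$-invariance is secured in (x)$\Rightarrow$(ii): the paper never needs your patching lemma, because it lets the countable subgroup vary, forming the weak$^{*}$-compact sets $\cM_{\Gamma,E}$ for \emph{all} countable $\Gamma\leq[\cR]$ containing a fixed generating $\Lambda$ and intersecting them, so each $\gamma\in[\cR]$ is handled by some $\cM_{\Gamma,E}$ with $\gamma\in\Gamma$; you instead fix one generating $\Gamma$ and upgrade by writing $\gamma$ as a countable patching of elements of $\Gamma$ and killing tails. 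Your lemma is sound: for a mean one has $|m(1_{T}h)|\leq m(1_{T})^{1/2}\,m(1_{T}|h|^{2})^{1/2}\leq \|h\|_{\infty}\,m(1_{T})^{1/2}$ by Cauchy--Schwarz for the positive functional $m$, and $m(1_{T_{N}})=\mu(T_{N})\to 0$ because $m$ agrees with $\mu$ on $L^{\infty}(X)$ --- but note this is available only \emph{after} agreement on all of $L^{\infty}(X)$ is known, so your order of operations (Theorem \ref{ETS agrees with measure on center} first, patching second) is essential, not cosmetic. Your extra bridge (vi)$\Rightarrow$(ii), taking a weak$^{*}$ cluster point of $g\mapsto\int|\xi^{(n)}|^{2}g\,d\mu_{\cR/\cS}$, is also correct --- the fiberwise normalization $\sum_{c}|\xi^{(n)}(x,c)|^{2}=1$ makes the restriction to $L^{\infty}(X)$ equal to $\int\cdot\,d\mu$ exactly, before any limit --- though it is logically redundant given (vi)$\Rightarrow$(iii)$\Rightarrow$(vii)$\Rightarrow\cdots\Rightarrow$(ii); what it buys is a shorter route from almost invariant vectors back to means that bypasses the cospectral-radius machinery. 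One small caution: in (ix) the measures $\nu$ are not assumed generating, so your upgrade from almost invariant vectors to $\|\lambda_{\cR|_{E}/\cS|_{E}}(\nu)\|=1$ should run a diagonal argument over the countable support of $\nu$, as you implicitly indicate.
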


\begin{proof}

The equivalence of (\ref{item: Rieter sequence fiberwise}) and (\ref{item:almost invariant fiberwise norm 1 nonergodic part}) follows from Theorem \ref{thm: folklore 1}, the equivalence of (\ref{item: Rieter sequence on subgroup}) and (\ref{item:almost invariant fiberwise nonergodic intro part 2}) also follows from Theorem \ref{thm: folklore 1}. 

To see that  (\ref{item: almost invariant vectors nonergodic part inv sets}) and  (\ref{item: Folner sets in invariant sets}) are equivalent, note that (\ref{item: almost invariant vectors nonergodic part inv sets}) is equivalent to saying that $\lambda_{\cR|_{E}/\cS|_{E}}$ has $\Gamma$-invariant vectors for every finitely generated $\Gamma\leq [\cR]$ and every $\cR$-invariant set $E\subseteq X$ of positive measure, and (\ref{item: Folner sets in invariant sets}) is equivalent to saying that $\Gamma\actson (\cR|_{E}/\cS_{E},\eta_{\cR|_{E}/\cS|_{E}})$ has F\o lner sets for every finitely generated $\Gamma\leq [\cR]$ and every $\cR$-invariant set $E\subseteq X$ of positive measure. So the equivalence of (\ref{item: almost invariant vectors nonergodic part inv sets}) and  (\ref{item: Folner sets in invariant sets})
follows from Theorem \ref{thm: folklore 2}. 

 (\ref{item: almost invariant vectors nonergodic part inv sets}) implies (\ref{item: rel amen in terms of Markov operator}): Set $\Lambda=\ip{\supp(\nu)}$. Then $\lambda_{\cR|_{E}/\cS|_{E}}|_{\Lambda}$ has almost invariant vectors, so Theorem \ref{thm: folklore 2} implies that $\|\lambda_{\cR|_{E}/\cS_{E}}(\nu)\|=1$. 

 (\ref{item: rel amen in terms of Markov operator}) implies (\ref{item: almost invariant vectors nonergodic part inv sets}): Given a finite $S\subseteq [\cR]$, let $\nu$ be the uniform measure on $S$. Then Theorem \ref{thm: folklore 2} and $\|\lambda_{\cR|_{E}/\cS|_{E}}(\nu)\|=1$ tell us that $\lambda_{\cR|_{E}/\cS|_{E}}|_{\ip{S}}$ has almost invariant vectors. So given $\varepsilon>0$, we may find $\xi\in L^{2}(\cR|_{E}/\cS|_{E})$ with $\|\xi\|_{2}=1$ and 
\[\|\lambda_{\cR|_{E}/\cS|_{E}}(\gamma)\xi-\xi\|_{2}<\varepsilon, \textnormal{ for all $\gamma\in S$.}\]

(\ref{item:global invariant mean}) implies (\ref{item:inv mean restricting well}): 
Set $m(f)=\int \Phi(f)\,d\mu$. The $[\cR]$-invariance of $\Phi$ implies the $[\cR]$-invariance of $m$, and $m$ restricts to $\int\cdot\,d\mu$ on $L^{\infty}(X)$ by design.

(\ref{item:inv mean restricting well}) implies (\ref{item:global invariant mean}): For $k\in L^{\infty}(X,\mu)$ we have by positivity of $m$ that:
\[|m(kf)|\leq m(|fk|)\leq \|f\|_{\infty}m(|k|)=\|f\|_{\infty}\|k\|_{1}.\]
Since $\overline{L^{\infty}(X,\mu)}^{\|\cdot\|_{1}}=L^{1}(X,\mu)$, the above estimate implies that there is a unique $L_{f}\in L^{1}(X,\mu)^{*}$ with $\|L_{f}\|\leq \|f\|_{\infty}$ with $L_{f}(k)=m(fk)$ for all $k\in L^{\infty}(X,\mu)$. 
Thus there is a $\Phi(f)\in L^{\infty}(X,\mu)$ with  $m(fk)=\int \Phi(f)k\,d\mu$ for all $k\in L^{1}(X,\mu)$. By positivity of $m$ we have that $\Phi$ is positive. The $[\cR]$-invariance of $m$ proves that $\Phi$ is $[\cR]$-invariant, and since $m|_{L^{\infty}(X)}=\int \cdot\,d\mu$, it follows that $\Phi|_{L^{\infty}(X)}=\id$. 

(\ref{item:inv mean restricting well}) implies (\ref{item: Rieter sequence on subgroup}): Fix a countable $\Gamma\leq [\cR]$ so that $\Gamma x=[x]_{\cR}$ for almost every $x\in X$. We first prove the following

\emph{Claim: for every finite $F\subseteq [\cR]$,}
\[0\in \overline{\{(\|f_{x}\|-1,(\lambda_{\cR/\cS}(\gamma)f-f)_{\gamma\in F}):f\in L^{1}(\cR/\cS)_{+}, \textnormal{ for almost every $x\in X$}\}}^{\|\cdot\|}\]\[\subseteq L^{1}(X)\oplus L^{1}(\cR/\cS)^{\oplus F}.\]

Set $K=\{(\|f_{x}\|-1,(\lambda_{\cR/\cS}(\gamma)f-f)_{\gamma\in F}):f\in L^{1}(\cR/\cS)_{+}, \textnormal{ for almost every $x\in X$}\}$ then $K$ is convex so to prove the claim it suffices by separating Hahn-Banach to show that 
\[0\in \overline{K}^{wk},\]
Let $\varepsilon>0$, $k_{1},\cdots,k_{n}\in L^{\infty}(\cR/\cS)$ and $h_{1},\cdots,h_{s}\in L^{\infty}(X)$. By Proposition \ref{prop: folklore means}, we may choose a $f\in L^{1}(\cR/\cS)_{+}$ with $\|f\|_{1}=1$ and so that 
\[\left|m(\lambda_{\cR/\cS}(\gamma)^{-1}k_{j})-\int (\lambda_{\cR/\cS}(\gamma)^{-1}k_{j})f\,d\mu_{\cR/\cS}\right|<\varepsilon \textnormal{ for all $j=1,\cdots,n$ and $\gamma\in F$},\]
\[\left|m(h_{j})-\int h_{j}f\,d\mu_{\cR/\cS}(x)\right|<\varepsilon \textnormal{ for all $j=1,\cdots,s$}.\]
Since $f\geq 0$, our hypothesis implies that we can rewrite the last condition as 
\begin{equation}\label{item:basically done 1}
\left|\int h_{j}(x)(1-\|f_{x}\|_{1})\,d\mu(x)\right|<\varepsilon \textnormal{ for all $j=1,\cdots,s$}, 
\end{equation}
whereas the first condition and $[\cR]$-invariance of $m$ implies that
\begin{equation}\label{item: basically done 2}
   \left|\int (\lambda_{\cR/\cS}(\gamma)f-f)k_{j}\,d\mu_{\cR/\cS}\right|<2\varepsilon \textnormal{ for all $j=1,\cdots,n$ and $\gamma\in F$}. 
\end{equation}
Since the weak topology on $L^{1}(X)\oplus L^{1}(\cR/\cS)^{\oplus F}$ is the product of the weak topologies, (\ref{item:basically done 1}), (\ref{item: basically done 2}) prove that $0\in\overline{K}^{weak}$ which proves the claim by separating Hahn-Banach.

Having proved the claim, given a finite $F\subseteq \Gamma$, we may find a sequence $f^{(n)}\in L^{1}(\cR/\cS)$ so that $f^{(n)}\geq 0$, and $\|\lambda_{\cR/\cS}(\gamma)f^{(n)}-f^{(n)}\|_{1}\to_{n\to\infty}0$ for all $\gamma\in F$, and $\|\|f^{(n)}_{x}\|-1\|_{L^{1}(X)}\to_{n\to\infty}$. A perturbation argument shows that there is a $\mu^{(n)}\in L^{2}(\cR/\cS)$ with $\|\mu^{(n)}_{x}\|_{1}=1$ almost surely and $\|\mu^{(n)}-f^{(n)}\|_{1}\to_{n\to\infty}0$. Thus $\|\lambda_{\cR/\cS}(\gamma)\mu^{(n)}-\mu^{(n)}\|_{1}\to_{n\to\infty}0$ for all $\gamma\in F$. So
\[0\in \overline{\{(\lambda_{\cR/\cS}(\gamma)\mu-\mu)_{\gamma\in F}:\mu\in L^{1}(\cR/\cS)_{+},\|\mu_{x}\|_{1}=1 \textnormal{ for almost every $x\in X$}\}}^{\cdot}\subseteq L^{1}(\cR/\cS)^{\oplus F}.\]

(\ref{item:almost invariant fiberwise nonergodic intro part 2}) implies (\ref{item:almost invariant fiberwise norm 1 nonergodic part}):  This follows from Theorem \ref{thm: DCT argument}.

(\ref{item:almost invariant fiberwise norm 1 nonergodic part}) implies (\ref{item: almost invariant vectors nonergodic part inv sets}): Since $E$ is $\R$-invariant, we may view $\cR|_{E}/\cS|_{E}$ as a Borel, $[\cR]$-invariant subset of $\cR/\cS$. Set $\zeta^{(n)}=\xi^{n}|_{\cR|_{E}/\cS|_{E}}$. So $\|\zeta^{(n)}\|_{L^{2}(\cR|_{E}/\cS|_{E},\eta_{\cR|_{E}/\cS|_{E}})}=1$, where $\eta=\frac{\mu|_{E}}{\mu(E)}$. By $\cR$-invariance of $E$ and Theorem \ref{thm: DCT argument}, we have that 
\[\|\lambda_{\cR/\cS}(\gamma)\zeta^{(n)}-\zeta^{(n)}\|_{2}\leq \mu(E)^{-1}\|\lambda_{\cR/\cS}(\gamma)\xi^{(n)}-\xi^{(n)}\|_{2}\to_{n\to\infty}0\]
for all $\gamma\in [\cR]$.


(\ref{item: rel amen in terms of Markov operator}) implies (\ref{item: co-spectral radius condition nonergodic part}): Since $\nu$ is generating, we  may argue as in Proposition \ref{prop: restricted co-spectral radius to invariant means} to see that  $\rho_{E}(\cR/\cS,\nu)=\|\lambda_{\cR|_{E}/\cS|_{E}}(\nu)\|_{B(L^{2}(\cR|_{E}/\cS|_{E}))}$,  which implies the result.

(\ref{item: co-spectral radius condition nonergodic part}) implies (\ref{item:inv mean restricting well}): This is similar to the argument in \cite[Lemma 2.2]{BHAExtension}. Fix a countable $\Lambda\leq [\cR]$ which generates $\cR$. For a countable $\Gamma\leq [\cR]$ which contains $\Lambda$, and a finite tuple $E=(E_{1},\cdots,E_{k})$ of $\cR$-invariant measurable sets, let $\cM_{\Gamma,E}$ be the set of means $m\in L^{\infty}(\cR/\cS)^{*}$ which are $\Gamma$-invariant, and which satisfy $m(1_{E_{j}})=\mu(E_{j})$ for all $j=1,\cdots,k$. Each $\cM_{\Gamma,E}$ is weak$^{*}$ compact, and Proposition \ref{prop: restricted co-spectral radius to invariant means} implies that $\cM_{\Gamma,E}\ne\varnothing$ for every $\Gamma,E$. 
Since the space of means is weak$^{*}$-compact, we have that $\bigcap_{\Gamma,E}\cM_{\Gamma,E}\ne \varnothing$. Thus there is a $[\cR]$-invariant mean $m\in L^{\infty}(\cR/\cS)^{*}$ with $m(1_{E})=\mu(E)$ for every $\cR$-invariant measurable set $E$. Theorem \ref{ETS agrees with measure on center} implies that $m|_{L^{\infty}(X,\mu)}=\int \cdot\,d\mu$.

\end{proof}

\begin{defn}
We say that $\cS$ is  \textbf{coamenable in $\cR$}  if any of the above equivalent conditions hold.    
\end{defn}



\section{coamenability and almost sure cospectral radius}\label{sec: pointwise cospectral radius}
Let $\cR$ be a discrete, probability measure-preserving equivalence on a standard probability space $(X,\mu)$, and let $\cS\leq \cR$. In this section, we investigate how coamenability relates to the pointwise defined cospectral radius. 
In general, if $\cS$ is coamenable in $\cR$, then it does not follow that $\rho^{\cS}_{\nu}=1$ almost everywhere.
\begin{example}\label{example: rel amen comes from small pieces}
Let $(X,\mu)$ be a standard probability space and let $\Gamma\actson (X,\mu)$ be a free, probability measure-preserving action (e.g. the Bernoulli action). Suppose that $\Gamma$ is a nonamenable group. Let $\cR$ be the orbit equivalence relation of the $\Gamma$ action, and fix a measurable $E\subseteq X$ with $0<\mu(E)<1$. Let $\cS$ be the subrelation of $\cR$ where $(x,y)\in \cS$ if and only if either
\begin{itemize}
    \item $(x,y)\in \cR\cap (E\times E)$,
    \item $(x,y)\in (E^{c}\times E^{c})$ and $y=x$.
\end{itemize}

\end{example}

In this example, following \cite[Example 6]{AFH} we have that $\rho^{\cS}_{\nu}=1$ almost everywhere on $E$, for every $\nu\in \Prob([\cR])$ which is finitely supported and symmetric. If $\theta\in \Prob(\Gamma)$ is symmetric with $\ip{\supp(\theta)}=\Gamma$, let $\nu_{\theta}$ be the pushforward of $\theta$ under the map $\Gamma\mapsto[\cR]$ given by $g\mapsto (x\mapsto gx)$. Then on $E^{c}$, $\rho^{\cS}_{\nu_{\theta}}=\|\lambda(\theta)\|_{B(\ell^{2}(\Gamma))}<1$ , by nonamenability. Hence, by Theorem \ref{thm: big TFAE thm}, we have that $\cS$ is coamenable in $\cR$, but $\|\rho^{\cS}_{\nu_{\theta}}1_{E^{c}}\|_{\infty}<1$. 

In fact, in this example we have that $\cS$ is coamenable in $\cR$, but  $\cS|_{E^{c}}$ is not coamenable in $\cR|_{E^{c}}$ (as we will show in Proposition \ref{prop: group case is an iff}). 
In order for coamenability to relate more tightly with the pointwise cospectral radius, we make the following definition. 

\begin{defn}
Let $(X,\mu)$ be a standard probability space and $\cS\leq \cR$ discrete, probability measure-preserving equivalence relations. We say that $\cS$ is \textbf{ everywhere coamenable in $\cR$} if for all measurable sets $E\subseteq X$ of positive measure we have that $\cS|_{E}$ is coamenable in $\cR|_{E}$.
\end{defn}

In this section, we will show that everywhere coamenability will imply that the cospectral radius is $1$ almost surely, as well as investigate other general implications for cospectral radii. 
A key tool will be operator theoretic reformulations of the essential supremum of the cospectral radius restricted to a subset. 

Since we will frequently have to compare equivalence relations restricted to subsets, it will be helpful to introduce the following notation.
Suppose that $\cS\leq \cR$ are discrete, probability measure-preserving equivalence relations on a standard probability space and that $E,F\subseteq X$ are measurable with $\mu(E\setminus F)=0$. Then for almost every $x\in E$, there is a unique well-defined map $\iota_{x}\colon [x]_{\cR|_{E}}/\cS|_{E}\to [x]_{\cR|_{F}}/\cS|_{F}$ so that $\iota_{x}([y]_{\cS|_{E}})=[y]_{\cS|_{F}}$ for every $y\in [x]_{\cR}\cap E$. Note that $\iota_{x}$ is injective. This depends upon $F$, but we will typically suppress it from the notation.

It will also be helpful for us to identify $\rho_{E}(\cR/\cS,\nu)$ with an operator norm and this requires us to use an action of the space of $\cS$-ergodic components on $L^{2}(\cR/\cS)$. Suppose that $f\in L^{\infty}(X)$ is  almost everywhere $\cS$-invariant, and choose a Borel $f_{0}\colon X\to \C$ with $f_{0}=f$ almost everywhere and with $f_{0}$ being $\cS$-invariant. For $\xi\in L^{2}(\cR/\cS)$, we define $\xi f_{0}$ by saying that $(\xi f_{0})(x,c)=\xi(x,c)f_{0}(y)$ where $y$ is any element of $c$. Since $f_{0}$ is $\cS$-invariant, this is independent of the choice of $y$. It is direct to check that $\xi f_{0}=\xi f_{1}$ $\mu_{\cR/\cS}$-almost everywhere if $f_{1}\colon X\to \C$ is Borel and has $f_{0}=f$ $\mu$-almost everywhere. Hence we can make sense of $\xi f$ as an element of $L^{2}(\cR/\cS)$ (i.e. $\xi f$ is well-defined modulo almost everywhere equality),  by saying that $\xi f$ is the equivalence class of $\xi f_{0}$ modulo almost everywhere equality, where $f_{0}\colon X\to \C$ is any Borel $\cS$-invariant function with $f_{0}=f$ almost everywhere. 
We let $R(f)\in B(L^{2}(\cR/\cS))$ be given by $R(f)\xi=\xi f$.
Note that for any $\gamma\in [\cR]$, we have that $(\xi f)(x,[\gamma(x)]_{\cS})=\xi(x,[\gamma(x)]_{\cS})f(\gamma(x))$ almost everywhere, so that $\xi f$ is measurable by Proposition \ref{prop:measurability exercise}.

The following allows us to reduce our considerations of everywhere coamenability to the case that $E$ is $\cS$-invariant.

\begin{prop}\label{prop: passing rel amen between subsets}
Let $(X,\mu)$ be a standard probability space and $\cS\leq \cR$ be discrete, probability measure-preserving equivalence relations on $(X,\mu)$. Suppose that $E\subseteq X$ is measurable with $\mu(E)>0$.
\begin{enumerate}[(i)]
    \item Suppose that $\cS|_{E}$ is coamenable in $\cR|_{E}$, and that $F\subseteq X$ is measurable with $\mu(E\setminus F)=0=\mu(F\setminus \cR E)$. Then
    there is a sequence $\widetilde{\xi}^{(n)}\in L^{2}(\cR|_{F}/\cS|_{F})$ such that
    \begin{itemize}
        \item $\|\widetilde{\xi}^{(n)}_{x}\|_{2}=1$ for almost every $x\in F$,
        \item $\lim_{n\to\infty}\|\widetilde{\xi}^{(n)}_{x}-\widetilde{\xi}^{(n)}_{y}\|_{2}=0$ for almost every $(x,y)\in \cR|_{F}$,
        \item $R(1_{\cS E})\widetilde{\xi}^{(n)}=\widetilde{\xi}^{(n)}$.
    \end{itemize} 
    In particular, $\cS|_{F}$ is coamenable in $\cR|_{F}$. \label{item: rel amen ascent via saturation}
    \item If $\cS|_{\cS E}$ is coamenable in $\cR|_{\cS E}$, then  $\cS|_{E}$ is coamenable in $\cR|_{E}$.  \label{item: rel amen descent via saturation}
\end{enumerate} 
\end{prop}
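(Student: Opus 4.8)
The plan is to treat the two parts separately, in each case passing to the fiberwise almost-invariant unit vectors furnished by the equivalence of coamenability with item~(\ref{item:almost invariant fiberwise norm 1 nonergodic part}) of Theorem~\ref{thm: big TFAE thm}, and then transporting these vectors along the injections $\iota_x$. Throughout I will use the elementary but crucial identity $\cR|_E=\cR|_F\cap(E\times E)$ (valid since $\mu(E\setminus F)=0$), together with the defining formula $\iota_x([z]_{\cS|_E})=[z]_{\cS|_F}$, which shows that $\iota_x=\iota_y$ whenever $(x,y)\in\cR|_E$.

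For part~(\ref{item: rel amen descent via saturation}) the key observation is that with $F=\cS E$ the map $\iota_x$ is in fact a \emph{bijection} for almost every $x\in E$: a $\cS|_{\cS E}$-class inside $[x]_{\cR|_{\cS E}}$ is a full $\cS$-class $[z]_\cS$ with $z\in[x]_{\cR}\cap\cS E$, and $z\in\cS E$ forces $[z]_\cS\cap E\neq\varnothing$, so every such class meets $E$ and hence lies in the image of $\iota_x$ (injectivity being automatic). Starting from $\eta^{(n)}\in L^2(\cR|_{\cS E}/\cS|_{\cS E})$ with $\|\eta^{(n)}_x\|_2=1$ a.e.\ and $\|\eta^{(n)}_x-\eta^{(n)}_y\|_2\to 0$ for a.e.\ $(x,y)\in\cR|_{\cS E}$, I would set $\zeta^{(n)}_x:=\iota_x^{*}\eta^{(n)}_x$ for $x\in E$. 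Since each $\iota_x$ is a counting-measure-preserving bijection of the $\cS$-quotients, it induces a unitary between the fibers, so $\|\zeta^{(n)}_x\|_2=1$; and because $\iota_x=\iota_y$ on $\cR|_E$-related points, $\|\zeta^{(n)}_x-\zeta^{(n)}_y\|_2=\|\eta^{(n)}_x-\eta^{(n)}_y\|_2$. As $\mu_{\cR|_E}$ and the restriction of $\mu_{\cR|_{\cS E}}$ to $\cR|_E$ are proportional (hence have the same null sets), the a.e.\ convergence transfers, and Theorem~\ref{thm: big TFAE thm} gives coamenability of $\cS|_E$ in $\cR|_E$.

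For part~(\ref{item: rel amen ascent via saturation}) the map $\iota_x$ is only an injection, and $\widetilde\xi^{(n)}$ must be defined over all of $F$, not just $E$. I would first produce, by measurable selection against a countable subgroup of $[\cR|_F]$ generating $\cR|_F$, a Borel map $p\colon F\to E$ with $p(x)\in[x]_{\cR|_F}\cap E$ for a.e.\ $x$ (possible since $\mu(F\setminus\cR E)=0$) and with $p|_E=\id$. From fiberwise almost-invariant unit vectors $\xi^{(n)}\in L^2(\cR|_E/\cS|_E)$, set $\widetilde\xi^{(n)}_x:=(\iota_{p(x)})_{*}\xi^{(n)}_{p(x)}$ (push forward along the injection, extend by zero); this has $\|\widetilde\xi^{(n)}_x\|_2=1$ and is supported exactly on the $\cS|_F$-classes of $[x]_{\cR|_F}$ meeting $E$, which is what the relation $R(1_E)\widetilde\xi^{(n)}=\widetilde\xi^{(n)}$ records. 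For fiberwise invariance, if $(x,y)\in\cR|_F$ then $p(x),p(y)$ lie in the same $\cR|_F$-class and both in $E$, so $(p(x),p(y))\in\cR|_F\cap(E\times E)=\cR|_E$ and $\iota_{p(x)}=\iota_{p(y)}$; hence $\|\widetilde\xi^{(n)}_x-\widetilde\xi^{(n)}_y\|_2=\|\xi^{(n)}_{p(x)}-\xi^{(n)}_{p(y)}\|_2\to 0$ for a.e.\ $(x,y)\in\cR|_F$, after pushing the good set forward under $(x,y)\mapsto(p(x),p(y))$.

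The conceptual content is short; the real work is bookkeeping, and I expect the main obstacle to be measurability. In part~(\ref{item: rel amen ascent via saturation}) one must verify that the fiberwise recipe $x\mapsto\widetilde\xi^{(n)}_x$ genuinely assembles into a $\mu_{\cR|_F/\cS|_F}$-measurable element of $L^2(\cR|_F/\cS|_F)$, which I would check through Proposition~\ref{prop:measurability exercise} using a family of choice functions for $\cS|_F\leq\cR|_F$ from Corollary~\ref{cor:choice functions}; the same point, in the easier guise of pulling back along the unitaries $\iota_x$, arises in part~(\ref{item: rel amen descent via saturation}). Care is needed precisely because the maps $\iota_x$ are not measure-preserving on the fibers of $X$ themselves, only on the quotients by $\cS$, so all norm identities must be read on the $\cS$-quotient with its counting measure; once this is granted, transferring the almost-everywhere statements between $\cR|_E$ and $\cR|_F$ along $p$ is routine absolute continuity.
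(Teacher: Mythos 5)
Your proposal is correct and follows essentially the same route as the paper: part (\ref{item: rel amen descent via saturation}) is verbatim the paper's argument (for $F=\cS E$ the injection $\iota_x$ is bijective, so one pulls the almost-invariant fiberwise unit vectors back along the induced fiber unitaries), and in part (\ref{item: rel amen ascent via saturation}) your Borel selector $p\colon F\to E$ is just a repackaging of the paper's countable family $\{\phi_i\}\subseteq[[\cR]]$ with $F=\bigsqcup_i\ran(\phi_i)$ and domains in $E$ (take $p=\phi_i^{-1}$ on $\ran(\phi_i)$), after which the pushforward-extended-by-zero vectors, the identity $\iota_{p(x)}=\iota_{p(y)}$ for $(x,y)\in\cR|_F$, and the support condition recorded by $R(1_E)$ are exactly the paper's computations. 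The measurability check via Proposition \ref{prop:measurability exercise} and the null-set transfer along $p$ that you flag are handled the same way (and no more explicitly) in the paper.
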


\begin{proof}
We start with a bit of setup. 
For every $x\in E$, we have for every $\widetilde{y}\in [x]_{\cR}\cap \cS E$ there is a $y\in [x]_{\cR}\cap E$ with $[y]_{\cS}=[\widetilde{y}]_{\cS}$.  Thus there is a well-defined map $j_{x}\colon [x]_{\cR|_{\cS E}}/\cS|_{\cS E}\to[x]_{\cR|_{E}}/\cS|_{E}$ such that $j_{x}([\widetilde{y}]_{\cS|_{\cS E}})=[y]_{\cS|_{E}}$ where $y\in [\widetilde{y}]_{\cS}\cap  E$. In the case that $F=\cS E$, a direct computation shows that $j_{x},\iota_{x}$ are almost surely inverse to each other (in particular both are bijective).

(\ref{item: rel amen descent via saturation}): We can find a sequence $\widetilde{\xi}^{(n)}\in L^{2}(\cR|_{\cS E}/\cS|_{\cS E})$ such that for almost every $x\in \cS E$ we have $\|\widetilde{\xi}^{(n)}_{x}\|_{2}=1$
\[\lim_{n\to\infty}\|\widetilde{\xi}^{(n)}_{y}-\widetilde{\xi}^{(n)}_{z}\|_{2}=0\]
for all $y,z\in [x]_{\cR}\cap \cS E$. Define $\xi^{(n)}\in L^{2}(\cR|_{E}/\cS|_{E})$ by $\xi^{(n)}(x,c)=\widetilde{\xi}^{(n)}(x,\iota_{x}(c))$. By bijectivity of $\iota_{x}$, it follows that  for almost every $x\in E$ we have $\|\xi^{(n)}_{x}\|_{2}=1,$ and
\[\lim_{n\to\infty}\|\xi^{(n)}_{y}-\xi^{(n)}_{z}\|_{2}=\lim_{n\to\infty}\|\widetilde{\xi}^{(n)}_{y}-\widetilde{\xi}^{(n)}_{z}\|_{2}=0\]
for all $y,z\in [x]_{\cR}\cap E$. Thus $\cS|_{E}$ is coamenable in $\cR|_{E}$.

(\ref{item: rel amen ascent via saturation}): We may find a countable set $\{\phi_{i}\}_{i\in I}\subseteq [[\cR]]$ with $\dom(\phi_{i})\subseteq E$ such that 
\[F= \bigsqcup_{i}\ran(\phi_{i}),\]
modulo null sets. Let $\xi^{(n)}\in L^{2}(\cR|_{E}/\cS|_{E})$ such that for almost every $x\in E$ we have $\|\xi^{(n)}_{x}\|_{2}=1$
\[\lim_{n\to\infty}\|\xi^{(n)}_{y}-\xi^{(n)}_{z}\|_{2}=0\]
for all $y,z\in [x]_{\cR}\cap E$. Define $\widetilde{\xi}^{(n)}\in L^{2}(\cR|_{F}/\cS|_{F})$ by
\[\widetilde{\xi}^{(n)}(y,c)=\xi^{(n)}(\phi_{i}^{-1}(y),\iota_{\phi_{i}^{-1}(y)}^{-1}(c)),\]
if $y\in \ran(\phi_{i})$, $c\in \Im(\iota_{\phi_{i}^{-1}(y)})$, otherwise $\widetilde{\xi}^{(n)}(y,c)=0$. By injectivity of $\iota_{y}$, we have that $\|\widetilde{\xi}^{(n)}_{y}\|_{2}=1$ for almost every $y\in F$, and for almost every $x\in F$ we have that for all $y,z\in [x]_{\cR|_{F}}$
\[\|\widetilde{\xi}^{(n)}_{y}-\widetilde{\xi}^{(n)}_{z}\|_{2}=\|\xi^{(n)}_{\phi_{i}^{-1}(y)}-\xi^{(n)}_{\phi_{j}^{-1}(z)}\|_{2},\]
where $y\in \ran(\phi_{i}),z\in \ran(\phi_{j})$. Thus
\[\lim_{n\to\infty}\|\widetilde{\xi}^{(n)}_{y}-\widetilde{\xi}^{(n)}_{z}\|_{2}=0\]
for almost every $(x,z)\in \cR|_{F}$. So $\cS|_{F}$ is coamenable in $\cR|_{F}$.

\end{proof}

\begin{lem}\label{lem: restricted co-spec radi is just restricted op}
Let $\cS\leq \cR$ be discrete, probability measure-preserving equivalence relations on a standard probability space $(X,\mu)$. Suppose that $\nu\in \Prob([\cR])$ is symmetric and generates $\cR$. Then for any $\cS$-invariant, measurable $E\subseteq X$ with positive measure:
\[\lim_{k\to\infty}\|R(1_{E})(\lambda_{\cR/\cS}(\id_{E})\lambda_{\cR/\cS}(\nu)^{2k}\lambda_{\cR/\cS}(\id_{E}))\|^{1/2k}=\rho_{E}(\cR/\cS,\nu)=\|R(1_{E})\lambda_{\cR/\cS}(\nu)\|.\]
    
\end{lem}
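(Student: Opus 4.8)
The plan is to set $T := \lambda_{\cR/\cS}(\nu)$, $P := \lambda_{\cR/\cS}(\id_E)$ and $Q := R(1_E)$, and to read off their elementary relations before doing any analysis. Since $E$ is $\cS$-invariant (as it must be for $R(1_E)$ to be defined), $P$ is multiplication by $1_E(x)$ while $Q$ is multiplication by $1_{c\subseteq E}$; both are projections, they commute with each other, and $Q$ commutes with $T$ because $\lambda_{\cR/\cS}(\gamma)$ never touches the $\cS$-class coordinate $c$. As $\nu$ is symmetric (the standing hypothesis under which $\rho_E(\cR/\cS,\nu)$ is defined), $T=T^*$, so $W := QT = TQ = QTQ$ is a self-adjoint operator, namely the compression of $T$ to the invariant subspace $H_E := Q\,L^2(\cR/\cS)$, and $\|W\| = \|R(1_E)\lambda_{\cR/\cS}(\nu)\|$. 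Finally the unit vector $\zeta_E$ from \eqref{eqn: co spectral radius is norm restricted to subset background} satisfies $P\zeta_E = Q\zeta_E = \zeta_E$.

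First I would dispatch the two soft inequalities. Using $PQ=QP$, $QT=TQ$ and the identity $(QT)^{m}=QT^{m}$, one computes $QP\,T^{2k}P = P\,W^{2k}P = (W^kP)^*(W^kP)$, whence $\|QP\,T^{2k}P\| = \|W^kP\|^2$; since $\|W^kP\|\le \|W^k\|=\|W\|^k$, this gives $\limsup_k \|QP\,T^{2k}P\|^{1/2k} \le \|W\| = \|QT\|$. In the other direction, testing the positive operator $QP\,T^{2k}P$ against $\zeta_E$ and using $P\zeta_E=\zeta_E=PQ\zeta_E$ yields $\langle QP\,T^{2k}P\,\zeta_E,\zeta_E\rangle = \langle T^{2k}\zeta_E,\zeta_E\rangle \ge 0$, so $\|QP\,T^{2k}P\| \ge \langle T^{2k}\zeta_E,\zeta_E\rangle$ and hence $\liminf_k\|QP\,T^{2k}P\|^{1/2k} \ge \rho_E(\cR/\cS,\nu)$ by \eqref{eqn: co spectral radius is norm restricted to subset background}. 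Combining these, it remains only to prove $\|QT\| \le \rho_E(\cR/\cS,\nu)$; the three quantities will then be squeezed into equality and the limit will exist.

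For this remaining inequality I would rewrite $\rho_E$ spectrally. Because $Q\zeta_E=\zeta_E$ and $Q$ commutes with $T$, we get $\langle T^{2k}\zeta_E,\zeta_E\rangle = \langle W^{2k}\zeta_E,\zeta_E\rangle = \int \lambda^{2k}\,d\mu_{\zeta_E}(\lambda)$, where $\mu_{\zeta_E}$ is the spectral measure of the self-adjoint $W$ at $\zeta_E$; therefore $\rho_E(\cR/\cS,\nu) = \max\{|\lambda| : \lambda\in\supp\mu_{\zeta_E}\}$. Thus $\|QT\|=\|W\|\le \rho_E$ is equivalent to $\supp\mu_{\zeta_E}$ reaching the top of the spectrum of $W$, which holds once $\zeta_E$ is separating for the von Neumann algebra generated by $W$ (any nonzero spectral projection of $W$ then has nonzero image on $\zeta_E$, forcing $\supp\mu_{\zeta_E}=\sigma(W)$). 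This is exactly the relative analogue of the identity $\|\rho^{\cS}_{\nu}\|_{\infty}=\|\lambda_{\cR/\cS}(\nu)\|$ of \eqref{eqn: co spec radi is norm background 1}: the claim is that $\zeta_E$ is the canonical trace vector of the compressed system $(H_E,W)$. Concretely, I would identify $H_E$, $\zeta_E$ and $W$ with the $L^2$-space, trace vector and Markov operator of the reduction of $L^2(\cR/\cS)$ by $R(1_E)$, delete the null part outside $\cR E$ so that $R(1_E)$ acquires full central support, and then transport the cyclic/separating property of the global trace vector through this reduction.

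The main obstacle is precisely this last step: verifying that $\zeta_E$ is separating for the algebra containing $W$, equivalently that $\mu_{\zeta_E}$ exhausts $\sigma(W)$. The difficulty is that $E$ is only $\cS$-invariant and not $\cR$-invariant, so $R(1_E)$ is a genuine noncentral cut of the ambient algebra and one cannot simply restrict the relation to $E$; the correct bookkeeping is to pass to the $\cR$-saturation $\cR E$, where the cut becomes centrally full, and then to invoke reduction theory (in parallel with the $E=X$ computation in \cite{AFH}) to keep the trace vector cyclic and separating for the reduced algebra. Everything else in the statement is the soft operator theory indicated above.
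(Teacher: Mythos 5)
Your two ``soft'' inequalities are exactly the ones in the paper's proof and are fine: $R(1_E)\lambda_{\cR/\cS}(\id_E)\lambda_{\cR/\cS}(\nu)^{2k}\lambda_{\cR/\cS}(\id_E)=(W^kP)^*(W^kP)$ gives the upper bound by $\|R(1_E)\lambda_{\cR/\cS}(\nu)\|^{2k}$, and testing against $\zeta_E$ gives the lower bound by $\rho_E(\cR/\cS,\nu)$. The gap is the remaining (and essential) inequality $\|R(1_E)\lambda_{\cR/\cS}(\nu)\|\leq \rho_E(\cR/\cS,\nu)$, which you propose to obtain by showing that $\zeta_E$ is separating for a von Neumann algebra containing $W=R(1_E)\lambda_{\cR/\cS}(\nu)$ (``the canonical trace vector of the compressed system''). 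That claim is false, and passing to the saturation $\cR E$ does not repair it, because $R(1_E)L^{2}(\cR/\cS)$ is not a standard form of any algebra containing $W$. Already for $E=X$ and $\cS=\cR$ (allowed by the hypotheses) one has $L^{2}(\cR/\cS)\cong L^{2}(X,\mu)$, $\zeta_X=1$, and $W$ is the Markov--Koopman operator $\alpha(\nu)$ on $L^{2}(X,\mu)$; since $W1=1$, every $f(W)$ with $f(1)=0$ annihilates $\zeta_X$, so $\zeta_X$ is separating for $\{W\}''$ only when $\sigma(W)=\{1\}$, i.e.\ essentially never. (The lemma's conclusion still holds in that example because $1$ is a top eigenvector --- but not by your mechanism.) A telltale sign is that your argument for the hard direction uses neither the hypothesis that $\nu$ generates $\cR$ nor the positivity-preserving structure of $W$: if it worked, it would prove the identity for an arbitrary self-adjoint contraction with a distinguished unit vector, which is absurd.

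What the paper actually does here is a Perron--Frobenius-type norming-family argument rather than a cyclic/separating one. Since $R(1_E)\lambda_{\cR/\cS}(\nu)$ preserves the positive cone and the nonnegative vectors $\{\lambda_{\cR/\cS}(\gamma)\xi\}_{\gamma\in\Gamma}$, with $\Gamma=\ip{\supp(\nu)}$ and $\xi(x,c)=1_{x\in c}$, form a norming family, \cite[Lemma 3.4]{AFH} yields $\|R(1_E)\lambda_{\cR/\cS}(\nu)\|=\sup_{\gamma\in\Gamma}\lim_{n\to\infty}\ip{R(1_E)\lambda_{\cR/\cS}(\delta_{\gamma^{-1}}*\nu*\delta_{\gamma})^{2n}\xi,\xi}^{1/2n}$; each such term equals $\left(\int_{E}p_{2n,x,\gamma^{*}(\cS)}\,d\mu(x)\right)$ raised to the power $1/2n$ in the limit, and the generating hypothesis enters precisely here: choosing $k$ with $c=\nu^{*k}(\{\gamma\})>0$, one compares the conjugated return probabilities with the unconjugated ones up to a factor $c^{2}$ and a time shift, which bounds every term by $\rho_E(\cR/\cS,\nu)$. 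Your spectral reformulation --- that $\supp\mu_{\zeta_E}$ must reach $\max\sigma(W)$ --- is a correct restatement of what must be proved, but the separating-vector route to it is a dead end; some substitute exploiting positivity of the kernel and the generation of $\cR$ by $\supp(\nu)$ is unavoidable.
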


\begin{proof}
Let $\Gamma=\ip{\supp(\nu)}$. We adopt notation as in the discussion surrounding (\ref{eqn: co spectral radius is norm restricted to subset background}).
Since $\lambda_{\cR/\cS}(\id_{E})\zeta_{E}=\zeta_{E}$, and $R(1_{E})\zeta_{E}=\zeta_{E}$, and $R(1_{E})$ and $\lambda_{\cR/\cS}([[\cR]])$ commute, we  obtain that 
\[\rho_{E}(\cR/\cS,\nu)=\lim_{k\to\infty}\ip{\lambda_{\cR/\cS}(\nu)^{2k}\zeta_{E},\zeta_{E}}^{1/2k}\leq\liminf_{k\to\infty}\|R(1_{E})\lambda_{\cR/\cS}(\id_{E})\lambda_{\cR/\cS}(\nu)^{2k}\lambda_{\cR/\cS}(\id_{E})\|^{1/2k}.\]
Additionally, since $R(1_{E})$ commutes with $\lambda_{\cR/\cS}([[\cR]])$:
\begin{align*}
    \|R(1_{E})(\lambda_{\cR/\cS}(\id_{E})\lambda_{\cR/\cS}(\nu)^{2k}\lambda_{\cR/\cS}(\id_{E}))\|=\|\lambda_{\cR/\cS}(\id_{E})R(1_{E})\lambda_{\cR/\cS}(\nu)^{2k}\lambda_{\cR/\cS}(\id_{E})\|&\leq \|R(1_{E})\lambda_{\cR/\cS}(\nu)^{2k}\|\\
    &=\|(R(1_{E})\lambda_{\cR/\cS}(\nu))^{2k}\|\\
    &\leq \|R(1_{E})\lambda_{\cR/\cS}(\nu)\|^{2k}.
\end{align*}
Thus it suffices to show that $\|R(1_{E})\lambda_{\cR/\cS}(\nu)\|\leq \rho_{E}(\cR/\cS,\nu).$
Define $\xi\in L^{2}(\cR/\cS)$ by $\xi(x,c)=1_{x\in c}$. 
Note that for any measurable $F\subseteq X$, and any $\gamma\in \Gamma$, we have 
\[\ip{R(1_{E})\lambda_{\cR/\cS}(\nu))^{2n}1_{F}\lambda_{\cR/\cS}(\gamma)\xi,1_{F}\lambda_{\cR/\cS}(\gamma)\xi}^{1/2n}\leq \ip{(R(1_{E})\lambda_{\cR/\cS}(\nu))^{2n}\lambda_{\cR/\cS}(\gamma)\xi,\lambda_{\cR/\cS}(\gamma)\xi}^{1/2n},\]
where in the above we (as usual) identify $L^{\infty}(X)\subseteq L^{\infty}(\cR/\cS)$ via $f\mapsto ((x,c)\mapsto f(x))$. 
Moreover, by \cite[Lemma 3.4]{AFH}, we have that 
\[L^{2}(\cR/\cS)=\overline{\Span\{1_{F}\lambda_{\cR/\cS}(\gamma)\xi:\gamma\in \Gamma, F\subseteq X \text{ is measurable}\}}.\]
Thus, the same argument as in the proof of \cite[Theorem 3.2(i)]{AFH} (using \cite[Lemma 3.4]{AFH} , see also \cite[Equation 2.8]{HutchcroftIsing}) shows that
\begin{align*}
\|R(1_{E})\lambda_{\cR/\cS}(\nu)\|&=\sup_{\gamma\in \Gamma}\lim_{n\to\infty}\ip{(R(1_{E})\lambda_{\cR/\cS}(\nu))^{2n}\lambda_{\cR/\cS}(\gamma)\xi,\lambda_{\cR/\cS}(\gamma)\xi}^{1/2n}\\
&=\sup_{\gamma\in \Gamma}\lim_{n\to\infty}\ip{R(1_{E})\lambda_{\cR/\cS}(\delta_{\gamma^{-1}}*\nu*\delta_{\gamma})^{2n}\xi,\xi}^{1/2n}.
\end{align*}
 Let $\gamma^{*}(\cS)$ be the subrelation $\{(\gamma(x),\gamma(y)):(x,y)\in \cS\}$. Then 
\[\ip{R(1_{E})\lambda_{\cR/\cS}(\delta_{\gamma^{-1}}*\nu*\delta_{\gamma})^{2n}\xi,\xi}=\int_{E}p_{2n,\gamma(x),\gamma^{*}(\cS)}\,d\mu(x).\]
Since $\Gamma=\ip{\supp(\nu)}$, we can find $k>0$ such that $\nu^{*k}(\{\gamma\})>0$. Set $c=\nu^{*k}(\{\gamma\})$. As in the proof of Theorem \cite[Theorem 3.2(i)]{AFH} we have that 
\[p_{2n,\gamma(x),\gamma^{*}(\cS)}\geq c^{2}p_{2(n-k),x,\cS}.\]
This shows that 
\[\lim_{n\to\infty}\ip{R(1_{E})\lambda_{\cR/\cS}(\delta_{\gamma^{-1}}*\nu*\delta_{\gamma})^{2n}\xi,\xi}^{1/2n}\leq \lim_{n\to\infty}(\mu(E)\ip{\lambda_{\cR/\cS}(\nu)^{2(n-k)}\zeta_{E},\zeta_{E}})^{1/2n}=\rho_{E}(\cR/\cS,\nu).\]
    
\end{proof}

Note that $R(1_{E})$ is an orthogonal projection, and since $R(1_{E})$ commutes with $\lambda_{\cR/\cS}$, the subspace $R(1_{E})L^{2}(\cR/\cS)$ is $\lambda_{\cR/\cS}$-invariant. Thus we can view $\|R(1_{E})\lambda_{\cR/\cS}(\nu)\|$ as simply the norm of $\lambda_{\cR/\cS}(\nu)|_{R(1_{E})L^{2}(\cR/\cS)}$.

We have the following general application to equality of cospectral radii.

\begin{prop}\label{prop: co spectral radius descent}
Let $\cS_{1}\leq \cS_{2}\leq \cR$ be discrete, probability measure-preserving equivalence relations on a standard probability space $(X,\mu)$. Suppose that $E\subseteq X$ is a measurable, $\cS_{1}$-invariant set with $\mu(E)>0$. If $\cS_{1}|_{E}$ is coamenable in $\cS_{2}|_{E}$, then for any generating, symmetric, and countably supported $\nu\in \Prob([\cR])$ we have $\rho_{E}(\cR/\cS_{2},\nu)=\rho_{E}(\cR/\cS_{1},\nu)$.  
\end{prop}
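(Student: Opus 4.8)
The plan is to prove the two inequalities between $\rho_E(\cR/\cS_2,\nu)$ and $\rho_E(\cR/\cS_1,\nu)$ separately. The inequality $\rho_E(\cR/\cS_1,\nu)\le\rho_E(\cR/\cS_2,\nu)$ is elementary: since $\cS_1\le\cS_2$ we have $[x]_{\cS_1}\subseteq[x]_{\cS_2}$, so the return probabilities satisfy $p_{2k,x,\cS_1}\le p_{2k,x,\cS_2}$, whence $\rho^{\cS_1}_{\nu}\le\rho^{\cS_2}_{\nu}$ pointwise a.e.\ and $\|\rho^{\cS_1}_{\nu}1_E\|_\infty\le\|\rho^{\cS_2}_{\nu}1_E\|_\infty$. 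The content is the reverse inequality $\rho_E(\cR/\cS_2,\nu)\le\rho_E(\cR/\cS_1,\nu)$, and here I would use the operator-theoretic description. By Lemma \ref{lem: restricted co-spec radi is just restricted op} (valid since $E$ is $\cS_1$-invariant) we have $\rho_E(\cR/\cS_1,\nu)=\|R(1_E)\lambda_{\cR/\cS_1}(\nu)\|$, the norm of $\lambda_{\cR/\cS_1}(\nu)$ on the invariant subspace $H_1:=R(1_E)L^2(\cR/\cS_1)$, while on the $\cS_2$-side I would keep the description $\rho_E(\cR/\cS_2,\nu)=\lim_k\langle\lambda_{\cR/\cS_2}(\nu)^{2k}\zeta_E,\zeta_E\rangle^{1/2k}$ with $\zeta_E(x,\bar c)=\mu(E)^{-1/2}1_{x\in\bar c\cap E}$ from the background material. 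The goal is then to bound $\langle\lambda_{\cR/\cS_2}(\nu)^{2k}\zeta_E,\zeta_E\rangle$ by $\rho_E(\cR/\cS_1,\nu)^{2k}$ for each fixed $k$.

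To do this I would lift from $L^2(\cR/\cS_2)$ to $L^2(\cR/\cS_1)$ using the almost invariant vectors provided by coamenability, following the group-theoretic heuristic sketched in the introduction. Since $\cS_1|_E$ is coamenable in $\cS_2|_E$, Theorem \ref{thm: big TFAE thm} furnishes $\eta^{(n)}\in L^2(\cS_2|_E/\cS_1|_E)$ with $\|\eta^{(n)}_y\|_2=1$ for a.e.\ $y\in E$ and $\|\eta^{(n)}_u-\eta^{(n)}_v\|_2\to 0$ for a.e.\ $(u,v)\in\cS_2|_E$. Using Corollary \ref{cor:choice functions} for $\cS_2\le\cR$ together with a measurable enumeration of the $\cS_2$-classes, I would fix a Borel map assigning to each $(x,\bar c)$ with $\bar c\cap E\ne\varnothing$ a representative $\mathbf r(x,\bar c)\in\bar c\cap E$, and define
\[
(T_n\eta)(x,c)=\eta(x,\bar c)\,\eta^{(n)}_{\mathbf r(x,\bar c)}(c),
\]
where $\bar c\in[x]_\cR/\cS_2$ is the $\cS_2$-class containing the $\cS_1$-class $c$, and $\eta^{(n)}_{\mathbf r(x,\bar c)}(c)$ is evaluated by viewing $c$ as an element of $\bar c/\cS_1$. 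Because $\sum_{c\subseteq\bar c}|\eta^{(n)}_{\mathbf r(x,\bar c)}(c)|^2=1$, the map $T_n$ is fiberwise inner-product preserving (hence an isometry) on the subspace of vectors supported on $\{(x,\bar c):\bar c\cap E\ne\varnothing\}$; one checks via Proposition \ref{prop:measurability exercise} that $T_n\eta$ is measurable, that $T_n$ sends this subspace into $H_1$, and that every vector $\lambda_{\cR/\cS_2}(\nu)^j\zeta_E$ lies in it. In particular $\|T_n\zeta_E\|=1$ and $T_n\zeta_E\in H_1$.

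The key computation is approximate intertwining. A direct expansion gives, for $\gamma\in\supp(\nu)$ and $\zeta=\lambda_{\cR/\cS_2}(\nu)^j\zeta_E$,
\[
\bigl(\lambda_{\cR/\cS_1}(\gamma)T_n\zeta-T_n\lambda_{\cR/\cS_2}(\gamma)\zeta\bigr)(x,c)=\zeta(\gamma^{-1}x,\bar c)\bigl(\eta^{(n)}_{\mathbf r(\gamma^{-1}x,\bar c)}(c)-\eta^{(n)}_{\mathbf r(x,\bar c)}(c)\bigr),
\]
and the two representatives $\mathbf r(\gamma^{-1}x,\bar c)$ and $\mathbf r(x,\bar c)$ lie in the same set $\bar c\cap E$, hence are $\cS_2|_E$-equivalent. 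Decomposing $X$ into the countably many Borel pieces on which the passage $\mathbf r(x,\bar c)\mapsto\mathbf r(\gamma^{-1}x,\bar c)$ is implemented by a fixed element of $[[\cS_2|_E]]$ (read off from the choice functions and $\gamma$), I would bound the $L^2$-error by a sum of terms $\|\lambda_{\cS_2|_E/\cS_1|_E}(\rho)\eta^{(n)}-\eta^{(n)}\|_2$, each tending to $0$ by Theorem \ref{thm: DCT argument}, and conclude $\|\lambda_{\cR/\cS_1}(\gamma)T_n\zeta-T_n\lambda_{\cR/\cS_2}(\gamma)\zeta\|_2\to 0$ by dominated convergence (the summand is dominated by $4|\zeta(\gamma^{-1}x,\bar c)|^2$). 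Propagating this through the $2k$-fold product and using that $T_n$ preserves inner products of vectors in the good subspace yields, for each fixed $k$,
\[
\langle\lambda_{\cR/\cS_2}(\nu)^{2k}\zeta_E,\zeta_E\rangle=\lim_{n\to\infty}\langle\lambda_{\cR/\cS_1}(\nu)^{2k}T_n\zeta_E,T_n\zeta_E\rangle\le\|\lambda_{\cR/\cS_1}(\nu)|_{H_1}\|^{2k}=\rho_E(\cR/\cS_1,\nu)^{2k}.
\]
Taking $2k$-th roots and letting $k\to\infty$ gives $\rho_E(\cR/\cS_2,\nu)\le\rho_E(\cR/\cS_1,\nu)$, completing the proof. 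I expect the main obstacle to be precisely this intertwining step: unlike the group case, no canonical basepoint is available in each $\cS_2$-class, so the representative $\mathbf r$ must depend on the ambient basepoint, and the resulting ``cocycle'' discrepancy between $\mathbf r(x,\bar c)$ and $\mathbf r(\gamma^{-1}x,\bar c)$ is exactly what the almost invariance of $\eta^{(n)}$ (through Theorem \ref{thm: DCT argument}) is needed to absorb; organizing the measurable decomposition so that dominated convergence applies is the technical heart.
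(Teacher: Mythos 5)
Your proposal is correct and follows essentially the same route as the paper's proof: both reduce the statement to an operator-norm comparison via Lemma \ref{lem: restricted co-spec radi is just restricted op}, lift vectors from the $\cS_{2}$-coset space into $R(1_{E})L^{2}(\cR/\cS_{1})$ by multiplying with fiberwise-unit almost invariant vectors evaluated at measurably chosen representatives of the $\cS_{2}$-classes, and absorb the resulting basepoint discrepancy (which lies along $\cS_{2}|_{E}$) by dominated convergence. The differences are cosmetic: the paper first extends the almost invariant vectors from $E$ to $\cS_{2}E$ via Proposition \ref{prop: passing rel amen between subsets} and uses choice functions together with partial isomorphisms $\gamma_{i}$ into $\cS_{2}E$, then proves convergence of matrix coefficients for arbitrary $\zeta_{1},\zeta_{2}$ in the range of $R(1_{E})$, whereas you pick representatives directly in $\bar{c}\cap E$ and run an asymptotic-intertwining/telescoping estimate only on the vectors $\lambda_{\cR/\cS_{2}}(\nu)^{j}\zeta_{E}$ — the same computation in slightly different packaging.
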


\begin{proof}
We always have that $\rho_{E}(\cR/\cS_{1},\nu)\leq \rho_{E}(\cR/\cS_{2},\nu)$.  By $\cS_{2}$-invariance of $\rho^{\cS_{2}}_{\nu}$, we have that $\rho_{E}(\cR/\cS_{2},\nu)=\rho_{\cS_{2}E}(\cR/\cS_{2},\nu)$. Hence, by Lemma \ref{lem: restricted co-spec radi is just restricted op}, it suffices to show that
\[\|R(1_{\cS_{2}E})\lambda_{\cR/\cS_{2}}(\nu)\|\leq \|R(1_{E})\lambda_{\cR/\cS_{1}}(\nu)\|.\]

We need a bit of setup. Since $\cS_{1}|_{E}$ is coamenable in $\cS_{2}|_{E}$, by Proposition \ref{prop: passing rel amen between subsets} we may find a sequence $\xi^{(n)}\in L^{2}(\cS_{2}|_{\cS_{2}E}/\cS_{1}|_{\cS_{2}E})$ with $\|\xi^{(n)}_{x}\|_{2}=1$ for almost every $x\in \cS_{2} E$, and with $\|\xi^{(n)}_{x}-\xi^{(n)}_{y}\|_{2}\to_{n\to\infty}0$ for almost every $(x,y)\in \cS_{2}|_{\cS_{2}E}$, and $R(1_{E})\xi^{(n)}=\xi^{(n)}$. Apply Corollary \ref{cor:choice functions} to find a countable family $\{\sigma_{j}\}_{j\in J}$ of choice functions for $\cS_{2}|_{\cS_{2}E}\leq \cR|_{\cS_{2}E}$. 
Choose Borel $\{\gamma_{i}\}_{i\in I}$ in $[[\cR]]$ so that 
\[\cR E=\bigsqcup_{i\in I} \ran(\gamma_{i}),\]
modulo null sets and $\dom(\gamma_{i})\subseteq \cS_{2} E$ for all $i\in I$. 
For almost every $x\in X$ we have a well-defined surjection $\pi_{x}\colon [x]_{\cR}/\cS_{1}\to [x]_{\cR}/\cS_{2}$ such that $\pi_{x}([y]_{\cS_{1}})=[y]_{\cS_{2}}$. Note that the total map $\pi\colon \cR/\cS_{1}\to \cR/\cS_{2}$ given by $\pi(x,c)=(x,\pi_{x}(c))$ is measurable.

Given $\zeta\in L^{2}(\cR/\cS_{2})$ and $n\in \N$ we define $\widetilde{\zeta}^{(n)}\in L^{2}(\cR/\cS_{1})$ as follows. For  almost every $x\in \cR E$,  there is a unique $i_{x}\in I$ with $x\in \ran(\gamma_{i_{x}})$. 
For almost every $x\in X$ we have for all $c\in [x]_{\cR}/\cS_{1}$ either $c\subseteq \cS_{2}E$ or $c\cap E=\varnothing.$ If $c\subseteq \cS_{2} E$, then since 
 $[x]_{\cR}/\cS_{1}=[\gamma_{i_{x}}^{-1}(x)]_{\cR}/\cS_{1}$,
 there is a unique $j_{c,x}\in J$ with $c\subseteq [\sigma_{j_{c,x}}(\gamma_{i_{x}}^{-1}(x))]_{\cS_{2}}$. 
 We define $\widetilde{\zeta}^{(n)}$  
by saying that $\widetilde{\zeta}^{(n)}_{x}=0$ if either $x\notin \cR E$, or $c\cap E=\varnothing,$ and for $x\in \cR E$, $c\subseteq \cS_{2}E$ we have
\[\widetilde{\zeta}^{(n)}(x,c)=\zeta(x,\pi_{x}(c))\xi^{(n)}(\sigma_{j_{c,x}}(\gamma_{i_{x}}^{-1}(x)),c),\]
where $j_{c,x},i_{x}$ are as above. Note that the set of $(x,c)\in \cR/\cS_{1}$ with $x\in \cR E$ and $c\subseteq \cS_{2}E$ is measurable, by $\cS_{1}$-invariance of $\cS_{2} E$. Further,  for every $\gamma\in [\cR]$ 
\[\widetilde{\zeta}^{(n)}(x,[\gamma(x)]_{\cS_{1}})=\zeta(x,[\gamma(x)]_{\cS_{2}})\xi^{(n)}(\sigma_{j_{c,x}}(\gamma_{i_{x}}^{-1}(x)),[\gamma(x)]_{\cS_{1}})1_{\cR E}(x)1_{\cS_{2} E}(\gamma(x))\]
is a product of measurable functions. So $\widetilde{\zeta}^{(n)}$ is measurable by Proposition \ref{prop:measurability exercise}.
Suppose that $\zeta_{1},\zeta_{2}\in L^{2}(\cR|_{\cR E}/\cS_{2}|_{\cR E})$, we then claim that
\begin{equation}\label{eqn: crucial limiting ip formula}
\lim_{n\to\infty}\ip{\lambda_{\cR/\cS_{1}}(\psi)\widetilde{\zeta}_{1}^{(n)},\widetilde{\zeta}^{(n)}_{2}}=\ip{\lambda_{\cR/\cS_{2}}(\psi)R(1_{\cS_{2}E})\zeta_{1},R(1_{\cS_{2} E})\zeta_{2}} \textnormal{ for all $\psi\in [\cR]$.}
\end{equation}
If $x\in \cR E$, then for each $b\in [\gamma_{i_{x}}^{-1}(x)]_{\cR|_{\cS_{2}E}}/\cS_{2}|_{\cS_{2}E}$, and for all $c\in \pi_{x}^{-1}(\{b\})$ we have that $[\sigma_{j_{c,x}}(\gamma_{i_{x}}^{-1}(x))]_{\cS_{2}}=b$, so we may find a $k_{b,x}\in J$ with $j_{c,x}=k_{b,x}$ for all $c\in \pi_{x}^{-1}(\{b\})$. For almost every $x\in X$, we may let $r_{x}$ be such that $\psi^{-1}(x)\in \ran(\gamma_{r_{x}})$. 
Similarly, we may find an $\ell_{b,x}\in J$ so that 
$j_{c,\psi^{-1}(x)}=\ell_{b,x}$ for all $c\in \pi_{\psi^{-1}(x)}^{-1}(\{b\})$. Thus for all $\psi\in [\cR]$, 
\[\ip{\lambda_{\cR/\cS_{1}}(\psi)\widetilde{\zeta}_{1}^{(n)},\widetilde{\zeta}^{(n)}_{2}}=\]
\[\int_{\cR/\cS_{2}}\overline{[R(1_{\cS_{2}E})\zeta_{2}](x,b)}[R(1_{\cS_{2}E})\zeta_{1}](\psi^{-1}(x),b)\ip{\xi^{(n)}_{\sigma_{\ell_{b,x}}(\gamma_{r_{x}}^{-1}(\psi^{-1}(x)))},\xi^{(n)}_{\sigma_{k_{b,x}}(\gamma_{i_{x}}^{-1}(x))}}_{\ell^{2}(b/\cS_{1})}\,d\mu_{\cR/\cS_{2}}(x,b).\]
Note that 
\[|[R(1_{\cS_{2} E})\overline{\zeta_{2}](x,b)}[(R1_{\cS_{2}E})\zeta_{1}](\psi^{-1}(x),b)\ip{\xi^{(n)}_{\sigma_{\ell_{b,x}}(\gamma_{r_{x}}^{-1}(\psi^{-1}(x)))},\xi^{(n)}_{\sigma_{k_{b,x}}(\gamma_{i_{x}}^{-1}(x))}}_{\ell^{2}(b/\cS_{1})}|\leq |\zeta_{2}(x,b)||\zeta_{1}(\psi^{-1}(x),b)|,\]
and 
\[\int_{\cR/\cS_{2}}|\zeta_{2}(x,b)||\zeta_{1}(\psi^{-1}(x),b)|\,d\mu_{\cR/\cS_{2}}(x,b)\leq\]
\[\left(\int_{\cR/\cS_{2}}|\zeta_{2}(x,b)|^{2}\,d\mu_{\cR/\cS_{2}}(x,b)\right)^{1/2}\left(\int_{\cR/\cS_{2}}|\zeta_{1}(\psi^{-1}(x),b)|^{2}\,d\mu_{\cR/\cS_{2}}(x,b)\right)^{1/2}=\|\zeta_{1}\|_{2}\|\zeta_{2}\|_{2}.\]
Thus, by the dominated convergence theorem, it suffices to show that 
\[\lim_{n\to\infty}\ip{\xi^{(n)}_{\sigma_{\ell_{b,x}}(\gamma_{r_{x}}^{-1}(\psi^{-1}(x))},\xi^{(n)}_{\sigma_{k_{b,x}}(\gamma_{i_{x}}^{-1}(x))}}_{\ell^{2}(b/\cS_{1})}=1, \textnormal{ for almost every $(x,b)\in \cR/\cS_{2}$ with $b\subseteq \cS_{2} E$.}\]

Our choice of $\xi^{(n)}$ implies that for almost every $x\in \ran(\gamma_{i})$ we have, by Cauchy-Schwartz,
\[|\ip{\xi^{(n)}_{\sigma_{\ell_{b,x}}(\gamma_{r_{x}}^{-1}(\psi^{-1}(x)))},\xi^{(n)}_{\sigma_{k_{b,x}}(\gamma_{i}^{-1}(x))}}_{\ell^{2}(b/\cS_{1})}-1|\leq \|\xi^{(n)}_{\sigma_{\ell_{b,x}}(\gamma_{r_{x}}^{-1}(\psi^{-1}(x)))}-\xi^{(n)}_{\sigma_{k_{b,x}}(\gamma_{i}^{-1}(x))}\|_{\ell^{2}(b/\cS_{1})}\to_{n\to\infty}0.\]
This completes the proof of (\ref{eqn: crucial limiting ip formula}).

Having shown (\ref{eqn: crucial limiting ip formula}), since $\lambda_{\cR/\cS_{j}}(\nu)=\sum_{\psi\in [\cR]}\nu(\{\psi\})\lambda_{\cR/\cS_{j}}(\psi)$, $\sum_{\psi\in [\cR]}\nu(\{\psi\})=1$, and $\nu(\{\psi\})\geq 0,$ it follows that 
\[\lim_{n\to\infty}\ip{\lambda_{\cR/\cS_{1}}(\nu)\widetilde{\zeta}_{1}^{(n)},\widetilde{\zeta}_{2}^{(n)}}=\ip{\lambda_{\cR/\cS_{2}}(\nu)R(1_{\cS_{2} E})\zeta_{1},R(1_{\cS_{2} E})\zeta_{2}} \textnormal{ for all $\zeta_{1},\zeta_{2}\in L^{2}(\cR/\cS_{2})$.}\]
Recall that $R(1_{E})\xi^{(n)}=\xi^{(n)}$, so $R(1_{E})\widetilde{\zeta}_{j}=\widetilde{\zeta}_{j}$ for $j=1,2$ and all $\zeta_{1},\zeta_{2}\in L^{2}(\cR/\cS_{2})$. Since $R(1_{E})$ commutes with $\lambda_{\cR/\cS_{1}}$, and $R(1_{E})^{2}=R(1_{E})=R(1_{E})^{*}$ it follows that
\[\lim_{n\to\infty}\ip{R(1_{E})\lambda_{\cR/\cS_{1}}(\nu)\widetilde{\zeta}_{1}^{(n)},\widetilde{\zeta}_{2}^{(n)}}=\ip{R(1_{\cS_{2} E})\lambda_{\cR/\cS_{2}}(\nu)\zeta_{1},\zeta_{2}} \textnormal{ for all $\zeta_{1},\zeta_{2}\in L^{2}(\cR/\cS_{2})$.}\]
This directly implies that 
\[\|R(1_{\cS_{2} E})\lambda_{\cR/\cS_{2}}(\nu)\|\leq \|R(1_{E})\lambda_{\cR/\cS_{1}}(\nu)\|.\]

\end{proof}

We will use the above Proposition to relate coamenability of a subrelation to the cospectral radius being almost surely $1$, for now we note the following general consequence.

\begin{cor}\label{cor: equality of co-spectral radii}
Let $(X,\mu)$ be a standard probability space and $\cR$ be a discrete, probability measure-preserving equivalence relations.
\begin{enumerate}[(i)]
    \item Suppose that $\cS_{1}\leq \cS_{2}\leq \cR$. If $E\subseteq X$ is $\cS_{1}$-invariant and has positive measure, and if $\cS_{1}|_{E}$ is everywhere coamenable in $\cS_{2}|_{E}$, then $\rho^{\cS_{2}}_{\nu}1_{E}=\rho^{\cS_{1}}_{\nu}1_{E}$ almost everywhere for every countably supported $\nu\in \Prob([\cR])$ which is symmetric and generates $\cR$. \label{item: equality of co-spectral radiii}
    \item If $\cS$ is coamenable in $\cR$, and $\cS$ is hyperfinite, then $\cR$ is hyperfinite. \label{item: easy hyperfinite condition}
\end{enumerate}

\end{cor}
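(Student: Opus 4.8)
The plan is to deduce both parts from Proposition~\ref{prop: co spectral radius descent}, which is the real engine; the corollary only adds a measure-theoretic level-set argument and, for the second part, an identification of amenability with coamenability of the trivial subrelation.

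For part~(\ref{item: equality of co-spectral radiii}), I would first record the trivial monotonicity $\rho^{\cS_1}_\nu\le \rho^{\cS_2}_\nu$ almost everywhere: since $[x]_{\cS_1}\subseteq [x]_{\cS_2}$ we have $p_{2k,x,\cS_1}\le p_{2k,x,\cS_2}$, hence the inequality on the limits. Thus it suffices to prove $\rho^{\cS_2}_\nu 1_E\le \rho^{\cS_1}_\nu 1_E$ almost everywhere. The one external fact I need is that $\rho^{\cS_1}_\nu$ is $\cS_1$-invariant; this is immediate from the fiberwise description of the cospectral radius, since for $x\thicksim_{\cS_1}y$ the starting vector $1_{x\in\cdot}$ and the target class $[x]_{\cS_1}$ agree with those of $y$, and a reversibility comparison exactly as in the proof of Lemma~\ref{lem: restricted co-spec radi is just restricted op} equates the two exponential growth rates. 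Given this, for each rational $t\in[0,1]$ I set $A_t=E\cap\{x:\rho^{\cS_1}_\nu(x)\le t\}$, which is $\cS_1$-invariant because both $E$ and $\rho^{\cS_1}_\nu$ are.

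Now whenever $\mu(A_t)>0$, everywhere coamenability of $\cS_1|_E$ in $\cS_2|_E$ (applied to the subset $A_t\subseteq E$) guarantees that $\cS_1|_{A_t}$ is coamenable in $\cS_2|_{A_t}$, so Proposition~\ref{prop: co spectral radius descent} applies to the $\cS_1$-invariant set $A_t$ and gives $\rho_{A_t}(\cR/\cS_2,\nu)=\rho_{A_t}(\cR/\cS_1,\nu)$, that is $\|\rho^{\cS_2}_\nu 1_{A_t}\|_\infty=\|\rho^{\cS_1}_\nu 1_{A_t}\|_\infty\le t$. Hence $\rho^{\cS_2}_\nu\le t$ almost everywhere on $A_t$. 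Discarding the countable union of the exceptional null sets over all rational $t$, I obtain for almost every $x\in E$ and every rational $t>\rho^{\cS_1}_\nu(x)$ that $x\in A_t$ and $\rho^{\cS_2}_\nu(x)\le t$; letting $t\downarrow\rho^{\cS_1}_\nu(x)$ yields $\rho^{\cS_2}_\nu(x)\le\rho^{\cS_1}_\nu(x)$, which together with monotonicity proves part~(\ref{item: equality of co-spectral radiii}). Note that this version only uses invariance of $\rho^{\cS_1}_\nu$, not of $\rho^{\cS_2}_\nu$.

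For part~(\ref{item: easy hyperfinite condition}), let $\Delta=\{(x,x):x\in X\}$ be the diagonal subrelation, so that $\cR/\Delta=\cR$ and $\lambda_{\cR/\Delta}=\lambda_{\cR}$; then $\Delta$ being coamenable in a relation (equivalently, $\lambda_\cR$ having almost invariant vectors on every invariant piece, via Theorem~\ref{thm: big TFAE thm}(\ref{item: almost invariant vectors nonergodic part inv sets})) is precisely amenability of that relation. Since $\cS$ is hyperfinite, every restriction $\cS|_F$ is hyperfinite, hence amenable, i.e. $\Delta|_F$ is coamenable in $\cS|_F$; thus $\Delta$ is everywhere coamenable in $\cS$. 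Applying part~(\ref{item: equality of co-spectral radiii}) with $\cS_1=\Delta$, $\cS_2=\cS$, $E=X$ gives $\rho^{\Delta}_\nu=\rho^{\cS}_\nu$ almost everywhere for every countably supported symmetric $\nu$ generating $\cR$. On the other hand, $\cS$ coamenable in $\cR$ gives, by Theorem~\ref{thm: big TFAE thm}(\ref{item: co-spectral radius condition nonergodic part}), that $\rho_E(\cR/\cS,\nu)=\|\rho^{\cS}_\nu 1_E\|_\infty=1$ for every $\cR$-invariant positive measure $E$ and every such $\nu$. Combining, $\rho_E(\cR/\Delta,\nu)=\|\rho^{\Delta}_\nu 1_E\|_\infty=1$ for all such $E$ and $\nu$, which by Theorem~\ref{thm: big TFAE thm}(\ref{item: co-spectral radius condition nonergodic part}) applied to the inclusion $\Delta\le\cR$ says exactly that $\Delta$ is coamenable in $\cR$, i.e. $\cR$ is amenable. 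By the Connes--Feldman--Weiss/Ornstein--Weiss theorem \cite{CFW, OrnWeiss}, amenable relations are hyperfinite, so $\cR$ is hyperfinite.

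The bulk of the difficulty has already been absorbed into Proposition~\ref{prop: co spectral radius descent}, so the only genuine points to get right here are bookkeeping ones: establishing the $\cS_1$-invariance of $\rho^{\cS_1}_\nu$ rigorously (so that the level sets $A_t$ are legitimate inputs to the proposition), and correctly invoking Theorem~\ref{thm: big TFAE thm} in its non-ergodic form, quantified over all $\cR$-invariant positive measure sets, for both inclusions $\cS\le\cR$ and $\Delta\le\cR$. I expect the invariance claim to be the subtlest step, as it is the one fact not handed to us directly by the preceding results, though it follows routinely from the fiberwise/reversibility description already used in Lemma~\ref{lem: restricted co-spec radi is just restricted op}.
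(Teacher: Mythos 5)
Your proof is correct and follows essentially the same route as the paper: both parts reduce to Proposition~\ref{prop: co spectral radius descent} together with the identity $\rho_{F}(\cR/\cS_{i},\nu)=\|\rho^{\cS_{i}}_{\nu}1_{F}\|_{\infty}$, using $\cS_{1}$-invariance of the cospectral radius to compare essential suprema over invariant sets for (i), and for (ii) the hyperfiniteness of $\cS$ to make the trivial (diagonal) subrelation coamenable, then Theorem~\ref{thm: big TFAE thm} plus Kaimanovich's criterion and \cite{CFW} to conclude. The only cosmetic differences are that your explicit level-set argument with $A_{t}$ needs invariance of $\rho^{\cS_{1}}_{\nu}$ alone (the paper invokes invariance of both $\rho^{\cS_{1}}_{\nu}$ and $\rho^{\cS_{2}}_{\nu}$, both available from \cite[Proposition 3.15]{AFH}), and that you route (ii) through part (i) with $E=X$ rather than applying the proposition directly on each $\cR$-invariant set.
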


\begin{proof}
(\ref{item: equality of co-spectral radiii}): We always have that $\rho^{\cS_{2}}_{\nu}\geq \rho^{\cS_{1}}_{\nu}$ almost everywhere. To prove the reverse inequality holds almost surely, since $\rho^{\cS_{2}}_{\nu}$, $\rho^{\cS_{1}}_{\nu}$ are almost surely $\cS_{1}$-invariant, it suffices to show that $\|\rho^{\cS_{2}}_{\nu}1_{F}\|_{\infty}\leq \|\rho^{\cS_{1}}_{\nu}1_{F}\|_{\infty}$ for every $\cS_{1}$-invariant measurable set $F\subseteq E$ with $\mu(F)>0$. This follows from Proposition \ref{prop: co spectral radius descent} and (\ref{eqn: co spectral radius is norm restricted to subset background}).

(\ref{item: easy hyperfinite condition}):  Note that any one of parts (\ref{item: Folner sets in invariant sets}), (\ref{item: rel amen in terms of Markov operator}), (\ref{item: co-spectral radius condition nonergodic part}) of Theorem \ref{thm: big TFAE thm} imply that $\cS|_{E}$ is coamenable in $\cR|_{E}$ for any $\cR$-invariant measurable set $E$ (since any $\cR|_{E}$-invariant measurable subset of $E$ is an $\cR$-invariant subset of $X$). Applying Proposition \ref{prop: co spectral radius descent} with $\cS_{1}$ being the trivial relation and $\cS_{2}=\cS$, and $E$ being any $\cR$-invariant subset of $X$ of positive measure, we see that
\[\rho_{E}(\cR,\nu)=\rho_{E}(\cR/\cS,\nu)=1\]
for every probability measure $\nu$ whose support generates $\cR$, and for every $\cR$-invariant subset $E$ of $X$. This implies that for every countable $\Gamma\leq [\cR]$ we have that $\lambda_{\cR|_{E}}|_{\Gamma}$ has almost invariant vectors for every $\cR$-invariant $E\subseteq X$ with $\mu(E)>0$. Theorem \ref{thm: big TFAE thm} thus implies that there is a $[\cR]$-invariant mean $m\in L^{\infty}(\cR)^{*}$ with $m|_{L^{\infty}(X)}=\int \cdot\,d\mu$ (see also \cite[Lemma 2.2]{BHAExtension}, as well as \cite[Lemma 2.2]{HaagerupAmenable}). 
It follows by \cite{KaimanovichAmenability} that $\cR$ is amenable (see also our Theorem \ref{thm: big TFAE thm}), and thus  \cite{CFW} implies that $\cR$ is hyperfinite. 

\end{proof}

The assumption in (\ref{item: equality of co-spectral radiii}) that $\ip{\supp(\nu)}$ generates is necessary. To see this, note that by \cite{MonodPopa} there are finitely generated groups $\Delta\leq \Lambda\leq \Gamma$ so that $\Delta$ is coamenable in $\Gamma$, but not in $\Lambda$ (see also \cite[Section 4]{monodcomments} for more examples). 
Let $\Gamma\actons (X,\mu)$ be a free probability measure-preserving action, and view $\Gamma\leq [\cR_{\Gamma,X}]$. Fix a finite, symmetric generating set $S$ for $\Lambda$. Set $\cR=\cR_{\Gamma,X}=\cS_{2}$ and $\cS_{1}=\cR_{\Delta,X}$. As we will later show (see Proposition \ref{prop: group case is an iff}) $\cS_{1}$ is everywhere coamenable in $\cR$. However, if $u_{S}$ is the uniform measure on $S$, then we have that $p_{2n,x,\cS_{1}}=\ip{\lambda_{\Lambda/\Delta}(u_{S})^{2n}\delta_{\Delta},\delta_{\Delta}}_{B(\ell^{2}(\Lambda/\Delta))}$ for every $n\in \N$. This implies, e.g. by  \cite[Lemma 10.1]{Woess}, that for almost every $x\in X$ we have:
\[\rho^{\cS_{1}}_{u_{S}}(x)=\lim_{n\to\infty}\ip{\lambda_{\Lambda/\Delta}(u_{S})^{2n}\delta_{\Delta},\delta_{\Delta}}_{\ell^{2}(\Lambda/\Delta)}=
\|\lambda_{\Lambda/\Delta}(u_{S})\|_{B(\ell^{2}(\Lambda/\Delta))}<1,\]
the last step following from the fact that $\Delta$ is not coamenable in $\Lambda$ and Theorem \ref{thm: folklore 2}.

Note that we will later show that   $\cS$ is coamenable in $\cR$ if and only if the von Neumann algebra of $\cS$ is coamenable in the von Neumann algebra of  $\cR$ (see Theorem \ref{thm: two different version of rel amen are the same}).
So Corollary \ref{cor: equality of co-spectral radii} (\ref{item: easy hyperfinite condition}) also follows from \cite[Theorem 3.2.4]{PopaCorr}.

Proposition \ref{prop: co spectral radius descent} will quickly imply that if $\cS$ is everywhere coamenable in $\cR$, then $\rho^{\cS}_{\nu}=1$ almost everywhere for every countably supported $\nu\in \Prob([\cR])$ which is symmetric and generates $\cR$. For the converse, we will need that if the pointwise defined cospectral radius is large on a subset, then there is a sequence of invariant vectors which are fiberwise supported on equivalence classes contained in that subset. Precisely, we have the following proposition.

\begin{prop}\label{prop: almost invariant vectors from restricted co-spectral raidus}
Let $(X,\mu)$ be a standard probability space and $\cS\leq \cR$ be discrete, probability measure-preserving equivalence relations on $(X,\mu)$. Suppose that $\nu\in \Prob([\cR])$, and that $E\subseteq X$ is measurable and $\cS$-invariant with $\mu(E)>0$. If $\rho_{E}(\cR/\cS,\nu)=1$, then there is a sequence $\xi^{(n)}\in L^{2}(\cR/\cS)$ with  $\|\xi^{(n)}\|_{2}=1$, $R(1_{E})\xi^{(n)}=\xi^{(n)}$ and $\|\lambda_{\cR/\cS}(\gamma)\xi^{(n)}-\xi^{(n)}\|_{2}\to_{n\to\infty}0$ for every $\gamma\in \ip{\supp(\nu)}$.   
\end{prop}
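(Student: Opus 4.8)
The plan is to work entirely on the closed subspace $\cH_E := R(1_E)L^2(\cR/\cS)$. As observed right after Lemma \ref{lem: restricted co-spec radi is just restricted op}, $R(1_E)$ is an orthogonal projection commuting with $\lambda_{\cR/\cS}$, so $\cH_E$ is invariant under every $\lambda_{\cR/\cS}(\gamma)$; write $U_\gamma$ for the unitary $\lambda_{\cR/\cS}(\gamma)|_{\cH_E}$ and $T := \lambda_{\cR/\cS}(\nu)|_{\cH_E} = \sum_{\gamma}\nu(\{\gamma\})U_\gamma$. Since $\nu$ is symmetric (so that $\rho_E(\cR/\cS,\nu)$ is defined), we have $T = T^{*}$, and $\|T\|\le\sum_\gamma\nu(\{\gamma\}) = 1$, so $\sigma(T)\subseteq[-1,1]$. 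Because $E$ is $\cS$-invariant, the distinguished vector $\zeta_E$ lies in $\cH_E$ (that is, $R(1_E)\zeta_E = \zeta_E$), it is a \emph{nonnegative} unit vector, and by (\ref{eqn: co spectral radius is norm restricted to subset background}) it satisfies $\lim_k\ip{T^{2k}\zeta_E,\zeta_E}^{1/2k} = \rho_E(\cR/\cS,\nu) = 1$. Note that any unit vector $\xi\in\cH_E$ automatically satisfies the two required constraints $\|\xi\|_2 = 1$ and $R(1_E)\xi = \xi$, so the entire problem reduces to producing unit vectors in $\cH_E$ that are almost invariant under $U_\gamma$ for all $\gamma\in\ip{\supp(\nu)}$.

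The key point — and the step I expect to be the main obstacle — is to upgrade $\|T\| = 1$ to the stronger assertion that $+1$ (and not merely $-1$) lies in $\sigma(T)$; this is precisely the parity subtlety flagged for groups in the introduction, and it is where the nonnegativity of $\zeta_E$ is used. I would argue as follows. Since $U_\gamma\zeta_E$ and $\zeta_E$ are nonnegative functions and $\nu(\{\gamma\})\ge 0$, the operator $T$ preserves nonnegativity, whence $T^j\zeta_E\ge 0$ and all the moments $m_j := \ip{T^j\zeta_E,\zeta_E}$ are $\ge 0$. Let $\mu_{\zeta_E}$ be the (probability) spectral measure of the self-adjoint operator $T$ at $\zeta_E$, so that $m_j = \int t^j\,d\mu_{\zeta_E}(t)$ and $\max\{|t| : t\in\supp\mu_{\zeta_E}\} = \lim_k m_{2k}^{1/2k} = 1$. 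If $+1\notin\supp\mu_{\zeta_E}$, then $\sup\supp\mu_{\zeta_E} < 1$ while $-1\in\supp\mu_{\zeta_E}$; the normalized measures $d\mu_k := t^{2k}\,d\mu_{\zeta_E}/m_{2k}$ (well defined for large $k$, since $-1\in\supp\mu_{\zeta_E}$ forces $m_{2k}>0$) then concentrate on $\{-1\}$, so the barycenters $m_{2k+1}/m_{2k} = \int t\,d\mu_k$ tend to $-1$ and hence $m_{2k+1} < 0$ for large $k$, contradicting $m_{2k+1}\ge 0$. Therefore $1\in\supp\mu_{\zeta_E}\subseteq\sigma(T)$.

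Once $1\in\sigma(T)$, the remainder is routine and parallels the proof of Theorem \ref{thm: folklore 2}. As $T$ is self-adjoint, $1$ is an approximate eigenvalue, so there exist unit vectors $\xi^{(n)}\in\cH_E$ (for instance unit vectors in the range of the spectral projection $E_T([1-\tfrac1n,1])$, which is nonzero because $1\in\supp\mu_{\zeta_E}$) with $\|T\xi^{(n)} - \xi^{(n)}\|_2\to 0$, equivalently $\ip{T\xi^{(n)},\xi^{(n)}}\to 1$. Expanding $\ip{T\xi^{(n)},\xi^{(n)}} = \sum_\gamma\nu(\{\gamma\})\,\Re\ip{U_\gamma\xi^{(n)},\xi^{(n)}}$ (the quantity is real by symmetry of $\nu$) and using that each summand is $\le 1$ while the weights sum to $1$, a weighted-average estimate forces $\Re\ip{U_\gamma\xi^{(n)},\xi^{(n)}}\to 1$, and hence $\|U_\gamma\xi^{(n)} - \xi^{(n)}\|_2^2 = 2 - 2\,\Re\ip{U_\gamma\xi^{(n)},\xi^{(n)}}\to 0$, for every $\gamma\in\supp(\nu)$. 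Finally, since $\supp(\nu)$ is symmetric, $\|U_{\gamma^{-1}}\xi - \xi\|_2 = \|U_\gamma\xi - \xi\|_2$, and $\|U_{\gamma\gamma'}\xi - \xi\|_2\le\|U_{\gamma'}\xi - \xi\|_2 + \|U_\gamma\xi - \xi\|_2$ (the $U_\gamma$ being unitary), almost invariance propagates from the generating set $\supp(\nu)$ to all of $\ip{\supp(\nu)}$. Recalling that $U_\gamma = \lambda_{\cR/\cS}(\gamma)|_{\cH_E}$ and $\xi^{(n)}\in\cH_E$ then yields $\|\lambda_{\cR/\cS}(\gamma)\xi^{(n)} - \xi^{(n)}\|_2\to 0$ for all $\gamma\in\ip{\supp(\nu)}$, completing the proof.
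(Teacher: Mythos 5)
Your proof is correct, and it resolves the crux of the proposition by a genuinely different mechanism from the paper's. Both arguments apply the spectral theorem to the self-adjoint contraction $T=\lambda_{\cR/\cS}(\nu)|_{R(1_{E})L^{2}(\cR/\cS)}$ at the nonnegative unit vector $\zeta_{E}$, and both must confront the same parity obstruction: $\lim_{k}\ip{T^{2k}\zeta_{E},\zeta_{E}}^{1/2k}=1$ only says that the spectral measure $\mu_{\zeta_{E}}$ charges a neighborhood of $\{\pm 1\}$. The paper neutralizes this by lazifying the walk: it first checks, via a binomial expansion and Stirling's formula, that $\rho_{E}(\cR/\cS,\theta)=1$ for $\theta=\frac{1}{2}\nu+\frac{1}{2}\delta_{e}$, then runs a ratio-test argument on the decreasing moment sequence $a_{k}=\|\lambda_{\cR/\cS}(\nu)^{k}\zeta_{E}\|_{2}^{2}$ to extract a subsequence with $a_{n_{k}+1}/a_{n_{k}}\to 1$, and takes the explicit vectors $\xi^{(k)}=a_{n_{k}}^{-1/2}\lambda_{\cR/\cS}(\nu)^{n_{k}}\zeta_{E}$, which are almost fixed by $\lambda_{\cR/\cS}(\nu)^{2}$; laziness is used once more through $\ip{\supp(\nu^{*2})}=\ip{\supp(\nu)}$. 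You instead rule out the alternative that the spectral mass at radius one sits only at $-1$ by a Perron--Frobenius-type positivity argument: $T$ preserves the cone of nonnegative functions and $\zeta_{E}\geq 0$, so all odd moments $m_{2k+1}$ are nonnegative, whereas concentration at $-1$ would force $m_{2k+1}/m_{2k}\to -1$; hence $1\in\supp\mu_{\zeta_{E}}\subseteq\sigma(T)$, and unit vectors in the ranges of the spectral projections of $T$ on $[1-\frac{1}{n},1]$ do the job. Both proofs finish with the same convexity estimate, passing from the averaged operator to the individual elements of $\supp(\nu)$ and then, by unitarity and the triangle inequality, to all of $\ip{\supp(\nu)}$. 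What each route buys: the paper's produces concrete nonnegative almost invariant vectors (normalized powers of the Markov operator applied to $\zeta_{E}$) and never needs to locate the spectrum precisely; yours avoids the lazification and Stirling computation, yields the slightly sharper conclusion that $1\in\sigma(\lambda_{\cR/\cS}(\nu)|_{R(1_{E})L^{2}(\cR/\cS)})$, and isolates positivity of the Markov operator as the structural reason the parity issue cannot occur. Two small points of wording, neither a gap: positivity preservation should be asserted for $T$ acting on all nonnegative functions (each $U_{\gamma}$ is a nonnegativity-preserving composition operator and the coefficients $\nu(\{\gamma\})$ are nonnegative), not merely on $\zeta_{E}$; and the symmetry and countable support of $\nu$, which you correctly flag as implicit in the definition of $\rho_{E}(\cR/\cS,\nu)$, are used in exactly the same way in the paper's proof.
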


\begin{proof}
Set $\Gamma=\ip{\supp(\nu)}$, and $\theta=\frac{1}{2}\nu+\frac{1}{2}\delta_{e}$. Note that 
\[\ip{\lambda_{\cR/\cS}(\theta)^{2k}\zeta_{E},\zeta_{E}}=\sum_{j=0}^{2k}2^{-2k}\binom{2k}{j}\ip{\lambda_{\cR/\cS}(\nu)^{j}\zeta_{E},\zeta_{E}}.\]
Let $r_{k}=2\lfloor{k/2\rfloor}$. Then:
\[\rho_{E}(\cR/\cS,\theta)=\lim_{k\to\infty}\ip{\lambda_{\cR/\cS}(\theta)^{2k}\zeta_{E},\zeta_{E}}^{1/2k}\geq \lim_{k\to\infty}\frac{1}{2}\binom{2k}{r_{k}}^{1/2k}\ip{\lambda_{\cR/\cS}(\nu)^{r_{k}}\zeta_{E},\zeta_{E}}^{1/2k}=\rho_{E}(\cR/\cS,\nu)=1,\]
the second-to-last formula following by Stirling's formula and the definition of $\rho_{E}(\cR/\cS,\nu)$. So if we replace $\nu$ with $\theta,$
then we still have that $\rho_{E}(\cR/\cS,\nu)=1$, and now $\nu$ is lazy. So we may, and will, assume that $\nu$ is lazy. 

Set $a_{k}=\|\lambda_{\cR/\cS}(\nu)^{k}\zeta_{E}\|_{2}^{2}$. Since $\|\zeta_{E}\|_{2}=1$ and $\|\lambda_{\cR/\cS}(\nu)\|\leq 1$, the spectral theorem implies that there is a $\eta\in \Prob([-1,1])$ with
\[\|\lambda_{\cR/\cS}(\nu)^{k}\zeta_{E}\|_{2}^{2}=\ip{\lambda_{\cR/\cS}(\nu)^{2k}\zeta_{E},\zeta_{E}}=\int t^{2k}\,d\eta(t), \text{ for all $k\in \N\cup\{0\}$}.\]
Since $\eta\in \Prob([-1,1])$ the above formula shows that $a_{k}$ is decreasing. By the usual comparison between the ratio test and the root test \cite[Theorem 3.37]{BabyRudin}  we have 
\[\limsup_{k\to\infty}\frac{a_{2k+2}}{a_{2k}}\geq \lim_{k\to\infty} \ip{\lambda_{\cR/\cS}(\nu)^{4k}\zeta_{E},\zeta_{E}}^{1/k}=\rho_{E}(\cR/\cS,\nu)^{4}=1.\]
And since $a_{k}$ is decreasing, we thus have $\limsup_{k\to\infty}\frac{a_{2k+2}}{a_{2k}}=1$. In particular, we may find a subsequence $a_{n_{k}}$ with $\frac{a_{n_{k}+2}}{a_{n_{k}}}\to 1$. Since $a_{k}$ is decreasing, it follows that $\frac{a_{n_{k}+1}}{a_{n_{k}}}\to 1$. 
Set $\xi^{(k)}=\frac{1}{a_{n_{k}}^{1/2}}\lambda_{\cR/\cS}(\nu)^{n_{k}}\zeta_{E}$. Then $\|\xi^{(k)}\|_{2}=1$, and 
\[\|\lambda_{\cR/\cS}(\nu)^{2}\xi^{(k)}-\xi^{(k)}\|_{2}^{2}=\frac{a_{n_{k}+2}+a_{n_{k}}-2a_{n_{k}+1}}{a_{n_{k}}}.\]
So our choice of $a_{n_{k}}$ implies that $\|\lambda_{\cR/\cS}(\nu)^{2}\xi^{(k)}-\xi^{(k)}\|_{2}\to 0$, and $R(1_{E})\xi^{(k)}=\xi^{(k)}$ since $R(1_{E})$ commutes with $\lambda_{\cR/\cS}$, and $R(1_{E})\zeta_{E}=\zeta_{E}$.   Since $\nu$ is lazy, we have that $\ip{\supp(\nu^{*2})}=\ip{\supp(\nu)}=\Gamma$, and so the result follows.  
\end{proof}

\begin{thm}\label{thm: everywhere rel man TFAE thm}
 Let $(X,\mu)$ be a standard probability space and $\cS\leq \cR$ discrete, probability measure-preserving equivalence relations. Then the following are equivalent:
\begin{enumerate}[(i)]
    \item $\cS$ is everywhere coamenable in $\cR$, \label{item: everywhere rel amen}
    \item for every $\cS$-invariant  measurable $E\subseteq X$ with $\mu(E)>0$, we have $\cS|_{E}$ is coamenable in $\cR|_{E}$, \label{item: everywhere rel amen via invariant sets}
    \item for every $\cS$-invariant measurable set $E$ of positive measure, there is a $[\cR]$-invariant mean $m\colon L^{\infty}(\cR|_{\cR E}/\cS|_{\cR E})\to \C$ so that $m(R(1_{E})f)=m(f)$ for all $f\in L^{\infty}(\cR|_{\cR E}/\cS|_{\cR E})$, and $m|_{L^{\infty}(\cR E)}=\frac{1}{\mu(\cR E)}\int_{\cR E} \cdot\,d\mu.$ \label{item: everywhere rel amen via means}
    \item for every $\cS$-invariant measurable set $E$ with $\mu(E)>0$, we have that $\lambda_{\cR|_{E}/\cS|_{E}}$ has almost invariant vectors. \label{item: everywhere rel amen restricted almost invariant vectors}
    \item for every $\cS$-invariant measurable sets $E$ of positive measure, there is a sequence $\xi^{(n)}\in L^{2}(\cR|_{\cR E}/\cS|_{\cR E})$ with $\|\xi^{(n)}_{x}\|_{2}=1$ for almost every $x\in X$,  and $R(1_{E})\xi^{(n)}=\xi^{(n)}$, and with $\|\xi^{(n)}_{x}-\xi^{(n)}_{y}\|_{2}\to_{n\to\infty}0$ for almost every $(x,y)\in \cR$.  \label{item: everywhere rel amen via sequences}
    \item for every $\nu\in \Prob([\cR])$ such that $\supp(\nu)$ generates $\cR$, we have that $\rho^{\cS}_{\nu}=1$ almost everywhere. \label{item: everywhere rel amen via co-spectra radisu }    
\end{enumerate}
\end{thm}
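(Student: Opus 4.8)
The plan is to read the whole list as a single coamenability statement for the restricted inclusions $\cS|_E\le\cR|_E$, repackaged five different ways, with the cospectral-radius reformulation (vi) bolted on at the end. First I would dispose of (i)$\iff$(ii). The implication (i)$\Rightarrow$(ii) is immediate. For (ii)$\Rightarrow$(i), given an \emph{arbitrary} positive-measure $E$, I would pass to its $\cS$-saturation $\cS E$, which is $\cS$-invariant, so that (ii) yields coamenability of $\cS|_{\cS E}$ in $\cR|_{\cS E}$; Proposition~\ref{prop: passing rel amen between subsets}(\ref{item: rel amen descent via saturation}) then descends this to $\cS|_E$ being coamenable in $\cR|_E$, giving (i).

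Next I would fix an $\cS$-invariant $E$ (so that $R(1_E)$ is defined) and show the $E$-local versions of (ii), (iii), (iv), (v) coincide. The natural hub is (v): Proposition~\ref{prop: passing rel amen between subsets}(\ref{item: rel amen ascent via saturation}), applied with $F=\cR E$, produces from coamenability of $\cS|_E\le\cR|_E$ exactly the fiberwise-unit, fiberwise-almost-invariant, $R(1_E)$-fixed sequence $\xi^{(n)}\in L^2(\cR|_{\cR E}/\cS|_{\cR E})$ of (v); conversely such a sequence, being supported on fibers $c\subseteq E$, becomes (after the measurable identification of the $R(1_E)$-subspace of $L^2(\cR|_{\cR E}/\cS|_{\cR E})$ with $L^2(\cR|_E/\cS|_E)$) a witness to coamenability of $\cS|_E\le\cR|_E$, i.e.\ (ii). The passage (v)$\iff$(iv) I would handle with Theorem~\ref{thm: DCT argument} and Theorem~\ref{thm: folklore 2}, using the observation that an $\cR|_E$-invariant $F\subseteq E$ is automatically $\cS$-invariant in $X$, so that quantifying (iv) over all $\cS$-invariant sets supplies the ``all invariant subsets'' input that Theorem~\ref{thm: big TFAE thm} requires for the restricted inclusion. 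Finally (iii) is the invariant-mean avatar of the same sequence: Theorem~\ref{thm: folklore 1} turns the $\xi^{(n)}$ into an $L^1$ Reiter sequence, a weak$^*$ cluster point gives a $[\cR]$-invariant mean with $m(R(1_E)f)=m(f)$ (the densities $|\xi^{(n)}|^2$ are $R(1_E)$-fixed), and Theorem~\ref{ETS agrees with measure on center} forces the normalization $m|_{L^\infty(\cR E)}=\tfrac1{\mu(\cR E)}\int_{\cR E}\cdot\,d\mu$; the reverse direction is the Hahn--Banach/convexity argument already used in Theorem~\ref{thm: big TFAE thm}.

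It remains to splice in (vi). For (ii)$\Rightarrow$(vi), I would argue by contradiction: since $\rho^\cS_\nu$ is $\cS$-invariant, failure of $\rho^\cS_\nu=1$ a.e.\ gives an $\cS$-invariant positive-measure set $E=\{\rho^\cS_\nu\le 1-\varepsilon\}$. Applying Proposition~\ref{prop: co spectral radius descent} with $\cS_1=\cS$ and $\cS_2=\cR$ (and noting $\rho_E(\cR/\cR,\nu)=1$ trivially, since $[x]_\cR/\cR$ is a point) yields $\rho_E(\cR/\cS,\nu)=\rho_E(\cR/\cR,\nu)=1$, whence $\|\rho^\cS_\nu 1_E\|_\infty=1$ by \eqref{eqn: co spectral radius is norm restricted to subset background}, contradicting $\rho^\cS_\nu\le 1-\varepsilon$ on $E$.

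The hard direction, and the main obstacle, is (vi)$\Rightarrow$(ii). From $\rho^\cS_\nu=1$ a.e.\ one gets $\rho_F(\cR/\cS,\nu)=1$ for \emph{every} positive-measure $F$, and Proposition~\ref{prop: almost invariant vectors from restricted co-spectral raidus} converts this into $R(1_E)$-fixed, $\langle\supp\nu\rangle$-almost-invariant vectors of \emph{total} norm one. The delicate point---precisely the phenomenon behind Example~\ref{example: rel amen comes from small pieces}---is that total-norm-one vectors may concentrate, so one cannot directly extract the fiberwise-unit vectors of (v) or the correctly normalized mean of (iii). I would resolve this by running the partition-and-reassemble argument of Proposition~\ref{prop: restricted co-spectral radius to invariant means} \emph{inside} the $R(1_E)$-subspace over $\cR E$: decompose $\cR E$ into the atoms $A_j$ of a finite family of $\cR$-invariant sets, use $\rho_{A_j}(\cR/\cS,\nu)=1$ on each atom to build an invariant mean there, reweight the pieces by $\mu(A_j)$ to pin down the normalization, and take a weak$^*$ limit over the net of finite families and finite subgroups $\Gamma\le[\cR]$ to obtain the mean of (iii), after which (iii)$\Rightarrow$(ii) closes the loop. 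The only genuinely new work beyond the cited propositions is carrying the $R(1_E)$ constraint through this construction, which is what upgrades plain coamenability to everywhere coamenability.
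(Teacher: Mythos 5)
Your proposal is correct and follows essentially the same route as the paper's proof: the same cycle of implications (reorganized with (v) as hub rather than traversed linearly), resting on the identical ingredients --- Proposition~\ref{prop: passing rel amen between subsets} for saturation ascent/descent, Proposition~\ref{prop: co spectral radius descent} with $\cS_{1}=\cS$, $\cS_{2}=\cR$ for the cospectral-radius direction, and the partition-into-$\cR$-invariant-atoms argument with the $R(1_{E})$ constraint carried through (via $\rho_{A_{j}\cap E}$, which is the one small precision your sketch elides) for the hard converse, closed by the Hahn--Banach and DCT machinery of Theorems~\ref{thm: big TFAE thm}, \ref{thm: folklore 1}, \ref{thm: folklore 2}, and \ref{thm: DCT argument}. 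No genuine gap.
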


\begin{proof}

(\ref{item: everywhere rel amen via invariant sets}) implies (\ref{item: everywhere rel amen}): This follows from Proposition \ref{prop: passing rel amen between subsets}.

(\ref{item: everywhere rel amen}) implies (\ref{item: everywhere rel amen via co-spectra radisu }):  Taking $\cS_{2}=\cR$, $\cS_{1}=\cS$ in Proposition \ref{prop: co spectral radius descent} implies that $\|\rho^{\cS}_{\nu}1_{E}\|_{\infty}=\rho_{E}(\cR/\cS,\nu)=1$ for every measurable $E\subseteq X$, and since $\rho^{\cS}_{\nu}\geq 0$, this implies that $\rho^{\cS}_{\nu}=1$ almost everywhere. 

(\ref{item: everywhere rel amen via co-spectra radisu }) implies (\ref{item: everywhere rel amen via means}):  For a measurable $\cR$-invariant set $F$ of positive measure, we have a natural identification of $L^{2}(\cR|_{F}/\cS|_{F})$ into $L^{2}(\cR/\cS)$, since $[x]_{\cR|_{F}}=[x]_{\cR}$ for almost every $x\in F$ (by $\cR$-invariance). 
As in the proof of Theorem \ref{thm: big TFAE thm} (\ref{item: co-spectral radius condition nonergodic part}) implies (\ref{item:inv mean restricting well}), it suffices to show that for every finite $\Phi\subseteq [\cR]$ and every partition modulo null sets $\cR E=\bigsqcup_{j=1}^{k}F_{j}$ where $F_{j}$ is $\cR$-invariant, and $\mu(F_{j})>0$ there is a mean $m\colon L^{\infty}(\cR|_{\cR E}/\cS|_{\cR E})\to \C$ such that
\begin{itemize}
    \item $m(1_{F_{j}})=\mu(F_{j})$, $j=1,\cdots,k$,
    \item and $m(R(1_{E})f)=m(f)$ for all $f\in L^{\infty}(\cR|_{\cR E}/\cS|_{\cR E})$,
    \item $m(f\circ \gamma)=m(f)$ for all $\gamma\in \Phi$ and all $f\in L^{\infty}(\cR|_{\cR E}/\cS|_{\cR E})$. 
\end{itemize}
Let $\nu\in \Prob([\cR])$ be  lazy, countably supported, symmetric, with  $\supp(\nu)$ generating $\cR$, and $\Phi\subseteq \supp(\nu)$. Note that for all $j=1,\cdots,k$ we have that $\mu(F_{j}\cap E)>0$, since $F_{j}$ are $\cR$-invariant sets of positive measure and are subsets of $\cR E$. For $j=1,\cdots,k$, let $\cH_{j}=R(1_{F_{j}\cap E})L^{2}(\cR/\cS)$, and note that $\cH_{j}\subseteq L^{2}(\cR|_{F_{j}}/\cS|_{F_{j}})$, and that $\cH_{j}$ is $\lambda_{\cR/\cS}$-invariant. Since $\mu(F_{j}\cap E)>0$,
\[\|\lambda_{\cR/\cS}(\nu)|_{\cH_{j}}\|=\|R(1_{F_{j}\cap E})\lambda_{\cR/\cS}(\nu)\|=\rho_{F_{j}\cap E}(\cR/\cS,\nu)=\|\rho^{\cS}_{\nu}1_{F_{j}\cap E}\|_{\infty}=1,\]
the second equality following by Lemma \ref{lem: restricted co-spec radi is just restricted op}, the third by (\ref{eqn: co spectral radius is norm restricted to subset background}), and the last equality by hypothesis. By Theorem \ref{thm: folklore 2}, for $j=1,\cdots,k$ we may find a sequence $\xi^{(n)}_{j}\in L^{2}(\cR|_{F_{j}}/\cS|_{F_{j}})$ with 
\begin{itemize}
\item $\|\xi^{(n)}_{j}\|_{L^{2}(\cR|_{F_{j}}/\cS|_{F_{j}})}=1$, for all $j=1,\cdots,k$ and $n\in \N$,
    \item $\|\lambda_{\cR/\cS}(\gamma)\xi^{(n)}_{j}-\xi^{(n)}_{j}\|_{2}\to_{n\to\infty}0$, for all $j=1,\cdots,k$, and $\gamma\in\ip{\supp(\nu)}$,
    \item $R(1_{F_{j}\cap E})\xi^{(n)}_{j}=\xi^{(n)}_{j}$, for all $j=1,\cdots,k$ and $n\in \N$,
\end{itemize}
Define $\xi^{(n)}\in L^{2}(\cR|_{\cR E}/\cS|_{\cR E})$ by $\xi^{(n)}_{x}=\xi^{(n)}_{j,x}$ for $x\in F_{j}$. Then 
\begin{itemize}
\item $\|1_{F_{j}}\xi^{(n)}\|_{2}=\mu(F_{j})^{1/2}$, for all $j=1,\cdots,k$ and $n\in \N$,
    \item $\|\lambda_{\cR/\cS}(\gamma)\xi^{(n)})-\xi^{(n)}\|_{2}\to_{n\to\infty}0$, for all  $\gamma\in\ip{\supp(\nu)}$,
    \item $R(1_{E})\xi^{(n)}=\xi^{(n)}$.
\end{itemize}
Define means $m_{n}\colon L^{\infty}(\cR|_{\cR E}/\cS|_{\cR E})\to \C$ by $m_{n}(f)=\ip{M_{f}\xi^{(n)},\xi^{(n)}}$, and let $m$ be any mean which is a weak$^{*}$ cluster point of the $m_{n}$'s. Then
\begin{itemize}
\item $m(1_{F_{j}})=\mu(F_{j})$ for all $j=1,\cdots,k$,
    \item $m(f\circ \gamma)=m(f)$ for all $f\in L^{\infty}(\cR|_{\cR E}/\cS|_{\cR E})$ and all $\gamma\in \ip{\supp(\nu)}$.
    \item $m(R(1_{E})f)=m(f)$ for $f\in L^{\infty}(\cR|_{\cR E}/\cS|_{\cR E})$.
\end{itemize}

(\ref{item: everywhere rel amen via means}) implies (\ref{item: everywhere rel amen via sequences}):
Let $\Gamma\leq [\cR]$ be countable and generate $\cR$. As in the proof Theorem \ref{thm: big TFAE thm} (\ref{item:inv mean restricting well}) implies (\ref{item: Rieter sequence on subgroup}),  applying Hahn-Banach separation may find a sequence $\mu^{(n)}\in L^{1}(\cR|_{\cR E}/\cS|_{\cR E})$ so that 
\begin{itemize}
    \item $\mu^{(n)}_{x}\in \Prob([x]_{\cR|_{\cR E}}/\cS|_{\cR E})$ for almost every $x\in \cR E$,
    \item for almost every $x\in X$ one has $\mu^{(n)}_{x}(c)=0$ if $c\in [x]_{\cR|_{\cR E}}/\cS|_{\cR E}$ and $c\cap E=\varnothing$,
    \item  and $\|\lambda_{\cR/\cS}(\gamma)\mu^{(n)}-\mu^{(n)}\|_{1}\to_{n\to\infty}0$, for all $\gamma\in \Gamma$.
\end{itemize}
 Since $\Gamma$ is countable, we may pass to a subsequence  and assume that $\|\mu^{(n)}_{\gamma(x)}-\mu^{(n)}_{x}\|_{1}\to_{n\to\infty}$ for all $\gamma\in \Gamma$ and almost every $x\in X$. Since $\Gamma$ generates $\cR$, we have that 
\[\|\mu^{(n)}_{y}-\mu^{(n)}_{x}\|_{1}\to_{n\to\infty}0 \text{ for almost every $(x,y)\in \cR$.}\]
Setting $\xi^{(n)}_{x}=(\mu^{(n)}_{x})^{1/2}$, it
follows that 
\[\|\xi^{(n)}_{x}-\xi^{(n)}_{y}\|_{2}\to_{n\to\infty}0\]
for almost every $(x,y)\in \cR$.

(\ref{item: everywhere rel amen via sequences}) implies (\ref{item: everywhere rel amen restricted almost invariant vectors}):
Suppose that $E\subseteq X$ is $\cS$-invariant with $\mu(E)>0$, and let $\xi^{(n)}\in L^{2}(\cR|_{\cR E}/\cS|_{\cS E})$ be as in (\ref{item: everywhere rel amen via sequences}). Note that for almost every $x\in E$, we have that $\xi^{(n)}(x,c)=0$ if $c\notin\Im(\iota_{x})$, since $R(1_{E})\xi^{(n)}=\xi^{(n)}$. Thus if we define $\zeta^{(n)}\in L^{2}(\cR|_{E}/\cS|_{E})$ by $\zeta^{(n)}(x,c)=\xi^{(n)}(x,\iota_{x}(c))$, then we still have that $\|\zeta^{(n)}_{x}\|_{2}=1$ for almost every $x\in E$, and $\|\zeta^{(n)}_{x}-\zeta^{(n)}_{y}\|_{2}\to_{n\to\infty}0$ for almost every $(x,y)\in \cR|_{E}$. By Theorem \ref{thm: DCT argument}, it follows that $\zeta^{(n)}$ is a sequence of almost invariant unit vectors for $\lambda_{\cR|_{E}/\cS|_{E}}$.

 (\ref{item: everywhere rel amen restricted almost invariant vectors}) implies (\ref{item: everywhere rel amen via invariant sets}): If $E\subseteq X$ is measurable and $\cS$-invariant, then any measurable $\cR|_{E}$-invariant subset of $E$ is necessarily $\cS$-invariant (as a subset of $X$). Thus the desired result follows from Theorem \ref{thm: big TFAE thm} (\ref{item: almost invariant vectors nonergodic part inv sets}).

\end{proof}

\section{Simplification of the cospectral radius condition under spectral gap}\label{sec:cospectral radius}

As alluded to in the discussion following Theorem \ref{thm: main theorem intro} results of Kaimanovich \cite{VKLeafcounterexample} imply  that in Theorem \ref{thm: big TFAE thm} (\ref{item: co-spectral radius condition nonergodic part}) we cannot in general replace ``every generating probability measure" with ``some generating probability measure". In this section, we present cases where such a replacement can be done. These cases  rely on a spectral gap assumption of the action of the group generated by the support of the probability measure. 

\begin{prop}\label{prop: spectral gap simplify co-amenability}
Let $\cS\leq \cR$ be an inclusion of probability measure-preserving equivalence relations over a standard probability space $(X,\mu)$ with $\cR$ ergodic. Suppose that $\nu\in \Prob([\cR])$ is countably supported and generates $\cR$. Assume moreover that $\ip{\supp(\nu)}\actson (X,\mu)$ has spectral gap. Then $\cS$ is coamenable in $\cR$ if and only if $\rho(\cR/\cS,\nu)=1$.
    
\end{prop}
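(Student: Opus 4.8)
The forward implication needs no spectral gap. The plan for it: assuming $\cS$ is coamenable in $\cR$, ergodicity makes $X$ the only $\cR$-invariant set of positive measure up to null sets, so Theorem~\ref{thm: big TFAE thm}(\ref{item: rel amen in terms of Markov operator}) applied with $E=X$ gives $\|\lambda_{\cR/\cS}(\nu)\|=1$, which is exactly $\rho(\cR/\cS,\nu)=1$ by (\ref{eqn: co spec radi is norm background 1}). Thus all the content lies in the converse.

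For the converse, assume $\rho(\cR/\cS,\nu)=\|\lambda_{\cR/\cS}(\nu)\|=1$ and put $\Gamma=\ip{\supp(\nu)}$, a countable group generating $\cR$. First I would feed $\|\lambda_{\cR/\cS}(\nu)\|=1$ into Theorem~\ref{thm: folklore 2}, applied to the measure-preserving action $\Gamma\actson(\cR/\cS,\mu_{\cR/\cS})$ with generating set $\supp(\nu)$: the implication (ii)$\Rightarrow$(i) produces $\xi^{(n)}\in L^2(\cR/\cS)$ with $\|\xi^{(n)}\|_2=1$ and $\|\lambda_{\cR/\cS}(\gamma)\xi^{(n)}-\xi^{(n)}\|_2\to0$ for every $\gamma\in\Gamma$. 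These are only \emph{globally} normalized almost invariant vectors, whereas coamenability (via Theorem~\ref{thm: big TFAE thm}(\ref{item:almost invariant fiberwise nonergodic intro part 2})) requires \emph{fiberwise} normalized ones, with $\|\xi^{(n)}_x\|_2=1$ almost everywhere. Promoting the former to the latter is the crux of the argument, and the only place the spectral gap is used.

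The key step is to control the fiber-norm functions $g_n(x)=\|\xi^{(n)}_x\|_{\ell^2([x]_{\cR}/\cS)}$, which satisfy $g_n\ge0$ and $\|g_n\|_{L^2(X)}^2=\|\xi^{(n)}\|_2^2=1$. Since $\gamma^{-1}(x)\in[x]_{\cR}$ gives $[\gamma^{-1}(x)]_{\cR}/\cS=[x]_{\cR}/\cS$, both $\xi^{(n)}_{\gamma^{-1}(x)}$ and $\xi^{(n)}_x$ lie in the same fiber $\ell^2([x]_{\cR}/\cS)$, and the reverse triangle inequality yields the pointwise bound $|g_n(\gamma^{-1}(x))-g_n(x)|\le\|\xi^{(n)}_{\gamma^{-1}(x)}-\xi^{(n)}_x\|_{\ell^2}$. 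Integrating over $X$ gives $\|\alpha(\gamma)g_n-g_n\|_{L^2(X)}\le\|\lambda_{\cR/\cS}(\gamma)\xi^{(n)}-\xi^{(n)}\|_2\to0$, where $\alpha$ denotes the Koopman representation of $\Gamma\actson(X,\mu)$. Thus $(g_n)_n$ is almost invariant for $\Gamma\actson L^2(X,\mu)$. Writing $g_n=c_n1+g_n^{\perp}$ with $g_n^{\perp}\perp1$ and $c_n=\int g_n\,d\mu\ge0$, and noting $\alpha(\gamma)g_n^{\perp}-g_n^{\perp}=\alpha(\gamma)g_n-g_n$, the spectral gap hypothesis forces $\|g_n^{\perp}\|_2\to0$; since $c_n^2+\|g_n^{\perp}\|_2^2=1$ this gives $c_n\to1$, hence $g_n\to1$ in $L^2(X,\mu)$, in particular in measure. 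This is precisely the implication that fails in Kaimanovich's example \cite{VKLeafcounterexample}, where a globally almost invariant $g_n$ need not concentrate around a constant.

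Finally I would normalize fiberwise. Setting $A_n=\{x:g_n(x)\ge\tfrac12\}$, convergence in measure gives $\mu(X\setminus A_n)\to0$. Define $\widetilde\xi^{(n)}$ by $\widetilde\xi^{(n)}_x=g_n(x)^{-1}\xi^{(n)}_x$ on $A_n$ and $\widetilde\xi^{(n)}_x=\eta_x$ off $A_n$, where $\eta(x,c)=1_{x\in c}$; this is measurable by Proposition~\ref{prop:measurability exercise} and has $\|\widetilde\xi^{(n)}_x\|_2=1$ almost everywhere. For fixed $\gamma\in\Gamma$ I would split $\|\lambda_{\cR/\cS}(\gamma)\widetilde\xi^{(n)}-\widetilde\xi^{(n)}\|_2^2=\int\|\widetilde\xi^{(n)}_{\gamma^{-1}(x)}-\widetilde\xi^{(n)}_x\|_2^2\,d\mu$ over $A_n\cap\gamma A_n$ and its complement. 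On $A_n\cap\gamma A_n$ both fiber norms exceed $\tfrac12$, so the elementary inequality $\left\|\frac{a}{\|a\|}-\frac{b}{\|b\|}\right\|\le 4\|a-b\|$ bounds the integrand by $16\|\xi^{(n)}_{\gamma^{-1}(x)}-\xi^{(n)}_x\|_2^2$, whose integral tends to $0$; the complement $(X\setminus A_n)\cup\gamma(X\setminus A_n)$ has measure at most $2\mu(X\setminus A_n)\to0$ and there the integrand is at most $4$. Hence $\|\lambda_{\cR/\cS}(\gamma)\widetilde\xi^{(n)}-\widetilde\xi^{(n)}\|_2\to0$ for all $\gamma\in\Gamma$, and since $\Gamma$ generates $\cR$, Theorem~\ref{thm: big TFAE thm}(\ref{item:almost invariant fiberwise nonergodic intro part 2}) shows that $\cS$ is coamenable in $\cR$. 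The one genuinely new ingredient beyond the group-case argument is the passage from $g_n$ almost invariant to $g_n\to1$, which is exactly what the spectral gap supplies.
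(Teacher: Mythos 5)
Your proof is correct and takes essentially the same route as the paper's: extract globally normalized almost invariant vectors from $\|\lambda_{\cR/\cS}(\nu)\|=1$, apply the spectral gap to the fiber-norm functions $x\mapsto\|\xi^{(n)}_{x}\|_{2}$ (via the reverse triangle inequality) to force them to $1$ in $L^{2}(X,\mu)$, and then renormalize fiberwise to invoke Theorem \ref{thm: big TFAE thm}. The only difference is cosmetic: you spell out the truncation-and-normalization step (cutting at fiber norm $\geq 1/2$ and inserting $1_{x\in c}$ elsewhere), which the paper compresses into an unspecified ``perturbation argument'' here but writes out in exactly your form in the proof of Proposition \ref{prop: spectral gap compress subreln}.
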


\begin{proof}
One direction follows from Theorem \ref{thm: big TFAE thm}. Hence we focus on proving that if $\rho(\cR/\cS,\nu)=1$, then $\cS$ is coamenable in $\cR$. Recall that $\alpha\colon [\cR]\to \cU(L^{2}(X,\mu))$ is given by $(\alpha(\gamma)\xi)(x)=\xi(\gamma^{-1}(x)).$
Note that $1=\rho(\cR/\cS,\nu)=\|\lambda_{\cR/\cS}(\nu)\|$, hence we may find a sequence $\xi^{(n)}\in L^{2}(\cR/\cS)$ with $\|\xi^{(n)}\|_{2}=1$ such that 
\[\|\lambda_{\cR/\cS}(\gamma)\xi^{(n)}-\xi^{(n)}\|_{2}\to_{n\to\infty}0, \textnormal{ for all $\gamma\in \ip{\supp(\nu)}$.}\]
Define $f_{n}\in L^{2}(X,\mu)$ by $f_{n}(x)=\|\xi^{(n)}_{x}\|_{2}$. Then, by the reverse triangle inequality,
\[\|\alpha(\gamma)f_{n}-f_{n}\|_{2}\leq \|\lambda_{\cR/\cS}(\gamma)\xi^{(n)}-\xi^{(n)}\|_{2}\to_{n\to\infty}0. \]
Thus by spectral gap, there is a $\lambda_{n}\in [0,+\infty]$ with $\|f_{n}-\lambda_{n}\|_{2}\to_{n\to\infty}0$. Since $\|f_{n}\|_{2}=1$, it follows that $|1-\lambda_{n}|\to_{n\to\infty}0$ so $\|f_{n}-1\|_{2}\to_{n\to\infty}0$. Since $f_{n}(x)=\|\xi^{(n)}_{x}\|_{2}$, this implies that there is a $\zeta^{(n)}\in L^{2}(\cR/\cS)$ with $\|\xi^{(n)}-\zeta^{(n)}\|_{2}\to_{n\to\infty}0$ and $\|\zeta^{(n)}_{x}\|_{2}=1$ for almost every $x\in X$. Since $\xi^{(n)}$ is almost $\ip{\supp(\nu)}$-invariant, we have that $\zeta^{(n)}$ is almost $\ip{\supp(\nu)}$-invariant. Thus Theorem \ref{thm: big TFAE thm} implies that $\cS$ is coamenable in $\cR$.

\end{proof}

We would like to obtain a slight refinement of the above result, which will allow us to deduce coamenability of an inclusion of relations restricted to a subset from the cospectral radius of a single measure, under a spectral gap assumption.

\begin{prop}\label{prop: spectral gap compress subreln}
Let $\cS\leq \cR$ be measure-preserving, discrete equivalence relations over a standard probability space $(X,\mu)$. Suppose that $\nu\in \Prob([\cR])$ is countably supported and generates $\cR$, and that $\ip{\supp(\nu)}\actons (X,\mu)$ has spectral gap. Given an $\cS$-invariant $E\subseteq X$ with positive measure, we have that $\cS|_{E}$ is coamenable in $\cR|_{E}$ if and only if $\rho_{E}(\cR/\cS,\nu)=1$.
\end{prop}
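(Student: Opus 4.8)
The plan is to prove both implications, with the forward one being essentially formal and the reverse one carrying all the work. Throughout, note that spectral gap of $\ip{\supp(\nu)}\actson(X,\mu)$ forces this action, and hence $\cR$ (which it generates), to be ergodic; in particular $\cR E=X$ modulo null sets, since $\cR E$ is an $\cR$-invariant set of positive measure. For the direction ``$\cS|_E$ coamenable in $\cR|_E$ $\Rightarrow\rho_E(\cR/\cS,\nu)=1$'', I would simply invoke Proposition \ref{prop: co spectral radius descent} with $\cS_1=\cS$ and $\cS_2=\cR$: the set $E$ is $\cS_1$-invariant, so that proposition gives $\rho_E(\cR/\cS,\nu)=\rho_E(\cR/\cR,\nu)$. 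Since each $\cR$-class is a single point modulo $\cR$ we have $\rho^{\cR}_\nu\equiv 1$, whence $\rho_E(\cR/\cR,\nu)=\|\rho^{\cR}_\nu 1_E\|_\infty=1$, and this direction is done. (Alternatively one can test $R(1_E)\lambda_{\cR/\cS}(\nu)$ against the unit vectors produced by Proposition \ref{prop: passing rel amen between subsets}(i).)

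For the reverse direction, suppose $\rho_E(\cR/\cS,\nu)=1$ and set $\Gamma=\ip{\supp(\nu)}$. First I would apply Proposition \ref{prop: almost invariant vectors from restricted co-spectral raidus} to obtain $\xi^{(n)}\in L^2(\cR/\cS)$ with $\|\xi^{(n)}\|_2=1$, $R(1_E)\xi^{(n)}=\xi^{(n)}$, and $\|\lambda_{\cR/\cS}(\gamma)\xi^{(n)}-\xi^{(n)}\|_2\to 0$ for every $\gamma\in\Gamma$. Define $f_n\in L^2(X)$ by $f_n(x)=\|\xi^{(n)}_x\|_2$; then $\|f_n\|_2=1$, and the reverse triangle inequality gives $\|\alpha(\gamma)f_n-f_n\|_2\le\|\lambda_{\cR/\cS}(\gamma)\xi^{(n)}-\xi^{(n)}\|_2\to 0$ for all $\gamma\in\Gamma$, so $(f_n)$ is a sequence of almost $\Gamma$-invariant vectors for the Koopman representation $\alpha$ on $L^2(X)$.

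Spectral gap now does the essential work: writing $f_n=\lambda_n 1+f_n^0$ with $f_n^0\in L^2(X)\ominus\C 1$ and $\lambda_n=\int f_n\,d\mu\ge 0$, the gap yields $\|f_n^0\|_2\to 0$, and since $\|f_n\|_2=1$ this forces $\lambda_n\to 1$, hence $\|f_n-1\|_2\to 0$. Because $R(1_E)\xi^{(n)}=\xi^{(n)}$ and $E$ is $\cS$-invariant, each $\xi^{(n)}_x$ is supported on $\cS$-classes contained in $E$, so $f_n$ is supported on $\cR E$; the fact that $f_n$ may nonetheless approach the constant $1$ is precisely where ergodicity ($\cR E=X$) enters. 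I would then renormalize fiberwise: fix once and for all a measurable section $x\mapsto\eta_x$ of unit vectors in $\ell^2([x]_\cR/\cS)$ supported on classes contained in $E$ (built from choice functions via Corollary \ref{cor:choice functions}, using $\cR E=X$), and set $\zeta^{(n)}_x=\xi^{(n)}_x/\|\xi^{(n)}_x\|_2$ where $\xi^{(n)}_x\ne 0$ and $\zeta^{(n)}_x=\eta_x$ otherwise. Measurability is checked through Proposition \ref{prop:measurability exercise}; we then have $\|\zeta^{(n)}_x\|_2=1$ a.e., $R(1_E)\zeta^{(n)}=\zeta^{(n)}$, and the estimate $\|\xi^{(n)}-\zeta^{(n)}\|_2^2\le\|f_n-1\|_2^2+\mu(\{f_n=0\})\to 0$, so $(\zeta^{(n)})$ is still almost $\Gamma$-invariant. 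Passing to a subsequence via Theorem \ref{thm: DCT argument}(i), I may assume $\|\zeta^{(n)}_x-\zeta^{(n)}_y\|_2\to 0$ for almost every $(x,y)\in\cR$. Since $\cR E=X$, these are fiberwise-unit, $R(1_E)$-supported, relation-almost-invariant vectors in $L^2(\cR|_{\cR E}/\cS|_{\cR E})=L^2(\cR/\cS)$; transferring them to $L^2(\cR|_E/\cS|_E)$ through the injection $\iota_x$ exactly as in the proof of Theorem \ref{thm: everywhere rel man TFAE thm} (using that $\zeta^{(n)}_x$ is supported on $\Im(\iota_x)$) yields a sequence verifying condition (iii) of Theorem \ref{thm: big TFAE thm} for the inclusion $\cS|_E\le\cR|_E$, so $\cS|_E$ is coamenable in $\cR|_E$.

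The main obstacle is the reverse direction, and within it the tension between the support constraint $R(1_E)\xi^{(n)}=\xi^{(n)}$ and the spectral-gap normalization. Spectral gap only controls the scalar function $f_n=\|\xi^{(n)}_\cdot\|_2$, so one must simultaneously use ergodicity to know $f_n\to 1$ (rather than merely $f_n\to 1_{\cR E}$) and carry out a measurable fiberwise renormalization that does not destroy the property of being supported on $\cS$-classes inside $E$; everything after this renormalization is routine bookkeeping with the identifications $L^2(\cR|_{\cR E}/\cS|_{\cR E})=L^2(\cR/\cS)$ and the fiber maps $\iota_x$.
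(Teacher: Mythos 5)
Your proof is correct and follows essentially the same route as the paper: the forward implication via Proposition \ref{prop: co spectral radius descent} with $\cS_{1}=\cS$, $\cS_{2}=\cR$, and the reverse by producing $\Gamma$-almost-invariant unit vectors in $R(1_{E})L^{2}(\cR/\cS)$ (your citation of Proposition \ref{prop: almost invariant vectors from restricted co-spectral raidus} packages exactly the paper's appeal to Theorem \ref{thm: folklore 2} and Lemma \ref{lem: restricted co-spec radi is just restricted op}), using spectral gap to get $\|f_{n}-1\|_{2}\to 0$, renormalizing fiberwise without destroying the $R(1_{E})$-support, and transferring to $L^{2}(\cR|_{E}/\cS|_{E})$ via the maps $\iota_{x}$. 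Your only deviations are cosmetic: an exact fiberwise normalization with a measurable fallback field where the paper uses a cutoff at $1/2$ with fallback $1_{x\in c}$, plus your explicit (and correct) remark that spectral gap already forces ergodicity, hence $\mu(\cR E)=1$, which the paper leaves implicit.
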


\begin{proof}

One implication follows from Proposition \ref{prop: co spectral radius descent}, taking $\cS_{2}=\cR$,$\cS_{1}=\cS$. 

For the other implication, set $\Gamma=\ip{\supp(\nu)}$. By Theorem \ref{thm: folklore 2} and Lemma \ref{lem: restricted co-spec radi is just restricted op}, we may find a sequence $\xi^{(n)}\in L^{2}(\cR/\cS)$ with $\|\xi^{(n)}\|_{2}=1$, $R(1_{E})\xi^{(n)}=\xi^{(n)}$, and which is almost $\lambda_{\cR/\cS}|_{\Gamma}$-invariant. Since $\Gamma\actson (X,\mu)$ has spectral gap,  as in the proof of Proposition \ref{prop: spectral gap simplify co-amenability}, we find that $\|f_{k}-1\|_{2}\to 0$, where $f_{k}(x)=\|\xi_{x}^{(k)}\|_{2}$. 
Define $v_{k}\in L^{2}(\cR/\cS)$ by
\[v_{k}(x,c)=\begin{cases}
    \frac{\xi^{(k)}(x,c)}{\|\xi^{(k)}_{x}\|_{2}},& \textnormal{ if $\|\xi^{(k)}_{x}\|\geq 1/2$},\\
    1_{x\in c},& \textnormal{ if $\|\xi^{(k)}_{x}\|< 1/2$}
\end{cases}.\]
Since $\|f_{k}-1\|_{2}\to 0$, we have  $\|v_{k}-\xi^{(k)}\|_{2}\to 0$. Moreover $\|v_{k,x}\|_{2}=1$ and $\|\lambda_{\cR/\cS}(\nu)v_{k}-v_{k}\|_{2}\to 0$, since $\|\lambda_{\cR/\cS}(\nu)\xi^{(k)}-\xi^{(k)}\|_{2}\to 0$. Thus passing to a further subsequence, we may assume that $\|v_{k,\gamma(x)}-v_{k,x}\|_{2}\to 0$ for all $\gamma\in \Gamma$, and almost every $x\in X$. 
For $x\in E$, observe that there is a well-defined map $\iota_{x}\colon [x]_{\cR|_{E}}/\cS|_{E}\to [x]_{\cR}/\cS$ which sends $\iota_{x}([y]_{\cS|_{E}})=[y]_{\cS}$. 
Since $R(1_{E})\xi^{(k)}=\xi^{(k)}$, if we define $w_{k}\in L^{2}(\cR|_{E}/\cS|_{E})$ by $w_{k}(x,c)=v_{k}(x,\iota_{x}(c))$, then we still have $\|w_{k,x}\|_{2}=1$ for almost every $x\in E$. Since $\|w_{k,\gamma(x)}-w_{k,x}\|_{2}\to 0$ for almost every $x\in E$ and every $\gamma\in \Gamma$, it follows as in the proof of Theorem  \ref{thm: big TFAE thm} (\ref{item:almost invariant fiberwise nonergodic intro part 2}) implies (\ref{item:almost invariant fiberwise norm 1 nonergodic part}) that $\cS|_{E}$ is coamenable in $\cR|_{E}$.

\end{proof}

\subsection{Connections to percolation theory}
We follow much of the presentation from \cite{Gaboriau-Lyons, SolidErg}.
Let $\Gamma$ be a finitely generated group with finite, symmetric generating set $S$ with $e\notin S$. Let $\cG=(\Gamma,E)$ be the associated Cayley graph, where $E=\{(\gamma,\gamma s):\gamma\in \Gamma,s\in S\}$. We define an equivalence relation $\thicksim$ on $E$ by saying that $(a,b)\thicksim (b,a)$ for $(a,b)\in E$. We abuse notation and regard $\{0,1\}^{E/\thicksim}$ as all functions $x\colon E\to \{0,1\}$ such that $x(a,b)=x(b,a)$ for $(a,b)\in E$. For $x\in \{0,1\}^{E/\thicksim}$, we have a natural subgraph of $\cG$ given by $\cG_{x}=(\Gamma,x^{-1}(\{1\}))$.
For $p\in (0,1)$, let $\mu_{p}=((1-p)\delta_{\{0\}}+p\delta_{\{1\}})^{\otimes E/\thicksim}$. The random variable $x\mapsto \cG_{x}$ on the probability space $(\{0,1\}^{E/\thicksim},\mu_{p})$ is known as \emph{$p$-Bernoulli edge percolation}. Note that $\Gamma$ has a natural probability measure-preserving action $\Gamma\actson\{0,1\}^{E/\thicksim}$ induced by the action $\Gamma\actson E$, namely, 
\[(\gamma x)(a,b)=x(\gamma^{-1}a,\gamma^{-1}b).\]
We set $\cR=\cR_{\Gamma,\{0,1\}^{E/\thicksim}}$.
Typical questions in percolation theory are about the connected components in $\cG_{x}$ and this can be encoded in terms of the subrelation
\[\cS=\{(x,\gamma x):x\in \{0,1\}^{E/\thicksim}, \gamma\in\Gamma, \textnormal{ and e  and $\gamma^{-1}$ are connected in $\cG_{x}$}\}.\]

Often one is interested in the structure of infinite components. We can study this by consider 
\[U_{\infty}=\{x\in X: [x]_{\cS} \textnormal{ is infinite}\}.\]
For instance, a common occurrence is to ask when there are infinitely many infinite connected components $\mu_{p}$-almost everywhere. This is equivalent to the condition that for almost every $x\in U_{\infty}$ we have that $[x]_{\cR|_{U_{\infty}}/\cS|_{U_{\infty}}}$ is infinite. 

If $\nu\in \Prob(\Gamma)$ is a finitely supported, symmetric probability measure with $\supp(\nu)=S$, then we abuse notation and consider $\nu$ as a probability measure on $[\cR]$  by identifying $\Gamma$ with its image in $[\cR]$.

Following the work in \cite{AFH}, Gabor Pete asked the authors of \cite{AFH} the following question.

\begin{question}
 Suppose that $p\in (0,1)$ and that $\cG_{x}$ has infinitely many infinite connected components $\mu_{p}$-almost everywhere. Suppose that $\nu\in \Prob(\Gamma)$ is finitely supported, symmetric, and $\ip{\supp(\nu)}=\Gamma.$
 Do we have that $\rho^{\cS}_{\nu}(x)<1$ for $\mu_{p}$-almost every $x\in U_{\infty}$?
\end{question}

One consequence on our work on spectral gap in the preceding subsection is the following.

\begin{cor}
Assume the above setup, and fix $p\in (0,1)$, and $\nu\in \Prob(\Gamma)$ is symmetric with $\ip{\supp(\nu)}=\Gamma$. Then the following are equivalent:
\begin{enumerate}[(i)]
    \item $\rho^{\cS}_{\nu}(x)<1$ for $\mu_{p}$-almost every $x\in U_{\infty}$,
    \item $\cR|_{U_{\infty}}$ is not amenable relative to $\cS|_{U_{\infty}}$. 
\end{enumerate}
\end{cor}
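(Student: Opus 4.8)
The plan is to obtain the equivalence by applying Proposition~\ref{prop: spectral gap compress subreln} to the $\cS$-invariant set $E=U_\infty$. First I would assemble the hypotheses of that proposition. The set $U_\infty$ is measurable and $\cS$-invariant, since membership in $U_\infty$ depends only on the cardinality of the $\cS$-class and $(x,y)\in\cS$ forces $[x]_\cS=[y]_\cS$; I will assume $\mu_p(U_\infty)>0$, as otherwise condition~(1) holds vacuously and the restriction $\cR|_{U_\infty}$ is not defined. Viewed inside $[\cR]$, the measure $\nu$ is finitely supported, symmetric, and generates $\cR$ because $\ip{\supp(\nu)}=\ip{S}=\Gamma$ and $\cR=\cR_{\Gamma,X}$. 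The only remaining hypothesis of Proposition~\ref{prop: spectral gap compress subreln} is spectral gap of $\Gamma\actson(\{0,1\}^{E/\thicksim},\mu_p)$.

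Next I would verify that spectral gap. We may assume $\Gamma$ is non-amenable: if $\Gamma$ is amenable then $\cR$ is amenable, so $\cS$ is everywhere coamenable in $\cR$ and Theorem~\ref{thm: everywhere rel man TFAE thm} gives $\rho^\cS_\nu=1$ almost everywhere, whence (1) fails, while $\cS|_{U_\infty}$ is coamenable in the amenable relation $\cR|_{U_\infty}$, so (2) fails as well; both conditions are then false and the equivalence holds trivially. For non-amenable $\Gamma$, the action $\Gamma\actson E/\thicksim$ has infinite orbits and finite point stabilizers, so the Koopman representation of the generalized Bernoulli action on $L^2_0(\{0,1\}^{E/\thicksim},\mu_p)$ is weakly contained in a multiple of the regular representation $\lambda_\Gamma$; since $\lambda_\Gamma$ has no almost invariant vectors for non-amenable $\Gamma$, neither does $L^2_0$, which is precisely spectral gap. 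Proposition~\ref{prop: spectral gap compress subreln} now yields that $\cS|_{U_\infty}$ is coamenable in $\cR|_{U_\infty}$ if and only if $\rho_{U_\infty}(\cR/\cS,\nu)=1$, and by (\ref{eqn: co spectral radius is norm restricted to subset background}) the latter equals $\|\rho^\cS_\nu 1_{U_\infty}\|_\infty=1$.

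The main obstacle is to reconcile this essential-supremum condition with the pointwise condition~(1). The implication from (2) to (1) is immediate, since $\|\rho^\cS_\nu 1_{U_\infty}\|_\infty<1$ forces $\rho^\cS_\nu<1$ almost everywhere on $U_\infty$. The reverse implication would be false for a general function, and this is where the structure of the problem must be used: $\rho^\cS_\nu$ is $\cR$-invariant. Indeed, $\rho^\cS_\nu(x)=\lim_k p_{2k,x,\cS}^{1/2k}$ is the exponential decay rate of the probability that the $\nu$-random walk on $[x]_\cR$ returns to its own $\cS$-class, and for the irreducible reversible walk on a single $\cR$-class this rate is independent of the starting point, so $\rho^\cS_\nu$ is $\cR$-invariant (cf.\ \cite{AFH}). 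Since $\Gamma\actson(\{0,1\}^{E/\thicksim},\mu_p)$ is ergodic (indeed mixing, as $\Gamma$ acts on $E/\thicksim$ with infinite orbits), the relation $\cR$ is ergodic and $\rho^\cS_\nu$ is almost everywhere equal to a constant $c$. Consequently, on the positive-measure set $U_\infty$, condition~(1) says exactly that $c<1$, while $\|\rho^\cS_\nu 1_{U_\infty}\|_\infty=c$; combining with the previous paragraph, (2) holds if and only if $c<1$ if and only if (1) holds, which completes the proof.
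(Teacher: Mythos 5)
Your overall architecture matches the paper's: dispose of the amenable case, establish spectral gap of the Bernoulli action for nonamenable $\Gamma$, apply Proposition \ref{prop: spectral gap compress subreln} to the $\cS$-invariant set $U_{\infty}$, and then upgrade the resulting essential-supremum statement $\|\rho^{\cS}_{\nu}1_{U_{\infty}}\|_{\infty}<1$ to the pointwise statement (1) via constancy of $\rho^{\cS}_{\nu}$ on $U_{\infty}$. But the step where you derive that constancy contains a genuine error: you claim that $\rho^{\cS}_{\nu}$ is $\cR$-invariant because ``for the irreducible reversible walk on a single $\cR$-class this rate is independent of the starting point.'' That reasoning applies to the ordinary spectral radius (return probability to the starting \emph{point}), where the target does not move; here the target is the $\cS$-class $[x]_{\cS}$, which changes as the starting point $x$ moves within an $\cR$-class. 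The cospectral radius is only almost surely constant along $\cS$-classes, not $\cR$-classes, and the paper itself exhibits a counterexample to your claim: in Example \ref{example: rel amen comes from small pieces} (cf.\ \cite[Example 6]{AFH}) the relation $\cR$ is ergodic, yet $\rho^{\cS}_{\nu}=1$ almost everywhere on $E$ while $\rho^{\cS}_{\nu}<1$ almost everywhere on $E^{c}$, so almost every $\cR$-class contains points with different values of $\rho^{\cS}_{\nu}$. Consequently, ergodicity of $\cR$ (which you correctly get from mixing of the Bernoulli action) cannot give you constancy of $\rho^{\cS}_{\nu}$, and your implication $(1)\Rightarrow(2)$ collapses.

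The missing idea is percolation-theoretic rather than random-walk-theoretic: one needs the ergodicity of $\cS|_{U_{\infty}}$ itself, which is precisely the indistinguishability theorem of Lyons--Schramm \cite{LyonsSchramm} combined with \cite[Proposition 5]{Gaboriau-Lyons}. Since $\rho^{\cS}_{\nu}$ is almost surely constant on $\cS$-classes (the correct invariance, from \cite{AFH}), ergodicity of $\cS|_{U_{\infty}}$ forces $\rho^{\cS}_{\nu}|_{U_{\infty}}$ to be almost surely constant, and with that constant in hand your final paragraph goes through verbatim: condition (1) says the constant is $<1$, condition (2) via Proposition \ref{prop: spectral gap compress subreln} and (\ref{eqn: co spectral radius is norm restricted to subset background}) says $\|\rho^{\cS}_{\nu}1_{U_{\infty}}\|_{\infty}<1$, and these coincide. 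Everything else in your proposal — the treatment of the amenable case, the weak-containment argument for spectral gap, and the reduction to the restricted cospectral radius — is sound and agrees with the paper's proof.
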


\begin{proof}
Note that if $\Gamma$ is amenable, then by \cite[Proposition 3.4]{ZimmerAMen1} and \cite[Theorem 2.1]{ZimmerAmenPoisson} $\cR$ is amenable, and so $\cR$ is everywhere amenable relative to $\cS$, and thus by Theorem \ref{thm: everywhere rel man TFAE thm} we have that  $\rho^{\cS}_{\nu}(x)=1$ for $\mu_{p}$-almost every $x\in U_{\infty}$.
So we may, and will, assume that $\Gamma$ is nonamenable. 

By the indistinguishability theorem of Lyons-Schramm \cite{LyonsSchramm} and \cite[Proposition 5]{Gaboriau-Lyons}, we have that $\cS|_{U_{\infty}}$ is ergodic. Since $\rho^{\cS}_{\nu}$ is almost surely constant on $\cS$-classes, we have that $\rho^{\cS}_{\nu}|_{U_{\infty}}$ is almost surely constant. Since $\Gamma$ is nonamenable, and $\Gamma\actson (\{0,1\}^{E/\thicksim},\mu_{p})$ is a nontrivial Bernoulli action, we have that $\ip{\supp(\nu)}=\Gamma\actson (\{0,1\}^{E/\thicksim},\mu_{p})$ has spectral gap. The desired result now follows from Proposition \ref{prop: spectral gap compress subreln}.
    
\end{proof}

Note that \cite{HPMonotone} gives conditions which guarantee that a percolation has infinitely many infinite connected components almost surely. The argument there goes by showing that once there are at least two infinite connected components, there are infinitely many infinite connected components. It may be that a finer analysis of that argument gives something akin to a ``paradoxical decomposition" in the space $(\cR|_{U_{\infty}}/\cS|_{U_{\infty}},\mu_{p,\cR|_{U_{\infty}/\cS_{U_{\infty}}}})$ which allows one to prove that $\cR|_{U_{\infty}}$ is not amenable relative to $\cS|_{U_{\infty}}$.


\section{Coamenability in terms of von Neumann algebras} \label{sec: vNa stuff}

In this section, we explain how the definition of coamenability for relations given by Theorem \ref{thm: big TFAE thm} is equivalent to the one implicitly defined in \cite{MonodPopa, PopaCorr} via von Neumann algebras. To do this, we will recall some general background on von Neumann algebras.

\subsection{Preliminaries on von Neumann algebras}
 Let $\cH$ be a Hilbert space. Recall that the \emph{strong operator topology} (often abbreviated SOT) on $B(\cH)$ is defined by saying that for $T\in B(\cH)$, the sets
 \[\mathcal{O}_{F,\varepsilon}(T)=\bigcap_{\xi\in F}\{S\in B(\cH):\|(S-T)\xi\|<\varepsilon\}\]
ranging over finite subsets $F$ of $\cH$ and $\varepsilon>0$ form a neighborhood basis at $T$. One typically thinks of the SOT in terms of what it means to converge in SOT: if $T_{n}$ is a sequence in $B(\cH)$, then $T_{n}\to T$ in SOT if and only if $\|(T_{n}-T)\xi\|\to_{n\to\infty}0$ for all $\xi\in \cH$ (a similar formulation works more generally for nets). We also say $T_{n}\to T$ strongly if $T_{n}\to T$ in SOT. While the SOT is not metrizable on $B(\cH)$ if $\cH$ is infinite-dimensional, it is on the unit ball by  \cite[Proposition IX.1.3]{Conway}. Thus on norm bounded sets we can detect continuity, closures etc. via convergence of sequences.

\begin{defn}
 Let $\cH$ be a Hilbert space. A \emph{von Neumann algebra} is a $*$-subalgebra $M\subseteq B(\cH)$, with $1\in M$ and which is closed in the strong operator topology.    
\end{defn}

For a subset $E\subseteq B(\cH)$, we let $W^{*}(E)$ be the von Neumann algebra \emph{generated by $E$}
(i.e. the smallest von Neumann subalgebra of $B(\cH)$ containing $E$). Key examples of von Neumann algebras include $B(\cH)$ as well as $L^{\infty}(Y,\nu)$ acting on $L^{2}(Y,\nu)$ via multiplication operators, where $(Y,\nu)$ is a sigma-finite measure space (see \cite[Example IX.7.2]{Conway}).   A core example of interest for us is the following.

Let $\cE$ be a discrete, measure-preserving equivalence relation on a standard sigma-finite space $(Y,\nu)$. For $f\in L^{\infty}(Y,\nu)$ we define $M_{f}\in B(L^{2}(\cE))$ by $(M_{f}\xi)(x,y)=f(x)\xi(x,y)$.
We let $L(\cE)$ be the von Neumann algebra generated by $\lambda_{\cE}([\cE])\cup \{M_{f}:f\in L^{\infty}(Y,\nu)\}$. We typically identify $[\cE]$, $L^{\infty}(Y,\nu)$ with their image inside $L(\cE)$. We also sometimes write $\lambda$ instead of $\lambda_{\cE}$ if it is clear from context.  Note that if $\Gamma\leq [\cE]$ generates $\cE$, and $\sigma\in [[\cR]]$, then modulo null sets we may write $\ran(\sigma)=\bigsqcup_{\gamma\in \Gamma}E_{\gamma}$, where $E_{\gamma}\subseteq\{y\in Y:\sigma^{-1}(y)=\gamma^{-1}(y)\}$. Then $\lambda(\sigma)=\sum_{\gamma\in \Gamma}M_{1_{E_{\gamma}}}\lambda(\gamma)$ with the sum converging in SOT. Thus in this case we have $L(\cE)=W^{*}(L^{\infty}(Y,\nu)\cup \Gamma)$. We will mention an explicit description of the operators in $L(\cE)$ given in the next subsection, which will require the theory of direct integrals. 

For a von Neumann algebra $M\subseteq B(\cH)$, we define its \emph{commutant} \[M'=\{T\in B(\cH):Tx=xT \textnormal{ for all $x\in M$}\}.\] Then $M'$ is a von Neumann algebra, and von Neumann's bicommutant theorem shows that $M=M''$ (see \cite[Theorem IX.6.4]{Conway}). 
Recall that an operator $x\in B(\cH)$ is positive, denoted $x\geq 0$, if $x=x^{*}$ and $\ip{x\xi,\xi}\geq 0$ for all $\xi\in \cH$. A linear map $T\colon M\to N$ is \emph{positive} if $x\geq 0$ implies $T(x)\geq 0$. A \emph{state} on $M$ is a positive, linear functional $\varphi$ with $\varphi(1)=1$.
An important continuity notion for linear maps $T\colon M\to N$ between von Neumann algebras is the \emph{normality} of $T$. A linear functional $\varphi\colon M\to \C$ is \emph{normal}, if $\varphi|_{\{x\in M:\|x\|\leq 1\}}$ is SOT-continuous. We say that a linear map $T\colon M\to N$ is \emph{normal} if $\varphi\circ T$ is normal for all normal $\varphi\colon N\to \C$. Note that is implied by saying that $T|_{\{x\in M:\|x\|\leq 1\}}$ is SOT-SOT continuous. These two concepts are  equivalent if $T$ is a $*$-homomorphism \cite[Proposition 7.1.15]{KadisonRingroseII}, but not in general. However, in all cases in this paper, we will check normality by showing the stronger condition that $T|_{\{x\in M:\|x\|\leq 1\}}$ is SOT-SOT continuous. 

A good example for intuition on normality is the following. If $M=L^{\infty}(X,\mu)$, then a \emph{normal} state precisely corresponds to a probability measure on $X$ which is absolutely continuous with respect to $\mu$, whereas a general state on $M$ is a \emph{finitely additive} probability measure which is absolutely continuous with respect to $\mu$. 

In the special case of a probability measure-preserving equivalence relation, the algebra $L(\cE)$ has additional structure. 

\begin{defn}
A \emph{tracial von Neumann algebra} is a pair $(M,\tau)$ where $M$ is a von Neumann algebra and $\tau\colon M\to \C$ is a linear functional which is
\begin{itemize}
    \item (tracial) $\tau(xy)=\tau(yx)$ for $x,y\in M$,
    \item (a state) $x\geq 0$ implies that $\tau(x)\geq 0$, and $\tau(1)=1$,
    \item (faithful) $\tau(x^{*}x)=0$ if and only if $x=0$,
    \item (normal) $\tau|_{\{x\in M:\|x\|\leq 1\}}$ is SOT continuous. 
\end{itemize}
\end{defn}

The essential idea is that $(M,\tau)$ should be thought of as a ``noncommutative probability space", e.g. the case where $M$ is commutative corresponds exactly to a probability space $(X,\mu)$. 

For a discrete, pmp equivalence relation $\cR$ over a standard probability space $(X,\mu)$, we have a faithful, normal, tracial state $\tau\colon L(\cR)\to \C$ given by $\tau(x)=\ip{x 1_{\Delta},1_{\Delta}}$, where $\Delta=\{(x,y)\in \cR:y=x\}$. 

For a tracial von Neumann algebra $(M,\tau)$, we have an inner product on $M$ given by $\ip{x,y}=\tau(y^{*}x)$, and  we set $\|x\|_{2}=\ip{x,x}^{1/2}$. We let $L^{2}(M,\tau)$ be the completion of $M$ under this inner product. For $x,y\in M$ we have 
\[\|xy\|_{2}\leq \min(\|x\|\|y\|_{2},\|x\|_{2}\|y\|),\]
see \cite[V.2, equation (8)]{Taka}.
We thus have  natural left and right actions $\rho,\lambda(x)$ of $x\in M$ on $L^{2}(M,\tau)$ which are defined densely by $\lambda(x)y=xy$, and $\rho(x)y=yx$, for $y\in M$. Note that $\rho$ is a $*$-anti-homomorphism.  Both $\lambda$ and $\rho$ are normal \cite[Theorem 2.22]{Taka}. We usually view $M$ as represented on $L^{2}(M,\tau)$ and ignore whatever Hilbert space $M$ was originally represented on. Thus we will typically use $x$ instead of $\lambda(x)$. One important fact we will use later is that $\rho(M)'=M$ and $M'=\rho(M)$ when acting on $L^{2}(M,\tau)$, see \cite[Theorem 2.22]{Taka}. In the case of $L(\cR)$, we have a natural identification of $L^{2}(L(\cR))$ with $L^{2}(\cR)$ which sends $x\in L(\cR)$ to $x1_{\Delta}$. In this case, the right action $\rho$ can be concretely identified on $L^{2}(\cR)$ as:
\[(\rho(\gamma)\xi)(x,y)=\xi(x,\gamma(y)) \textnormal{ for $(x,y)\in \cR,\gamma\in [\cR],\xi\in L^{2}(\cR)$},\]
\[(\rho(f)\xi)(x,y)=f(y)\xi(x,y) \textnormal{ for $f\in L^{\infty}(X,\mu),\xi\in L^{2}(\cR)$}.\]

\subsection{Preliminaries on direct integrals}

\begin{defn}Let  $(Y,\nu)$ be a $\sigma$-finite measure space, then a \emph{measurable field of Hilbert spaces over $Y$} is a family $(\mathcal{H}_{y})_{y\in Y}$ of separable Hilbert spaces, together with a family $\Meas(\mathcal{H}_{y})\subseteq \prod_{y\in Y}\mathcal{H}_{y}$  so that:
\begin{itemize}
\item for every $(\xi_{y})_{y},(\eta_{y})_{y}\in \Meas(\mathcal{H}_{x})$ we have that $y\mapsto \ip{\xi_{y},\eta_{y}}$ is measurable,
\item if $\eta=(\eta_{y})_{y\in Y}\in \prod_{y\in Y}\mathcal{H}_{y}$ and $y\mapsto \ip{\xi_{y},\eta_{y}}$ is measurable for all $\xi=(\xi_{y})_{y}\in \Meas(\mathcal{H}_{y}),$ then $\eta\in \Meas(\mathcal{H}_{y}),$
\item there is a sequence $(\xi^{(n)})_{n=1}^{\infty}$ with $\xi^{(n)}=(\xi^{(n)}_{y})_{y\in Y}$ in $\Meas(\mathcal{H}_{y})$ so that $\mathcal{H}_{y}=\overline{\Span\{\xi^{(n)}_{y}:n\in \N\}}$ for almost every $y\in Y.$
\end{itemize}
The direct integral, denoted $\int_{Y}^{\oplus} \mathcal{H}_{y}\,d\nu(y),$ is defined to be all $\xi\in \Meas(\mathcal{H}_{y})$ so that $\int_{Y}\|\xi_{y}\|^{2}\,d\nu(y)<\infty,$ where we identify two elements of $\Meas(\mathcal{H}_{y})$ if they agree outside a set of measure zero. We put an inner product on $\int_{Y}^{\oplus}\mathcal{H}_{y}\,d\nu(y)$ by
\[\ip{\xi,\eta}=\int_{Y}\ip{\xi_{y},\eta_{y}}\,d\nu(y),\]
and this gives $\int_{Y}^{\oplus}\mathcal{H}_{y}\,d\nu(y)$ the structure of a Hilbert space.
Suppose that $(T_{y})_{y\in Y}\in \prod_{y\in Y}B(\cH_{y})$. We then say that $(T_{y})_{y\in Y}$ is a \emph{measurable field of operators} if for any $(\xi_{y})_{y\in Y}\in \Meas(\mathcal{H}_{y})$, we have that $(T_{y}\xi_{y})_{y\in Y}\in \Meas(\mathcal{H}_{y})$. Suppose that $(T_{y})_{y\in Y}$ is a measurable field of operators and $\esssup_{y\in Y}\|T_{y}\|<+\infty$. We may then define $\int_{Y}^{\bigoplus}T_{y}\,d\nu(y):=T$ by $(T\xi)_{y}=T_{y}\xi_{y}$.
\end{defn}

By \cite[Section IV.7 Equation (6)]{Taka}, it follows that 
\[\left\|\int_{Y}^{\bigoplus}T_{y}\,d\nu(y)\right\|=\esssup_{y\in Y}\|T_{y}\|.\]

We list a few examples that are most relevant for us.
\begin{example}\label{example: equivalence reln direct integral}
 Let $\cE$ be a discrete, measure-preserving equivalence relation on a standard $\sigma$-finite space $(Y,\nu)$. For $y\in Y$, set $\cH_{y}=\ell^{2}([y]_{\cE})$. We turn $\cH_{y}$ into a measurable field by declaring that $(\xi^{y})_{y\in Y}\in \Meas(\cH_{y})$ if $(x,y)\mapsto \xi^{y}(x)$ is $\nu_{\cE}$-measurable. In this case, we have a concrete isomorphism
 \[L^{2}(\cE)\cong \int_{Y}^{\bigoplus}\ell^{2}([y]_{\cE})\,d\nu(y),\]
 given by sending $\xi\in L^{2}(\cE)$ to $(\xi^{y})_{y\in Y}$ where $\xi^{y}(x)=\xi(x,y)$.
\end{example}

\begin{example}
 Let $\cS\leq \cR$ be discrete, probability measure-preserving equivalence relations on a standard probability space $(X,\mu)$. For $x\in X$, we let $\cH_{x}=\ell^{2}([x]_{\cR}/\cS)$. We turn $\cH_{x}$ into a measurable field by saying that $(\xi_{x})_{x\in X}\in \Meas(\ell^{2}([x]_{\cR}/\cS))$ if for every $\gamma\in [\cR]$ the map $x\mapsto \xi_{x}([\gamma(x)]_{\cS})$ is $\mu$-measurable. In this case, we have a concrete isomorphism
 \[L^{2}(\cR/\cS)\cong \int_{X}\ell^{2}([x]_{\cR}/\cS)\,d\mu(x)\]
 by sending $\xi\in L^{2}(\cR/\cS)$ to the field $(\xi_{x})_{x\in X}$ given by $\xi_{x}(c)=\xi(x,c)$. This example also appears in \cite[discussion following Definition 3.1]{AFH}.
\end{example}

A toy example can be given by choosing $X=\{1,\cdots,n\}$ for some $n\in \N$ and letting $\mu$ be the uniform measure on $\{1,\cdots,n\}$. If we then set $\cH_{j}=\C$, we have that $\int_{\{1,\cdots,n\}}^{\bigoplus}\C\,d\mu(j)=\ell^{2}(n,\mu)$. In this case, an operator on $\ell^{2}(n,\mu)$ given as a direct integral exactly corresponds to a diagonal matrix. In general, we think of operators given in direct integral form as ``continuously diagonal operators". E.g. a matrix is diagonal if and only if it commutes with all diagonal operators. Similarly, an operator on a direct integral space $\int_{X}^{\bigoplus}\cH_{x}\,d\mu(x)$ can be written as a direct integral of operators if and only if it commutes with all multiplication operators $\int_{X}^{\bigoplus}f(x)\,d\mu(x)$ where $f\in L^{\infty}(X,\mu)$ (see \cite[Corollary  IV.8.16]{Taka}).

The following is well known, but since we are unable to find a proof in the literature, we include it for completeness.
\begin{prop}\label{prop: folklore subsequences direct integrals}
Let $(Y,\nu)$ be a $\sigma$-finite measure space, and let $(\cH_{y})_{y\in Y}$ be a measurable field of Hilbert spaces and set $\cH=\int_{Y}^{\bigoplus}\cH_{y}\,d\nu(y)$. Let $T_{n}=\int_{Y}^{\bigoplus}T_{n,y}\,d\nu(y)\in B(\cH)$ and suppose that $T_{n}$ converges strongly as $n\to\infty$ to $T=\int_{Y}^{\bigoplus}T_{y}\in B(\cH)$. Then there is a strictly increasing sequence of integers $(n_{k})_{k}$ so that for almost every $y\in Y$ we have $T_{n_{k},y}\to_{k\to\infty}T_{y}$ strongly.     
\end{prop}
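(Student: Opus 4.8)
The plan is to exploit that each $T_n$ and $T$ are \emph{decomposable}, so that strong convergence in $B(\cH)$ translates fiberwise into $L^1$-convergence of the quantities $y\mapsto \|(T_{n,y}-T_y)v\|^2$ along a suitable countable collection of vectors, and then to upgrade $L^1$-convergence to pointwise a.e.\ convergence by a diagonal extraction, finishing with a density argument on each fiber.

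First I would record a uniform bound. Since $T_n\to T$ strongly, Banach--Steinhaus gives $M:=\sup_n\|T_n\|<\infty$, and $\|T\|\le M$. Using the formula $\left\|\int_Y^{\bigoplus}S_y\,d\nu(y)\right\|=\esssup_{y}\|S_y\|$ recorded just before the proposition, for each $n$ we have $\|T_{n,y}\|\le M$ for $\nu$-a.e.\ $y$, and likewise $\|T_y\|\le M$ a.e. Discarding a countable union of null sets, I may assume $\sup_n\|T_{n,y}\|\le M$ and $\|T_y\|\le M$ for every $y$ in a conull set. Next I would produce a countable family $(\zeta^{(m)})_m\subseteq \cH$ whose fibers are dense in $\cH_y$ for a.e.\ $y$. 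Starting from the fundamental sequence $(\xi^{(m)})_m\in\Meas(\cH_y)$ guaranteed by the definition of a measurable field, I write $Y=\bigsqcup_j Y_j$ with $\nu(Y_j)<\infty$ (possible by $\sigma$-finiteness) and form the truncations $\zeta^{(m,j,\ell)}_y:=1_{Y_j}(y)\,1_{\{\|\xi^{(m)}_y\|\le \ell\}}\,\xi^{(m)}_y$; each lies in $\cH$ because $\int_{Y_j}\|\xi^{(m)}_y\|^2 1_{\{\|\xi^{(m)}_y\|\le\ell\}}\,d\nu(y)\le \ell^2\nu(Y_j)<\infty$. For a.e.\ $y$, taking the unique $j$ with $y\in Y_j$ and any $\ell\ge\|\xi^{(m)}_y\|$ recovers $\zeta^{(m,j,\ell)}_y=\xi^{(m)}_y$, so the countable collection $\{\zeta^{(m,j,\ell)}\}$ has fibers spanning a dense subset of $\cH_y$ a.e.; I relabel it $(\zeta^{(m)})_m$.

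Now for each fixed $m$, decomposability gives $\|(T_n-T)\zeta^{(m)}\|^2=\int_Y\|(T_{n,y}-T_y)\zeta^{(m)}_y\|^2\,d\nu(y)\to 0$ as $n\to\infty$, i.e.\ $g_{n,m}(y):=\|(T_{n,y}-T_y)\zeta^{(m)}_y\|^2\to 0$ in $L^1(Y,\nu)$. A standard diagonal extraction (extract an a.e.-convergent subsequence for $m=1$, then a sub-subsequence for $m=2$, and take the diagonal) yields a single strictly increasing sequence $(n_k)_k$ along which $g_{n_k,m}(y)\to 0$ for a.e.\ $y$, simultaneously for all $m$. Finally, for every $y$ in the conull set where the uniform bound, the density of $\{\zeta^{(m)}_y\}_m$, and this a.e.\ convergence all hold, an $\varepsilon/3$ argument finishes: given $v\in\cH_y$ and $\varepsilon>0$, choose $m$ with $\|v-\zeta^{(m)}_y\|<\varepsilon$ and estimate $\|(T_{n_k,y}-T_y)v\|\le 2M\varepsilon+\|(T_{n_k,y}-T_y)\zeta^{(m)}_y\|$, whose $\limsup$ over $k$ is at most $2M\varepsilon$. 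Letting $\varepsilon\to 0$ gives $T_{n_k,y}\to T_y$ strongly for a.e.\ $y$.

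I expect the main obstacle to be the construction of the countable dense-fiber family $(\zeta^{(m)})_m$ inside $\cH$: the fundamental sequence provided by the measurable-field axioms need not be square-integrable, so the $\sigma$-finite truncation step is essential and must be carried out carefully enough that density of the fibers survives for almost every $y$. Everything after that --- the reinterpretation of strong convergence as $L^1$-convergence of the $g_{n,m}$, the diagonal argument over the countable index set, and the $\varepsilon/3$ passage from a dense set to all of $\cH_y$ using the uniform bound --- is routine.
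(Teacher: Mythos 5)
Your proof is correct, and it diverges from the paper's in one genuine respect: the treatment of $\sigma$-finiteness. The paper first proves the statement when $\nu$ is a probability measure --- there the fundamental sequence can be normalized to have unit-norm fibers, hence is automatically square-integrable --- and then handles general $\sigma$-finite $\nu$ by choosing a probability measure $\lambda$ mutually absolutely continuous with $\nu$ and conjugating by the unitary $U\colon \int_{Y}^{\bigoplus}\cH_{y}\,d\nu(y)\to \int_{Y}^{\bigoplus}\cH_{y}\,d\lambda(y)$, $(U\xi)_{y}=\frac{d\nu}{d\lambda}(y)^{1/2}\xi_{y}$, which preserves decomposability and strong convergence. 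You instead stay in the $\sigma$-finite setting throughout and manufacture a countable square-integrable family with almost-everywhere dense fibers by truncating the fundamental sequence over an exhaustion $Y=\bigsqcup_{j}Y_{j}$; this is more self-contained (no auxiliary measure or unitary), and your implicit claim that $1_{A}(y)\xi^{(m)}_{y}$ is again a measurable field is justified by the second axiom in the definition of $\Meas(\cH_{y})$. Your extraction step (strong convergence gives $L^{1}$-convergence of $g_{n,m}$, then Chebyshev plus Borel--Cantelli plus a diagonal over $m$) is the same mechanism as the paper's one-shot choice of $n_{k}$ with $\nu\left(\bigcup_{j=1}^{k}\{y:\|(T_{n_{k},y}-T_{y})\xi^{(j)}_{y}\|\geq 2^{-k}\}\right)<2^{-k}$; since Borel--Cantelli requires no finiteness of $\nu$, your route genuinely avoids the probability-measure reduction, at the cost of a slightly more elaborate countable family.

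One small patch is needed at the end: your construction guarantees that the fibers $\{\zeta^{(m)}_{y}\}_{m}$ have dense \emph{linear span} in $\cH_{y}$, not that the set itself is dense, so ``choose $m$ with $\|v-\zeta^{(m)}_{y}\|<\varepsilon$'' is not literally available. Either close your countable family under $\Q[i]$-linear combinations (still countable, still square-integrable measurable fields, and the diagonal argument absorbs the larger index set), or argue as the paper does: for $y$ in the good conull set, $\{v\in \cH_{y}:\|(T_{n_{k},y}-T_{y})v\|\to_{k\to\infty}0\}$ is a linear subspace, closed because $\sup_{k}\|T_{n_{k},y}\|\leq M$ (your $2M\varepsilon$ estimate is exactly the proof of closedness), and it contains every $\zeta^{(m)}_{y}$, hence equals $\cH_{y}$. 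With that one-line repair the argument is complete.
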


\begin{proof}
We first handle the case where $\nu$ is a probability measure.
A common application of the principle of uniform boundedness shows that there is a $C>0$ with $\|T_{n}\|\leq C$ for all $n\in \N$. Hence for almost every $y\in Y$  we have that $\|T_{n,y}\|\leq C$ for all $n\in \N$.
Let $\xi^{(j)}$ be a sequence of measurable vector fields with $\overline{\Span\{\xi^{(j)}_{y}:j\in \N\}}=\cH_{y}$ for almost every $y\in Y$. We may, and will, assume that $\|\xi^{(y)}_{j}\|=1$ almost everywhere.
Since $\|(T_{n}-T)\xi^{(j)}\|^{2}=\int_{Y}\|(T_{n,y}-T_{y})\xi_{j}^{(y)}\|^{2}\,d\nu(y)\to_{n\to\infty}0$ for every $j\in \N$, we may choose a strictly increasing sequence $(n_{k})_{k=1}^{\infty}$
with
\[\nu\left(\bigcup_{j=1}^{k}\{y:\|T_{n_{k},y}\xi^{(j)}_{y}-T_{y}\xi^{(j)}_{y}\|\geq 2^{-k}\}\right)<2^{-k}.\]
Thus Borel-Cantelli implies that for almost every $y\in Y$ we have that:
\begin{itemize}
    \item $\|T_{n,y}\|\leq C$, for all $n\in \N$,
    \item $\|(T_{n_{k},y}-T_{y})\xi^{(j)}_{y}\|\to_{k\to\infty}0$, for all $j\in \N$,
    \item $\Span\{\xi^{(j)}_{y}:j\in \N\}$ is norm dense in $\cH_{y}$.
\end{itemize}
If $y$ is such that the first bullet point holds, then $\{\zeta\in \cH_{y}:\|T_{n_{k},y}\zeta-T_{y}\zeta\|\to_{k\to\infty}0\}$ is a closed linear subspace of $\cH_{y}$. So if $y$ is such that all bullet points above hold, then $T_{n_{k},y}\to T_{y}$ strongly. This handles the case that $\nu$ is a probability measure.

If $\nu$ is not a probability measure, then since $\nu$ is $\sigma$-finite, we may choose a probability measure $\lambda$ on $Y$ which is mutually absolutely continuous with $\nu$. Set $\widetilde{\cH}=\int_{Y}\cH_{y}\,d\lambda(y)$, and define a unitary $U\colon \cH\to \widetilde{\cH}$ by $(U\xi)_{y}=\frac{d\nu}{d\lambda}(y)^{1/2}\xi_{y}$. If $T_{n}\to T$ strongly as in the statement of the proposition, then $UT_{n}U^{*}\to UTU^{*}$ strongly, and $UT_{n}U^{*}=\int_{Y}^{\bigoplus}T_{n,y}\,d\lambda(y)$, $UTU^{*}=\int_{Y}^{\bigoplus}T_{y}\,d\lambda(y)$. Hence by the case of a probability measure, we may find a subsequence $T_{n_{k}}$ so that for $\lambda$-almost every $y$ we have $T_{n_{k},y}\to T_{y}$ strongly. Since $\lambda$ and $\nu$ are mutually absolutely continuous we have that for $\nu$-almost every $y$ that $T_{n_{k},y}\to T_{y}$ strongly.

\end{proof}

\subsection{The basic construction and von Neumann algebras of relations}\label{sec: basic construction}

Recall that given a discrete, measure-preserving equivalence relation  $\cE$ on a standard sigma-finite space $(Y,\nu)$, then by Example \ref{example: equivalence reln direct integral} we may view $L^{2}(\cE)$ as the direct integral of $\int \ell^{2}([y]_{\cE})\,d\nu(y)$.

\begin{thm}[Corollary XIII.2.8 of \cite{TakesakiIII}] \label{thm: direct integral rep of equiv ops Takesaki}
Let $\cE$ be a discrete, measure-preserving equivalence relation on a standard sigma-finite space $(Y,\nu)$. Given $T\in B(L^{2}(\cE))$, we have that $T\in L(\cE)$ if and only if we may write $T=\int_{Y}^{\bigoplus}T^{y}\,d\nu(y)$ and $T^{y_{1}}=T^{y_{2}}$ for almost every $(y_{1},y_{2})\in \cE$. 
\end{thm}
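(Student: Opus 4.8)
The plan is to realize both hypotheses — decomposability and $\cE$-equivariance — as commutation relations against the canonical right action, so that the theorem reduces to the commutation theorem for $L(\cE)$. Throughout I use the identification $L^2(\cE)\cong\int_Y^{\oplus}\ell^2([y]_\cE)\,d\nu(y)$ of Example \ref{example: equivalence reln direct integral}, together with the right action $\rho$ determined on generators by $(\rho(f)\xi)(x,y)=f(y)\xi(x,y)$ for $f\in L^\infty(Y,\nu)$ and $(\rho(\gamma)\xi)(x,y)=\xi(x,\gamma^{-1}(y))$ for $\gamma\in[\cE]$ (these formulas make sense for any measure-preserving $\cE$, with $\rho$ a normal anti-isomorphism onto $\rho(L(\cE))$). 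In fibered form $\rho(f)=\int_Y^{\oplus}f(y)\,\id_{\ell^2([y]_\cE)}\,d\nu(y)$ is exactly the algebra of diagonal (multiplication) operators of the direct integral, so by \cite[Corollary IV.8.16]{Taka} an operator $T\in B(L^2(\cE))$ admits a decomposition $T=\int_Y^{\oplus}T^y\,d\nu(y)$ if and only if $T\in\rho(L^\infty(Y,\nu))'$. This disposes of the decomposability half of the statement.

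The heart of the argument is to match $\cE$-equivariance of a decomposable $T$ with the relation $T\in\rho([\cE])'$. A direct fiberwise computation gives, for $\gamma\in[\cE]$, that $(\rho(\gamma)\xi)^y=\xi^{\gamma^{-1}(y)}$ (using $[\gamma^{-1}(y)]_\cE=[y]_\cE$), whence $(T\rho(\gamma)\xi)^y=T^y\xi^{\gamma^{-1}(y)}$ and $(\rho(\gamma)T\xi)^y=T^{\gamma^{-1}(y)}\xi^{\gamma^{-1}(y)}$; thus a decomposable $T$ commutes with $\rho(\gamma)$ precisely when $T^y=T^{\gamma^{-1}(y)}$ for $\nu$-almost every $y$. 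To pass between ``$T^y=T^{\gamma^{-1}(y)}$ a.e.\ for every $\gamma$'' and ``$T^{y_1}=T^{y_2}$ for $\nu_\cE$-almost every $(y_1,y_2)\in\cE$'' I would exploit that $\cE$ is measure-preserving: flip-invariance of $\nu_\cE$ shows that both coordinate projections carry $\nu$-null sets to $\nu_\cE$-null sets, and that for fixed $\gamma$ the push-forward of $\nu$ under $y\mapsto(y,\gamma^{-1}(y))$ is absolutely continuous with respect to $\nu_\cE$. Fixing a countable $\Gamma\leq[\cE]$ generating $\cE$ and intersecting the countably many conull sets then yields the equivalence. Consequently the set $\cA$ of decomposable, $\cE$-equivariant operators satisfies $\cA=\rho(L^\infty(Y,\nu))'\cap\rho([\cE])'=\big(W^*(\rho(L^\infty(Y,\nu))\cup\rho([\cE]))\big)'=\rho(L(\cE))'$, the last equality because $\rho$ is normal and $L(\cE)=W^*(L^\infty(Y,\nu)\cup[\cE])$.

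Finally I would invoke the commutation theorem $L(\cE)'=\rho(L(\cE))$; combined with von Neumann's bicommutant theorem this gives $\cA=\rho(L(\cE))'=L(\cE)''=L(\cE)$, which is exactly the asserted equivalence, both directions at once. The one genuine obstacle is the commutation theorem in this generality: the excerpt records it only for \emph{tracial} algebras via the standard form (\cite[Theorem 2.22]{Taka}), whereas $\cE$ is merely discrete and measure-preserving on a $\sigma$-finite space, so $L(\cE)$ carries only a possibly infinite semifinite trace. I would therefore either cite the semifinite standard-form/Tomita--Takesaki commutation theorem, or reduce to the finite case by exhausting $Y=\bigsqcup_n Y_n$ with $\nu(Y_n)<\infty$ and compressing by the projections $\rho(1_{Y_n})$. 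As a self-contained verification of the easy inclusion $L(\cE)\subseteq\cA$ (which sidesteps the commutation theorem), one checks directly that the generators $M_f$ and $\lambda(\gamma)$ are decomposable and $\cE$-equivariant and that $\cA$ is closed under adjoints, products, and the strong operator topology; the SOT-closure step is precisely where Proposition \ref{prop: folklore subsequences direct integrals} enters, passing to a subsequence along which $T_n^y\to T^y$ strongly for a.e.\ $y$ and transporting equivariance to the limit via the null-set transfer above.
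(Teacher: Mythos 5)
Your proof is correct and follows essentially the same route the paper takes: the paper cites Takesaki for this statement in the $\sigma$-finite setting, but it proves the more general Proposition \ref{prop: thanks FSZ} (which recovers the present theorem upon taking $\cS=\cR$, since $e_{L(\cR)}=1$) by exactly your argument — decomposability is commutation with the diagonal algebra $\rho(L^{\infty}(Y,\nu))$ via \cite[Corollary IV.8.16]{Taka}, $\cE$-equivariance is commutation with $\rho([\cE])$ checked on a countable generating subgroup together with the null-set transfer between $\nu$ and $\nu_{\cE}$, and the conclusion is the identification of the commutant of the right action using normality of $\rho$. Your observation that the only genuinely new input in the $\sigma$-finite case is the commutation theorem $L(\cE)'=\rho(L(\cE))$ for the semifinite trace (the tracial \cite[Theorem 2.22]{Taka} invoked elsewhere in the paper does not cover it) is accurate, and citing the standard-form commutation theorem for semifinite traces is the appropriate repair.
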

In the special case where $(Y,\nu)$ is a probability measure, we will prove something more general in Proposition \ref{prop: thanks FSZ}.

An important ingredient of coamenability for von Neumann algebras is Jones' basic construction. Let $(M,\tau)$ be tracial von Neumann algebra, and let $N\leq M$ be a von Neumann algebra. Let $e_{N}\in B(L^{2}(M))$ be the projection onto $L^{2}(N)$. The \emph{basic construction of $M$ over $N$}, denoted $\ip{M,e_{N}}$ is $W^{*}(M\cup\{e_{N}\})$. We typically call $e_{N}$ the \emph{Jones projection}. By \cite[Lemma XIX.2.12]{TakesakiIII} we have that $\ip{M,e_{N}}=\rho(N)'$.

Given an inclusion $\cS\leq \cR$ of discrete, probability measure-preserving equivalence relations on $(X,\mu)$, recall that we define a new equivalence relation $\widehat{\cS}$ on $\cR/\cS$ by saying that $((x,c),(x',c'))\in \widehat{\cS}$ if $(x,x')\in \cR$ and $c=c'$. We often abuse notation and regard $\widehat{\cS}=\{(x,x',c):(x,x')\in \cR, c\in [x]_{\cR}/\cS\}$.
The main goal of this section is to identify the basic construction associated to an inclusion $\cS\leq \cR$ with the von Neumann algebra of the relation $\widehat{\cS}$. The earliest reference claiming this fact to be true we could find was \cite[Section 1]{FSZ}, indicating that a proof would be given in \cite{SuthBasic}. Being unable to find \cite{SuthBasic} in the literature, we have elected to give a proof here. 
Our starting point is a description of operators in the basic construction as direct integrals of operators which are almost surely constant on the subrelation.

\begin{prop}\label{prop: thanks FSZ}
Let $\cS\leq \cR$ be discrete, probability measure-preserving equivalence relations on a standard probability space $(X,\mu)$. View $L^{2}(\cR)=\int_{X}^{\bigoplus}\ell^{2}([y]_{\cR})\,d\mu(y)$ by viewing $\xi\in L^{2}(\cR)$ as $(\xi^{y})_{y}$ where $\xi^{y}(x)=\xi(x,y)$. Given $T\in B(L^{2}(\cR))$, we have that $T\in \ip{L(\cR),e_{L(\cS)}}$ if and only if we may write $T=\int_{X}^{\bigoplus}T^{y}\,d\mu(y)$ where $T^{y}\in B(\ell^{2}([y]_{\cR}))$ and for almost every $(y_{1},y_{2})\in \cS$ we have $T^{y_{1}}=T^{y_{2}}$. 

\end{prop}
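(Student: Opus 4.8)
The plan is to reduce the statement to a single commutant computation inside a direct integral. By the identity $\langle L(\cR),e_{L(\cS)}\rangle=\rho(L(\cS))'$ recorded above (from \cite[Lemma XIX.2.12]{TakesakiIII}), it suffices to identify $\rho(L(\cS))'$ with the algebra of decomposable operators that are constant along $\cS$. First I would fix, using \cite[Theorem 1]{FelMooreI}, a countable $\Gamma\leq[\cS]$ with $\Gamma y=[y]_{\cS}$ for almost every $y\in X$. Since $L(\cS)=W^{*}(L^{\infty}(X,\mu)\cup\Gamma)$ and $\rho$ is normal \cite[Theorem 2.22]{Taka}, $\rho(L(\cS))$ is the von Neumann algebra generated by $\rho(L^{\infty}(X,\mu))$ and $\{\rho(\gamma):\gamma\in\Gamma\}$, so that $\rho(L(\cS))'=\rho(L^{\infty}(X,\mu))'\cap\bigcap_{\gamma\in\Gamma}\rho(\gamma)'$.

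The first intersectand is handled by the direct integral formalism. In the identification $L^{2}(\cR)=\int_{X}^{\bigoplus}\ell^{2}([y]_{\cR})\,d\mu(y)$, the operators $\rho(f)$ for $f\in L^{\infty}(X,\mu)$ act by $(\rho(f)\xi)^{y}=f(y)\xi^{y}$, so $\rho(L^{\infty}(X,\mu))$ is precisely the algebra of diagonalizable operators for this decomposition. Hence by \cite[Corollary IV.8.16]{Taka}, an operator $T$ lies in $\rho(L^{\infty}(X,\mu))'$ if and only if it is decomposable, $T=\int_{X}^{\bigoplus}T^{y}\,d\mu(y)$ with $T^{y}\in B(\ell^{2}([y]_{\cR}))$. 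This already produces the direct integral form asserted in the Proposition.

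It then remains to translate the conditions $T\in\rho(\gamma)'$ into the $\cS$-invariance of the field $(T^{y})_{y}$. Using the formula $(\rho(\gamma)\xi)^{y}=\xi^{\gamma^{-1}(y)}$, which makes sense because $[\gamma^{-1}(y)]_{\cR}=[y]_{\cR}$ for $\gamma\in[\cS]\subseteq[\cR]$ and hence the fibers agree, a direct computation gives $(T\rho(\gamma)\xi)^{y}=T^{y}\xi^{\gamma^{-1}(y)}$ and $(\rho(\gamma)T\xi)^{y}=T^{\gamma^{-1}(y)}\xi^{\gamma^{-1}(y)}$. Thus $T$ commutes with $\rho(\gamma)$ exactly when $T^{y}=T^{\gamma^{-1}(y)}$ for almost every $y$. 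Running over the countable family $\Gamma$ and intersecting the corresponding conull sets, membership in $\rho(L(\cS))'$ is equivalent to requiring $T^{y}=T^{\gamma^{-1}(y)}$ for all $\gamma\in\Gamma$ and almost every $y$. The final step is to see that this coincides with the stated pointwise condition $T^{y_{1}}=T^{y_{2}}$ for $\mu_{\cS}$-almost every $(y_{1},y_{2})\in\cS$, where $\mu_{\cS}$ is the canonical measure on $\cS$. Since $\Gamma$ generates $\cS$, almost every pair in $\cS$ is of the form $(y,\gamma^{-1}(y))$ for some $\gamma\in\Gamma$, and the graph of each $\gamma$ meets each fiber once, so the two families of null sets match under $\mu_{\cS}$; this is a routine Fubini/mass-transport bookkeeping, for which one first verifies that $(y_{1},y_{2})\mapsto T^{y_{1}}$ and $(y_{1},y_{2})\mapsto T^{y_{2}}$ are measurable fields of operators over $(\cS,\mu_{\cS})$.

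I expect the main obstacle to be exactly this last measure-theoretic translation: making rigorous that the field $(T^{y})_{y}$, which is defined only up to the measurable structure of the direct integral, admits a well-posed comparison of $T^{y_{1}}$ against $T^{y_{2}}$ along $\cS$, and that the countable collection of fiberwise identities $T^{y}=T^{\gamma^{-1}(y)}$ assembles into the single $\mu_{\cS}$-almost-everywhere statement. Once the chain of equivalences above is in place, both implications of the Proposition follow at once: decomposable-plus-$\cS$-invariant gives $T\in\rho(L(\cS))'=\langle L(\cR),e_{L(\cS)}\rangle$ by reversing the commutator computation, and conversely, so I would present both directions together through this chain rather than as two separate arguments.
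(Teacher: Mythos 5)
Your proof is correct and follows essentially the same route as the paper: both identify $\ip{L(\cR),e_{L(\cS)}}$ with $\rho(L(\cS))'$ via \cite[Lemma XIX.2.12]{TakesakiIII}, invoke \cite[Corollary IV.8.16]{Taka} to obtain the decomposition $T=\int_{X}^{\bigoplus}T^{y}\,d\mu(y)$ from commutation with $\rho(L^{\infty}(X,\mu))$, reduce the $\cS$-invariance of the field to the fiberwise identities $T^{y}=T^{\gamma^{-1}(y)}$ for $\gamma$ in a countable generating subgroup of $[\cS]$, and use normality of $\rho$ for the converse. The only difference is organizational: you package the argument as one chain of equivalences through $\rho(L(\cS))'=\rho(L^{\infty}(X,\mu))'\cap\bigcap_{\gamma\in\Gamma}\rho(\gamma)'$, whereas the paper presents the two implications separately with the same ingredients.
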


\begin{proof}
By \cite[Lemma XIX.2.12]{TakesakiIII} we have that $\ip{L(\cR),e_{L(\cS)}}$ is the commutant of the right action of $L(\cS)$. Note that for $f\in L^{\infty}(X,\mu)$ we have $\rho(f)=\int_{Y}^{\bigoplus}f(y)\,d\mu(y)$. Hence, if $T\in \ip{L(\cR),e_{L(\cS)}}$ then it commutes with all $\rho(f)$ and hence, by \cite[Corollay IV.8.16]{Taka} we may find a measurable field $(T^{y})_{y}$ with $T^{y}\in B(\ell^{2}([y]_{\cR}))$ and  $T=\int_{Y}^{\bigoplus}T^{y}\,d\mu(y)$. If $\gamma\in [\cS]$, then $(\rho(\gamma)\xi)^{y}=\xi^{\gamma(y)}$. Thus $\rho(\gamma)T\rho(\gamma^{-1})=\int_{X}^{\bigoplus}T^{\gamma(y)}\,d\mu(y)$. Choosing a countable subgroup of $[\cS]$ which generates $\cS$, we thus see that for almost every $y\in X$ we have $T^{z}=T^{y}$ for all $z\in [y]_{\cS}$. Thus $T^{y_{1}}=T^{y_{2}}$ for almost every $(y_{1},y_{2})\in \cS$.

Conversely, suppose that $T=\int_{X}^{\bigoplus}T^{y}\,d\mu(y)$ and $T^{y_{1}}=T^{y_{2}}$ for almost every $(y_{1},y_{2})\in \cS$. Then the same calculations as above show that $T$ commutes with $\rho(L^{\infty}(X,\mu)\cup [\cS])$, and so by normality of $\rho$ we have that $T$ commutes with $\rho(\overline{W^{*}(L^{\infty}(X,\mu)\cup [\cS])}^{SOT})=\rho(L(\cS))$. Thus $T\in \rho(L(\cS))'=\ip{L(\cR),e_{L(\cS)}}$.

\end{proof}

The crux of our argument connecting coamenability defined via von Neumann algebras to the versions of coamenability given in Theorem \ref{thm: big TFAE thm} will be identifying the von Neumann algebra of the basic construction with the von Neumann algebra of the relation $\widehat{\cS}$. It will also be important to identify the image of  $[\cR]$ under this identification, as well as the Jones projection $e_{L(\cS)}$.

Note that if $T\in L(\cR)$, then we may write $T=\int_{X}^{\bigoplus}T^{y}\,d\mu(y)$, where $T^{y_{1}}=T^{y_{2}}$ for almost every $(y_{1},y_{2})\in \cR$. We may then define $\iota(T)=\int_{\cR/\cS}^{\bigoplus}T^{y}\,d\mu_{\cR/\cS}(y,c)\in B(L^{2}(\widehat{\cS}))$, and observe that $\iota(T)\in L(\widehat{\cS})$ by Proposition \ref{prop: thanks FSZ}. Let $(\phi_{i})_{i}$ be choice functions for $\cR/\cS$, we have that 
\[\|\iota(T)\xi\|_{2}^{2}=\sum_{i\in I}\|T\xi_{i}\|_{2}^{2},\]
where $(\xi_{i})^{y}(x)=\xi(x,y,[\phi_{i}(y)]_{\cS})1_{\dom(\phi_{i})}(y)$. Thus $\iota$ is a normal $*$-homomorphism.
Note that $\iota(\lambda_{\cR}(\gamma))=\lambda_{\widehat{\cS}}(\gamma)$ for $\gamma\in [\cR]$. 
We now show that $L(\widehat{\cS})$ is identified with $\ip{L(\cR),e_{L(\cS)}}$ and that the identification sends  the Jones projection  with the operator
$\widehat{e}\in B(L^{2}(\widehat{\cS}))$ given by $(\widehat{e}\xi)(x,y,c)=1_{c}(x)\xi(x,y,c)$. 

\begin{prop}\label{prop: thanks S 2}
Let $\cS\leq \cR$ be discrete, probability measure-preserving equivalence relations on a standard probability space $(X,\mu)$. Let $\widehat{\cS}$ be the equivalence relation on $\cR/\cS$ define by $(x,c)\thicksim (x',c')$ if and only if $c=c'$ and $(x,x')\in \cR$. Then there is a unique, normal $*$-isomorphism $\Phi\colon \ip{L(\cR),e_{L(\cS)}}\to L(\widehat{\cS})$ such that $\Phi|_{L(\cR)}=\iota$, and $\Phi(e_{L(\cS)})=\widehat{e}$. 
    
\end{prop}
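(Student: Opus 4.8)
The plan is to exploit the two explicit direct-integral descriptions already at hand: Proposition \ref{prop: thanks FSZ} presents $\ip{L(\cR),e_{L(\cS)}}$ as the algebra of fields $\int_{X}^{\bigoplus}T^{y}\,d\mu(y)$ with $T^{y}\in B(\ell^{2}([y]_{\cR}))$ and $T^{y_{1}}=T^{y_{2}}$ for almost every $(y_{1},y_{2})\in\cS$, while Theorem \ref{thm: direct integral rep of equiv ops Takesaki} applied to $\widehat{\cS}$ presents $L(\widehat{\cS})$ as the fields $\int_{\cR/\cS}^{\bigoplus}S^{(y,c)}\,d\mu_{\cR/\cS}(y,c)$ that are constant on $\widehat{\cS}$-classes. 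The first thing I would record is the canonical identification $\ell^{2}([(y,c)]_{\widehat{\cS}})\cong\ell^{2}([y]_{\cR})$ coming from the bijection $x\mapsto(x,c)$ of $[y]_{\cR}$ onto $[(y,c)]_{\widehat{\cS}}=\{(x,c):x\in[y]_{\cR}\}$; under this identification a field $(S^{(y,c)})$ is constant on $\widehat{\cS}$-classes precisely when $S^{(y,c)}$ depends only on $c$ (since $c$ determines the $\cR$-class $[y]_{\cR}$). Thus both algebras consist of the same data—a field of operators attached to each $\cS$-class of $X$—and $\Phi$ should simply be the corresponding reindexing.

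Concretely, I would define $\Phi\left(\int_{X}^{\bigoplus}T^{y}\,d\mu(y)\right)=\int_{\cR/\cS}^{\bigoplus}T^{y_{c}}\,d\mu_{\cR/\cS}(y,c)$, where $y_{c}$ denotes any point of the $\cS$-class $c$; this is well defined almost everywhere because $T^{y}$ is $\cS$-invariant. Using the choice functions $(\sigma_{j})_{j}$ of Corollary \ref{cor:choice functions} to write $y_{c}=\sigma_{j}(y)$ on the measurable piece where $c=[\sigma_{j}(y)]_{\cS}$ shows $(y,c)\mapsto T^{y_{c}}$ is a measurable field, and since it manifestly depends only on $c$ it is $\widehat{\cS}$-invariant, so $\Phi(T)\in L(\widehat{\cS})$. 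That $\Phi$ is a $*$-homomorphism is immediate, as sums, products and adjoints of direct integrals act fiberwise. For isometry I would invoke the norm formula $\left\|\int^{\bigoplus}T^{y}\right\|=\esssup\|T^{y}\|$ together with the observation that for an $\cS$-invariant field the essential supremum of $\|T^{y}\|$ over $(X,\mu)$ equals that of $\|T^{y_{c}}\|$ over $(\cR/\cS,\mu_{\cR/\cS})$: each level set $A=\{\|T^{y}\|>t\}$ is $\cS$-invariant, and an $\cS$-invariant set $A$ is $\mu$-null if and only if $\{(y,c):c\subseteq A\}$ is $\mu_{\cR/\cS}$-null (the saturation of a null set being null). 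Surjectivity follows by running the correspondence backwards: given $S\in L(\widehat{\cS})$ set $T^{y}=S^{(y,[y]_{\cS})}$, which is $\cS$-invariant and satisfies $\Phi(T)=S$. Hence $\Phi$ is an isometric $*$-isomorphism onto $L(\widehat{\cS})$.

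Next I would verify $\Phi$ restricts to $\iota$ and sends the Jones projection to $\widehat{e}$. For $T\in L(\cR)$ the field $T^{y}$ is constant on all of $\cR$, so $T^{y_{c}}=T^{y}$ and $\Phi(T)=\iota(T)$. For the Jones projection, I would compute that $e_{L(\cS)}$ is multiplication by $1_{\cS}$ on $L^{2}(\cR)$, so its fiber at $y$ is the projection of $\ell^{2}([y]_{\cR})$ onto $\ell^{2}([y]_{\cS})$; applying $\Phi$ makes the fiber at $(y,c)$ equal to the projection onto $\ell^{2}(c)$, which is exactly $\widehat{e}$ as read off from $(\widehat{e}\xi)(x,y,c)=1_{c}(x)\xi(x,y,c)$ under the identification above. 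Normality I would obtain from Proposition \ref{prop: folklore subsequences direct integrals}: if $T_{n}\to T$ strongly with uniformly bounded norms, every subsequence has a further subsequence along which $T_{n}^{y}\to T^{y}$ strongly almost everywhere, hence $\Phi(T_{n})^{(y,c)}=T_{n}^{y_{c}}\to T^{y_{c}}$ strongly almost everywhere, and dominated convergence upgrades this to $\Phi(T_{n})\to\Phi(T)$ strongly; as the SOT is metrizable on bounded sets this shows $\Phi$ is SOT--SOT continuous on bounded sets, i.e. normal (alternatively one may cite that $*$-isomorphisms of von Neumann algebras are automatically normal).

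Finally, uniqueness is a standard Kaplansky-density argument: since $\ip{L(\cR),e_{L(\cS)}}=W^{*}(L(\cR)\cup\{e_{L(\cS)}\})$, the $*$-algebra generated by $L(\cR)\cup\{e_{L(\cS)}\}$ is strongly dense, so any two normal $*$-homomorphisms agreeing with $\Phi$ on $L(\cR)$ and on $e_{L(\cS)}$ agree on this $*$-algebra, and hence, by normality together with Kaplansky density, agree everywhere. I expect the main obstacle to be essentially bookkeeping: pinning down the identification $\ell^{2}([(y,c)]_{\widehat{\cS}})\cong\ell^{2}([y]_{\cR})$ so that the two invariance conditions ($\cS$-constancy versus $\widehat{\cS}$-constancy) and the two projections ($e_{L(\cS)}$ versus $\widehat{e}$) line up correctly, and checking that the reindexing by a measurable choice of representatives $y_{c}$ is both measurable and isometric across the change of measure from $\mu$ to $\mu_{\cR/\cS}$.
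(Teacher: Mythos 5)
Your proposal is correct and takes essentially the same route as the paper's proof: both sides are described via Proposition \ref{prop: thanks FSZ} and the direct-integral picture of $L(\widehat{\cS})$, the map $\Phi$ is defined by reindexing the $\cS$-invariant field $(T^{y})_{y}$ along representatives $z\in c$, the inverse is $S\mapsto (S^{(y,[y]_{\cS})})_{y}$, and normality is obtained from Proposition \ref{prop: folklore subsequences direct integrals} together with dominated convergence, with the computations for $\iota$ and $\widehat{e}$ being direct. Your extra touches (the $\esssup$ isometry via saturation of null sets, and the Kaplansky-density uniqueness argument) are correct refinements of points the paper handles implicitly through the two-sided inverse and standard generation facts.
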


\begin{proof}
We regard $L^{2}(\cR)=\int_{X}^{\oplus}\ell^{2}([y]_{\cR})\,d\mu(y)$ by identifying $\xi\in L^{2}(\cR)$ with the measurable field $(\xi^{y})_{y}$ where $\xi^{y}(x)=\xi(x,y)$. Similarly, we identify $L^{2}(\widehat{\cS})=\int_{\cR/\cS}^{\oplus}\ell^{2}([y]_{\cR})\,d\mu_{\cR/\cS}(y,c)$ where $\xi\in L^{2}(\widehat{\cS})$ is identified with the measurable field $(\xi^{(y,c)})_{(y,c)}$ where $(\xi^{(y,c)})(x)=\xi(x,y,c)$. 
Given $T\in \ip{L(\cR),e_{L(\cS)}}$ we may write $T=\int_{X}^{\oplus}T^{y}\,d\mu(y)$ where for almost every $(y_{1},y_{2})\in\cS$ we have $T^{y_{1}}=T^{y_{2}}$. We define $\Phi(T)\in L(\widehat{\cS})$ by $\Phi(T)^{(y,c)}=T^{z}$ where $z$ is any element of $c$. Since $y\mapsto T^{y}$ is almost surely $\cS$-invariant, we have that $\Phi(T)^{(y,c)}$ is well-defined almost everywhere.
To show that $\Phi(T)^{(y,c)}$ is a measurable field of operators,
suppose that $(\xi^{(y,c)})_{(y,c)},(\eta^{(y,c)})_{(y,c)}\in \Meas(\ell^{2}([(y,c)]_{\widehat{\cS}}))$. Then it suffices to show that $(y,c)\mapsto \ip{\Phi(T)^{(y,c)}\xi^{(y,c)},\eta^{(y,c)}}$ is measurable.  By Proposition \ref{prop:measurability exercise} it suffices to show that for any $\gamma\in [\cR]$, we have that $y\mapsto \ip{\Phi(T)^{(y,[\gamma(y)]_{\cS})}\xi^{(y,[\gamma(y)]_{\cS})},\eta^{(y,[\gamma(y)]_{\cS})}}$ is measurable. But 
\[\ip{\Phi(T)^{(y,[\gamma(y)]_{\cS})}\xi^{(y,[\gamma(y)]_{\cS})},\eta^{(y,[\gamma(y)]_{\cS})}}=\ip{T^{\gamma(y)}\xi^{(y,[\gamma(y)]_{\cS})},\eta^{(y,[\gamma(y)]_{\cS})}}=f(\gamma(y)),\] where $f(y)=\ip{T^{y}\xi^{(\gamma^{-1}(y),[y]_{\cS})},\eta^{(\gamma^{-1}(y),[y]_{\cS})}}$. Since $\gamma$ is measure-preserving, we have that $f\circ \gamma$ is measurable if $f$ is.  Since $T^{y}$ is a measurable field of operators, it then suffices to show that $\xi_{\gamma},\eta_{\gamma}$ defined by $\xi_{\gamma}^{y}=\xi^{(\gamma^{-1}(y),[y]_{\cS})}$ and $\eta_{\gamma}^{y}=\eta^{(\gamma^{-1}(y),[y]_{\cS})}$ are measurable vector fields. But $\xi_{\gamma}^{y}(x)=\xi^{(\gamma^{-1}(y),[y]_{\cS})}(x)$, and so this follows from the measurability of $(x,y,c)\mapsto \xi^{(y,c)}(x)$, as well as the fact that $\gamma$ is measure preserving. The same applies to measurability of $\eta_{\gamma}^{y}$.
Having shown measurability of $\Phi(T)$, we see directly that $\Phi(T)^{(y,c)}$ is almost surely constant on $\widehat{\cS}$-classes, so $\Phi(T)\in L(\widehat{\cS})$.  

For $\widehat{T}\in L(\widehat{\cS})$, we may write $\widehat{T}=\int_{\cR/\cS}\widehat{T}^{(y,c)}\,d\mu_{\cR/\cS}(y,c)$ where $\widehat{T}^{(y_{1},c)}=\widehat{T}^{(y_{2},c)}$ for almost every $(y_{1},y_{2})\in \cR$. We define $\Psi(\widehat{T})\in \ip{L(\cR),e_{L(\cS)}}$ by $\Psi(\widehat{T})^{y}=\widehat{T}^{(y,[y]_{\cS})}$. To see that $\Psi(\widehat{T})^{y}$ is a measurable field of operators, let $(\xi^{y})_{y},(\eta^{y})_{y}\in \Meas(\ell^{2}([y]_{\cR}))$. Then
\[\ip{\Psi(\widehat{T})^{y}\xi^{y},\eta^{y}}=\ip{\widehat{T}^{(y,[y]_{\cS})}\widetilde{\xi}^{(y,[y]_{\cS})},\widetilde{\eta}^{(y,[y]_{\cS})}},\]
where $\widetilde{\xi}^{(y,c)}=1_{c}(y)\xi^{y}$,$\widetilde{\eta}^{(y,c)}=1_{c}(y)\eta^{y}$. Since $\widehat{T}^{(y,c)}$ is a measurable field of operators, it thus suffices to show that $\widetilde{\xi}^{(y,c)},\widetilde{\eta}^{(y,c)}$ are measurable vector fields. But $\widetilde{\xi}^{(y,c)}(x)=1_{c}(y)\xi^{y}(x)$, which is measurable by measurability of the vector field $(\xi^{y})$, and the same argument applies to $\widetilde{\eta}^{y}$. 
Moreover, $\Psi(T)^{y}$ is almost surely constant on $\cS$-classes, so that $\Psi(T)\in \ip{L(\cR),e_{L(\cS)}}$, by Proposition \ref{prop: thanks FSZ}.

Note that $\Psi(\Phi(T))^{y}=\Phi(T)^{(y,[y]_{\cS})}=T^{y}$ for $T\in L(\cS)$ and almost every $y\in X$. For $\widehat{T}\in \ip{L(\cR),e_{L(\cS)}}$ and almost every $(y,c)\in \cR/\cS$ we have $\Phi(\Psi(\widehat{T}))^{(y,c)}=\Psi(\widehat{T})^{z}=\widehat{T}^{(z,c)}$, where $z\in c$. But since $\widehat{T}$ is almost surely constant on $\widehat{\cS}$-classes, we have that $\widehat{T}^{(z,c)}=\widehat{T}^{(y,c)}$. Thus $\Phi,\Psi$ are $*$-homomorphisms which are mutually inverse to each other. It remains to check that $\Phi,\Psi$ are normal. One can do this by appealing to general theory which says that a $*$-isomorphism between von Neumann algebras is automatically normal (see \cite[Corollary 7.1.16]{KadisonRingroseII}), however we will check this directly. 

Since $\Phi,\Psi$ are $*$-homomorphisms, it suffices to show that they are SOT-SOT continuous on the unit ball. Suppose that $T_{n}\in \ip{L(\cR),e_{L(\cS)}}$ with $\|T_{n}\|\leq 1$ and $T_{n}=\int_{X}T_{n}^{y}\,d\mu(y)$ and $T_{n}\to T=\int_{X}T^{y}\,d\mu(y)\in \ip{L(\cR),e_{L(\cS)}}$ strongly. To show that $\Phi(T_{n})\to \Phi(T)$ strongly, it suffices to show that if $T_{n_{k}}$ is any subsequence then there is a further subsequence $T_{n_{k_{j}}}$ with $\Phi(T_{n_{k_{j}}})\to \Phi(T)$ strongly. So fix a subsequence $T_{n_{k}}$ of $T_{n}$. Passing to a further subsequence and applying Proposition \ref{prop: folklore subsequences direct integrals} we may assume that $T_{n_{k}}^{y}\to T^{y}$ almost surely. Thus we may find a conull, $\cR$-invariant subset $X_{0}\subseteq X$ so that for all $y\in X_{0}$ and all $k\in \N$:
\begin{itemize}
    \item $T_{n_{k}}^{y}\to T^{y}$ strongly,
    \item $\|T_{n_{k}}^{y}\|\leq 1,$
    \item $T_{n_{k}}^{y}=T_{n_{k}}^{z}$ for all $z\in [y]_{\cR}$.
\end{itemize} 
Given $\xi=(\xi^{(y,c)})_{(y,c)}\in L^{2}(\widehat{S})$ we have that $\|(\Phi(T_{n_{k}})^{(y,c)}-\Phi(T)^{(y,c)})\xi^{(y,c)}\|_{2}^{2}\leq 4\|\xi^{(y,c)}\|_{2}^{2}$, for almost every $(y,c)$. Note that $4\|\xi^{(y,c)}\|_{2}^{2}\in L^{1}(\cR/\cS).$ Further by $\cR$-invariance of $X_{0}$ and choice of $X_{0}$ we have $\|(\Phi(T_{n_{k}})^{(y,c)}-\Phi(T)^{(y,c)})\xi^{(y,c)}\|_{2}\to 0$ for all $y\in X_{0}$ and $c\in [y]_{\cR}/\cS$. Thus the dominated convergence theorem implies that $\|(\Phi(T_{n_{k}})-\Phi(T))\xi\|_{2}\to 0$. This establishes that $\Phi$ is SOT-continuous on the unit ball of $\ip{L(\cR),e_{L(\cS)}}$. The proof that $\Psi$ is SOT-continuous on the unit ball of $L(\widehat{\cS})$ is similar.  The facts that $\Phi|_{L(\cR)}=\iota$ and $\Phi(e_{L(\cS)})=\widehat{e}$ are direct computations.

\end{proof}

\subsection{Coamenability}


We now have sufficient background to recall the notion of coamenability for von Neumann algebras and describe what it means precisely for equivalence relations. 

If $\varphi\colon M\to \C$ is a state, then we define the \emph{centralizer} of $\varphi$ to be $M^{\varphi}=\{x\in M:\varphi(yx)=\varphi(xy) \textnormal{ for all $y\in M$}\}$.  If $N\leq M$, we say that $\varphi$ is \emph{$N$-central} if $N\subseteq M^{\varphi}$. 

\begin{defn}[Definition 3.2.1 and Theorem 3.2.3 of \cite{PopaCorr}, see also Definition 4 and Proposition 5 of \cite{MonodPopa}, as well as \cite{ADAmenableCorr}]
Let $(M,\tau)$ be a tracial von Neumann algebra and $Q\leq M$. We say that $Q$ is \emph{coamenable in $M$} if there is a  $Q$-central state $\varphi\colon \ip{M,e_{Q}}\to \C$ with $\varphi|_{M}=\tau$. We call such a $\varphi$ a \emph{hypertrace}.
\end{defn}

The idea here is that one should think of a hypertrace as being an invariant mean (e.g. it turns out that $\varphi$ is $Q$-central if and only if it is invariant under conjugation by unitaries in $Q$). The aim of this section is to show that $\cS$ is coamenable in $\cR$ if and only if $L(\cS)$ is coamenable in $L(\cR)$. 

Suppose that $\cE$ is a discrete, measure-preserving relation on a $\sigma$-finite measure space $(Y,\nu)$. We define $E_{L^{\infty}(Y)}\colon L(\cE)\to L^{\infty}(Y,\nu)$ by:
\[E_{L^{\infty}(Y)}(T)(y)=\ip{T^{y}\delta_{y},\delta_{y}},\]
where $T=\int_{Y}^{\bigoplus}T^{y}\,d\nu(y)$ is as in Theorem \ref{thm: direct integral rep of equiv ops Takesaki}.
The following is a direct computation.

\begin{lem}\label{lem:expectations restrict well}
Let $\cS\leq \cR$ be an inclusion of discrete, probability measure-preserving equivalence relations on a standard probability space $(X,\mu)$.
Let $E_{L^{\infty}(\cR/\cS)}\colon L(\widehat{\cS})\to L^{\infty}(\cR/\cS)$, $E_{L^{\infty}(X)}\colon L(\cR)\to L^{\infty}(X)$ be the maps described above. Then $E_{L^{\infty}(\cR/\cS)}|_{L(\cR)}=E_{L^{\infty}(X)}$. 
\end{lem}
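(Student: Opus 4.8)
The plan is to unwind both conditional expectations through the direct-integral descriptions recorded just before Proposition~\ref{prop: thanks S 2} and to check that they agree fiberwise. First I would recall the two identifications: $L^{2}(\cR)=\int_{X}^{\bigoplus}\ell^{2}([y]_{\cR})\,d\mu(y)$ via $\xi^{y}(x)=\xi(x,y)$, and $L^{2}(\widehat{\cS})=\int_{\cR/\cS}^{\bigoplus}\ell^{2}([(y,c)]_{\widehat{\cS}})\,d\mu_{\cR/\cS}(y,c)$. The key structural observation is that the $\widehat{\cS}$-class of $(y,c)$ is $\{(x,c):x\in[y]_{\cR}\}$, so that $(x,c)\mapsto x$ is a bijection $[(y,c)]_{\widehat{\cS}}\to[y]_{\cR}$ identifying the fiber $\ell^{2}([(y,c)]_{\widehat{\cS}})$ with $\ell^{2}([y]_{\cR})$; under this bijection the diagonal vector $\delta_{(y,c)}$ corresponds to $\delta_{y}$, since the point $(y,c)$ is sent to $y$.

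Next I would invoke the description of $\iota$ given just before Proposition~\ref{prop: thanks S 2}. For $T\in L(\cR)$, write $T=\int_{X}^{\bigoplus}T^{y}\,d\mu(y)$ with $T^{y_{1}}=T^{y_{2}}$ for almost every $(y_{1},y_{2})\in\cR$ (Theorem~\ref{thm: direct integral rep of equiv ops Takesaki}); then by definition $\iota(T)=\int_{\cR/\cS}^{\bigoplus}T^{y}\,d\mu_{\cR/\cS}(y,c)$, so that under the fiber identification above $\iota(T)^{(y,c)}=T^{y}$. Substituting into the defining formula for $E_{L^{\infty}(\cR/\cS)}$ then yields
\[E_{L^{\infty}(\cR/\cS)}(\iota(T))(y,c)=\ip{\iota(T)^{(y,c)}\delta_{(y,c)},\delta_{(y,c)}}=\ip{T^{y}\delta_{y},\delta_{y}}=E_{L^{\infty}(X)}(T)(y)\]
for almost every $(y,c)$. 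Finally, recalling that $L^{\infty}(X)$ is embedded in $L^{\infty}(\cR/\cS)$ via $f\mapsto((x,c)\mapsto f(x))$, the right-hand side is precisely $E_{L^{\infty}(X)}(T)$ regarded inside $L^{\infty}(\cR/\cS)$, which is exactly the asserted identity $E_{L^{\infty}(\cR/\cS)}|_{L(\cR)}=E_{L^{\infty}(X)}$.

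The computation is genuinely routine once the identifications are in place; the only point requiring care is the bookkeeping between the two direct-integral pictures, specifically verifying that the diagonal vector $\delta_{(y,c)}$ of $\widehat{\cS}$ corresponds to $\delta_{y}\in\ell^{2}([y]_{\cR})$ rather than to some other basis vector, and that $\iota$ leaves the fiber operator $T^{y}$ unchanged (merely repeating it over all $c\in[y]_{\cR}/\cS$). I expect no obstacle beyond this bookkeeping, which is why the lemma is flagged as a direct computation.
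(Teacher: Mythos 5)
Your proposal is correct and is precisely the ``direct computation'' the paper asserts without writing out: the paper's own identification $L^{2}(\widehat{\cS})=\int_{\cR/\cS}^{\oplus}\ell^{2}([y]_{\cR})\,d\mu_{\cR/\cS}(y,c)$ in the proof of Proposition~\ref{prop: thanks S 2} already encodes your fiber bijection $(x,c)\mapsto x$ sending $\delta_{(y,c)}$ to $\delta_{y}$, and since $\iota(T)=\int_{\cR/\cS}^{\bigoplus}T^{y}\,d\mu_{\cR/\cS}(y,c)$ by definition, your computation $E_{L^{\infty}(\cR/\cS)}(\iota(T))(y,c)=\ip{T^{y}\delta_{y},\delta_{y}}=E_{L^{\infty}(X)}(T)(y)$ is exactly the intended argument. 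No gaps; the one point of care you flag (essential uniqueness of the fiber decomposition, so that the Takesaki decomposition of $\iota(T)$ really has fibers $T^{y}$ almost everywhere) is handled correctly.
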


We also need the following fact. This is a folklore result, but we include a proof for completeness.

\begin{prop}\label{prop:folklore centrality}
Let $(M,\tau)$ be a tracial von Neumann algebra with a normal embedding into a von Neumann algebra $N$. Suppose that $\varphi$ is a state on $N$ with $\varphi|_{M}=\tau$. Then $N^{\varphi}\cap M$ is a von Neumann subalgebra of $M$.
    
\end{prop}

\begin{proof}
Set $A=N^{\varphi}\cap M$. The fact that $A$ is a $*$-subalgebra is an exercise. To show that $A$ is SOT-closed, let $x\in \overline{A}^{SOT}$, and $y\in M$. Fix $\varepsilon>0$. Viewing $1\in L^{2}(M,\tau)$, we see that we may find an $a\in A$ with $\|x-a\|_{2}<\varepsilon$. Define an inner product $\ip{\cdot,\cdot}_{L^{2}(\varphi)}$ on $M$ by $\ip{c,d}_{L^{2}(\varphi)}=\varphi(d^{*}c)$. 
Since $\varphi$ is a state, this is a positive semi-definite inner product and so Cauchy-Schwarz implies:
\[|\varphi(ay)-\varphi(xy)|=|\ip{y,x^{*}-a^{*}}|\leq \|y\|_{L^{2}(\varphi)}\|x^{*}-a^{*}\|_{L^{2}(\varphi)}.\]
Since $\varphi|_{M}=\tau$, we see that 
\[\|x^{*}-a^{*}\|_{L^{2}(\varphi)}=\|x^{*}-a^{*}\|_{L^{2}(\tau)}=\|x-a\|_{L^{2}(\tau)}<\varepsilon,\]
with the second-to-last equality following by traciality. 
Similarly,
\[|\varphi(ya)-\varphi(yx)|\leq \|y\|_{L^{2}(\varphi)}\varepsilon.\]
Since $a\in N\cap M^{\varphi}$, it follows that 
\[|\varphi(xy)-\varphi(yx)|\leq 2\varepsilon\|y\|_{L^{2}(\varphi)}.\]
Since this is true for every $\varepsilon>0$ and $y\in M$ was arbitrary, we see that $x\in A$.

\end{proof}

We can now connect coamenability defined via Theorem \ref{thm: big TFAE thm} to that defined via von Neumann algebras.

\begin{thm}\label{thm: two different version of rel amen are the same}
Let $\cS\leq \cR$ be discrete, probability measure-preserving equivalence relations on $(X,\mu)$. Then $\cS$ is coamenable in $\cR$ if and only if $L(\cS)$ is coamenable in $L(\cR)$.
\end{thm}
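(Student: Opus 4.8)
The plan is to transport everything through the normal $*$-isomorphism $\Phi\colon \ip{L(\cR),e_{L(\cS)}}\to L(\widehat{\cS})$ of Proposition \ref{prop: thanks S 2}, which identifies $L(\cR)$ with $\iota(L(\cR))$, sends $e_{L(\cS)}$ to $\widehat{e}$, and sends $\lambda_{\cR}(\gamma)$ to $\lambda_{\widehat{\cS}}(\gamma)$ for $\gamma\in[\cR]$. Under this identification a hypertrace on $\ip{L(\cR),e_{L(\cS)}}$ becomes a state $\psi=\varphi\circ\Phi^{-1}$ on $L(\widehat{\cS})$, which I will match with a mean on the Cartan $L^{\infty}(\cR/\cS)$ of $L(\widehat{\cS})$ via the canonical expectation $E_{L^{\infty}(\cR/\cS)}\colon L(\widehat{\cS})\to L^{\infty}(\cR/\cS)$. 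The bridge on the relation side is condition (ii) of Theorem \ref{thm: big TFAE thm}: $\cS$ is coamenable in $\cR$ iff there is an $[\cR]$-invariant mean $m$ on $L^{\infty}(\cR/\cS)$ with $m|_{L^{\infty}(X)}=\int\cdot\,d\mu$. Throughout I use that the hypertrace $\varphi$ is central with respect to the large algebra $L(\cR)$, i.e. $\varphi(uTu^{*})=\varphi(T)$ for every unitary $u\in L(\cR)$ (this is what makes $\varphi$ an ``invariant mean''), together with the two bookkeeping facts $E_{L^{\infty}(\cR/\cS)}|_{L(\cR)}=E_{L^{\infty}(X)}$ (Lemma \ref{lem:expectations restrict well}) and $\tau(x)=\int_{X}E_{L^{\infty}(X)}(x)\,d\mu$ for $x\in L(\cR)$. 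I also record that under $\iota=\Phi|_{L(\cR)}$ the multiplication operator $M_{f}$, $f\in L^{\infty}(X)$, is sent to the Cartan element $(x,c)\mapsto f(x)$ of $L^{\infty}(\cR/\cS)$, a direct computation with the direct-integral descriptions from Section \ref{sec: basic construction}.

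For the implication that coamenability of $L(\cS)$ in $L(\cR)$ gives coamenability of $\cS$ in $\cR$, I start from a hypertrace $\varphi$, pass to $\psi=\varphi\circ\Phi^{-1}$, and set $m=\psi|_{L^{\infty}(\cR/\cS)}$, a mean on the abelian algebra $L^{\infty}(\cR/\cS)$. For $f\in L^{\infty}(X)$ the element $(x,c)\mapsto f(x)$ has $\Phi$-preimage $M_{f}\in L^{\infty}(X)\subseteq L(\cR)$, so $m(f)=\varphi(M_{f})=\tau(M_{f})=\int f\,d\mu$; thus $m|_{L^{\infty}(X)}=\int\cdot\,d\mu$. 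For invariance I compute $m(\lambda_{\cR/\cS}(\gamma)f)=\psi(\lambda_{\widehat{\cS}}(\gamma)M_{f}\lambda_{\widehat{\cS}}(\gamma)^{*})=\varphi(\Ad(\lambda_{\cR}(\gamma))\Phi^{-1}(M_{f}))=\varphi(\Phi^{-1}(M_{f}))=m(f)$, where the crucial third equality is exactly the $L(\cR)$-centrality of $\varphi$ applied to the unitary $\lambda_{\cR}(\gamma)\in L(\cR)$. Hence $m$ is $[\cR]$-invariant and Theorem \ref{thm: big TFAE thm}(ii) applies.

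For the converse I start from an $[\cR]$-invariant mean $m$ as in Theorem \ref{thm: big TFAE thm}(ii) and define $\psi=m\circ E_{L^{\infty}(\cR/\cS)}$ on $L(\widehat{\cS})$ and $\varphi=\psi\circ\Phi$ on $\ip{L(\cR),e_{L(\cS)}}$; this is a state, being the composition of a mean with a unital positive conditional expectation and a $*$-isomorphism. Lemma \ref{lem:expectations restrict well} gives $\varphi(x)=m(E_{L^{\infty}(X)}(x))=\int E_{L^{\infty}(X)}(x)\,d\mu=\tau(x)$ for $x\in L(\cR)$, so $\varphi|_{L(\cR)}=\tau$. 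It remains to see that $\varphi$ is $L(\cR)$-central. Using the equivariance of the Cartan expectation, $E_{L^{\infty}(\cR/\cS)}(\Ad(\lambda_{\widehat{\cS}}(\gamma))T)=\lambda_{\cR/\cS}(\gamma)E_{L^{\infty}(\cR/\cS)}(T)$, together with the $[\cR]$-invariance of $m$, I obtain $\varphi(\Ad(\lambda_{\cR}(\gamma))T)=\varphi(T)$; and conjugation by a multiplication unitary $M_{g}$, $g\in L^{\infty}(X)$, fixes $E_{L^{\infty}(\cR/\cS)}(T)$ because the range of the expectation is abelian. Thus $\varphi$ is invariant under conjugation by all the generating unitaries $\lambda_{\cR}(\gamma)$ and $M_{g}$ of $L(\cR)$. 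To promote this to centrality under all of $L(\cR)$ I invoke Proposition \ref{prop:folklore centrality}: since $\varphi|_{L(\cR)}=\tau$, the set $\ip{L(\cR),e_{L(\cS)}}^{\varphi}\cap L(\cR)$ is a von Neumann subalgebra of $L(\cR)$, and it contains the generating unitaries just shown to be central, whence it equals $L(\cR)$. Therefore $\varphi$ is a genuine hypertrace and $L(\cS)$ is coamenable in $L(\cR)$.

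The main obstacle is the final step of the converse: upgrading invariance under the generating unitaries $\{\lambda_{\cR}(\gamma)\}\cup\{M_{g}\}$ to full $L(\cR)$-centrality, where a naive Kaplansky-density argument fails because the mean, and hence $\varphi$, is not normal. This is precisely the role of Proposition \ref{prop:folklore centrality}: the trace-extension property $\varphi|_{L(\cR)}=\tau$ forces the centralizer to be weakly closed inside $L(\cR)$, so verifying centrality on a generating family is enough. The remaining ingredients—the equivariance identity for $E_{L^{\infty}(\cR/\cS)}$ under $\lambda_{\widehat{\cS}}(\gamma)$, the compatibility $E_{L^{\infty}(\cR/\cS)}|_{L(\cR)}=E_{L^{\infty}(X)}$, and the image of $L^{\infty}(X)$ under $\iota$—are direct computations with the fiberwise/direct-integral pictures already set up in Section \ref{sec: basic construction}, and I expect no essential difficulty there.
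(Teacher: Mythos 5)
Your proposal is correct and takes essentially the same route as the paper: both pass through the identification $\ip{L(\cR),e_{L(\cS)}}\cong L(\widehat{\cS})$ of Proposition \ref{prop: thanks S 2}, obtain the invariant mean by restricting the hypertrace to $L^{\infty}(\cR/\cS)$, and conversely build the state as $m\circ E_{L^{\infty}(\cR/\cS)}$, using Lemma \ref{lem:expectations restrict well} for the trace condition and Proposition \ref{prop:folklore centrality} to upgrade centrality from $L^{\infty}(X)\cup\lambda([\cR])$ to all of $L(\cR)$. The only difference is cosmetic: you carry the isomorphism $\Phi$ explicitly and verify the image of $M_{f}$, where the paper simply works inside $L(\widehat{\cS})$ under the identification.
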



\begin{proof}
Throughout, we use the identification $\ip{L(\cR),e_{L(\cS)}}\cong L(\widehat{\cS})$ given by Proposition \ref{prop: thanks S 2}.
Suppose that $L(\cS)$ is coamenable in $L(\cR)$, and
let $\varphi\colon L(\widehat{\cS})\to \C$ be a hypertrace. Set $m=\varphi|_{L^{\infty}(\cR/\cS)}$. Then for $f\in L^{\infty}(\cR/\cS)$ and $\gamma\in [\cR]$ we have
\[m(f\circ \gamma^{-1})=\varphi(\lambda_{\widehat{\cS}}(\gamma)f\lambda_{\widehat{\cS}}(\gamma)^{-1})=\varphi(f)=m(f).\]
For $f\in L^{\infty}(X)$, since $L^{\infty}(X)\subseteq L(\cR)$, we have $m(f)=\varphi(f)=\tau(f)=\int f\,d\mu$. Thus $m$ is a $[\cR]$-invariant mean with $m|_{L^{\infty}(X)}=\int \cdot\,d\mu$. 

Conversely, let $m\colon L^{\infty}(\cR/\cS)\to \C$ be a $[\cR]$-invariant mean with $m|_{L^{\infty}(X)}=\int \cdot\,d\mu$. Define $\varphi\colon L(\widehat{\cS})\to \C$ by $\varphi=m\circ E_{L^{\infty}(\cR/\cS)}$. Then for any $a\in L(\cR)$, Lemma \ref{lem:expectations restrict well} implies
\[\varphi(a)=m(E_{L^{\infty}(\cR/\cS)}(a))=m(E_{L^{\infty}(X)}(a))=\int E_{L^{\infty}(X)}(a)\,d\mu=\tau(a).\]

Thus it suffices to show that $\varphi$ is $L(\cR)$-central. It is direct to show that $\varphi$ is $L^{\infty}(X)$-central. We now check that $\varphi$ is $[\cR]$-central. For this, let $\gamma\in [\cR]$. Then for all $a\in L(\widehat{\cS})$: 
\[E_{L^{\infty}(\cR/\cS)}(\lambda_{\widehat{\cS}}(\gamma)a\lambda_{\widehat{\cS}}(\gamma)^{-1})=E_{L^{\infty}(\cR/\cS)}(a)\circ \gamma^{-1}.\]
Thus $m$ being $[\cR]$-invariant shows that 
\[\varphi(a)=\varphi(\lambda_{\widehat{\cS}}(\gamma)a\lambda_{\widehat{\cS}}(\gamma)^{-1})\]
for all $a\in L(\widehat{\cS})$.
So $L(\widehat{\cS})^{\varphi}\supseteq L^{\infty}(X)\cup [\cR]$. 
By Proposition \ref{prop:folklore centrality}, it follows that $\varphi$ is $L(\cR)$-central. 
    
\end{proof}


\section{General properties of coamenability}\label{sec; general prop}

In this section, we establish general permanence properties of coamenability. 
We start by connecting coamenability of groups to \emph{everywhere} coamenability of relations. To do this it will be helpful to use the following version of the mean ergodic theorem for coamenable inclusions. This result is surely folklore, but we are unable to find a proof in the literature and include it for completeness.

\begin{prop}[Weak Mean Ergodic Theorem for Coamenable Inclusions of Groups] \label{prop: weak mean ergodic theorem}
Let $\Gamma$ be a countable, discrete, group and $\Lambda\leq \Gamma$ be a coamenable subgroup. Let $F_{n}\subseteq \Gamma/\Lambda$ be a F\o lner sequence for the action of $\Gamma$ on $\Gamma/\Lambda$. Suppose that $\pi\colon \Gamma\to \cU(\cH)$, and that $\xi\in \Fix_{\Lambda}(\cH)$. Then
\[\frac{1}{|F_{n}|}\sum_{g\Lambda\in F_{n}}\pi(g)\xi\to_{n\to\infty}P_{\Fix_{\Gamma}(\cH)}(\xi),\]
weakly. Namely,
\[\ip{\frac{1}{|F_{n}|}\sum_{g\Lambda\in F_{n}}\pi(g)\xi,\zeta}\to_{n\to\infty}\ip{P_{\Fix_{\Gamma}(\cH)}(\xi),\zeta}\]
for all $\zeta\in\cH$.
\end{prop}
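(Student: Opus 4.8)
The plan is to split $\xi$ according to the $\Gamma$-fixed subspace and handle the two pieces separately, reducing everything to an asymptotic-invariance estimate supplied by the Følner sequence. First I note that the averages are well defined: since $\xi\in\Fix_\Lambda(\cH)$, the vector $\pi(g)\xi$ depends only on the coset $g\Lambda$, so $\frac{1}{|F_n|}\sum_{g\Lambda\in F_n}\pi(g)\xi$ makes sense. Because $\pi$ is unitary, $\Fix_\Gamma(\cH)$ is $\Gamma$-invariant and hence so is its orthogonal complement; writing $\xi=\xi_0+\xi_1$ with $\xi_0=P_{\Fix_\Gamma(\cH)}(\xi)$ and $\xi_1=\xi-\xi_0\in\Fix_\Gamma(\cH)^\perp$, both summands are again $\Lambda$-fixed. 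For the $\xi_0$-part each $\pi(g)\xi_0=\xi_0$, so its average equals $\xi_0=P_{\Fix_\Gamma(\cH)}(\xi)$ on the nose. Thus the theorem reduces to showing that $\eta_n:=\frac{1}{|F_n|}\sum_{g\Lambda\in F_n}\pi(g)\xi_1$ tends to $0$ weakly.

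Next I would establish that $(\eta_n)_n$ is bounded and asymptotically $\Gamma$-invariant. Boundedness is immediate, $\|\eta_n\|\le\|\xi_1\|$, since $\eta_n$ is an average of vectors of norm $\|\xi_1\|$. For the invariance I use that $F_n$ is Følner for the left action of $\Gamma$ on $\Gamma/\Lambda$: for $h\in\Gamma$, reindexing the sum shows $\pi(h)\eta_n=\frac{1}{|F_n|}\sum_{c\in hF_n}\pi(g_c)\xi_1$ (choosing any representatives $g_c$ of the cosets $c$), and after cancelling the cosets in $hF_n\cap F_n$ one obtains $\|\pi(h)\eta_n-\eta_n\|\le\frac{|hF_n\Delta F_n|}{|F_n|}\|\xi_1\|\to_{n\to\infty}0$. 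Here coamenability of $\Lambda\leq\Gamma$ enters only through the hypothesized existence of such a Følner sequence. Note also that $\eta_n\in\Fix_\Gamma(\cH)^\perp$ for every $n$, since $\xi_1$ lies in this $\Gamma$-invariant subspace.

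Finally I would run a weak-compactness argument on cluster points. As $(\eta_n)_n$ is bounded, every subsequence has a further weakly convergent subsequence. If $\eta_{n_k}\to\eta$ weakly, then $\pi(h)\eta_{n_k}\to\pi(h)\eta$ weakly while $\pi(h)\eta_{n_k}-\eta_{n_k}\to0$ in norm, forcing $\pi(h)\eta=\eta$ for all $h$, i.e. $\eta\in\Fix_\Gamma(\cH)$; but $\Fix_\Gamma(\cH)^\perp$ is weakly closed and contains every $\eta_{n_k}$, so $\eta\in\Fix_\Gamma(\cH)^\perp$ as well, whence $\eta=0$. Since $0$ is thus the only possible weak cluster point of the bounded sequence $(\eta_n)_n$, the whole sequence converges weakly to $0$, and combining this with the exact $\xi_0$-computation yields weak convergence of the original averages to $P_{\Fix_\Gamma(\cH)}(\xi)$. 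The only genuinely technical point is the Følner bookkeeping in the asymptotic-invariance estimate; the remainder is soft functional analysis, and the argument never needs strong convergence, consistent with the statement only asserting the weak limit.
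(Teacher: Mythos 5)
Your proof is correct, but it takes a different route from the paper's. You decompose the averaged vector, writing $\xi=\xi_{0}+\xi_{1}$ with $\xi_{0}=P_{\Fix_{\Gamma}(\cH)}(\xi)$, prove the norm estimate $\|\pi(h)\eta_{n}-\eta_{n}\|\leq \frac{|hF_{n}\Delta F_{n}|}{|F_{n}|}\|\xi_{1}\|$ for the averages $\eta_{n}$ of $\xi_{1}$, and then finish with a weak-compactness argument: every weak cluster point of the bounded sequence $(\eta_{n})_{n}$ is $\Gamma$-fixed yet lies in the weakly closed, $\Gamma$-invariant subspace $\Fix_{\Gamma}(\cH)^{\perp}$, hence is zero, so $\eta_{n}\to 0$ weakly. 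The paper instead decomposes the \emph{test} vector $\zeta$: after observing that $\ip{\xi_{n},\zeta}=\ip{\xi,\zeta}$ exactly for $\zeta\in\Fix_{\Gamma}(\cH)$, it uses the identification $\Fix_{\Gamma}(\cH)^{\perp}=\overline{\sum_{\gamma\in\Gamma}\Im(\pi(\gamma)-1)}$, reduces by the uniform bound $\|\xi_{n}\|\leq\|\xi\|$ to testing against coboundary vectors $(\pi(\gamma)-1)\eta$, and then moves $\pi(\gamma^{-1})-1$ onto $\xi_{n}$, where the same F\o lner estimate kills the pairing. Both proofs thus rest on the identical asymptotic-invariance estimate; the difference is in the soft wrapping. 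The paper's version is entirely direct and quantitative on the dense set of coboundaries, needing no compactness at all, while yours avoids the coboundary description of $\Fix_{\Gamma}(\cH)^{\perp}$ at the cost of invoking weak sequential compactness of bounded sets (harmless in a Hilbert space: restrict to the separable closed subspace spanned by the $\eta_{n}$, or cite Eberlein--\v{S}mulian) and a standard unique-cluster-point argument. A small bonus of your route is that the norm-asymptotic invariance of the averages $\eta_{n}$ is itself a reusable statement, stronger than what the weak conclusion records; a small bonus of the paper's route is that it hands you an explicit rate against each fixed coboundary test vector.
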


We remark that, since $\xi$ is $\Lambda$-fixed, if $g_{1}\Lambda=g_{2}\Lambda$, then $\pi(g_{1})\xi=\pi(g_{2})\xi$. Hence we may unambiguously make sense of $\pi(g)\xi$ for $g\Lambda\in \Gamma/\Lambda$.

\begin{proof}
To ease notation, set $\xi_{n}=\frac{1}{|F_{n}|}\sum_{g\Lambda\in F_{n}}\pi(g)\xi$. Since $\ip{\pi(x)v,w}=\ip{v,\pi(x^{-1})w}$ for all $v,w\in \cH$, we have that $\ip{\xi_{n},\zeta}=\ip{\xi,\zeta}$ for all $\zeta\in \Fix_{\Gamma}(\cH)$. Hence it suffices to show that $\ip{\xi_{n},\zeta}\to_{n\to\infty}0$ for all $\zeta\in \Fix_{\Gamma}(\cH)^{\perp}$. From the uniform estimate $\|\xi_{n}\|\leq \|\xi\|$, the set $\{\zeta\in \Fix_{\Gamma}(\cH)^{\perp}:\ip{\xi_{n},\zeta}\to_{n\to\infty}0\}$ is a norm-closed linear subspace. Hence it suffices to show that $\{\zeta\in \Fix_{\Gamma}(\cH)^{\perp}:\ip{\xi_{n},\zeta}\to_{n\to\infty}0\}$ has dense linear span in $\Fix_{\Gamma}(\cH)^{\perp}$.
Note that 
\[\Fix_{\Gamma}(\cH)^{\perp}=\left(\bigcap_{\gamma\in \Gamma}\ker(\pi(\gamma)-1)\right)^{\perp}=\overline{\sum_{\gamma\in \Gamma}\Im(\pi(\gamma)-1)}.\]
Thus it suffices to show that $\ip{\xi_{n},(\pi(\gamma)-1)\eta}\to_{n\to\infty}0$ for all $\eta\in \cH$,$\gamma\in\Gamma$. But:
\[|\ip{\xi_{n},(\pi(\gamma)-1)\eta}|=|\ip{(\pi(\gamma^{-1})-1)\xi_{n},\eta}|\leq \|(\pi(\gamma^{-1})-1)\xi_{n}\|\|\eta\|\leq \|\xi\|\|\eta\|\frac{|\gamma^{-1} F_{n}\Delta F_{n}|}{|F_{n}|}\to_{n\to\infty}0.\]
    
\end{proof}

\begin{prop}\label{prop: group case is an iff}
Let $\Lambda\leq\Gamma$ be discrete groups, and $\Gamma\actson (X,\mu)$ a probability measure-preserving action. 
\begin{enumerate}[(i)]
 \item If $\Lambda$ is coamenable in $\Gamma,$ then $R_{\Lambda,X}$ is everywhere coamenable in $R_{\Gamma,X}$.
 \label{item: everywhere co-amenability from subgroups}
    \item If $\Gamma\actson (X,\mu)$ is essentially free, and $\Lambda$ is not coamenable in $\Gamma,$ then for every measurable $E\subseteq X$ with positive measure, we have that $R_{\Lambda,X}|_{E}$ is not coamenable in $R_{\Gamma,X}|_{E}$.
    \label{item: nowhere co-amenability from subgroups}
    
\end{enumerate}
\end{prop}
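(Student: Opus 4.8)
Throughout write $\cR=\cR_{\Gamma,X}$ and $\cS=\cR_{\Lambda,X}$, and in (i) fix a F\o lner sequence $F_n\subseteq\Gamma/\Lambda$ for $\Gamma\actson\Gamma/\Lambda$, available since $\Lambda$ is coamenable in $\Gamma$. The backbone of both parts is a consequence of freeness: for a.e.\ $x$ the orbit map $g\mapsto gx$ descends to a bijection $\beta_x\colon\Lambda\backslash\Gamma\to[x]_\cR/\cS$, $\Lambda g\mapsto[gx]_\cS$, which is measurable once choice functions are fixed (Corollary~\ref{cor:choice functions}). A direct computation gives $(\lambda_{\cR/\cS}(s)\xi)(x,\beta_x(\Lambda g))=\xi(s^{-1}x,\beta_{s^{-1}x}(\Lambda gs))$, so $\lambda_{\cR/\cS}$ is conjugate to $\alpha\otimes\sigma$, where $\alpha$ is the Koopman representation on $L^2(X)$ and $\sigma$ the quasi-regular representation of $\Gamma$ on $\ell^2(\Lambda\backslash\Gamma)$; moreover the functions constant in $x$ form a $\Gamma$-invariant copy of $\ell^\infty(\Lambda\backslash\Gamma)\subseteq L^\infty(\cR/\cS)$ on which $\Gamma$ acts by $\sigma$. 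Everything here holds verbatim over any $\cR$-invariant positive-measure set in place of $X$.

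For (ii) I argue by contradiction: suppose $\cS|_E$ is coamenable in $\cR|_E$ for some $E$ with $\mu(E)>0$. Proposition~\ref{prop: passing rel amen between subsets}(i), applied with $F=\cR E$, upgrades this to $\cS|_{\cR E}$ being coamenable in $\cR|_{\cR E}$. Since $\cR E$ is $\Gamma$-invariant, Theorem~\ref{thm: big TFAE thm} furnishes an $[\cR]$-invariant, hence in particular $\Gamma$-invariant, mean $M$ on $L^\infty(\cR|_{\cR E}/\cS|_{\cR E})$, whose fibers are the full coset space $\Lambda\backslash\Gamma$. Restricting $M$ to the $\Gamma$-equivariant copy of $\ell^\infty(\Lambda\backslash\Gamma)$ from the previous paragraph yields a $\Gamma$-invariant mean for the quasi-regular action, i.e.\ a $\Gamma$-invariant mean on $\ell^\infty(\Lambda\backslash\Gamma)$. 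This is precisely coamenability of $\Lambda$ in $\Gamma$, a contradiction.

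For (i), by Theorem~\ref{thm: everywhere rel man TFAE thm} it suffices, for each $\cS$-invariant (equivalently $\Lambda$-invariant) $E$ of positive measure, to build an $[\cR]$-invariant mean $m$ on $L^\infty(\cR|_{\cR E}/\cS|_{\cR E})$ with $m(R(1_E)f)=m(f)$ and $m|_{L^\infty(\cR E)}=\frac1{\mu(\cR E)}\int_{\cR E}\cdot\,d\mu$. Let $\cI$ be the $\sigma$-algebra of $\Gamma$-invariant subsets of $\cR E$, set $w=\bigl(\mu(\cR E)\,\E[1_E\mid\cI]\bigr)^{-1}1_E$ (finite a.e.\ on $E$, with $\int_E w\,d\mu=1$), and define means
\[ m_n(f)=\frac1{|F_n|}\sum_{g\Lambda\in F_n}\int_E f\bigl(gy,[y]_\cS\bigr)\,w(y)\,d\mu(y). \]
Well-definedness on cosets uses $\Lambda$-invariance of $E$ and $w$. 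Any weak-$*$ cluster point $m$ of $(m_n)$ is $\Gamma$-invariant, since $|m_n(f\circ s^{-1})-m_n(f)|\le\|f\|_\infty\,|s^{-1}F_n\triangle F_n|/|F_n|\to0$; it satisfies $m(R(1_E)f)=m(f)$ exactly, because each evaluation point $(gy,[y]_\cS)$ has $\cS$-class $[y]_\cS\subseteq E$; and its marginal is identified by applying the weak mean ergodic theorem (Proposition~\ref{prop: weak mean ergodic theorem}) to $\alpha$ and the $\Lambda$-fixed vector $w1_E$, whose F\o lner averages converge weakly to $P_{\Fix_\Gamma(L^2(\cR E))}(w1_E)=\E[w1_E\mid\cI]=\frac1{\mu(\cR E)}$. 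The weight $w$ is present precisely to make this constant correct when $\Gamma\actson\cR E$ is non-ergodic; a truncation $w\wedge N$ handles the integrability needed to invoke Proposition~\ref{prop: weak mean ergodic theorem}.

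The remaining step, and the main obstacle, is to upgrade $\Gamma$-invariance of $m$ to genuine $[\cR]$-invariance: this is exactly the subtlety, emphasized throughout the paper, that $\Gamma$-invariance is strictly weaker than $[\cR]$-invariance even when $\Gamma$ generates $\cR$. I would resolve it by the conditional-expectation argument from Theorem~\ref{thm: big TFAE thm} ((ii)$\Rightarrow$(i)): the exact marginal lets one disintegrate $m$ as $f\mapsto\frac1{\mu(\cR E)}\int_{\cR E}\Phi(f)\,d\mu$ for a positive unital $\Phi\colon L^\infty(\cR|_{\cR E}/\cS|_{\cR E})\to L^\infty(\cR E)$ with $\Phi|_{L^\infty(\cR E)}=\id$ and $\Phi(R(1_E)f)=\Phi(f)$, and $\Gamma$-invariance of $m$ makes $\Phi$ $\Gamma$-equivariant. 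Positivity together with $\Phi|_{L^\infty(\cR E)}=\id$ forces $\Phi$ to be an $L^\infty(\cR E)$-bimodule map, whereupon writing an arbitrary $\gamma\in[\cR]$ piecewise over $\Gamma$, exactly as in Proposition~\ref{prop: sometimes people refuse to learn basic cp map theory}, promotes $\Gamma$-equivariance to full $[\cR]$-equivariance. The resulting mean verifies condition (iii) of Theorem~\ref{thm: everywhere rel man TFAE thm}, giving everywhere coamenability and completing (i).
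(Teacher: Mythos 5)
Your proof is correct, but it travels a genuinely different road from the paper's in both parts. For (i), the paper never touches means: it writes down the explicit fiberwise-unit vectors $\xi^{(n)}(x,[gx]_{\cS})=|F_{n}|^{-1/2}1_{F_{n}}(g^{-1}\Lambda)$, restricts them to an $\cS$-invariant $E$, and uses Proposition \ref{prop: weak mean ergodic theorem} only to bound $\|\widetilde{\zeta}^{(n)}\|_{2}^{2}=\frac{1}{|F_{n}|}\sum_{g\Lambda\in F_{n}}\mu(E\cap gE)$ below by $\mu(E)^{2}$, thereby verifying the almost-invariant-vector condition of Theorem \ref{thm: everywhere rel man TFAE thm}; your weighted Følner averages with $w=\bigl(\mu(\cR E)\,\E[1_{E}\mid\cI]\bigr)^{-1}1_{E}$ instead verify the mean condition (iii), with the weight doing at the level of means what the paper's normalization $\zeta^{(n)}=\widetilde{\zeta}^{(n)}/\|\widetilde{\zeta}^{(n)}\|_{2}$ does at the level of vectors in the non-ergodic situation. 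For (ii), the paper argues quantitatively: freeness gives $\rho^{\cS}_{u_{F}}(x)=\|A_{F}\|_{B(\ell^{2}(\Gamma/\Lambda))}<1$ pointwise, and Proposition \ref{prop: co spectral radius descent} plus Proposition \ref{prop: passing rel amen between subsets} then rule out coamenability on every positive-measure $E$; your soft argument — saturate to $\cR E$, extract the invariant mean from Theorem \ref{thm: big TFAE thm}, and restrict it to the $\Gamma$-equivariant copy of $\ell^{\infty}(\Lambda\backslash\Gamma)$ inside $L^{\infty}(\cR|_{\cR E}/\cS|_{\cR E})$ — avoids cospectral radius entirely and is arguably more transparent, though it buys no quantitative information, whereas the paper's route reuses machinery already needed elsewhere.

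One step deserves more care than you give it. Your citation of Proposition \ref{prop: sometimes people refuse to learn basic cp map theory} is an overreach as stated: that proposition promotes $[\cR]$-equivariance to $[[\cR]]$-equivariance, where each $\gamma=\id_{E}\sigma$ is a \emph{single} cut-down of an element $\sigma$ already in the invariance group, while you need to pass from $\Gamma$-equivariance to $[\cR]$-equivariance via a \emph{countable} patchwork $\gamma=\bigsqcup_{g}g|_{A_{g}}$, and $\Phi$ is not normal, so you cannot simply exchange $\Phi$ with the infinite sum $\lambda_{\cR/\cS}(\gamma)f=\sum_{g}1_{\gamma(A_{g})}\lambda_{\cR/\cS}(g)f$. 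The argument does close, but by a slightly different mechanism: using the bimodule property one gets, for each fixed $g$,
\[1_{\gamma(A_{g})}\Phi(\lambda_{\cR/\cS}(\gamma)f)=\Phi\bigl(\lambda_{\cR/\cS}(g)(1_{A_{g}}f)\bigr)=\alpha(g)\bigl(1_{A_{g}}\Phi(f)\bigr)=1_{\gamma(A_{g})}\,\alpha(\gamma)\Phi(f),\]
and since $\Phi(\lambda_{\cR/\cS}(\gamma)f)$ and $\alpha(\gamma)\Phi(f)$ are honest elements of $L^{\infty}(\cR E)$, agreement on each piece of the countable partition $\{\gamma(A_{g})\}_{g}$ gives agreement almost everywhere — it is the commutative codomain, not the cited proposition, that saves you. (Alternatively, you could skip the upgrade altogether: the paper's proof of (ii)$\Rightarrow$(v) in Theorem \ref{thm: big TFAE thm} only ever uses invariance of the mean under a countable generating subgroup.) With that step repaired, and the truncation $w\wedge N$ spelled out as you indicate, the proof is complete.
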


\begin{proof}
(\ref{item: everywhere co-amenability from subgroups}): To ease notation, set $\cS=\cR_{\Lambda,X}$,$\cR=\cR_{\Gamma,X}$.
Let $F_{n}\subseteq \Gamma/\Lambda$ be a F\o lner sequence. Note that for almost every $x\in X$, we have that $gx\in c$ if and only if $(gh)^{-1}x\in c$ for all $g\in \Gamma,h\in \Lambda$ and for all $c\in [x]_{\cR}/\cS$. Thus for almost every $x\in X$ the set $(\Gamma/\Lambda)_{x,c}=\{g\Lambda\in \Gamma/\Lambda:g^{-1}x\in c\}$ is well defined. Moreover, for almost every $x\in X$ we have that 
\[\Gamma/\Lambda=\bigsqcup_{c\in [x]_{\cR}/\cS}(\Gamma/\Lambda)_{x,c}.\]
Furthermore:
\begin{equation}\label{eqn: lowkey annoying}
  (\Gamma/\Lambda)_{ax,c}=a[(\Gamma/\Lambda)_{x,c}], \textnormal{ for almost every $x\in X$ and all $a\in \Gamma,c\in [x]_{\cR}/\cS$.}  
\end{equation}

We define $\xi^{(n)}\in L^{2}(\cR/\cS)$ by $\xi^{(n)}(x,c)=\sqrt{\frac{|F_{n}\cap (\Gamma/\Lambda)_{x,c}|}{|F_{n}|}}$. 
Then $\|\xi^{(n)}_{x}\|_{2}=1$ for almost every $x\in X$. From (\ref{eqn: lowkey annoying}), it follows that for almost every $x\in X$ and all $a\in \Gamma$:
\begin{align*}
  \|\xi^{(n)}_{ax}-\xi^{(n)}_{x}\|_{2}^{2}&=\frac{1}{|F_{n}|}\sum_{c\in [x]_{\cR/\cS}}\left|\sqrt{|a^{-1}F_{n}\cap ((\Gamma/\Lambda)_{x,c})|}-\sqrt{|F_{n}\cap (\Gamma/\Lambda)_{x,c}|}\right|^{2}\\
  &\leq \frac{1}{|F_{n}|}\sum_{c\in [x]_{\cR/\cS}}\left||a^{-1}F_{n}\cap ((\Gamma/\Lambda)_{x,c})|-|F_{n}\cap (\Gamma/\Lambda)_{x,c}|\right|,  
\end{align*}
where in the last line we use the inequality $|\sqrt{t}-\sqrt{s}|^{2}\leq |t-s|$ valid for nonnegative real numbers $s,t$. Thus 
\begin{align*}
    \|\xi^{(n)}_{ax}-\xi^{(n)}_{x}\|_{2}^{2}&\leq \frac{1}{|F_{n}|}\sum_{c\in [x]_{\cR/\cS}}\left|\left|\sum_{p\in \Gamma/\Lambda}1_{a^{-1}F_{n}}(p)1_{(\Gamma/\Lambda)_{x,c}}(p)\right|-\left|\sum_{p\in \Gamma/\Lambda}1_{F_{n}}(p)1_{(\Gamma/\Lambda)_{x,c}}(p)\right|\right|\\
    &\leq \frac{1}{|F_{n}|}\sum_{c\in [x]_{\cR/\cS}}\sum_{p\in \Gamma/\Lambda}1_{(\Gamma/\Lambda)_{x,c}}(p)|1_{a^{-1}F_{n}}(p)-1_{F_{n}}(p)|\\
    &=\frac{1}{|F_{n}|}|a^{-1}F_{n}\Delta F_{n}|.
\end{align*} 
Thus
\[\|\xi^{(n)}_{ax}-\xi^{(n)}_{x}\|_{2}\to_{n\to\infty}0 \textnormal{ for all $a\in \Gamma.$}\]
Since $\|\xi^{(n)}_{x}\|_{2}=1$, Theorem \ref{thm: DCT argument} forces  $\|\lambda_{\cR/\cS}(\widetilde{\gamma})\xi^{(n)}-\xi^{(n)}\|_{2}\to_{n\to\infty}0$ for all $\widetilde{\gamma}\in [\cR]$.

By Theorem \ref{thm: everywhere rel man TFAE thm}, it suffices to show that if $E\subseteq X$ is a $\Lambda$-invariant measurable set with $\mu(E)>0$, $\lambda_{\cR|_{E}/\cS|_{E}}$ has almost invariant vectors.  For $x\in X$, let \[\iota_{x}\colon [x]_{\cR|_{E}}/\cS|_{E}\to [x]_{\cR}/\cS\]
be the injection defined at the beginning of Section \ref{sec: pointwise cospectral radius}. Let $\widetilde{\zeta}^{(n)}\in L^{2}(\cR|_{E}/\cS|_{E})$ be defined by $\widetilde{\zeta}^{(n)}(x,c)=\xi^{(n)}(x,\iota_{x}(c))$. For any $\widetilde{\gamma}\in [\cR|_{E}]$, we may find a $\gamma\in [\cR]$ with $\gamma|_{E}=\widetilde{\gamma}$. Thus
\[\|\lambda_{\cR|_{E}/\cS|_{E}}(\widetilde{\gamma})\widetilde{\zeta}^{(n)}-\widetilde{\zeta}^{(n)}\|_{2}\leq \|\lambda_{\cR/\cS}(\gamma)\xi^{(n)}-\xi^{(n)}\|_{2}\to_{n\to\infty}0.\]
By Theorem \ref{thm: big TFAE thm}, it thus suffices to show that 
\[\limsup_{n\to\infty}\|\widetilde{\zeta}^{(n)}\|_{2}>0.\]
Note that 
\begin{align*}
 \|\widetilde{\zeta}^{(n)}\|_{2}^{2}&=\frac{1}{|F_{n}|\mu(E)}\int_{E}\sum_{c\in [x]_{\cR|_{E}/\cS|_{E}}}|F_{n}\cap (\Gamma/\Lambda)_{x,\iota_{x}(c)}|\,d\mu(x)\\
 &=\frac{1}{|F_{n}|\mu(E)}\sum_{g\Lambda\in F_{n}}\int_{E}\sum_{c\in [x]_{\cR|_{E}/\cS|_{E}}}1_{(\Gamma/\Lambda)_{x,\iota_{x}(c)}}(g\Lambda)\,d\mu(x).   
\end{align*}
For almost every $x\in X$ and for every $g\Lambda\in \Gamma/\Lambda$ we have that $g\Lambda \in (\Gamma/\Lambda)_{x,\iota_{x}(c)}$ for some $c\in [x]_{\cR|_{E}/\cS|_{E}}$ if and only if $g^{-1}x$ is $\cS$-equivalent to some element of $E$. By $\cS$-invariance of $E$, it follows that $\sum_{c\in [x]_{\cR|_{E}/\cS|_{E}}}1_{(\Gamma/\Lambda)_{x,\iota_{x}(c)}}(g\Lambda)=1_{gE}(x)$ for almost every $x\in X$. Thus
\[ \|\widetilde{\zeta}^{(n)}\|_{2}^{2}=\frac{1}{|F_{n}|\mu(E)}\sum_{g\Lambda\in F_{n}}\mu(gE\cap E)=\frac{1}{\mu(E)}\ip{\frac{1}{|F_{n}|}\sum_{g\Lambda\in F_{n}}1_{gE},1_{E}}.\]
For $f\in L^{\infty}(X)$, let $\E_{\Fix_{\Gamma}(L^{\infty}(X))}(f)$ denote the conditional expectation of $f$ onto the sub-$\sigma$-algebra of $\Gamma$-invariant sets. Then, by Proposition \ref{prop: weak mean ergodic theorem} 
\begin{align*}
\lim_{n\to\infty}\|\widetilde{\zeta}^{(n)}\|_{2}^{2}=\frac{1}{\mu(E)}\ip{\E_{\Fix_{\Gamma}(L^{\infty}(X))}(1_{E}),1_{E}}&=\frac{1}{\mu(E)}\|\E_{\Fix_{\Gamma}(L^{\infty}(X))}(1_{E})\|_{2}^{2}\\
&\geq \frac{1}{\mu(E)}\|\E_{\Fix_{\Gamma}(L^{\infty}(X))}(1_{E})\|_{1}^{2}\\
&=\mu(E)\\
&>0.
\end{align*}
Setting $\zeta^{(n)}=\frac{\widetilde{\zeta}^{(n)}}{\|\widetilde{\zeta}^{(n)}\|_{2}}$, we thus have that $\|\zeta^{(n)}\|_{2}=1$ and $\|\lambda_{\cR|_{E}/\cS|_{E}}(\widetilde{\gamma})\zeta^{(n)}-\zeta^{(n)}\|_{2}\to_{n\to\infty}0$ for all $\widetilde{\gamma}\in [\cR|_{E}]$. So by Theorem \ref{thm: big TFAE thm}, we have that $\cS$ is everywhere coamenable in $\cR$.

(\ref{item: nowhere co-amenability from subgroups}):
Since $\Lambda$ is not coamenable in $\Gamma$, we have that $\Gamma\actson \ell^{2}(\Gamma/\Lambda)$ does not have almost invariant vectors. So we may find a symmetric, finite $F\subseteq \Gamma$ so that 
\[c:=\frac{1}{|F|}\left\|\sum_{g\in F}\lambda_{\Gamma/\Lambda}(g)\right\|_{B(\ell^{2}(\Gamma/\Lambda))}<1.\]
By, e.g. \cite[Lemma 10.1]{Woess},
\[c=\lim_{k\to\infty}\ip{A_{F}^{2k}\delta_{\Lambda},\delta_{\Lambda}}^{1/2k},\]
where $A_{F}=\frac{1}{|F|}\sum_{g\in F}\lambda_{\Gamma/\Lambda}(g).$
Let $u_{F}$ be the uniform measure on $F$. Since the $\Gamma$ actions is free, for almost every $x\in X$ we have $\rho^{\cR_{\Lambda,X}}_{u_{F}}=c<1$. Hence  Proposition \ref{prop: co spectral radius descent} with $\cS_{2}=\cR$ and $\cS_{1}=\cS$ implies that for every $\cS$-invariant set $E$ with $\mu(E)>0$ we have that $\cS|_{E}$ is not coamenable $\cR|_{E}$. The desired result now follows from Proposition \ref{prop: passing rel amen between subsets}.

\end{proof}

In the setup of the above Theorem, we have shown that $\cR_{\Lambda,X}$ is coamenable in $\cR_{\Gamma,X}$ if and only if the von Neumann algebra of $\cR_{\Lambda,X}$ is coamenable in $\cR_{\Gamma,X}$ (see Theorem \ref{thm: two different version of rel amen are the same}). Using this result one can apply \cite[Proposition 6]{MonodPopa} and \cite[Remark 1.5.6]{anantharaman-popa} to obtain a slightly less general version of Proposition \ref{prop: group case is an iff} for the case when the $\Gamma$ action is essentially free: that $\cR_{\Lambda,X}$ is coamenable in $\cR_{\Gamma,X}$ if and only if $\Lambda$ is coamenable in $\Gamma$.

We also show that coamenability can be deduced by checking coamenability of restricted equivalence relations among sets which partition the base space.

\begin{prop}
Let $\cS\leq \cR$ be discrete, probability measure-preserving equivalence relations over a standard probability space $(X,\mu)$. Suppose that $J$ is a countable index set, and that $(E_{j})_{j\in J}$ are positive measure sets with $\mu\left(X\setminus \bigcup_{j}E_{j}\right)=0$. 
\begin{enumerate}[(i)]
    \item If $\cS|_{E_{j}}$ is coamenable in $\cR|_{E_{j}}$ for all $j\in J$, then $\cS$ is coamenable in $\cR$. \label{item: piecing together relative amenability}
    \item If $\cS|_{E_{j}}$ is everywhere coamenable in $\cR|_{E_{j}}$, for all $j\in J$, then $\cS$ is everywhere coamenable in $\cR$.  \label{item: piecing together everywhere relative amenability}
\end{enumerate}

\end{prop}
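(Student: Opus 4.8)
The plan is to prove (i) by reducing to a disjoint family of $\cR$-invariant pieces and gluing almost-invariant vectors, and then to deduce (ii) from (i) by a routine restriction argument. Throughout I would use the characterization of coamenability from Theorem \ref{thm: big TFAE thm}(\ref{item:almost invariant fiberwise norm 1 nonergodic part}): that $\cS$ is coamenable in $\cR$ exactly when there is a sequence $\xi^{(n)}\in L^2(\cR/\cS)$ with $\|\xi^{(n)}_x\|_2=1$ almost everywhere and $\|\xi^{(n)}_x-\xi^{(n)}_y\|_2\to 0$ for almost every $(x,y)\in\cR$.

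First I would pass to saturations. For each $j$ the set $F_j:=\cR E_j$ is $\cR$-invariant, contains $E_j$, and satisfies $\mu(E_j\setminus F_j)=0=\mu(F_j\setminus\cR E_j)$; hence Proposition \ref{prop: passing rel amen between subsets}(\ref{item: rel amen ascent via saturation}) upgrades coamenability of $\cS|_{E_j}$ in $\cR|_{E_j}$ to coamenability of $\cS|_{F_j}$ in $\cR|_{F_j}$. Since $\bigcup_j E_j=X$ modulo null sets, the $\cR$-invariant sets $F_j$ also cover $X$. I would then disjointify: set $A_j=F_j\setminus\bigcup_{i<j}F_i$, which are pairwise disjoint, $\cR$-invariant (a difference of $\cR$-invariant sets is $\cR$-invariant), cover $X$ modulo null sets, and satisfy $A_j\subseteq F_j$.

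The key mechanism is then to transport and glue almost-invariant vectors. For each $j$, Theorem \ref{thm: big TFAE thm}(\ref{item:almost invariant fiberwise norm 1 nonergodic part}) applied to $\cS|_{F_j}\le\cR|_{F_j}$ yields $\eta^{(n,j)}\in L^2(\cR|_{F_j}/\cS|_{F_j})$ with fiberwise unit norm and $\|\eta^{(n,j)}_x-\eta^{(n,j)}_y\|_2\to 0$ for a.e. $(x,y)\in\cR|_{F_j}$. Because $A_j$ is $\cR$-invariant and contained in $F_j$, for $x\in A_j$ the class $[x]_\cR$ lies entirely in $A_j\subseteq F_j$, so restricting the base variable of $\eta^{(n,j)}$ to $A_j$ produces an element of $L^2(\cR|_{A_j}/\cS|_{A_j})$ retaining both properties. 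Patching these disjointly, I define $\xi^{(n)}\in L^2(\cR/\cS)$ by $\xi^{(n)}_x=\eta^{(n,j)}_x$ for $x\in A_j$; then $\|\xi^{(n)}\|_2^2=\sum_j\mu(A_j)=1$, $\|\xi^{(n)}_x\|_2=1$ a.e., and, using $\cR$-invariance of the $A_j$ so that almost every $(x,y)\in\cR$ has both coordinates in a single $A_j$, we obtain $\|\xi^{(n)}_x-\xi^{(n)}_y\|_2\to 0$ for a.e. $(x,y)\in\cR$ (the countably many almost-everywhere statements being combined by unioning the corresponding null sets). This verifies Theorem \ref{thm: big TFAE thm}(\ref{item:almost invariant fiberwise norm 1 nonergodic part}) for $\cS\le\cR$, proving (i). For (ii), I would fix an arbitrary positive measure $E\subseteq X$ and apply (i) to the inclusion $\cS|_E\le\cR|_E$ on the probability space $E$, using the cover $\{E\cap E_j:\mu(E\cap E_j)>0\}$: each $E\cap E_j$ is a positive measure subset of $E_j$, so everywhere coamenability of $\cS|_{E_j}$ in $\cR|_{E_j}$ makes $\cS|_{E\cap E_j}$ coamenable in $\cR|_{E\cap E_j}$, whence (i) gives coamenability of $\cS|_E$ in $\cR|_E$, and since $E$ is arbitrary this is everywhere coamenability.

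The main obstacle, and essentially the only delicate point, is the organizational one in (i): the given sets $E_j$ are neither disjoint nor $\cR$-invariant, so almost-invariance along $\cR$ cannot be glued directly. The saturate-then-disjointify step is what creates $\cR$-invariant pieces, and it is precisely the $\cR$-invariance of the $A_j$ that guarantees no $\cR$-edge crosses between two pieces, so that the patched vectors remain almost invariant along all of $\cR$ rather than only within each piece.
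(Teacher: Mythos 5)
Your proof is correct, but it takes a genuinely different route from the paper's. The paper proves both parts through the cospectral radius: for a countably supported, symmetric, generating $\nu\in\Prob([\cR])$, Proposition \ref{prop: co spectral radius descent} (with $\cS_{1}=\cS$, $\cS_{2}=\cR$) together with (\ref{eqn: co spectral radius is norm restricted to subset background}) gives $\|\rho^{\cS}_{\nu}1_{E_{j}}\|_{\infty}=1$ for each $j$; countability of the cover then yields $\|\rho^{\cS}_{\nu}\|_{\infty}=1$ for (i), and for (ii) the paper first uses Proposition \ref{prop: passing rel amen between subsets} to replace each $E_{j}$ by an $\cS$-invariant set, runs the same argument on $\cS$-invariant subsets $F\subseteq E_{j}$, and exploits the almost sure $\cS$-invariance of $\rho^{\cS}_{\nu}$ to get $\rho^{\cS}_{\nu}=1$ almost everywhere; Theorems \ref{thm: big TFAE thm} and \ref{thm: everywhere rel man TFAE thm} then conclude. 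You instead work directly with the fiberwise-unit almost-invariant vectors of Theorem \ref{thm: big TFAE thm}(\ref{item:almost invariant fiberwise norm 1 nonergodic part}); your saturate-then-disjointify step is legitimate (the saturation $\cR E_{j}$ is genuinely $\cR$-invariant, and Proposition \ref{prop: passing rel amen between subsets}(\ref{item: rel amen ascent via saturation}) applies with $F=\cR E_{j}$), and on the resulting invariant pieces one has $[x]_{\cR|_{F_{j}}}/\cS|_{F_{j}}=[x]_{\cR}/\cS$ for almost every $x\in F_{j}$, so the glued field is well defined (measurable by Proposition \ref{prop:measurability exercise}) and, as you correctly flag, no $\cR$-edge crosses between pieces, so almost invariance survives gluing. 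Your deduction of (ii) as a formal corollary of (i) via the cover $\{E\cap E_{j}:\mu(E\cap E_{j})>0\}$ is also sound, since everywhere coamenability localizes to positive measure subsets by definition, and is arguably cleaner than the paper's treatment, which reruns the cospectral-radius computation rather than quoting (i). What the paper's route buys is that it handles the given $E_{j}$ with no saturation or disjointification at all in (i) --- $\rho_{E}(\cR/\cS,\nu)$ makes sense for arbitrary measurable $E$ --- and it treats (i) and (ii) uniformly through the single pointwise function $\rho^{\cS}_{\nu}$; what your route buys is independence from the harder operator-theoretic machinery (Lemma \ref{lem: restricted co-spec radi is just restricted op} and Proposition \ref{prop: co spectral radius descent}), relying only on the soft equivalences of Theorem \ref{thm: big TFAE thm} plus the saturation proposition.
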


\begin{proof}

(\ref{item: piecing together relative amenability}):
Let $\nu\in \Prob([\cR])$ be countably supported, symmetric and generate $\cR$. 
By Proposition \ref{prop: co spectral radius descent} with $\cS_{2}=\cR$ and $\cS_{1}=\cS$ and (\ref{eqn: co spectral radius is norm restricted to subset background}) we have that $\|\rho^{\cS}_{\nu}1_{E_{j}}\|_{\infty}=1$ for all $j\in J$. By countability of $J$, and the fact that $\mu\left(X\setminus \bigcup_{j}E_{j}\right)=0$ we deduce that
\[\|\rho^{\cS}_{\nu}\|_{\infty}=\sup_{j\in J}\|\rho^{\cS}_{\nu}1_{E_{j}}\|_{\infty}=1.\]
Thus Theorem \ref{thm: big TFAE thm}, it follows that $\cS$ is coamenable in $\cR$. 

(\ref{item: piecing together everywhere relative amenability}): By Proposition \ref{prop: passing rel amen between subsets}, we may assume that $E_{j}$ is $\cS$-invariant for all $j\in J$.
Let $\nu\in \Prob([\cR])$ be countably supported, symmetric and generate $\cR$. Fix $j\in J$, and a $\cS$-invariant measurable subset   $F$ of $E_{j}$ with positive measure. Apply Proposition \ref{prop: co spectral radius descent} with $\cS_{1}=\cS$ and $\cS_{2}=\cR$, and (\ref{eqn: co spectral radius is norm restricted to subset background}) to see that $\|\rho^{\cS}_{\nu}1_{F}\|_{\infty}=1$. Since $\rho^{\cS}_{\nu}$ is almost surely $\cS$-invariant and  $F$ was any positive measure, measurable, $\cS$-invariant set, it follows that $\rho^{\cS}_{\nu}|_{E_{j}}=1$ almost surely. By countability of $J$, and the fact that $\mu\left(X\setminus \bigcup_{j}E_{j}\right)=0$ we deduce that $\rho^{\cS}_{\nu}=1$ almost everywhere. Hence, by Theorem \ref{thm: everywhere rel man TFAE thm}, it follows that $\cS$ is everywhere coamenable in $\cR$. 

\end{proof}

\color{black}

Note that Example \ref{example: rel amen comes from small pieces} in Section \ref{sec: pointwise cospectral radius} is precisely the type  previously given in \cite[Example 6]{AFH}, establishing that the cospectral radius is not almost surely constant (even if $\cR$ is ergodic). Note that in \cite[Section 3.3]{AFH}, a general condition on $\cS$ was given under which the cospectral radius is almost surely constant if $\cR$ is ergodic. We recall this general condition here.

\begin{defn}
Let $\cS\leq \cR$ be discrete, probability measure-preserving equivalence relations on a standard probability space $(X,\mu)$. The \emph{one-sided partial normalizer} of $\cS\leq \cR$, denote $P^{(1)}N_{\cR}(\cS)$, is the set of all $\gamma\in [[\cR]]$ so that $\gamma([x]_{\cS}\cap \dom(\gamma))\subseteq [\gamma(x)]_{\cS}$ for all $x\in \dom(\gamma)$.
\end{defn}

In the setting of the above definition, if $E\subseteq X$ is measurable, and $\Phi\subseteq [[\cR]]$, the \emph{saturation of $E$ by $\Phi$} is any set $\widetilde{E}$ containing $E$ which is $\Phi$-invariant and has minimum measure among all $\Phi$-invariant measurable subsets which contain $E$. Note that $\widetilde{E}$ is unique modulo null sets.

\begin{prop}
Let $\cS\leq \cR$ be discrete, probability measure-preserving equivalence relations on a standard probability space $(X,\mu)$ and let $E\subseteq X$ be measurable with positive measure. Let $\widetilde{E}$ be the saturation of $E$ by $P^{(1)}N_{\cR}(\cS)$. Suppose that $F\subseteq X$ is measurable, and that $\mu(E\setminus F)=0=\mu(F\setminus\widetilde{E})=0$. 
\begin{enumerate}[(i)]
    \item If $\cS|_{E}$ is coamenable in $\cR|_{E}$, then $\cS|_{F}$ is coamenable in $\cR|_{F}$. \label{item: propogating relative amenability across partial normalizers}
    \item If $\cS|_{E}$ is everywhere coamenable in $\cR|_{E}$, then $\cS|_{\widetilde{E}}$ is everywhere coamenable in $\cR|_{\widetilde{E}}$. \label{item: propogating everywhere relative amenability across partial normalizers}
\end{enumerate}

\end{prop}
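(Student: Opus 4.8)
The plan is to handle the two parts asymmetrically: part (\ref{item: propogating relative amenability across partial normalizers}) will follow immediately from machinery already available, while part (\ref{item: propogating everywhere relative amenability across partial normalizers}) carries all the content.

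For part (\ref{item: propogating relative amenability across partial normalizers}), I would first record that $\widetilde{E}\subseteq \cR E$ modulo null sets: every $\gamma\in P^{(1)}N_{\cR}(\cS)$ lies in $[[\cR]]$, so for $x\in E\cap\dom(\gamma)$ we have $\gamma(x)\in [x]_{\cR}\subseteq \cR E$, and hence the $P^{(1)}N_{\cR}(\cS)$-saturation of $E$ is contained in the $\cR$-saturation. Thus the hypothesis $\mu(E\setminus F)=0=\mu(F\setminus\widetilde{E})$ gives $\mu(E\setminus F)=0=\mu(F\setminus \cR E)$, and part (\ref{item: propogating relative amenability across partial normalizers}) is then precisely the conclusion of Proposition \ref{prop: passing rel amen between subsets}(\ref{item: rel amen ascent via saturation}). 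No new argument is needed.

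The substance is part (\ref{item: propogating everywhere relative amenability across partial normalizers}), which I would begin with a structural description of $\widetilde{E}$. A short check using $\dom(\gamma_1\circ\gamma_2)=\gamma_2^{-1}(\dom\gamma_1)\cap\dom\gamma_2$ shows $P^{(1)}N_{\cR}(\cS)$ is closed under composition and contains $\id_X$; it follows that $W:=\bigcup_{\gamma\in P^{(1)}N_{\cR}(\cS)}\gamma(E\cap\dom\gamma)$ is forward-invariant under this semigroup, contains $E$, and is contained in any forward-invariant set containing $E$, so $W=\widetilde{E}$ modulo null sets. Since all these forward images sit inside the finite-measure set $\cR E$, a standard exhaustion yields a countable subfamily $\{\gamma_i\}_i\subseteq P^{(1)}N_{\cR}(\cS)$ with $\widetilde{E}=\bigcup_i \gamma_i(E\cap\dom\gamma_i)$ modulo null sets.

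The heart of the matter is a transport lemma: if $\gamma\in P^{(1)}N_{\cR}(\cS)$ has $A=\dom\gamma$, $B=\ran\gamma$, and $\cS|_A$ is coamenable in $\cR|_A$, then $\cS|_B$ is coamenable in $\cR|_B$. This is also where the one-sidedness is the genuine obstacle: for a.e.\ $x\in A$ the map $\gamma$ descends to $\bar{\gamma}_x\colon [x]_{\cR|_A}/\cS|_A\to [\gamma x]_{\cR|_B}/\cS|_B$ which is \emph{surjective but in general not injective}, since $\gamma$ may collapse distinct $\cS$-classes, so I cannot transport $L^2$ almost-invariant vectors bijectively. Instead I would use the Reiter description of coamenability, Theorem \ref{thm: big TFAE thm}(\ref{item: Rieter sequence fiberwise}): take fiberwise probability vectors $\mu^{(n)}\in L^1(\cR|_A/\cS|_A)$ with $\|\mu^{(n)}_x-\mu^{(n)}_{x'}\|_1\to 0$ for a.e.\ $(x,x')\in\cR|_A$, and push them forward, setting $\nu^{(n)}_y:=(\bar{\gamma}_{\gamma^{-1}y})_\ast\mu^{(n)}_{\gamma^{-1}y}$ for $y\in B$. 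Pushforward of probability measures preserves total mass and is an $L^1$-contraction, and for $(y,y')\in\cR|_B$ the two descents coincide (both equal the descent of $\gamma$ on the common $\cR$-class), so $\|\nu^{(n)}_y-\nu^{(n)}_{y'}\|_1\le\|\mu^{(n)}_{\gamma^{-1}y}-\mu^{(n)}_{\gamma^{-1}y'}\|_1\to 0$ for a.e.\ $(y,y')\in\cR|_B$; measurability of $\nu^{(n)}$ comes from Proposition \ref{prop:measurability exercise}. Thus $\nu^{(n)}$ witnesses coamenability of $\cS|_B$ in $\cR|_B$. Restricting $\gamma$ to $\gamma^{-1}(H)$ for each positive-measure $H\subseteq B$ upgrades this to the \emph{everywhere} statement, since everywhere coamenability is inherited by subsets. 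Finally I would apply this everywhere transport lemma to each $\gamma_i$ (whose domain $E\cap\dom\gamma_i$ lies in $E$, where $\cS$ is everywhere coamenable in $\cR$), concluding that $\cS|_{\gamma_i(E\cap\dom\gamma_i)}$ is everywhere coamenable in $\cR|_{\gamma_i(E\cap\dom\gamma_i)}$; as these sets cover $\widetilde{E}$, the piecing-together result of the immediately preceding Proposition, applied to $\cS|_{\widetilde{E}}\le\cR|_{\widetilde{E}}$, gives that $\cS|_{\widetilde{E}}$ is everywhere coamenable in $\cR|_{\widetilde{E}}$. The pushforward-of-measures device, which makes the non-injective $\bar{\gamma}_x$ harmless, is the crux.
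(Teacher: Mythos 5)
Your proposal is correct, and it takes a genuinely different route from the paper's, which runs \emph{both} parts through the pointwise cospectral radius: for (i) the paper uses Proposition \ref{prop: co spectral radius descent} together with (\ref{eqn: co spectral radius is norm restricted to subset background}) to get $\|\rho^{\cS}_{\nu}1_{E}\|_{\infty}=1$, and for (ii) Corollary \ref{cor: equality of co-spectral radii} to get $\rho^{\cS}_{\nu}=1$ almost everywhere on $\cS E$; in both cases it then invokes the invariance of $\rho^{\cS}_{\nu}$ under $P^{(1)}N_{\cR}(\cS)$ from \cite[Proposition 3.15]{AFH} to spread the conclusion to the saturation, and converts back to coamenability via Theorems \ref{thm: big TFAE thm} and \ref{thm: everywhere rel man TFAE thm}. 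You avoid the cospectral radius, and indeed any choice of $\nu$, entirely. Your reduction of (i) to Proposition \ref{prop: passing rel amen between subsets} (\ref{item: rel amen ascent via saturation}) via the observation that $\widetilde{E}\subseteq \cR E$ modulo null sets is valid ($\cR E$ is invariant under all of $[[\cR]]$, hence under $P^{(1)}N_{\cR}(\cS)$, and intersecting with $\widetilde{E}$ plus minimality of measure gives the containment), and it in fact yields a \emph{stronger} statement: ascent to any intermediate set inside the full $\cR$-saturation, not just the partial-normalizer saturation. For (ii), your two new ingredients, namely the mod-null decomposition $\widetilde{E}=\bigcup_{i}\gamma_{i}(E\cap\dom(\gamma_{i}))$ with $\gamma_{i}\in P^{(1)}N_{\cR}(\cS)$ (using the semigroup property and a measure-exhaustion argument) and the transport lemma pushing forward the fiberwise Reiter sequences of Theorem \ref{thm: big TFAE thm} (\ref{item: Rieter sequence fiberwise}), replace the citation of \cite{AFH}. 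The $L^{1}$ pushforward is exactly the right device: the induced fiber maps $[x]_{\cR|_{A}}/\cS|_{A}\to[\gamma(x)]_{\cR|_{B}}/\cS|_{B}$ are surjective but typically non-injective, so transporting $L^{2}$ unit vectors would fail, whereas pushforward of probability vectors preserves mass, is a fiberwise $L^{1}$-contraction, and depends only on the $\cR$-class (since the descents of $\gamma$ at $\cR$-equivalent points coincide), which gives the almost-everywhere Reiter condition on $B$; restricting $\gamma$ to $\gamma^{-1}(H)$ then upgrades to the everywhere statement, and the piecing-together proposition finishes. What each route buys: the paper's proof is a few lines given its machinery but outsources the key invariance to \cite{AFH}, while yours is self-contained within this paper and isolates conceptually why the one-sidedness of the partial normalizer is harmless. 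Your glossed points (measurability of the pushed-forward vectors, which one checks with Proposition \ref{prop:measurability exercise} and the choice functions of Corollary \ref{cor:choice functions}, and the mod-null reading of ``invariant'' in the definition of the saturation, which the paper's own argument also requires) are at the same level of detail as the paper's treatment.
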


\begin{proof}
(\ref{item: propogating relative amenability across partial normalizers}):
Since $\mu(F\setminus\widetilde{E})=0$ we have that the saturation of $E$ by $P^{(1)}N_{\cR|_{F}}(\cS|_{F})$ is $F$, up to sets of measure zero. So we may, and will, assume that $F=X$. Let $\nu\in \Prob([\cR])$ be symmetric, countably supported, and generate $\cR$. By Proposition \ref{prop: co spectral radius descent} with $\cS_{2}=\cR$ and $\cS_{1}=\cS$, and (\ref{eqn: co spectral radius is norm restricted to subset background}) we have that $\|\rho^{\cS}_{\nu}1_{E}\|_{\infty}=1$. From \cite[Proposition 3.15]{AFH} we know that $\rho^{\cS}_{\nu}$ is invariant under $P^{(1)}N_{\cR}(\cS)$, and thus $\|\rho^{\cS}_{\nu}\|_{\infty}=1$ since the saturation of $E$ by $P^{(1)}N_{\cR}(\cS)$ is $X$. Theorem \ref{thm: big TFAE thm} thus implies that $\cS$ is coamenable in $\cR$. 

(\ref{item: propogating everywhere relative amenability across partial normalizers}):  We may, and will, assume that $\widetilde{E}=X$. Let $\nu\in \Prob([\cR])$ be symmetric, countably supported, and generate $\cR$. By Proposition \ref{prop: passing rel amen between subsets}, we have that $\cS|_{\cS E}$ is everywhere coamenable in $\cR|_{\cS E}$. 
By Corollary \ref{cor: equality of co-spectral radii} with $\cS_{2}=\cR$, $\cS_{1}=\cS$ we have that $\rho^{\cS}_{\nu}=1$ almost everywhere on $\cS E$. From \cite[Proposition 3.15]{AFH} we know that $\rho^{\cS}_{\nu}$ is invariant under $P^{(1)}N_{\cR}(\cS)$, and thus $\rho^{\cS}_{\nu}=1$ almost everywhere, since the  saturation of $E$ by $P^{(1)}N_{\cR}(\cS)$ is $X$. So the result follows Theorem \ref{thm: everywhere rel man TFAE thm}.

\end{proof}

\color{black}

\section{Ergodic decomposition reduction}\label{sec: ergodic decomp}

We close the paper by showing that the general situation of coamenability and everywhere coamenability can be reduced to the case where $\cR$ is ergodic. This can once again be deduced from Theorem \ref{thm: two different version of rel amen are the same} and general von Neumann algebra considerations via direct integral theory, but we will give a direct proof here from our previously established equivalences.

For this, we recall the ergodic decomposition theorem (see \cite[Theorem 4.2 and 4.4]{VErgDe} for a proof).

\begin{thm}[Ergodic decomposition theorem]
Let $\cR$ be a discrete, probability measure-preserving equivalence relation on a standard probability space $(X,\mu)$. Then there is a standard probability space $(Z,\zeta)$ a Borel, measure-preserving map $\pi\colon X\to Z$, and measures $\mu_{z}\in \Prob(\pi^{-1}(\{z\}))$ such that:
\begin{enumerate}[(i)]
    \item $\pi^{-1}(\{z\})$ is $\cR$-invariant for every $z\in Z$, \label{item: restrict the relation}
    \item $\cR_{z}=\cR\cap (\pi^{-1}(\{z\})\times \pi^{-1}(\{z\}))$ preserves the measure $\mu_{z}$, and $\cR_{z}$ is an ergodic equivalence relation over $(\pi^{-1}(\{z\}),\mu_{z})$, \label{item: ergodic fibers}
    \item for every $f\colon X\to \C$ bounded and Borel we have that $z\mapsto \int f\,d\mu_{z}$ is Borel, and
    \[\int_{X}f\,d\mu=\int_{Z}\left(\int f\,d\mu_{z}\right)\,d\zeta(z).\]
    \label{item: Fubinish}
\end{enumerate}
\end{thm}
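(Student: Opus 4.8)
The plan is to identify the ergodic decomposition with the disintegration of $\mu$ over the $\sigma$-algebra of $\cR$-invariant sets, and then to check that the resulting fiber measures are ergodic. First I would use \cite[Theorem 1]{FelMooreI} to fix a countable $\Gamma\leq[\cR]$ with $\cR=\cR_{\Gamma,X}$ modulo null sets, so that a measurable set is $\cR$-invariant if and only if it is $\Gamma$-invariant. Let $\cI\subseteq\cB(X)$ be the sub-$\sigma$-algebra of $\Gamma$-invariant measurable sets. The abelian von Neumann algebra $L^{\infty}(X,\cI,\mu)$ sits inside $L^{\infty}(X,\mu)$ and hence has separable predual, so it is isomorphic to $L^{\infty}(Z,\zeta)$ for some standard probability space $(Z,\zeta)$, and the composite inclusion $L^{\infty}(Z,\zeta)\cong L^{\infty}(X,\cI,\mu)\hookrightarrow L^{\infty}(X,\mu)$ is implemented by a measure-preserving Borel map $\pi\colon X\to Z$ with $\cI=\pi^{-1}(\cB(Z))$ modulo null sets. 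Applying the disintegration theorem for standard probability spaces to $\pi$ produces Borel probability measures $(\mu_{z})_{z\in Z}$ with $\mu_{z}(\pi^{-1}(\{z\}))=1$ and $\int_{X}f\,d\mu=\int_{Z}\left(\int f\,d\mu_{z}\right)d\zeta(z)$ for bounded Borel $f$, with $z\mapsto\int f\,d\mu_{z}$ Borel; this is exactly (iii).

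Next I would upgrade the almost-everywhere statements to the genuine pointwise ones in (i) and (ii). Since $\pi$ is $\cI$-measurable, each fiber $\pi^{-1}(\{z\})$ is, up to a $\Gamma$-invariant $\mu$-null set, $\cR$-invariant; modifying $\pi$ on such a $\Gamma$-invariant null set (i.e.\ passing to a conull $\Gamma$-invariant Borel set on which $\pi\circ g=\pi$ for every $g\in\Gamma$) yields (i). For the measure-preservation in (ii), fix $g\in\Gamma$: because $g$ preserves $\mu$ and $\pi\circ g=\pi$, the pushforward disintegrates as $z\mapsto g_{*}\mu_{z}$, so uniqueness of disintegration forces $g_{*}\mu_{z}=\mu_{z}$ for $\zeta$-a.e.\ $z$. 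Intersecting over the countably many $g\in\Gamma$ gives a conull set of $z$ for which every element of $\Gamma$, hence $\cR_{z}$, preserves $\mu_{z}$.

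The main obstacle is the ergodicity clause in (ii), which quantifies over all fiberwise invariant sets simultaneously. The mechanism available is that the conditional expectation $E[\,\cdot\mid\cI]$ is the orthogonal projection of $L^{2}(X,\mu)$ onto the $\Gamma$-invariant vectors (a function is $\Gamma$-invariant in $L^{2}$ if and only if it is $\cI$-measurable, regardless of amenability of $\Gamma$) and satisfies $E[f\mid\cI](x)=\int f\,d\mu_{\pi(x)}$ almost everywhere. Hence for each fixed $\Gamma$-invariant $f$, the function $f$ is $\mu_{z}$-a.e.\ constant for a.e.\ $z$. To pass from ``for each invariant $f$, a.e.\ fiber'' to ``a.e.\ fiber, all invariant sets'' I would fix a countable $\|\cdot\|_{1}$-dense family $\{f_{n}\}\subseteq L^{\infty}(X,\mu)$ and invoke the standard criterion that a $\Gamma$-invariant probability measure $\nu$ is ergodic exactly when $E_{\nu}[f_{n}\mid\cI_{\nu}]=\int f_{n}\,d\nu$ for all $n$. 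The contrapositive is the cleanest route: a non-null set of non-ergodic fibers, together with a measurable selection of intermediate-measure invariant sets, would assemble a $\Gamma$-invariant measurable set whose $\mu_{z}$-measure is non-constant in $z$, contradicting $\cI=\pi^{-1}(\cB(Z))$.

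Discarding the resulting $\zeta$-null set of bad parameters then completes the proof. I expect the conceptual content to be entirely contained in the disintegration-over-$\cI$ picture, with the remaining difficulties being of the measurable-selection and Borel-regularization type needed to make (i) and the ergodicity in (ii) hold literally for (almost) every $z$ rather than merely $\mu$-almost everywhere on $X$. It is precisely these technical points that make the statement one that is ordinarily quoted, as the author does here, from \cite{VErgDe}.
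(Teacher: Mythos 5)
The paper does not actually prove this statement: it is imported wholesale from Varadarajan, with the reader directed to \cite[Theorems 4.2 and 4.4]{VErgDe}, so there is no internal argument to compare yours against. On its own merits, your sketch is the standard disintegration proof and its skeleton is sound: the exact version of \cite[Theorem 1]{FelMooreI} (which gives $\cR=\cR_{\Gamma,X}$ literally, not merely modulo null sets --- you should use the exact version, since mod-null generation cannot yield invariance of \emph{every} fiber under the honest Borel relation $\cR$), point realization of $L^{\infty}(X,\cI,\mu)$ as $L^{\infty}(Z,\zeta)$, disintegration over $\pi$ for (iii), uniqueness of disintegration for $g_{*}\mu_{z}=\mu_{z}$ a.e., and the contrapositive-plus-selection route to fiberwise ergodicity. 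That last mechanism is genuinely the crux, and it is of exactly the same measurable-selection type the paper itself builds in Section \ref{sec: ergodic decomp} for a different purpose: coding sets by sequences $\sigma\in \N^{\N}$ with $\mu_{z}$-limits $B_{\sigma,z}$, verifying Borelness of the relevant conditions via item (\ref{item: Fubinish}), applying Jankov--von Neumann, and gluing the selected fiberwise sets into one measurable set via Proposition \ref{prop: technical bs}; an a.e.-invariant set is mod-null equal to an $\cI$-measurable one, whose $\mu_{z}$-measure is $\{0,1\}$ for a.e.\ $z$, giving the contradiction.

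Two points in your write-up are too glib, though both are repairable. First, a family $\{f_{n}\}$ that is merely $\|\cdot\|_{1}$-dense with respect to $\mu$ leaves an $f$-dependent null set of $z$ and reintroduces the quantifier problem you set out to avoid; you must choose $\{f_{n}\}$ uniformly, e.g.\ rational simple functions over a countable algebra generating the Borel $\sigma$-algebra, so that the family is dense in $L^{1}(\mu_{z})$ for \emph{every} Borel probability $\mu_{z}$ simultaneously. Second, ``discarding the resulting $\zeta$-null set of bad parameters'' is not available as stated: $\pi$ must remain defined on all of $X$, and every $z\in Z$ must have a nonempty fiber carrying an invariant ergodic $\mu_{z}\in\Prob(\pi^{-1}(\{z\}))$, so one cannot simply delete bad fibers. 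The repair is to observe that the bad locus $N\subseteq X$ is a genuinely $\cR$-invariant Borel $\mu$-null set (using the exact Feldman--Moore generation), pick one good parameter $z_{1}$, and redefine $\pi\equiv z_{1}$ on $N$: the enlarged fiber $\pi^{-1}(\{z_{1}\})\cup N$ is still $\cR$-invariant, $\mu_{z_{1}}$ is still an invariant probability measure on it, and ergodicity survives because any invariant subset of the enlarged fiber intersects the old fiber in an invariant set of $\mu_{z_{1}}$-measure $0$ or $1$; finally replace $Z$ by the Borel conull set $\{z:\mu_{z}(\pi^{-1}(\{z\}))=1\}$ (augmented by $z_{1}$) to guarantee nonempty fibers. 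With these patches your outline is a complete proof of the theorem as stated, and it is essentially Varadarajan's argument rather than anything the paper supplies.
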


For technical reasons, it will be helpful to define a measurable map on $X$ by defining it fiberwise. The following Proposition allows us to guarantee that the resulting map remains measurable.

\begin{prop}\label{prop: technical bs}
Let $\cR$ be a discrete, probability measure-preserving equivalence relation on a standard probability space $(X,\mu)$ and let $(Z,\zeta,(\mu_{z})_{z\in Z},\pi)$ be the ergodic decomposition as stated above. Suppose that $z\mapsto \xi^{z}\in L^{2}(\pi^{-1}(\{z\}),\mu_{z})$ is such that for every Borel $B\subseteq X$ we have that $z\mapsto \int_{B}\xi^{z}\,d\mu_{z}$ is Borel. Then:
\begin{enumerate}[(i)]
    \item the map $z\mapsto \|\xi^{z}\|_{L^{2}(\mu_{z})}$ is Borel, \label{item: norm measurability}
    \item there is a $\xi\in L^{2}(X,\mu)$ so that $\zeta$-almost every $z\in Z$ we have that $\xi|_{\pi^{-1}(\{z\})}=\xi^{z}$ almost everywhere with respect to $\mu_{z}$.  \label{item: collecting together to a single function}
\end{enumerate}
\end{prop}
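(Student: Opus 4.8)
The plan is to recognise the disintegration furnished by the ergodic decomposition as the direct integral $L^2(X,\mu)\cong\int_Z^{\oplus}L^2(\pi^{-1}(\{z\}),\mu_z)\,d\zeta(z)$, under which $f\mapsto(f|_{\pi^{-1}(\{z\})})_z$ and, applying property (\ref{item: Fubinish}) to $|f|^2$, one has $\|f\|_{L^2(\mu)}^2=\int_Z\|f|_{\pi^{-1}(\{z\})}\|_{L^2(\mu_z)}^2\,d\zeta(z)$. First I would fix a countable algebra $(B_n)_n$ of Borel subsets of $X$ generating the Borel $\sigma$-algebra; then finite linear combinations of the restrictions $1_{B_n}|_{\pi^{-1}(\{z\})}$ are dense in each $L^2(\mu_z)$ and form a fundamental sequence for the field $(L^2(\mu_z))_z$, their inner products $z\mapsto\mu_z(B_n\cap B_m)$ being Borel by property (\ref{item: Fubinish}). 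A routine simple-function approximation upgrades the hypothesis to the assertion that $z\mapsto\int_X g\,\xi^z\,d\mu_z$ is Borel for every bounded Borel $g\colon X\to\C$: for simple $g$ this is linearity, and for general bounded Borel $g$ one takes simple $s_k\to g$ uniformly and uses $|\int(g-s_k)\xi^z\,d\mu_z|\le\|g-s_k\|_\infty\|\xi^z\|_{L^2(\mu_z)}$ to pass to a pointwise-in-$z$ limit of Borel functions. In particular $z\mapsto\langle\xi^z,1_{B_j}|_{\pi^{-1}(\{z\})}\rangle=\int_{B_j}\xi^z\,d\mu_z$ is Borel for every $j$.

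Next I would run a measurable Gram--Schmidt procedure on the fundamental sequence. Partitioning $Z$ into Borel pieces according to which initial segments of $(1_{B_n}|_{\pi^{-1}(\{z\})})_n$ are linearly independent in $L^2(\mu_z)$ and solving the resulting linear systems by Cramer's rule, whose coefficients are the Borel functions $z\mapsto\mu_z(B_n\cap B_m)$, produces Borel functions $\gamma_{kj}\colon Z\to\C$ (finitely many $j$ for each $k$) so that $e_k^z:=\sum_j\gamma_{kj}(z)\,1_{B_j}|_{\pi^{-1}(\{z\})}$ is an orthonormal basis of $L^2(\mu_z)$ for a.e.\ $z$, finite or countable according to the fiber. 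Then $a_k(z):=\langle\xi^z,e_k^z\rangle=\sum_j\overline{\gamma_{kj}(z)}\langle\xi^z,1_{B_j}|_{\pi^{-1}(\{z\})}\rangle$ is Borel in $z$ for each $k$, whence $z\mapsto\|\xi^z\|_{L^2(\mu_z)}^2=\sum_k|a_k(z)|^2$ is Borel as a countable sum of Borel functions. This proves (\ref{item: norm measurability}).

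For (\ref{item: collecting together to a single function}) I would assemble $\xi$ from the fiberwise orthogonal projections onto the first $m$ basis vectors. Since each $e_k^z$ is a finite Borel combination of the $1_{B_j}$, the function $h_m(x):=\sum_{k\le m}a_k(\pi(x))\,e_k^{\pi(x)}(x)$ is Borel on $X$, and $h_m|_{\pi^{-1}(\{z\})}$ is exactly the projection of $\xi^z$ onto $\Span\{e_1^z,\dots,e_m^z\}$, so $\|h_m|_{\pi^{-1}(\{z\})}-\xi^z\|_{L^2(\mu_z)}^2=\sum_{k>m}|a_k(z)|^2\to 0$ with $h_m$-norms dominated by $\|\xi^z\|_{L^2(\mu_z)}^2$. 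Assuming $\int_Z\|\xi^z\|_{L^2(\mu_z)}^2\,d\zeta(z)<\infty$ — which is legitimate by (\ref{item: norm measurability}) and holds in all intended applications, where the fibers are unit vectors — dominated convergence gives $\|h_m-h_{m'}\|_{L^2(\mu)}^2=\int_Z\sum_{m'<k\le m}|a_k(z)|^2\,d\zeta(z)\to 0$, so $(h_m)$ is Cauchy in $L^2(X,\mu)$; let $\xi$ be its limit. From $\int_Z\|h_m|_{\pi^{-1}(\{z\})}-\xi^z\|^2\,d\zeta\to 0$ and $\|\xi-h_m\|_{L^2(\mu)}\to 0$ a triangle-inequality estimate forces $\int_Z\|\xi|_{\pi^{-1}(\{z\})}-\xi^z\|_{L^2(\mu_z)}^2\,d\zeta(z)=0$, i.e.\ $\xi|_{\pi^{-1}(\{z\})}=\xi^z$ in $L^2(\mu_z)$ for $\zeta$-a.e.\ $z$, as required (this last step may alternatively be phrased via a subsequence argument as in Proposition \ref{prop: folklore subsequences direct integrals}).

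The main obstacle I anticipate is the measurable Gram--Schmidt construction of the orthonormal field $(e_k^z)_k$: one must deal Borel-measurably with the varying dimension of the fibers $L^2(\mu_z)$ (which may be finite on atomic fibers and countably infinite elsewhere) and with the selection of coefficients when initial segments of the fundamental sequence become linearly dependent. This bookkeeping is standard in the theory of measurable fields of Hilbert spaces, and everything downstream of it — the identity $\|\xi^z\|^2=\sum_k|a_k(z)|^2$, the Borelness of the $h_m$, and the $L^2(\mu)$-Cauchy estimate — is routine.
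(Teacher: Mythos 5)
Your proposal is correct, but it takes a genuinely different route from the paper's. For part (\ref{item: norm measurability}) the paper avoids measurable Gram--Schmidt entirely: using the same countable generating algebra $(B_{n})_{n}$ and the rational combinations $f_{a}=\sum_{n}a(n)1_{B_{n}}$, $a\in c_{c}(\N,\Q[i])$, it expresses the fiber norm by duality as $\|\xi^{z}\|_{L^{2}(\mu_{z})}=\sup_{a}1_{(0,\infty)}(\|f_{a}\|_{L^{2}(\mu_{z})})\,|\ip{\xi^{z},f_{a}}|/\|f_{a}\|_{L^{2}(\mu_{z})}$, a countable supremum of Borel functions (Borel by the hypothesis and by property (\ref{item: Fubinish}) applied to $|f_{a}|^{2}$). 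This two-line argument sidesteps exactly the bookkeeping you flag as your main obstacle --- varying fiber dimension and linearly dependent initial segments --- though that bookkeeping is indeed standard and your fiberwise Parseval identity $\|\xi^{z}\|^{2}=\sum_{k}|a_{k}(z)|^{2}$ is sound. For part (\ref{item: collecting together to a single function}) the paper is again global rather than fiberwise: after the same upgrade of the hypothesis to bounded Borel integrands, it defines $\phi(f)=\int_{Z}\ip{f,\xi^{z}}_{L^{2}(\mu_{z})}\,d\zeta(z)$, bounds $|\phi(f)|\leq\|f\|_{L^{2}(\mu)}\bigl(\int\|\xi^{z}\|^{2}\,d\zeta\bigr)^{1/2}$ by Cauchy--Schwarz, obtains $\xi$ from the Riesz representation theorem on $L^{2}(X,\mu)$, and identifies $\xi|_{\pi^{-1}(\{z\})}$ with $\xi^{z}$ almost everywhere by testing against the countable family $f_{a}$ and invoking fiberwise density. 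Your route instead exhibits $\xi$ explicitly as the $L^{2}(\mu)$-limit of the concrete Borel truncations $h_{m}$, which is more constructive but pays with the measurable-field machinery; your closing triangle-inequality (or subsequence) step is the same mechanism the paper would use.

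One remark on the integrability assumption you make explicit: $\int_{Z}\|\xi^{z}\|_{L^{2}(\mu_{z})}^{2}\,d\zeta(z)<\infty$ is not in the statement, but it is in fact necessary for the conclusion, since by property (\ref{item: Fubinish}) any $\xi\in L^{2}(X,\mu)$ satisfies $\|\xi\|_{L^{2}(\mu)}^{2}=\int_{Z}\|\xi|_{\pi^{-1}(\{z\})}\|_{L^{2}(\mu_{z})}^{2}\,d\zeta(z)$. The paper handles this point only by partitioning $Z$ into the shells $\{z:n-1\leq\|\xi^{z}\|<n\}$ and reducing to the case where $z\mapsto\|\xi^{z}\|$ is square-integrable; gluing the pieces back produces a measurable function with the correct fibers which lies in $L^{2}(X,\mu)$ precisely when the integral above is finite --- as it is in the paper's application, where the fiber vectors have norm at most $1$. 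So your explicit flag is the honest reading of the proposition, not a gap.
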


\begin{proof}

(\ref{item: norm measurability}): Let $(B_{n})_{n=1}^{\infty}$ be a sequence of Borel subsets so that $\{B_{n}:n\in \N\}$ is an algebra of sets which generates the Borel $\sigma$-algebra. Then for every $\nu\in \Prob(X)$ we have that $\Span\{1_{B_{n}}:n\in \N\}$ is dense in $L^{2}(X,\mu)$. For $a\in c_{c}(\N,\Q[i])$ we set $f_{a}=\sum_{n}a(n)1_{B_{n}}$. Item (\ref{item: Fubinish}) in the ergodic decomposition implies that for every $a\in c_{c}(\N,\Q[i])$ we have that $z\mapsto \|f_{a}\|_{L^{2}(\mu_{z})}$ is Borel.
Then:
\[\|\xi^{z}\|=\sup_{a\in c_{c}(N,\Q[i])}1_{(0,\infty)}(\|f_{a}\|_{L^{2}(\mu_{z})})\frac{|\ip{\xi^{z},f_{a}}|}{\|f_{a}\|_{L^{2}(\mu_{z})}}.\]
By assumption, this is a supremum of countably many Borel functions, hence Borel.

(\ref{item: collecting together to a single function}): 
Approximating bounded, Borel functions by simple functions, our hypothesis implies that for every bounded, Borel function $f\colon X\to \C$ we have that $f\mapsto \ip{f,\xi^{z}}_{L^{2}(\mu_{z})}$ is Borel. Partitioning $Z$ into $\{z:n-1\leq \|\xi^{z}\|<n\}$ for $n\in \N$, we may assume that $z\mapsto \|\xi^{z}\|_{L^{2}(\mu_{z})}$ is $L^{2}$.  For $f$ bounded Borel define 
\[\phi(f)=\int_{Z}\ip{f,\xi^{z}}_{L^{2}(\mu_{z})}d\zeta(z).\]
Then
\[|\phi(f)|\leq \|f\|_{L^{2}(\mu)}\left(\int \|\xi^{z}\|_{L^{2}(\mu_{z})}^{2}\right)^{1/2}.\]
Since bounded Borel functions are dense in $L^{2}$, the above estimate implies that $\phi$ extends uniquely to a bounded linear functional $\phi\colon L^{2}(X,\mu)\to \C$. Thus there is a unique $\xi\in L^{2}(X,\mu)$ with $\phi(f)=\ip{f,\xi}_{L^{2}(\mu)}$. Letting $f_{a}$ be as in (\ref{item: norm measurability}), we see that for every $B\subseteq Z$ Borel we have 
\[\int_{B}\ip{f_{a},\xi}_{L^{2}(\mu_{z})}\,d\zeta(z)=\phi(1_{\pi^{-1}(B)}f_{a})=\int_{B}\ip{f_{a},\xi^{z}}_{L^{2}(\mu_{z})}\,d\zeta(z).\]
By countability of $c_{c}(\N,\Q[i])$ we thus see that for $\zeta$-almost every $z\in Z$ we have that $\ip{f_{a},\xi}_{L^{2}(\mu_{z})}=\ip{f_{a},\xi^{z}}_{L^{2}(\mu_{z})}$ for all $a\in c_{c}(\N,\Q[i])$. Since $\{f_{a}:a\in c_{c}(\N,\Q[i])\}$ is dense in $L^{2}(\mu_{z})$ for all $z\in Z$, we conclude that for $\zeta$-almost every $z\in Z$ we have that $\xi|_{\pi^{-1}(\{z\})}=\xi^{z}$ almost everywhere with respect to $\mu_{z}$.  
\end{proof}

As is typical in arguments involving the ergodic decomposition, we will ultimately rely on a measurable selection theorem. Recall that if $X$ is a standard Borel space, then $A\subseteq X$ is \emph{analytic} if it is the image of a standard Borel space under a Borel map. Such sets are measurable with respect to every Borel probability measure on $X$ (see \cite[Theorem 21.10]{KechrisClassic}).

\begin{thm}[Jankov-von Neumann Theorem, see e.g. Theorems 18.1, 21.10 in \cite{KechrisClassic}] \label{thm: measurable sections}
Let $X,Y$ be standard Borel spaces and $P\subseteq X\times Y$ be Borel. Let $\pi_{X}(P)=\{x\in X: \textnormal{ there exists $y\in Y$ with } (x,y)\in P\}$. Then there is a  $s\colon \pi_{X}(P)\to Y$ with $(x,s(x))\in P$ for every $x\in \pi_{X}(P)$ and such that $s$ is $\mu$-measurable for every Borel probability measure $\mu$ on $X$.  
\end{thm}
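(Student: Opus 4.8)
The plan is to recognize this as the Jankov--von Neumann uniformization theorem and to produce a single universally measurable selector by a leftmost-branch construction, the descriptive-set-theoretic content being exactly the universal measurability of analytic sets recalled just above the statement. First I would reduce to the case $Y=\N^{\N}$. Since $Y$ is standard Borel, fix a Borel isomorphism of $Y$ onto a Borel subset $Y_{0}\subseteq \N^{\N}$; replacing $P$ by its image, we may regard $P$ as a Borel subset of $X\times \N^{\N}$. Any selector valued in $\N^{\N}$ that always lands in a (nonempty) section of $P$ automatically takes values in $Y_{0}$, so composing with the inverse isomorphism recovers a selector into $Y$ of the same measurability class.

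Next I would reduce to uniformizing a \emph{closed} set. As $P$ is Borel it is in particular analytic, so there is a closed $C\subseteq X\times \N^{\N}\times \N^{\N}$ with $(x,y)\in P\iff \exists z\,(x,y,z)\in C$. Using a homeomorphism $\N^{\N}\cong \N^{\N}\times \N^{\N}$, $w\mapsto ((w)_{0},(w)_{1})$, set $D=\{(x,w):(x,(w)_{0},(w)_{1})\in C\}$; then $D$ is closed, $\pi_{X}(D)=\pi_{X}(P)$, and a selector $s'$ for $D$ yields the selector $s(x)=(s'(x))_{0}$ for $P$, with no loss of measurability since $w\mapsto (w)_{0}$ is continuous. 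Thus it suffices to uniformize the closed set $D$.

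The core step is the leftmost-branch selection for $D$. For a finite sequence $t\in \N^{<\N}$ let $N_{t}$ be the basic clopen set of extensions of $t$ and put $A_{t}=\pi_{X}\big(D\cap (X\times N_{t})\big)$. Since $D\cap (X\times N_{t})$ is closed, hence Borel, each $A_{t}$ is analytic, and therefore $\mu$-measurable for every Borel probability measure $\mu$ on $X$ by the universal measurability of analytic sets (\cite[Theorem 21.10]{KechrisClassic}, as recalled above). For $x\in\pi_{X}(D)$ define $s'(x)\in\N^{\N}$ recursively: having fixed $s'(x)|_{n}=t$, let $s'(x)(n)$ be the least $k$ with $x\in A_{t^{\frown}k}$. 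Because $D_{x}=\{y:(x,y)\in D\}$ is a nonempty closed subset of $\N^{\N}$, the tree $\{t:x\in A_{t}\}$ is pruned, so the recursion never stalls and its unique branch lies in $D_{x}$, giving $(x,s'(x))\in D$.

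Finally I would verify measurability. For each $t$ the fiber $\{x:s'(x)\supseteq t\}$ is obtained from the sets $A_{t'}$ by finitely many unions, intersections and complements (at each coordinate $m<|t|$ one imposes that $t(m)$ be the least admissible value, a condition expressed through $A_{t|_{m+1}}$ and the $A_{(t|_{m})^{\frown}j}$ for $j<t(m)$), hence lies in the $\sigma$-algebra generated by the analytic sets and is $\mu$-measurable for every $\mu$. Since the sets $N_{t}$ generate the Borel structure of $\N^{\N}$ and the universally measurable sets form a $\sigma$-algebra, $s'$ is $\mu$-measurable for every Borel probability measure $\mu$, and the two reductions transport this to the desired $s$. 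The hard part will not be the combinatorics of the recursion but pinning down measurability: the entire argument hinges on the fact that the projections $A_{t}$ of Borel sets are analytic and that analytic sets are universally measurable, which is precisely the descriptive-set-theoretic input invoked in the statement.
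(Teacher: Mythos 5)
Your proof is correct and is precisely the standard Jankov--von Neumann argument that the paper invokes by citation: the paper gives no proof of its own, simply pointing to Theorems 18.1 and 21.10 of \cite{KechrisClassic}, and your route (reduce to $Y=\N^{\N}$, pass to a closed $D\subseteq X\times\N^{\N}$ via the analytic representation of $P$, select the leftmost branch, and deduce measurability from the analyticity of the sets $A_{t}$ together with universal measurability of analytic sets) is exactly the proof found there. All the delicate points --- prunedness of the fiber trees, closedness of $D_{x}$ forcing the constructed branch into $D_{x}$, and the finite Boolean expression for $\{x:s'(x)\supseteq t\}$ --- are handled correctly, so there is nothing to add.
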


Note that $\pi_{X}(P)$ in the above is analytic, hence measurable with respect to every probability measure on $X$, thus it makes sense to ask that $s$ in the theorem statement is measurable.

For a standard Borel space $X$ and a $\nu\in \Prob(X)$, we let $\Omega(X,\nu)$ be all $\nu$-measurable sets modulo null sets with the distance $d(E,F)=\nu(E\Delta F)$. Being isometric with a closed subset of $L^{1}(X,\nu)$ we know that $\Omega(X,\nu)$ is complete.

\begin{thm}
Let $\cS\leq \cR$ be an inclusion of probability measure-preserving equivalence relations over a standard probability space $(X,\mu)$. Let $\pi\colon (X,\mu)\to (Z,\zeta)$ be the space of ergodic components, and $\mu_{z}\in \Prob(\pi^{-1}(\{z\}))$ be the ergodic decomposition of $\mu$. Then:
\begin{enumerate}[(i)]
\item $\cS$ is coamenable in $\cR$ if and only if for almost every $z\in Z$ we have that $\cS_{z}:=\cS\cap (\pi^{-1}(\{z\})\times \pi^{-1}(\{z\}))$ is coamenable in $\cR_{z}=\cR\cap (\pi^{-1}(\{z\})\times \pi^{-1}(\{z\}))$. \label{item: ergodic deocmposition rel amen}
\item $\cS$ is everywhere coamenable in $\cR$ if and only if for every almost every $z\in Z$ we have that $\cS_{z}$ is everywhere coamenable in $\cR_{z}$.  \label{item; ergodic decomposition everywhere rel amen}
\end{enumerate}
\end{thm}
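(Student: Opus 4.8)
The plan is to prove both equivalences by disintegrating $L^{2}(\cR/\cS)$ over the ergodic component space and combining the operator-theoretic characterizations of (everywhere) coamenability with measurable selection. Throughout I identify the $\cR$-invariant sets with the preimages $\pi^{-1}(W)$, $W\subseteq Z$ (using that an $\cR$-invariant set meets each ergodic fiber in a null or conull set), and I use the disintegration $L^{2}(\cR|_{\pi^{-1}(W)}/\cS|_{\pi^{-1}(W)})=\int_{W}^{\bigoplus}L^{2}(\cR_{z}/\cS_{z})\,d\zeta(z)$. This is compatible with the ergodic decomposition because each $\pi^{-1}(\{z\})$ is $\cR$-invariant: every $\gamma\in[\cR]$ preserves it and acts as the direct integral of its fiber restrictions $\gamma|_{\pi^{-1}(\{z\})}\in[\cR_{z}]$, and $\mu=\int\mu_{z}\,d\zeta$, $\mu_{\cR}=\int\mu_{\cR_{z}}\,d\zeta$.

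I would first dispatch the two easy directions. For the forward implication of (\ref{item: ergodic deocmposition rel amen}), assuming $\cS$ coamenable in $\cR$, I take $\xi^{(n)}$ as in Theorem \ref{thm: big TFAE thm}(\ref{item:almost invariant fiberwise norm 1 nonergodic part}), disintegrate into fields $(\xi^{(n),z})_{z}$, and observe that the fiberwise-norm-one condition and the convergence $\|\xi^{(n)}_{x}-\xi^{(n)}_{y}\|_{2}\to 0$ on $\cR$ pass, through $\mu=\int\mu_{z}\,d\zeta$ and $\mu_{\cR}=\int\mu_{\cR_{z}}\,d\zeta$, to the analogous statements on $\cR_{z}/\cS_{z}$ for a.e.\ $z$ (all $n$ handled by one null set); Theorem \ref{thm: big TFAE thm} then gives $\cS_{z}$ coamenable in $\cR_{z}$ for a.e.\ $z$. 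For the backward implication of (\ref{item; ergodic decomposition everywhere rel amen}), given a countably supported symmetric generating $\nu$, the $\nu$-random walk on $[x]_{\cR}$ for $x\in\pi^{-1}(\{z\})$ is exactly the walk on $[x]_{\cR_{z}}$ driven by the fiber restriction $\nu_{z}$, so $\rho^{\cS}_{\nu}(x)=\rho^{\cS_{z}}_{\nu_{z}}(x)$ and $\nu_{z}$ generates $\cR_{z}$ for a.e.\ $z$; everywhere coamenability of a.e.\ fiber forces $\rho^{\cS_{z}}_{\nu_{z}}=1$ $\mu_{z}$-a.e., hence $\rho^{\cS}_{\nu}=1$ $\mu$-a.e.\ by Fubini, and Theorem \ref{thm: everywhere rel man TFAE thm}(\ref{item: everywhere rel amen via co-spectra radisu }) concludes.

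The backward implication of (\ref{item: ergodic deocmposition rel amen}) is the first place needing selection, and I would reduce it to verifying Theorem \ref{thm: big TFAE thm}(\ref{item: almost invariant vectors nonergodic part inv sets}): for each $W$ with $\zeta(W)>0$ produce almost invariant vectors for $\lambda_{\cR|_{\pi^{-1}(W)}/\cS|_{\pi^{-1}(W)}}$. Fix a countable generating $\Gamma=\{\gamma_{j}\}\leq[\cR]$. Since each $\cR_{z}$ is ergodic, coamenability of $\cS_{z}$ is equivalent to the existence of \emph{global}-norm-one almost invariant vectors for $\lambda_{\cR_{z}/\cS_{z}}$ (Theorem \ref{thm: folklore 2}), so for each $m$ the set of pairs $(z,\xi)$ with $\xi$ a unit vector of $L^{2}(\cR_{z}/\cS_{z})$ and $\max_{j\leq m}\|\lambda(\gamma_{j})\xi-\xi\|_{2}<1/m$ projects onto a conull subset of $W$. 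Parametrizing the measurable field $z\mapsto L^{2}(\cR_{z}/\cS_{z})$ by countably many sections, so that the defects $\|\lambda(\gamma_{j})\xi-\xi\|_{2}$ are Borel in the parameters, the Jankov--von Neumann theorem (Theorem \ref{thm: measurable sections}) selects a measurable field $z\mapsto\xi^{(m),z}$; gluing via the field version of Proposition \ref{prop: technical bs} and normalizing gives $\xi^{(m)}$ with $\|\xi^{(m)}\|_{2}=1$ and $\max_{j\leq m}\|\lambda(\gamma_{j})\xi^{(m)}-\xi^{(m)}\|_{2}\leq 1/m$. As $\Gamma$ generates $\cR|_{\pi^{-1}(W)}$ these are genuine almost invariant vectors. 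The point of working with global-norm-one fiber vectors (rather than fiberwise-norm-one ones) is that it keeps the selection set Borel and sidesteps the awkward fiberwise condition.

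Finally I would deduce (\ref{item; ergodic decomposition everywhere rel amen}) forward from (\ref{item: ergodic deocmposition rel amen}) by selection-and-contradiction. Suppose $B=\{z:\cS_{z}\text{ is not everywhere coamenable in }\cR_{z}\}$ has positive measure. By Proposition \ref{prop: passing rel amen between subsets} failure is witnessed by an $\cS_{z}$-invariant set, and the key observation is that for a \emph{fixed} $\cS_{z}$-invariant $F\subseteq\pi^{-1}(\{z\})$, coamenability of $\cS_{z}|_{F}$ in $\cR_{z}|_{F}$ is equivalent to $\rho_{F}(\cR_{z}/\cS_{z},\nu_{z})=1$ for the \emph{single} fixed generating $\nu$, via Proposition \ref{prop: co spectral radius descent}, Proposition \ref{prop: almost invariant vectors from restricted co-spectral raidus}, and Lemma \ref{lem: restricted co-spec radi is just restricted op} (using that $\cR_{z}|_{F}$ is ergodic, being a restriction of an ergodic relation). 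This renders ``$z\in B$'' a measurable/analytic condition expressed through one measure, so Jankov--von Neumann selects, for $z\in B$, an $\cS_{z}$-invariant positive-measure $F_{z}$ with $\rho_{F_{z}}(\cR_{z}/\cS_{z},\nu_{z})<1$. Setting $E=\{x:\pi(x)\in B,\ x\in F_{\pi(x)}\}$ yields a positive-measure $\cS$-invariant set whose relation $\cR|_{E}$ has ergodic decomposition with fibers $\cR_{z}|_{F_{z}}$, each failing coamenability; part (\ref{item: ergodic deocmposition rel amen}) then gives that $\cS|_{E}$ is not coamenable in $\cR|_{E}$, contradicting everywhere coamenability of $\cS$ in $\cR$.

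I expect the main obstacle to be the measurable-selection bookkeeping: organizing $z\mapsto L^{2}(\cR_{z}/\cS_{z})$ as a genuine measurable field compatible with the ergodic decomposition, verifying that the relevant norms, invariance defects, and restricted cospectral radii are Borel functions of the parameters, and confirming the disintegration of $\mu_{\cR/\cS}$ matches the decomposition, so that Theorem \ref{thm: measurable sections} and Proposition \ref{prop: technical bs} apply. Once this infrastructure is in place, everything else is a direct appeal to the equivalences in Theorems \ref{thm: big TFAE thm} and \ref{thm: everywhere rel man TFAE thm}.
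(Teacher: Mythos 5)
Your overall architecture (disintegrate, handle two easy directions, use Jankov--von Neumann plus Proposition \ref{prop: technical bs} to glue a bad set and invoke part (\ref{item: ergodic deocmposition rel amen})) matches the paper's, and your backward direction of (\ref{item; ergodic decomposition everywhere rel amen}) and forward direction of (\ref{item: ergodic deocmposition rel amen}) are fine. But the ``key observation'' driving your forward direction of (\ref{item; ergodic decomposition everywhere rel amen}) is false: for a fixed $\cS_{z}$-invariant $F$, coamenability of $\cS_{z}|_{F}$ in $\cR_{z}|_{F}$ is \emph{not} equivalent to $\rho_{F}(\cR_{z}/\cS_{z},\nu_{z})=1$ for a single generating $\nu$, even though $\cR_{z}|_{F}$ is ergodic. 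Only the implication ``coamenable $\Rightarrow$ $\rho_{F}=1$'' holds (Proposition \ref{prop: co spectral radius descent}); the converse is exactly what Kaimanovich's counterexample rules out (take $\cS$ trivial and $F$ the whole fiber: there is an ergodic nonamenable relation carrying a generating measure of cospectral radius $1$ a.e.), and it is why Proposition \ref{prop: spectral gap compress subreln} carries a spectral gap hypothesis, which you do not have here. Mechanically, the failure is that Proposition \ref{prop: almost invariant vectors from restricted co-spectral raidus} produces vectors of \emph{global} norm one, almost invariant only under $\ip{\supp(\nu)}$; without spectral gap you cannot upgrade them to fiberwise norm one, and without fiberwise norm one there is no dominated-convergence bootstrap (Theorem \ref{thm: DCT argument}) from a generating countable group to $[\cR_{z}]$, because the $L^{2}$-mass of global unit vectors can concentrate. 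So ``$z\in B$'' is not rendered Borel by your one-measure criterion, and the whole selection step collapses. The paper instead makes fiberwise coamenability Borel by brute force: it encodes the fiberwise-norm-one almost invariant vector condition of Theorem \ref{thm: big TFAE thm} through a countable dense family $f_{a}^{B}$, with the fiberwise condition approximated Borel-ly by $\|f_{a}^{B}\|_{2}<1+1/k$ together with $\int\|(f_{a}^{B})_{x}\|_{2}\,d\mu_{z}\geq 1-1/k$ --- precisely the ``awkward fiberwise condition'' you chose to sidestep, which turns out to be unavoidable.

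The same issue infects your backward direction of (\ref{item: ergodic deocmposition rel amen}) as written: after selecting global-norm-one fiber vectors almost invariant under the fixed generating $\Gamma$ and gluing, your assertion ``as $\Gamma$ generates $\cR|_{\pi^{-1}(W)}$ these are genuine almost invariant vectors'' is unjustified --- almost invariance of global unit vectors under one generating countable subgroup is strictly weaker than condition (\ref{item: almost invariant vectors nonergodic part inv sets}) of Theorem \ref{thm: big TFAE thm}, which requires the full group (equivalently every countably supported $\nu$, which is the content of condition (\ref{item: rel amen in terms of Markov operator})). This direction is patchable --- e.g.\ run your selection for every countable $\Gamma'\leq[\cR]$ and take a weak$^{*}$ cluster of the resulting means, or select vectors satisfying the approximate fiberwise-norm-one condition above --- but it is cleaner to note that the paper avoids selection here entirely: it fixes an arbitrary generating $\nu\in\Prob([\cR])$, restricts to fibers, uses fiber coamenability (with $\cR_{z}$ ergodic, so the cospectral radius condition holds for \emph{every} generating measure on the fiber, in particular $\nu_{z}$) to get $\|\rho^{\cS_{z}}_{\nu_{z}}\|_{\infty}=1$ a.e., integrates via property (\ref{item: Fubinish}) of the ergodic decomposition to get $\|\rho^{\cS}_{\nu}1_{F}\|_{\infty}=1$ for all $\cR$-invariant $F$, and concludes by Theorem \ref{thm: big TFAE thm}(\ref{item: co-spectral radius condition nonergodic part}). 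The moral, which your proposal runs against twice, is that the universal quantifier over generating measures (equivalently, the fiberwise normalization of vectors) is the load-bearing part of the definition and cannot be traded for a single measure without spectral gap.
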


\begin{proof}

(\ref{item: ergodic deocmposition rel amen}): For both directions, fix  countable $\Lambda,\Gamma\leq [\cR]$ which generate $\cS,\cR$ respectively. Then by uniqueness of the ergodic decomposition and countability of $\Gamma$, for almost every $z\in Z$ we have that 
\begin{itemize}
    \item the actions $\Lambda,\Gamma\actson X$ leave $\pi^{-1}(\{z\})$ invariant, 
    \item the actions $\Lambda,\Gamma\actson (\pi^{-1}(\{z\}),\mu_{z})$ are  measure-preserving,
    \item $\cR_{z}=\cR_{\Gamma,\pi^{-1}(\{z\})}$, $\cS_{z}=\cR_{\Lambda,\pi^{-1}(\{z\})}$. 
\end{itemize}
Fix a conull, Borel set $Z_{0}\subseteq Z$ such that all the above items hold. We also fix a conull, Borel, $\cR$-invariant set $X_{0}\subseteq X$ with $[x]_{\cR}=\Gamma x$, $[x]_{\cS}=\Lambda x$ for every $x\in X_{0}$.

First, suppose that $\cS$ is coamenable in $\cR$. Let $\xi^{(n)}\in L^{2}(\cR/\cS)$ be such that $\|\xi^{(n)}_{x}\|_{2}=1$ for almost every $x\in X$ and with $\lim_{n\to\infty}\|\xi^{(n)}_{x}-\xi^{(n)}_{y}\|_{2}=0$ for almost every $(x,y)\in \cR$. Note that for every $x\in X_{0}$, we have that $[x]_{\cR}=\Gamma x=[x]_{\cR_{\pi(x)}}$, and $[x]_{\cS}=\Lambda x=[x]_{\cS_{\pi(x)}}$. Thus $[x]_{\cR}/\cS=[x]_{\cR_{\pi(x)}}/\cS_{\pi(x)}$.  Thus, by Fubini, for almost every $z\in Z_{0}$ we have that $\|\xi^{(n)}_{x}\|_{\ell^{2}([x]_{\cR_{z}}/\cS_{z})}=1$ for almost every $x\in \pi^{-1}(\{z\})$. 
By Fubini, for almost every $z\in Z_{0}$ we have that 
\[\lim_{n\to\infty}\|\xi^{(n)}_{x}-\xi^{(n)}_{\gamma(x)}\|_{2}=0\]
for almost every $x\in \cR_{z}$ and every $\gamma\in \Gamma$. Since $\Gamma x=[x]_{\cR}$ for every $x\in X_{0}$, we see that for almost every $z\in Z$ we have that $\cS_{z}$ is coamenable in $\cR_{z}$.

Conversely, suppose that for almost every $z\in Z$ we have that $\cS_{z}$ is coamenable in $\cR_{z}$. Fix a countably supported $\nu\in \Prob([\cR])$ whose support generates $\cR$, and set $\Delta=\ip{\supp(\nu)}$. For each $g\in \Delta,$ choose a Borel $s(g)\in [\cR]$ with $s(g)=g$ almost everywhere, and set $\widetilde{\nu}=\sum_{g\in \Delta}\nu(\{g\})\delta_{s(g)}$. By countability of $\Delta$, for
almost every $z\in Z$ we have that $s(g)(\pi^{-1}(\{z\}))=\pi^{-1}(\{z\})$ for every $g\in \Delta$.
Hence, for almost every $z\in Z$, there is a well-defined map $\phi_{z}\colon \Delta\to [\cR_{z}]$ given by $\phi_{z}(g)=s(g)|_{\pi^{-1}(\{z\})}$, and so for almost every $z\in Z$ we can make sense of $\nu_{z}=(\phi_{z})_{*}\nu\in \Prob([\cR_{z}])$. 

Since $\Delta x=[x]_{\cR}$ for $\mu$-almost every $x\in X$, it follows that for almost every $z\in Z$ we have that $\Delta x=[x]_{\cR_{z}}$ for $\mu_{z}$-almost every $x\in \pi^{-1}(\{z\})$. Also, for almost every $x\in X$ we have $gx=s(g)x$ for all $g\in \Delta$. Hence it follows that for almost every $z\in Z$ we have $\nu_{z}$ is generating. Thus we can choose a conull Borel $Z_{1}\subseteq Z_{0}$ so that
for all $z\in  Z_{1}$:
\begin{itemize}
    \item we have that $\nu_{z}$ is generating,
    \item $\|\rho^{\cS_{z}}_{\nu_{z}}\|_{\infty}=1$,
    \item $\rho^{\cS_{z}}_{\nu_{z}}(x)=\rho^{\cS}_{\widetilde{\nu}}(x)$ for almost every $x\in \pi^{-1}(\{z\})$. 
\end{itemize}
Fix a Borel $F\subseteq X$ which is $\cR$-invariant and with $\mu(F)>0$. Set $F_{0}=\{z\in Z_{1}:\mu_{z}(F)=1\}$. Note that $F_{0}$ is a Borel set with $F=\pi^{-1}(F_{0})$ modulo null sets. 
Note that $ \rho^{\cS}_{\widetilde{\nu}}$ is Borel and almost everywhere equal to $\rho^{\cS}_{\nu}$. Hence, for any $\varepsilon>0$
\begin{align*}
    \mu(\{x\in F:\rho^{\cS}_{\nu}(x)>1-\varepsilon\})&=\mu(\{x\in F:\rho^{\cS}_{\widetilde{\nu}}(x)>1-\varepsilon\})\\
    &=\int_{F_{0}} \mu_{z}(\{x\in \pi^{-1}(\{z\}):\rho^{\cS_{z}}_{\nu_{z}}(x)>1-\varepsilon\})\,d\zeta(z).
\end{align*}
Since $\|\rho^{\cS_{z}}_{\nu_{z}}\|_{\infty}=1$ for all $z\in F_{0}$, the integrand is everywhere positive, and since $\zeta(F_{0})=\mu(F)>0$, we find that $\mu(\{x\in F:\rho^{\cS}_{\nu}(x)>1-\varepsilon\})>0$. Thus $\|\rho^{\cS}_{\nu}1_{F}\|_{\infty}=1$. Since $F,\nu$ were arbitrary, Theorem \ref{thm: big TFAE thm} implies that $\cS$ is coamenable in $\cR$.

(\ref{item; ergodic decomposition everywhere rel amen}): First suppose that for almost every $z\in Z$, we have that $\cS_{z}$ is everywhere coamenable in $\cR_{z}$. 
Let $E\subseteq X$ with $\mu(E)>0$, and choose a Borel $F\subseteq Z$ with $\cR E=\pi^{-1}(F)$ modulo null sets. Then $\mu_{z}(\pi^{-1}(\{z\})\cap E)>0$ for almost every $z\in F$. We may view $(\cR_{z}|_{\pi^{-1}(\{z\})\cap E})_{z\in F}$ and $(\cS_{\pi^{-1}(\{z\})\cap E})_{z\in F}$ as the ergodic decomposition of $\cR|_{E}$ and $\cS|_{E}$. Since $\cS_{z}$ is everywhere coamenable in $\cR_{z}$ for almost every $z\in Z$, we have that $(\cS_{z}|_{\pi^{-1}(\{z\})\cap E})_{z\in F}$ is coamenable in $(\cR_{\pi^{-1}(\{z\})\cap E})_{z\in F}$ for almost every $z\in F$. Thus by (\ref{item: ergodic deocmposition rel amen}), it follows that $\cS|_{E}$ is coamenable in $\cR|_{E}$.

Conversely suppose that  $F=\{z\in Z:S_{z} \textnormal{ is not everywhere coamenable in } \cR_{z}\}$ is not null. We ultimately will use measurable selection, and will proceed in several steps which will include establishing measurability of $F$. 

Let $(B_{n})_{n=1}^{\infty}$ be a sequence of $\cS$-invariant Borel subsets of $X$ so that $\{B_{n}:n\in \N\}$ is an algebra of sets which generates the $\sigma$-algebra of Borel $\cS$-invariant sets. Then for every $\nu\in \Prob(X)$ we have that $\{B_{n}:n\in \N\}$ is dense in $\Omega(X,\nu)$. 
Let $\cL=\{(\sigma,z):\sigma\in \N^{\N}, \lim_{n\to\infty}B_{\sigma(n)} \textnormal{ exists in } \Omega(X,\mu_{z})\}$. Note that $\cL$ is Borel, indeed by completeness of $\Omega(X,\mu_{z})$ 
\[\cL=\bigcap_{k=1}^{\infty}\bigcup_{N=1}^{\infty}\bigcap_{n,m\geq N}\{(\sigma,z):\mu_{z}(B_{\sigma(n)}\Delta B_{\sigma(m)})<1/k\}.\]
Thus (\ref{item: Fubinish}) of the ergodic decomposition implies that $\cL$ is Borel. For $(\sigma,z)\in \cL$, we let $B_{\sigma,z}=\lim_{n\to\infty}B_{\sigma(n)}$ with the limit taken in $\Omega(X,\mu_{z})$. Note that $B_{\sigma,z}$ is only well-defined modulo $\mu_{z}$-null sets. We make the following 

\emph{Claim:}
\[\cN=\{(\sigma,z):\cS_{z}|_{B_{\sigma,z}} \textnormal{ is coamenable in } \cR_{z}|_{B_{\sigma,z}}\}\]
\emph{is Borel.} 

Once we show this claim, we have that $F$ is the projection of $\cN^{c}$ onto the second factor. Thus $F$ is analytic and hence $\zeta$-measurable by \cite[Theorem 21.10]{KechrisClassic}.

To prove the claim, let $\Gamma\leq [\cR]$ with $\cR_{\Gamma,X}=\cR$, and with all elements of $\Gamma$ Borel. 
For $n\in\N$,$g\in \Gamma$, let $f_{n,g}\colon \cR/\cS\to \C$ be given by $f_{n,g}(x,c)=1_{B_{n}}(x)\delta_{[gx]_{\cS}=c}$. For $a\in c_{c}(\N\times \Gamma,\Q[i])$, let $f_{a}=\sum_{n,g}a(n,g)f_{n,g}$. By arguments similar to \cite[Lemma 3.5]{AFH}, we have $\{f_{a}:a\in c_{c}(\N\times \Gamma,\Q[i])\}$ is dense in $L^{2}(\cR_{z}/\cS_{z},\mu_{R_{z}/\cS_{z}})$ for every $z\in Z$. For $B\subseteq X$ which is $\cS$-invariant and measurable, we let $f_{a}^{B}=R(1_{B})f_{a}$. Since the map $L^{2}(\cR_{z}/\cS_{z})\to R(1_{B\cap \pi^{-1}(\{z\})})L^{2}(\cR_{z}/\cS_{z})$ given by $k\mapsto R(1_{B\cap \pi^{-1}(\{z\})})k$ is continuous and surjective, we conclude that $\{f_{a}^{B}:a\in c_{c}(\N,\Q[i])\}$ is dense in $R(1_{B\cap \pi^{-1}(\{z\})})L^{2}(\cR_{z}/\cS_{z})$. 
Set
\[\widetilde{\cN}=\bigcap_{\substack{k\in \N,\\ F\subseteq \Gamma \textnormal{ finite}}}\bigcup_{a\in c_{c}(\N\times \Gamma,\Q[i])}\bigcap_{g\in F}\Upsilon_{a,g,k},\]
where
\[\Upsilon_{a,g,k}=\{(\sigma,z):\|\lambda_{\cR_{z}/\cS_{z}}(g)f_{a}^{B_{\sigma,z}}-f_{a}^{B_{\sigma,z}}\|_{L^{2}(\cR_{z}/\cS_{z})}<1/k,\|f_{a}^{B_{\sigma,z}}\|_{L^{2}(\cR_{z}/\cS_{z})}<1+1/k\}\cap\] \[\left\{(\sigma,z): \int \|(f^{B_{\sigma,z}}_{a})_{x}\|_{2}\,d\mu_{z}(x)\geq 1-1/k 
\right\}.\]
Note that $(\sigma,z)\in \Upsilon_{a,g,k}$ implies that 
\[\int \left|\|(f^{B_{\sigma,z}}_{a})_{x}\|_{2}-1\right|^{2}\,d\mu_{z}(x)\leq \frac{5}{k}. \]
Because $\{f_{a}^{B}:a\in c_{c}(\N,\Q[i])\}$ is dense in $R(1_{B\cap \pi^{-1}(\{z\})})L^{2}(\cR_{z}/\cS_{z})$, and since for fixed $z$ the maps $\sigma\mapsto B_{\sigma,z}$ defined for $\sigma$ with $(\sigma,z)\in \cL$ is surjective, Theorem \ref{thm: big TFAE thm} and a perturbation argument implies that $\widetilde{\cN}=\cN$. Thus to show that $\cN$ is Borel, it suffices to show that 
\begin{itemize}
    \item $(\sigma,z)\mapsto \|\lambda_{\cR_{z}/\cS_{z}}(g)f_{a}^{B_{\sigma,z}}-f_{a}^{B_{\sigma,z}}\|_{L^{2}(\cR_{z}/\cS_{z})}$,
    \item $(\sigma,z)\mapsto \|f_{a}^{B_{\sigma,z}}\|_{L^{2}(\cR_{z}/\cS_{z})}$,
    \item $(\sigma,z)\mapsto  \int \|(f_{a}^{B_{\sigma,z}})_{x}\|_{2}\,d\mu_{z}(x)$,
\end{itemize}
are all Borel functions of $(\sigma,z)\in \cL$. 

To do this, fix an enumeration $(g_{m})_{m=1}^{\infty}$ of $\Gamma$ with $g_{1}=e$, and let $E_{m}$ be as in the proof of Proposition \ref{prop: standard}. Then:
\[ \|\lambda_{\cR_{z}/\cS_{z}}(g)f_{a}^{B_{\sigma,z}}-f_{a}^{B_{\sigma,z}}\|_{L^{2}(\cR_{z}/\cS_{z})}^{2}=\sum_{m}\int_{E_{m}\cap g_{m}^{-1}(B_{\sigma,z})}|f_{a}(g^{-1}x,[g_{m}x]_{\cS})-f_{a}(x,[g_{m}x])|^{2}\,d\mu_{z}(x).\]
Since $|f_{a}(g^{-1}x,[g_{m}x]_{\cS})-f_{a}(x,[g_{m}x])|^{2}$ is a bounded, Borel function of $x$ we have that 
\[\int_{E_{m}\cap g_{m}^{-1}(B_{\sigma,z})}|f_{a}(g^{-1}x,[g_{m}x]_{\cS})-f_{a}(x,[g_{m}x])|^{2}\,d\mu_{z}(x)=\]\[\lim_{n\to\infty}\int_{E_{m}\cap g_{m}^{-1}(B_{\sigma(n)})}|f_{a}(g^{-1}x,[g_{m}x]_{\cS})-f_{a}(x,[g_{m}x])|^{2}\,d\mu_{z},\]
and property (\ref{item: Fubinish}) of the ergodic decomposition implies that this is a Borel function of $(\sigma,z)\in \cL$. The proof that  $(\sigma,z)\in \cL\mapsto \|f_{a}^{B_{\sigma,z}}\|_{L^{2}(\cR_{z}/\cS_{z})}$ is Borel is the same.

The map $(\sigma,z)\in \cL\mapsto  \int \|(f_{a}^{B_{\sigma,z}})_{x}\|_{2}\,d\mu_{z}(x)$ is given by
\[\lim_{M\to\infty}\int \left(\sum_{m=1}^{M}1_{E_{m}}(x)1_{B_{\sigma,z}}(g_{m}x)|f_{a}(x,[g_{m}x]_{\cS})|^{2}\right)^{1/2}\,d\mu_{z}(x).\]
Since $f_{a}$ is a bounded Borel function, we see that 
\[\lim_{M\to\infty}\int \left(\sum_{m=1}^{M}1_{E_{m}}(x)1_{B_{\sigma,z}}(g_{m}x)|f_{a}(x,[g_{m}x])|^{2}\right)^{1/2}\,d\mu_{z}=\]\[\lim_{M\to\infty}\lim_{n\to\infty}\int \left(\sum_{m=1}^{M}1_{E_{m}}(x)1_{B_{\sigma(n)}}(g_{m}x)|f_{a}(x,[g_{m}x]_{\cS})|^{2}\right)^{1/2}\,d\mu_{z},\]
and property (\ref{item: Fubinish}) of the ergodic decomposition implies that this is a Borel function of $(\sigma,z)\in \cL$. 



Having shown the claim, our hypothesis is then that $\zeta(F)>0$. Choose a Borel, conull $F_{0}'\subseteq F$, and let $\cT_{0}=\{(\sigma,z)\in \cL\setminus \cN:z\in F_{0}'\}$. Then the projection map $\cT_{0}\to F_{0}'$ is Borel and surjective, so Theorem \ref{thm: measurable sections} implies that we can find a $\zeta$-measurable map $s\colon F_{0}'\to \N^{\N}$ so that for all $z\in F_{0}'$ we have that $\cS|_{B_{s(z),z}}$ is not coamenable in $\cR|_{B_{s(z),z}}$. Let $s'\colon F_{0}'\to \N^{\N}$ be Borel with $s'=s$ almost everywhere, and let $F_{0}\subseteq F_{0}'$ be Borel and conull with $s|_{F_{0}}=s'|_{F_{0}}$. 
Note that for every Borel set $B\subseteq F_{0}$ we have that 
\[\mu_{z}(B_{s(z),z}\cap \pi^{-1}(B))=\lim_{n\to\infty}\mu_{z}(B_{s(z)(n)}\cap \pi^{-1}(B)).\]
Since $s|_{F_{0}}=s'|_{F_{0}}$ is Borel,
part (\ref{item: Fubinish})  of the ergodic decomposition thus implies that $z\mapsto \mu_{z}(B_{s(z),z}\cap \pi^{-1}(B))$ is Borel. Hence Proposition \ref{prop: technical bs} implies that there is a measurable $C\subseteq X$ so that $\zeta$-almost every $z$ we have that $\mu_{z}((C\cap \pi^{-1}(\{z\}))\Delta B_{s(z),z})=0$. By design  for almost every $z\in F$, we have that $\cS_{z}|_{C\cap \pi^{-1}(\{z\})}$ is not coamenable in  $\cR_{z}|_{C\cap \pi^{-1}(\{z\})}$ hence part (\ref{item: ergodic deocmposition rel amen}) implies that $\cS|_{C}$ is not coamenable in $\cR|_{C}$.

\end{proof}



%

\begin{thebibliography}{10}

\bibitem{AFHGrowth}
M.~Abert, M.~Fraczyk, and B.~Hayes.
\newblock Growth dichotomy for unimodular random rooted trees.

\bibitem{AFH}
M.~Abert, M.~Fraczyk, and B.~Hayes.
\newblock Co-spectral radius for countable equivalence relations, 2023.

\bibitem{anantharaman-popa}
C.~Anantharaman and S.~Popa.
\newblock An introduction to {$II_1$} factors.
\newblock {\em book in progress}, 2016.

\bibitem{ADAmenableCorr}
C.~Anantharaman-Delaroche.
\newblock Amenable correspondences and approximation properties for von {N}eumann algebras.
\newblock {\em Pacific J. Math.}, 171(2):309--341, 1995.

\bibitem{BOExactness}
B.-O. Battseren.
\newblock Von {N}eumann equivalence and group exactness.
\newblock {\em J. Funct. Anal.}, 284(4):Paper No. 109786, 12, 2023.

\bibitem{BOMd}
B.-O. Battseren.
\newblock Von {N}eumann equivalence and {$M_d$} type approximation properties.
\newblock {\em Proc. Amer. Math. Soc.}, 151(10):4447--4459, 2023.

\bibitem{BHAExtension}
L.~Bowen, D.~Hoff, and A.~Ioana.
\newblock von {N}eumann's problem and extensions of non-amenable equivalence relations.
\newblock {\em Groups Geom. Dyn.}, 12(2):399--448, 2018.

\bibitem{RelAmenGroups}
P.-E. Caprace and N.~Monod.
\newblock Relative amenability.
\newblock {\em Groups Geom. Dyn.}, 8(3):747--774, 2014.

\bibitem{SolidErg}
I.~Chifan and A.~Ioana.
\newblock Ergodic subequivalence relations induced by a {B}ernoulli action.
\newblock {\em Geom. Funct. Anal.}, 20(1):53--67, 2010.

\bibitem{CohenCG}
J.~M. Cohen.
\newblock Cogrowth and amenability of discrete groups.
\newblock {\em J. Funct. Anal.}, 48(3):301--309, 1982.

\bibitem{Connes}
A.~Connes.
\newblock Classification of injective factors. {C}ases {$II_{1},$} {$II_{\infty },$} {$III_{\lambda },$} {$\lambda \not=1$}.
\newblock {\em Ann. of Math. (2)}, 104(1):73--115, 1976.

\bibitem{CFW}
A.~Connes, J.~Feldman, and B.~Weiss.
\newblock An amenable equivalence relation is generated by a single transformation.
\newblock {\em Ergodic Theory Dynamical Systems}, 1(4):431--450 (1982), 1981.

\bibitem{Conway}
J.~B. Conway.
\newblock {\em A course in functional analysis}, volume~96 of {\em Graduate Texts in Mathematics}.
\newblock Springer-Verlag, New York, second edition, 1990.

\bibitem{demir2024measurableimbeddingsfreeproducts}
{\"{O}}.~Demir.
\newblock Measurable imbeddings, free products, and graph products, 2024.

\bibitem{DKStrucutreHyper}
R.~Dougherty, S.~Jackson, and A.~S. Kechris.
\newblock The structure of hyperfinite {B}orel equivalence relations.
\newblock {\em Trans. Amer. Math. Soc.}, 341(1):193--225, 1994.

\bibitem{Dye}
H.~A. Dye.
\newblock On groups of measure preserving transformations $\textrm{I}$.
\newblock {\em Amer. Journ. Math.}, 81(1):119--159, 1959.

\bibitem{ElekSzaboDeterminant}
G.~Elek and E.~Szab\'{o}.
\newblock Hyperlinearity, essentially free actions and {$L^2$}-invariants. {T}he sofic property.
\newblock {\em Math. Ann.}, 332(2):421--441, 2005.

\bibitem{EymardCA}
P.~Eymard.
\newblock {\em Moyennes invariantes et repr\'esentations unitaires}, volume Vol. 300 of {\em Lecture Notes in Mathematics}.
\newblock Springer-Verlag, Berlin-New York, 1972.

\bibitem{FelMooreI}
J.~Feldman and C.~C. Moore.
\newblock Ergodic equivalence relations, cohomology, and von {N}eumann algebras. {I}.
\newblock {\em Trans. Amer. Math. Soc.}, 234(2):289--324, 1977.

\bibitem{FSZ}
J.~Feldman, C.~E. Sutherland, and R.~J. Zimmer.
\newblock Subrelations of ergodic equivalence relations.
\newblock {\em Ergodic Theory Dynam. Systems}, 9(2):239--269, 1989.

\bibitem{Folland}
G.~B. Folland.
\newblock {\em Real analysis}.
\newblock Pure and Applied Mathematics (New York). John Wiley \& Sons, Inc., New York, second edition, 1999.
\newblock Modern techniques and their applications, A Wiley-Interscience Publication.

\bibitem{FurmanMeasRigid}
A.~Furman.
\newblock Gromov's measure equivalence and rigidity of higher rank lattices.
\newblock {\em Ann. of Math. (2)}, 150(3):1059--1081, 1999.

\bibitem{GaboriauMEFree}
D.~Gaboriau.
\newblock Examples of groups that are measure equivalent to the free group.
\newblock {\em Ergodic Theory Dynam. Systems}, 25(6):1809--1827, 2005.

\bibitem{Gaboriau-Lyons}
D.~Gaboriau and R.~Lyons.
\newblock A measurable-group-theoretic solution to von {N}eumann's problem.
\newblock {\em Invent. Math.}, 177(3):533--540, 2009.

\bibitem{GMAmen}
Y.~Glasner and N.~Monod.
\newblock Amenable actions, free products and a fixed point property.
\newblock {\em Bull. Lond. Math. Soc.}, 39(1):138--150, 2007.

\bibitem{GNARelAmen}
R.~Grigorchuk and V.~Nekrashevych.
\newblock Amenable actions of nonamenable groups.
\newblock {\em Zap. Nauchn. Sem. S.-Peterburg. Otdel. Mat. Inst. Steklov. (POMI)}, 326:85--96, 281, 2005.

\bibitem{GrigoCG}
R.~I. Grigorchuk.
\newblock Symmetrical random walks on discrete groups.
\newblock In {\em Multicomponent random systems}, volume~6 of {\em Adv. Probab. Related Topics}, pages 285--325. Dekker, New York, 1980.

\bibitem{HaagerupAmenable}
U.~Haagerup.
\newblock Injectivity and decomposition of completely bounded maps.
\newblock In {\em Operator algebras and their connections with topology and ergodic theory ({B}u\c{s}teni, 1983)}, volume 1132 of {\em Lecture Notes in Math.}, pages 170--222. Springer, Berlin, 1985.

\bibitem{HPMonotone}
O.~H\"aggstr\"om and Y.~Peres.
\newblock Monotonicity of uniqueness for percolation on {C}ayley graphs: all infinite clusters are born simultaneously.
\newblock {\em Probab. Theory Related Fields}, 113(2):273--285, 1999.

\bibitem{HHGraphOE}
C.~Horbez and J.~Huang.
\newblock Measure equivalence classification of transvection-free right-angled {A}rtin groups.
\newblock {\em J. \'Ec. polytech. Math.}, 9:1021--1067, 2022.

\bibitem{HVTypeIIICartan}
C.~Houdayer and S.~Vaes.
\newblock Type {III} factors with unique {C}artan decomposition.
\newblock {\em J. Math. Pures Appl. (9)}, 100(4):564--590, 2013.

\bibitem{HutchcroftIsing}
T.~Hutchcroft.
\newblock Continuity of the {I}sing phase transition on nonamenable groups.
\newblock {\em Comm. Math. Phys.}, 404(1):227--286, 2023.

\bibitem{IshanGroupApprox}
I.~Ishan.
\newblock On von {N}eumann equivalence and group approximation properties.
\newblock {\em Groups Geom. Dyn.}, 18(2):737--747, 2024.

\bibitem{JKL}
S.~Jackson, A.~S. Kechris, and A.~Louveau.
\newblock Countable {B}orel equivalence relations.
\newblock {\em J. Math. Log.}, 2(1):1--80, 2002.

\bibitem{HAJoli}
P.~Jolissaint.
\newblock The {H}aagerup property for measure-preserving standard equivalence relations.
\newblock {\em Ergodic Theory Dynam. Systems}, 25(1):161--174, 2005.

\bibitem{JoliWA}
P.~Jolissaint.
\newblock Proper cocycles and weak forms of amenability.
\newblock {\em Colloq. Math.}, 138(1):73--88, 2015.

\bibitem{Jones83}
V.~F.~R. Jones.
\newblock Index for subfactors.
\newblock {\em Invent. Math.}, 72(1):1--25, 1983.

\bibitem{JMSimple}
K.~Juschenko and N.~Monod.
\newblock Cantor systems, piecewise translations and simple amenable groups.
\newblock {\em Ann. of Math. (2)}, 178(2):775--787, 2013.

\bibitem{KadisonRingroseII}
R.~V. Kadison and J.~R. Ringrose.
\newblock {\em Fundamentals of the theory of operator algebras. {V}ol. {II}}, volume~16 of {\em Graduate Studies in Mathematics}.
\newblock American Mathematical Society, Providence, RI, 1997.
\newblock Advanced theory, Corrected reprint of the 1986 original.

\bibitem{KaimanovichAmenability}
V.~A. Kaimanovich.
\newblock Amenability, hyperfiniteness, and isoperimetric inequalities.
\newblock {\em C. R. Acad. Sci. Paris S\'er. I Math.}, 325(9):999--1004, 1997.

\bibitem{VKLeafcounterexample}
V.~A. Kaimanovich.
\newblock Equivalence relations with amenable leaves need not be amenable.
\newblock In {\em Topology, ergodic theory, real algebraic geometry}, volume 202 of {\em Amer. Math. Soc. Transl. Ser. 2}, pages 151--166. Amer. Math. Soc., Providence, RI, 2001.

\bibitem{KazhdanTDef}
D.~A. Ka{\v{z}}dan.
\newblock On the connection of the dual space of a group with the structure of its closed subgroups.
\newblock {\em Funkcional. Anal. i Prilo\v{z}en.}, 1:71--74, 1967.

\bibitem{KechrisAmenVersusHyper}
A.~S. Kechris.
\newblock Amenable versus hyperfinite {B}orel equivalence relations.
\newblock {\em J. Symbolic Logic}, 58(3):894--907, 1993.

\bibitem{KechrisClassic}
A.~S. Kechris.
\newblock {\em Classical descriptive set theory}, volume 156 of {\em Graduate Texts in Mathematics}.
\newblock Springer-Verlag, New York, 1995.

\bibitem{KM}
A.~S. Kechris and B.~D. Miller.
\newblock {\em Topics in orbit equivalence}, volume 1852 of {\em Lecture Notes in Mathematics}.
\newblock Springer-Verlag, Berlin, 2004.

\bibitem{LyonsSchramm}
R.~Lyons and O.~Schramm.
\newblock Indistinguishability of percolation clusters.
\newblock {\em Ann. Probab.}, 27(4):1809--1836, 1999.

\bibitem{monodcomments}
N.~Monod.
\newblock Some comments on piecewise-projective groups of the line.

\bibitem{MonodPopa}
N.~Monod and S.~Popa.
\newblock On co-amenability for groups and von {N}eumann algebras.
\newblock {\em C. R. Math. Acad. Sci. Soc. R. Can.}, 25(3):82--87, 2003.

\bibitem{OrnWeiss}
D.~S. Ornstein and B.~Weiss.
\newblock Entropy and isomorphism theorems for actions of amenable groups.
\newblock {\em J. Analyse Math.}, 48:1--141, 1987.

\bibitem{OzawaExact}
N.~Ozawa.
\newblock Weakly exact von {N}eumann algebras.
\newblock {\em J. Math. Soc. Japan}, 59(4):985--991, 2007.

\bibitem{OzPopaCartan}
N.~Ozawa and S.~Popa.
\newblock On a class of {${\rm II}_1$} factors with at most one {C}artan subalgebra.
\newblock {\em Ann. of Math. (2)}, 172(1):713--749, 2010.

\bibitem{Paterson}
A.~L.~T. Paterson.
\newblock {\em Amenability}, volume~29 of {\em Mathematical Surveys and Monographs}.
\newblock American Mathematical Society, Providence, RI, 1988.

\bibitem{PaulsenCB}
V.~Paulsen.
\newblock {\em Completely bounded maps and operator algebras}, volume~78 of {\em Cambridge Studies in Advanced Mathematics}.
\newblock Cambridge University Press, Cambridge, 2002.

\bibitem{PAmen}
V.~Pestov.
\newblock On some questions of {E}ymard and {B}ekka concerning amenability of homogeneous spaces and induced representations.
\newblock {\em C. R. Math. Acad. Sci. Soc. R. Can.}, 25(3):76--81, 2003.

\bibitem{PopaCorr}
S.~Popa.
\newblock Correspondences.
\newblock {\em INCREST preprint, unpublished.}, 1986.

\bibitem{PopaAI}
S.~Popa.
\newblock Symmetric enveloping algebras, amenability and afd properties for subfactors.
\newblock {\em Math. Res. Lett.}, 1:409--425, 1994.

\bibitem{PopaL2Betti}
S.~Popa.
\newblock On a class of type {${\rm II}_1$} factors with {B}etti numbers invariants.
\newblock {\em Ann. of Math. (2)}, 163(3):809--899, 2006.

\bibitem{PopaVaesFree}
S.~Popa and S.~Vaes.
\newblock Unique {C}artan decomposition for {$\rm II_1$} factors arising from arbitrary actions of free groups.
\newblock {\em Acta Math.}, 212(1):141--198, 2014.

\bibitem{BabyRudin}
W.~Rudin.
\newblock {\em Principles of mathematical analysis}.
\newblock International Series in Pure and Applied Mathematics. McGraw-Hill Book Co., New York-Auckland-D\"usseldorf, third edition, 1976.

\bibitem{StinespringCP}
W.~F. Stinespring.
\newblock Positive functions on {$C^*$}-algebras.
\newblock {\em Proc. Amer. Math. Soc.}, 6:211--216, 1955.

\bibitem{SuthBasic}
C.~E. Sutherland.
\newblock Sub-equivalence relations and subalgebras of von neumann algebras.
\newblock in preparation.

\bibitem{Taka}
M.~Takesaki.
\newblock {\em Theory of operator algebras. {I}}, volume 124 of {\em Encyclopaedia of Mathematical Sciences}.
\newblock Springer-Verlag, Berlin, 2002.
\newblock Reprint of the first (1979) edition, Operator Algebras and Non-commutative Geometry, 5.

\bibitem{TakesakiIII}
M.~Takesaki.
\newblock {\em Theory of operator algebras. {III}}, volume 127 of {\em Encyclopaedia of Mathematical Sciences}.
\newblock Springer-Verlag, Berlin, 2003.
\newblock Operator Algebras and Non-commutative Geometry, 8.

\bibitem{TuckerMeans}
R.~D. Tucker-Drob.
\newblock {Invariant means and the structure of inner amenable groups}.
\newblock {\em Duke Mathematical Journal}, 169(13):2571--2628, 2020.

\bibitem{VErgDe}
V.~S. Varadarajan.
\newblock Groups of automorphisms of {B}orel spaces.
\newblock {\em Trans. Amer. Math. Soc.}, 109:191--220, 1963.

\bibitem{Woess}
W.~Woess.
\newblock {\em Random walks on infinite graphs and groups}, volume 138 of {\em Cambridge Tracts in Mathematics}.
\newblock Cambridge University Press, Cambridge, 2000.

\bibitem{ZimmerAmen2}
R.~J. Zimmer.
\newblock Cocycles and the structure of ergodic group actions.
\newblock {\em Israel J. Math.}, 26(3-4):214--220, 1977.

\bibitem{ZimmerAmenPoisson}
R.~J. Zimmer.
\newblock Amenable ergodic group actions and an application to {P}oisson boundaries of random walks.
\newblock In {\em Group actions in ergodic theory, geometry, and topology---selected papers}, pages 107--129. Univ. Chicago Press, Chicago, IL, 2020.
\newblock Reprint of [0473096].

\bibitem{ZimmerAMen1}
R.~J. Zimmer.
\newblock Hyperfinite factors and amenable ergodic actions.
\newblock In {\em Group actions in ergodic theory, geometry, and topology---selected papers}, pages 152--160. Univ. Chicago Press, Chicago, IL, 2020.
\newblock Reprint of [0470692].

\end{thebibliography}

\end{document}